\tikzset{->-/.style={decoration={  markings,  mark=at position .75 with {\arrow{latex}}},postaction={decorate}}}
\tikzset{-<-/.style={decoration={  markings,  mark=at position .75 with {\arrow{latex reversed}}},postaction={decorate}}}
\newtheorem{prop}{Proposition}[subsection]
\newtheorem{conjecture}[prop]{Conjecture}
\newtheorem{theorem}[prop]{Theorem}
\newtheorem{definition}[prop]{Definition}
\newtheorem{lemma}[prop]{Lemma}
\newtheorem{remark}[prop]{Remark}
\newtheorem{example}[prop]{Example}
\newtheorem{intermezzo}[prop]{Intermezzo}
\newcommand{\weg}[1]{}
\newcommand{\iemph}[1]{\emph{#1}\index{#1}}
\newcommand{\rep}{\texttt{rep}}
\def\C{{\Bbb C}}
\def\Rl{{\Bbb R}}
\def\R{{\Bbb R}}
\def\Z{{\Bbb Z}}
\def\N{{\Bbb N}}
\def\KK{{\Bbb K}}
\def\implies{\Rightarrow}
\def\to{\rightarrow}
\def\onto{\twoheadrightarrow}
\def\<{\langle}
\def\>{\rangle}
\newsavebox{\mybox}
\font\ef = eufb10
\newcommand{\NC}[1]{\xymatrix@=.5cm{\vtx{1}\ar@2@/^/[r]&\vtx{1}\ar@/^/[l]}}
\newcommand{\ideal}[1]{\ensuremath{\text{\ef{#1}}}}
\renewcommand{\H}{\ensuremath{\mathsf{H}}}
\newcommand{\id}{\mathsf{1\!\!1}}
\newcommand{\rank}{\ensuremath{\mathsf{Rank\:}}}
\newcommand{\proj}{\ensuremath{\mathsf{Proj\:}}}
\newcommand{\Rep}{\ensuremath{\mathsf{Rep}}}
\newcommand{\CRep}{\ensuremath{\mathsf{Rep}}}
\newcommand{\trep}{\ensuremath{\mathsf{trep}}}
\newcommand{\CM}{{\ensuremath{\mathcal{M}}}}
\newcommand{\Db}{\ensuremath{\mathtt{D^b}}}
\newcommand{\GL}{\ensuremath{\mathsf{GL}}}
\newcommand{\Proj}{\ensuremath{\mathsf{Proj\:}}}
\newcommand{\Mat}{\ensuremath{\mathsf{Mat}}}
\newcommand{\RHom}{\ensuremath{\mathsf{Hom}}}
\newcommand{\Hom}{\ensuremath{\mathsf{Hom}}}
\newcommand{\End}{\ensuremath{\mathsf{End}}}
\newcommand{\Ext}{\ensuremath{\mathsf{Ext}}}
\newcommand{\Stab}{\ensuremath{\mathsf{Stab}}}
\newcommand{\Mod}{\ensuremath{\mathtt{Mod}}\,}
\newcommand{\Coh}{\ensuremath{\mathtt{Coh}}\,}
\newcommand{\Fuk}{\ensuremath{\mathtt{Fuk}}\,}
\newcommand{\Tilt}{\ensuremath{\mathtt{T}}}
\newcommand{\GHilb}{G\text{-}\mathsf{Hilb}}
\newcommand{\se}[1]{\begin{equation*}\begin{split}#1\end{split}\end{equation*}}
\newcommand {\vtx}[1]{*+<4pt>[o][F-]{\scriptstyle{#1}}}
\newcommand {\sqvtx}[1]{*+<4pt>[o][F-]{\scriptstyle{#1}}}
\newcommand {\Spec}{\mathsf {Spec}\,}
\newcommand {\PP}{\mathbb P}
\newcommand{\rib}{\mathrm \Gamma}
\newcommand{\qpol}{\mathrm Q}
\newcommand{\ot}{\leftarrow}
\newcommand{\tot}{\leftrightarrow}
\newcommand{\eps}{\epsilon}
\newcommand{\ful}[1]{|{#1}|}
\newcommand{\fat}[1]{|\mathring{#1}|}
\newcommand{\punct}[1]{|\dot{#1}|}
\newcommand{\mirror}[1]{\mathord{\reflectbox{$#1$}}}
\newcommand{\twist}[1]{\mathord{#1}^{\,\bowtie}}
\newcommand{\Maps}{\mathrm{Maps}}
\newcommand{\address}[1]{}
\newcommand{\PMs}{\mathsf{PM}}
\newcommand{\MP}{\mathsf{MP}}
\newcommand{\NP}{\mathsf{NP}}
\newcommand{\ZP}{\mathsf{ZP}}
\newcommand{\LP}{\mathsf{LP}}
\renewcommand{\SS}{\mathbb{S}}
\newcommand{\bir}{\twist{\rib}}
\newcommand{\polq}{\mirror{\qpol}}
\newcommand{\Surf}{\mathbb{S}}
\newcommand{\cL}{\mathcal{L}}
\newcommand{\cM}{\mathcal{M}}
\newcommand{\cG}{\mathcal{G}}
\newcommand{\cT}{\mathcal{T}}
\newcommand{\cX}{\mathcal{X}}
\newcommand{\cE}{\mathcal{E}}
\newcommand{\rL}{\mathcal{L}_\R}
\newcommand{\rM}{\mathcal{M}_\R}
\newcommand{\rG}{{\mathcal{G}_\R}}
\newcommand{\rT}{\mathcal{T}_\R}
\newcommand{\rX}{\mathcal{X}_\R}
\newcommand{\rC}{\mathcal{C}_\R}
\newcommand{\Log}{\mathrm{Log}}
\newcommand{\Am}{\mathcal{A}}
\newcommand{\tL}{\mathcal{L}_\trop}
\newcommand{\tX}{\mathcal{X}_\trop}
\newcommand{\tC}{\mathcal{C}_\trop}
\newcommand{\cC}{\mathcal{C}}
\newcommand{\pZ}{\mathtt{Z}}
\newcommand{\cP}{\mathtt{P}}
\newcommand{\PM}{\mathtt{P}}
\newcommand{\cR}{\mathtt{R}}
\newcommand{\Var}{\mathrm{Var}}
\newcommand{\floor}[1]{\lfloor #1 \rfloor}
\newcommand{\Jac}{\mathtt{J}}
\newcommand{\sG}{\mathscr{G}}
\newcommand{\sF}{\mathscr{F}}
\newcommand{\trop}{\mathtt{trop}}
\newcommand{\cF}{\mathcal{F}}
\newcommand{\FM}{\mathcal{F}}
\newcommand{\cO}{\mathscr{O}}
\newcommand{\cI}{\mathscr{I}}
\newcommand{\cTT}{\mathscr{T}}
\newcommand{\cPP}{\mathscr{P}}
\newcommand{\cU}{\mathscr{U}}
\newcommand{\Pic}{\mathrm{Pic}}
\title{A Dimer ABC}
\author{Raf Bocklandt}
\address{Raf Bocklandt\\
Korteweg de Vries institute\\
University of Amsterdam (UvA)\\
Science Park 904\\ 
1098 XH Amsterdam\\ 
The Netherlands
\email{raf.bocklandt@gmail.com}
}
\begin{document}
\bibliographystyle{plain}
\maketitle
\begin{abstract}
We give an overview of recent developments in the theory of dimer models. The viewpoint we take is inspired by mirror symmetry.
After an introduction to the combinatorics of dimer models,
we will first look at dimers in dynamical systems and statistical mechanics, which can be viewed as coming from the A-model in mirror symmetry. Then we will
discuss the role of dimers in the theory of resolutions of singularities, which is inspired by the B-model. The C stands for the connections that tie 
both subjects together: clusters, categories, and stability conditions.
In this final part we will give some ideas on how these two stories fit in a broader framework.
\end{abstract}

\tableofcontents

\section{Dimer combinatorics}

Dimer models originally come from statistical physics. They were introduced
as a model to study phase transitions in solid state physics \cite{kasteleyn,Fisher,temperley1961dimer}. Just like a polymer is something containing
many parts, the name dimer refers to something that is built up of two parts, black and white particles.
In the original dimer model these black and white particles were distributed on a square lattice such that
two neighboring particles have opposite colours. The states of the system are configurations where every particle is bound
to a neighboring particle. This is known as a perfect matching. Describing the physics of this system amounts to counting
the number of perfect matchings, subject to certain boundary conditions. 

If the boundary conditions are periodic, this is the same as working on a torus. In this view it is more natural to work with bipartite graphs embedded in a surface, where the nodes represent the particles and the edges the possible bonds that can be made.
Many of the physical properties can now be translated into graph theoretical concepts. In this first section we will introduce
dimers in this general setup and look at their combinatorics.

\subsection{Ribbon graphs}
If a graph is embedded in an oriented surface we can use the orientation of the surface to define a cyclic orientation on the edges in each node of the graph.
This can be captured in the notion of a ribbon graph \cite{igusa2002higher}.
\begin{definition}
A \iemph{ribbon graph} or \iemph{fat graph} $\rib=(H,\nu,\eps)$ consists of a finite set of half edges $H$ and two permutations
$\nu,\eps:H\to H$ such that $\eps$ is an involution without fixed points.
\begin{enumerate}
\item
We call the orbits of $\nu$ \iemph{nodes} and denote the set of 
nodes by $\rib_0=H/\<\nu\>$.
\item
We call the orbits of $\eps$ \iemph{edges} and denote the set of 
edges by $\rib_1=H/\<\eps\>$.
\item 
We call the orbits of $\varphi:=\nu\circ\eps$ \iemph{faces} and denote the set of 
faces by $\rib_2=H/\<\varphi\>$.
\end{enumerate}
We say $x \in \rib_i$ and $y \in \rib_j$ are incident if they intersect. 

The \iemph{dual ribbon graph} $\rib^\vee=(H,\nu\circ \eps,\eps)$ has the same half edges  but the roles of $\nu$ and $\varphi$ are reversed.
Because $\eps^2=1$ we have that $\rib^{\vee\vee} =\rib$.
\end{definition} 

Given a ribbon graph $\rib$ we can construct a closed surface $\ful{\rib}$ by representing each face of size $n$ by an $n$-gon.
Each edge of the $n$-gon represents an edge incident with the corresponding face and we glue these polygons together by joint edges. 
For every half edge there is a quadrangle on the surface whose four points are the two nodes and the centers of the 2 faces incident with the edge.
These quadrangles tile the surface and the edges are diagonals of these quadrangles. The dual ribbon graph can be drawn on this surface 
by connecting the centers of the faces using the other diagonals of these quadrangles.

The name ribbon graph comes from the fact that a closed tubular neighbourhood of the embedded graph $(\rib_0,\rib_1)$ in $\ful{\rib}$ looks like a graph made of ribbons. 
This tubular neighborhood is sometimes denoted by $\fat{\rib}$. It is an oriented surface with boundary and its boundary components correspond
to the faces. If we glue punctured discs to these boundaries, we get a surface with punctures $\punct{\rib}$. 

\begin{example}\label{ex1}
If we take $\eps=(15)(26)(37)(48)$ and
 $\nu=(1234)(8765)$ then $\varphi = (18)(25)(36)(47)$. There are $2$ nodes, $4$ edges and $4$ faces, which are $2$-gons. This gives an Euler characteristic of $2-4+4=2$,
so $\rib$ is embedded in a sphere. On the left we drew the graph on the sphere and in the middle we drew the tubular neigborhood $\fat{\rib}$. On the right we drew the dual graph, which tiles the sphere with $2$ squares that have a common boundary. The quadrangles are represented by dotted lines.
\begin{center}
\begin{tikzpicture}[scale=.75]
\begin{scope}
\begin{scope}[rotate=40]
\draw[thick] (-2,0) arc (180:360:2cm and 1cm);
    \draw[thick,dashed] (-2,0) arc (180:0:2cm and 1cm);
    \draw[thick] (0,2) arc (90:270:1cm and 2cm);
    \draw[thick,dashed] (0,2) arc (90:-90:1 cm and 2cm);
    \draw[very thin] (0,0) circle (2cm);
\draw (.9,.9) node {$\bullet$};    
\draw (-.9,-.9) node {$\bullet$};    
\end{scope}
    \draw[dotted] (2,0) arc (0:360:2 cm and 1.25cm);
    \draw[dotted] (.1,0) arc (0:360:.1 cm and 2cm);
\draw (-.5,-1.0) node[shape=circle,fill=gray!0,inner sep=0pt] {$2$};    
\draw (.5,-1.0) node[shape=circle,fill=gray!0,inner sep=0pt] {$1$};    
\draw (-.5,-1.4) node[shape=circle,fill=gray!0,inner sep=0pt] {$3$};    
\draw (.5,-1.4) node[shape=circle,fill=gray!0,inner sep=0pt] {$4$};    
\draw (.5,1.0) node[shape=circle,fill=gray!0,inner sep=0pt] {$8$};    
\draw (-.5,1.0) node[shape=circle,fill=gray!0,inner sep=0pt] {$7$};    
\draw (.5,1.4) node[shape=circle,fill=gray!0,inner sep=0pt] {$5$};    
\draw (-.5,1.4) node[shape=circle,fill=gray!0,inner sep=0pt] {$6$};    
\end{scope}
\begin{scope}[xshift=5cm]
\draw[dashed] (1.5,0) arc (0:360:1.5cm and 2cm);
\draw[thick] (0,1.75) arc (90:450:1.25cm and 1.75cm);
\draw[thick] (0,1.75) arc (90:450:.50cm and 1.75cm);
\draw[dashed] (0,1.5) arc (90:450:.25cm and 1.5cm);

\draw[dashed] (-.55,1.25) arc (105:255:.6cm and 1.25cm);
\draw[dashed] (-.55,1.25) arc (105:255:.25cm and 1.25cm);
\draw[dashed] (.55,-1.25) arc (285:435:.6cm and 1.25cm);
\draw[dashed] (.55,-1.25) arc (285:435:.25cm and 1.25cm);

\draw (0,1.75) node {$\bullet$};    
\draw (0,-1.75) node {$\bullet$};    
\end{scope}
\begin{scope}[xshift=10cm]
\draw[thick] (2,0) arc (0:360:2);
\draw (0,2) node {$\bullet$};    
\draw (-2,0) node {$\bullet$};    
\draw (2,0) node {$\bullet$};    
\draw (0,-2) node {$\bullet$};    
    \draw[dotted] (2,0) arc (0:360:2 cm and 1.25cm);
    \draw[dotted] (.1,0) arc (0:360:.1 cm and 2cm);
\draw (-10 :2) node[shape=circle,fill=gray!0,inner sep=0pt] {$8$};    
\draw (10 :2) node[shape=circle,fill=gray!0,inner sep=0pt] {$1$};
\draw (80 :2) node[shape=circle,fill=gray!0,inner sep=0pt] {$5$};    
\draw (100 :2) node[shape=circle,fill=gray!0,inner sep=0pt] {$2$};
\draw (170 :2) node[shape=circle,fill=gray!0,inner sep=0pt] {$6$};    
\draw (190 :2) node[shape=circle,fill=gray!0,inner sep=0pt] {$3$};
\draw (260 :2) node[shape=circle,fill=gray!0,inner sep=0pt] {$7$};    
\draw (280 :2) node[shape=circle,fill=gray!0,inner sep=0pt] {$4$};
\end{scope}
\end{tikzpicture}
\end{center}
\end{example}

In some situations it is convenient to consider infinite graphs. If this is the case we assume
that all orbits of $\nu,\eps$ and $\nu\circ\eps$ are finite. This ensures that the graph is locally finite. An example of this is the universal cover of a ribbon graph:
we can take the universal cover of the closed surface $\ful{\rib}$ and lift the embedded graph $\rib$ to this cover. The resulting graph is an infinite ribbon graph
and we will denote it by $\tilde \rib$.

\subsection{Dimer models}

Depending on one's point of view a dimer model can be defined in two ways that are dual to each other. 
\begin{definition}
A \iemph{dimer graph} $\rib$ is a bipartite ribbon graph: there is a partition of the nodes into
black and white nodes $\rib_0=\rib_0^\bullet\sqcup \rib_0^\circ$
such that every edge is incident with one black and one white node.
\end{definition}

For the dual definition we will make use of the quiver formalism. A \iemph{quiver} $Q$ is an oriented graph and we will use the maps $h,t:Q_1\to Q_0$ to denote the head and tail of an arrow. Paths will be written from the right to the left. 
\[
a_1 \dots a_k := \xymatrix{\vtx{}&\vtx{}\ar[l]_{a_1}&\vtx{}\ar@{.>}[l]&\vtx{}\ar[l]_{a_k} }
\]
We also use the head and tail notation for paths, e.g. $h(a_1 \dots a_k) =h(a_1)$, and
a path $p$ is called a cyclic if $h(p)=t(p)$. A cycle is a cyclic path path considered
up to cyclic permutations of the arrows.

\begin{definition}
A \iemph{dimer quiver} $\qpol$ is an oriented ribbon graph such that the faces are oriented cycles. 
We can split  $\qpol_2=\qpol_2^+ \cup \qpol_2^-$ in two types the anticlockwise and the clockwise cycles, depending on their orientation
on the surface.
\end{definition}

Given a dimer graph $\rib$, we can make a dimer quiver by taking the dual $\qpol = \rib^\vee$. The edges of the dual graph run perpendicular to the edges 
of $\rib$ and we orient them such that they keep the black node on the left. In this way the arrows cycle anticlockwise around the black vertices and 
clockwise around the white: $\qpol_2^+=(\rib_0^\bullet)^\vee$ and $\qpol_2^-=(\rib_0^\circ)^\vee$. 

\begin{example}\label{spp}
The suspended pinchpoint \cite{franco2006brane}[section 4.1] is an example of a dimer model on the torus. 
The quiver has 3 vertices corresponding to the three faces on the ribbon graph (one hexagon (1) and two quadrangles (2,3)).

\begin{center}
\begin{tikzpicture}
\begin{scope}[scale=1.3]
\draw [-latex,dotted,shorten >=5pt] (0,0) -- (3,0);
\draw [-latex,dotted,shorten >=5pt] (3,0) -- (3,1);
\draw [-latex,dotted,shorten >=5pt] (3,1) -- (0,0);
\draw [-latex,dotted,shorten >=5pt] (0,0) -- (0,1);
\draw [-latex,dotted,shorten >=5pt] (0,1) -- (3,2);
\draw [-latex,dotted,shorten >=5pt] (3,2) -- (3,3);
\draw [-latex,dotted,shorten >=5pt] (3,2) -- (3,1);
\draw [-latex,dotted,shorten >=5pt] (0,2) -- (0,1);
\draw [-latex,dotted,shorten >=5pt] (0,3) -- (3,3);
\draw [-latex,dotted,shorten >=5pt] (3,3) -- (0,2);
\draw [-latex,dotted,shorten >=5pt] (0,2) -- (0,3);
\draw (0,0) node[circle,draw,fill=white,minimum size=10pt,inner sep=1pt] {{\tiny1}};
\draw (0,1) node[circle,draw,fill=white,minimum size=10pt,inner sep=1pt] {{\tiny2}};
\draw (0,2) node[circle,draw,fill=white,minimum size=10pt,inner sep=1pt] {{\tiny3}};
\draw (0,3) node[circle,draw,fill=white,minimum size=10pt,inner sep=1pt] {{\tiny1}};
\draw (3,0) node[circle,draw,fill=white,minimum size=10pt,inner sep=1pt] {{\tiny1}};
\draw (3,1) node[circle,draw,fill=white,minimum size=10pt,inner sep=1pt] {{\tiny2}};
\draw (3,2) node[circle,draw,fill=white,minimum size=10pt,inner sep=1pt] {{\tiny3}};
\draw (3,3) node[circle,draw,fill=white,minimum size=10pt,inner sep=1pt] {{\tiny1}};
\draw[thick] (2,.5)--(1.5,0);
\draw[thick] (2,.5)--(1.5,1);
\draw[thick] (1.5,2)--(1.5,1);
\draw[thick] (1.5,2)--(1,2.5);
\draw[thick] (1,2.5)--(1.5,3);
\draw[thick] (1,2.5)--(0,2.3);
\draw[thick] (3,2.3)--(1.5,2);
\draw[thick] (0,1.5)--(1.5,2);
\draw[thick] (3,1.5)--(1.5,1);
\draw[thick] (2,.5)--(3,.7);
\draw[thick] (1.5,1)--(0,.7);
\draw (2,.5) node {$\bullet$};    
\draw (1.5,2) node {$\bullet$};    
\draw (1.5,1) node[shape=circle,fill=gray!0,inner sep=0pt] {$\circ$};    
\draw (1,2.5) node[shape=circle,fill=gray!0,inner sep=0pt] {$\circ$};    
\end{scope}
\draw (4,2.5) node[right] {$\vcenter{\xymatrix{&&&&\\&&\vtx{1}\ar@(lu,ru)\ar@/^/[dr]\ar@/^/[dl]&&\\&\vtx{2}\ar@/^/[ur]\ar@/^/[rr]&&\vtx{3}\ar@/^/[ul]\ar@/^/[ll]&}}$};
\end{tikzpicture}
\end{center}
\end{example}
\newcommand{\kk}{k}

We will use both formalisms, so a \iemph{dimer model} or \iemph{dimer} will be a pair $(\rib,\qpol)$ consisting of a dimer graph and a dimer quiver which are dual to each other.
To avoid confusion we will consistently use the terminology \iemph{nodes}, \iemph{edges} and \iemph{faces} for the dimer graph and vertices, arrows and cycles for the dimer quiver. 
Paths and cycles in the bipartite graph will be called (closed) \iemph{walks}, while
for the quiver we will keep the names paths and cycles.

\subsection{Homology and cohomology}

The surface in which a dimer model is embedded, will be denoted by $\ful{\rib}=\ful{\qpol}$ and its Euler characteristic by
\[
\chi(\rib)=\chi(\qpol) = \# \rib_0 -\# \rib_1 + \# \rib_2  = \# \qpol_2 - \# \qpol_1 + \# \qpol_0
\]
To calculate the homology and cohomology of the surface we can use the chain and cochain complexes of the surface.

For the dimer quiver we define the chain complex
\[
\Z \qpol_\bullet :=  \xymatrix{\Z\qpol_0&\Z\qpol_1\ar[l]_d& \Z\qpol_2\ar[l]_d}
\]
consisting of formal sums of vertices, arrows and cycles with coefficients in $\Z$. The differential is given by $dc=\sum_{a \in c} a$ and $da=h(a)-t(a)$ 
for $a \in \qpol_1$ and $c \in \qpol_2$.

For any abelian group $\kk$ we get a cochain complex
\[
\Maps^\bullet(\qpol,\kk) :=  \xymatrix{\Maps(\qpol_0,\kk)\ar[r]^{d}&\Maps(\qpol_1,\kk) \ar[r]^{d} &\Maps(\qpol_2,\kk)} 
\]
with $d\phi(a) = \phi(h(a))-\phi(t(a))$ and $d\phi(c) = \sum_{a \in c} \phi(a)$ for $a \in \qpol_1$ and $c \in \qpol_2$.
 
Similarly we can define a chain complex for the graph:
\[
\Z \rib_\bullet :=  \xymatrix{\Z\rib_0&\Z\rib_1\ar[l]_d& \Z\rib_2\ar[l]_d}
\]
with $de = b(e)-w(e)$ and $df=\sum_{e \in f} \pm e$, where we put 
a plus sign if the black node of the edge follows the white in the anticlockwise direction around $f$. In this convention a face is considered as an anticlockwise cycle and
edges are oriented as going from white to black.

The cochain complex looks like
\[
\Maps^\bullet(\rib,\kk) :=  \xymatrix{\Maps(\rib_0,\kk)\ar[r]^{d}&\Maps(\rib_1,\kk) \ar[r]^{d} &\Maps(\rib_2,\kk) }
\]
with $d\phi(e) = \phi(b(e))-\phi(w(e))$ and $d\phi(f) = \sum_{e \in f} \pm\phi(e)$ for $e \in \rib_1$ and $f \in \rib_2$.

\begin{definition}
If $(\qpol,\rib)$ is a dimer on a surface $\Surf=\ful{\qpol}$ and $\pi:\Surf'\to \Surf$ is a finite unbranched cover we can lift
$(\qpol,\rib)$ to a new dimer $(\qpol',\rib')$ on $\Surf'$. If $G$ is the group of cover automorphisms
\[
 G = \{ \phi:\Surf'\to \Surf'\,|\, \pi\circ\phi=\pi\}
\]
We have a free action of $G$ on $(\rib',\qpol')$ with $\rib=\rib'/G$ and $\qpol=\qpol'/G$. 
We call $(\rib',\qpol')$ a \iemph{Galois cover} of $\qpol$ with cover group $G$. 
\end{definition}
For Galois covers we have that the $G$-invariant part of $\Z\qpol'_\bullet$ is isomorphic to $\Z \qpol_{\bullet}$ and $\chi(\qpol') = \chi(\qpol)|G|$.

\subsection{The twisted dimer}
The ribbon graph picture of dimers allows us to define an involution on the space of dimers. The \iemph{twist of a bipartite ribbon graph} was introduced by Feng et al. in \cite{feng2008dimer} and it is the same graph but with the cyclic orders of
the white faces reversed. 
\[
\twist{\rib} = (H,\nu',\eps) \text{ with } \nu'(x) = \begin{cases}
\nu(x) &\text{the $\<\nu\>$-orbit of $x$ is black}\\
\nu^{-1}(x) &\text{the $\<\nu\>$-orbit of $x$ is white}
\end{cases} 
\]
It is called the twist because the surface with boundary of
this new ribbon graph can be obtained by cutting the ribbons in two, giving them a half twist and gluing them back together again.
The new dimer is also called the \iemph{untwisted dimer}\cite{feng2008dimer}, the \iemph{mirror dimer}\cite{bocklandt2015noncommutative} or the \iemph{specular dual}\cite{hanany2012brane}. The bipartite graph of this dimer is 
the same but it is embedded in a different surface.

\begin{example}
The specular dual of the dimer from example \ref{ex1} is the same graph but embedded in a torus.
\begin{center}
\begin{tikzpicture}
\begin{scope}
\clip(-2,.25) rectangle (2,2.25);
\draw[dashed] (1.5,0) arc (0:360:1.5cm and 2cm);
\draw[dashed] (0,1.5) arc (90:450:.25cm and 1.5cm);
\draw[dashed] (-.55,1.25) arc (105:255:.6cm and 1.25cm);
\draw[dashed] (-.55,1.25) arc (105:255:.25cm and 1.25cm);
\draw[dashed] (.55,-1.25) arc (285:435:.6cm and 1.25cm);
\draw[dashed] (.55,-1.25) arc (285:435:.25cm and 1.25cm);
\end{scope}
\begin{scope}
\clip(-2,-2.25) rectangle (2,-.25);
\draw[dashed] (1.5,0) arc (0:360:1.5cm and 2cm);
\draw[dashed] (0,1.5) arc (90:450:.25cm and 1.5cm);
\draw[dashed] (-.55,1.25) arc (105:255:.6cm and 1.25cm);
\draw[dashed] (-.55,1.25) arc (105:255:.25cm and 1.25cm);
\draw[dashed] (.55,-1.25) arc (285:435:.6cm and 1.25cm);
\draw[dashed] (.55,-1.25) arc (285:435:.25cm and 1.25cm);
\end{scope}

\draw[thick] (0,1.75) arc (90:450:1.25cm and 1.75cm);
\draw[thick] (0,1.75) arc (90:450:.50cm and 1.75cm);
\draw (0,1.75) node[shape=circle,fill=gray!0,inner sep=0pt] {$\circ$};    
\draw (0,-1.75) node {$\bullet$};    

\draw[dashed] (-1.5,.25) .. controls (-1.5,0) and (-1,0) .. (-1,-.25); 
\draw[dashed,xshift=.75cm] (-1.5,.25) .. controls (-1.5,0) and (-1,0) .. (-1,-.25); 
\draw[dashed,xshift=1.75cm] (-1.5,.25) .. controls (-1.5,0) and (-1,0) .. (-1,-.25); 
\draw[dashed,xshift=2.5cm] (-1.5,.25) .. controls (-1.5,0) and (-1,0) .. (-1,-.25); 

\draw[dashed] (-1.5,-.25) .. controls (-1.5,0) and (-1,0) .. (-1,.25); 
\draw[dashed,xshift=.75cm] (-1.5,-.25) .. controls (-1.5,0) and (-1,0) .. (-1,.25); 
\draw[dashed,xshift=1.75cm] (-1.5,-.25) .. controls (-1.5,0) and (-1,0) .. (-1,.25); 
\draw[dashed,xshift=2.5cm] (-1.5,-.25) .. controls (-1.5,0) and (-1,0) .. (-1,.25); 

\draw (2.25,0) node {$\sim$};    
\begin{scope}[xshift=5cm]
\draw[dotted] (-2,2)--(-2,-2)--(2,-2)--(2,2)--(-2,2);
\draw[thick] (-2,2)--(2,-2);
\draw[thick] (2,2)--(-2,-2);
\draw (2,2) node {$\bullet$};    
\draw (2,-2) node {$\bullet$};    
\draw (-2,2) node {$\bullet$};    
\draw (-2,-2) node {$\bullet$};    
\draw (0,0) node[shape=circle,fill=gray!0,inner sep=0pt] {$\circ$};    
\draw[dashed] (-2,1.75) .. controls (-.5,0) and (-.5,0) .. (-2,-1.75); 
\draw[dashed] (2,1.75) .. controls (.5,0) and (.5,0) .. (2,-1.75); 
\draw[dashed] (1.75,2) .. controls (0,.5) and (0,.5) .. (-1.75,2); 
\draw[dashed] (1.75,-2) .. controls (0,-.5) and (0,-.5) .. (-1.75,-2); 
\end{scope}
\end{tikzpicture}
\end{center}
\end{example}

If we look at what happens to the dimer quiver, we can understand this in the following way. First we cut open the surface along the arrows of the quiver, then
we flip over all the clockwise cycles to introduce the twist in the ribbons of the ribbon graph.
Finally we glue everything back together again by identifying the arrows of the cycles. The result is called the \iemph{mirror quiver} or \iemph{twisted quiver}
and we denote it by $\polq$.

\begin{example}\label{lotsofmirrors}
We illustrate this with some examples from \cite{bocklandt2015noncommutative}:
\begin{center}
\begin{tabular}{ccccc}
$\qpol$&$\vcenter{
\xymatrix@C=.75cm@R=.75cm{
\vtx{1}\ar[r]_{a}&\vtx{2}\ar[d]|z&\vtx{1}\ar[l]^b\\
\vtx{3}\ar[u]_c\ar[d]^d&\vtx{4}\ar[l]|w\ar[r]|y&\vtx{3}\ar[u]^c\ar[d]_d\\
\vtx{1}\ar[r]^{a}&\vtx{2}\ar[u]|x&\vtx{1}\ar[l]_b
}}$
&
$\vcenter{
\xymatrix@C=.4cm@R=.75cm{
\vtx{1}\ar[rrr]_a\ar[dr]|{u_1}&&&\vtx{1}\ar[ld]|z\\
&\vtx{3}\ar[r]|y\ar[ld]|x&\vtx{2}\ar[ull]|{u_2}\ar[dr]|{v_2}&\\
\vtx{1}\ar[rrr]^a\ar[uu]_b&&&\vtx{1}\ar[ull]|{v_1}\ar[uu]^b
}}$
&
$\vcenter{
\xymatrix@C=.75cm@R=.75cm{
\vtx{1}\ar[r]_{a}\ar[d]^{b}&\vtx{1}\ar[r]_b&\vtx{1}\ar[d]_c\\
\vtx{1}\ar[d]^a&&\vtx{1}\ar[d]_d\\	
\vtx{1}\ar[r]^{d}&\vtx{1}\ar[r]^c&\vtx{1}\ar[uull]|x
}}$
&
$\vcenter{
\xymatrix@C=.4cm@R=.75cm{
\vtx{1}\ar[dd]\ar[rrr]&&&\vtx{2}\ar[dll]\ar@{.>}[dl]\\
&\vtx{5}\ar[ul]\ar[drr]&\vtx{6}\ar@{.>}[ull]\ar@{.>}[dr]&\\
\vtx{4}\ar[ur]\ar@{.>}[urr]&&&\vtx{3}\ar[uu]\ar[lll]
}}$
\vspace{.5cm}
\\
$\polq$&
$\vcenter{
\xymatrix@C=.75cm@R=.75cm{
\sqvtx{1}\ar[r]_{a}&\sqvtx{2}\ar[d]|c&\sqvtx{1}\ar[l]^y\\
\sqvtx{3}\ar[u]_z\ar[d]^d&\sqvtx{4}\ar[l]|w\ar[r]|b&\sqvtx{3}\ar[u]^z\ar[d]_d\\
\sqvtx{1}\ar[r]^{a}&\sqvtx{2}\ar[u]|x&\sqvtx{1}\ar[l]_y
}}$
&
$\vcenter{
\xymatrix@C=.4cm@R=.75cm{
\sqvtx{1}\ar[rrr]_z\ar[dr]|{u_1}&&&\sqvtx{1}\ar[ld]|a\\
&\sqvtx{3}\ar[r]|y\ar[ld]|b&\sqvtx{2}\ar[ull]|{u_2}\ar[dr]|{v_1}&\\
\sqvtx{1}\ar[rrr]^z\ar[uu]_x&&&\sqvtx{1}\ar[ull]|{v_2}\ar[uu]^x
}}$
&
$\vcenter{
\xymatrix@C=.75cm@R=.75cm{
\sqvtx{1}\ar[r]_{a}&\sqvtx{2}\ar[r]_b&\sqvtx{1}\ar[ld]|c\\
&\sqvtx{3}\ar[ld]|d&\\	
\sqvtx{1}\ar[uu]_x\ar[r]^{a}&\sqvtx{2}\ar[r]^b&\sqvtx{1}\ar[uu]^x
}}$
&
$\vcenter{
\xymatrix@C=.75cm@R=.75cm{
\sqvtx{1}\ar[r]&\sqvtx{2}\ar[r]\ar[ld]&\sqvtx{1}\ar[ld]\\
\sqvtx{3}\ar[r]\ar[u]&\sqvtx{4}\ar[r]\ar[u]\ar[ld]&\sqvtx{3}\ar[u]\ar[ld]\\
\sqvtx{1}\ar[r]\ar[u]&\sqvtx{2}\ar[r]\ar[u]&\sqvtx{1}\ar[u]
}}$
\end{tabular}
\end{center}
On the top row the first two quivers are embedded in a torus, the third in a surface with genus $2$ and the fourth in a sphere.
The mirros on the bottom row are all embedded in a torus.
Note that the first $2$ are isomorphic to their mirrors, but in a nontrivial way.  
\end{example}

Although the graph of the twisted dimer is the same, the mirror quiver $\polq$ will be different. 
If one looks at the collection of edges corresponding to an orbit of $\nu'$ one can see that
in the original dimer they trace out a walk of edges that turns alternatingly to the left and to the right.
Such a walk is called a \iemph{zigzag walk}. In the quiver the arrows corresponding to these
edges also trace out a path that turns alternatingly left and right. We call this path a \iemph{zigzag path} or \iemph{zigzag cycle}. 
In the other direction every zigzag path comes from a cycle
in $\nu'$, so the zigzag paths map one to one to the vertices of the dual dimer.

For each zigzag path (or walk) on the surface $\ful{\qpol}$ we can draw a curve by connecting the midpoints of the edges or arrows. 
The curves obtained in this way are called the \iemph{strands} and we orient them the 
same way as the corresponding zigzag path in the quiver. If you follow a strand and look
at the intersection points with the other strands then the strand will be crossed alternatingly from
the left and from the right. We will also need the \iemph{left opposite path} of a zigzag path. This runs in the opposite direction along the same
positive cycles as the zigzag path. The \iemph{right opposite path} does the same along the negative cycles. 
\vspace{.3cm}
\begin{center}
\begin{tikzpicture}
\draw (0,1)--(2,0)--(4,1)--(6,0)--(8,1);
\draw[-latex,dotted,shorten >=5pt] (6,1) arc (60:120:4);
\draw[-latex,dotted,shorten >=5pt] (2,1) arc (60:90:4);
\draw[latex-,dotted,shorten >=5pt] (6,1) arc (120:90:4);
\draw [-latex,shorten >=5pt] (0,0) -- (2,1);
\draw [-latex,shorten >=5pt] (2,1) -- (4,0);
\draw [-latex,shorten >=5pt] (4,0) -- (6,1);
\draw [-latex,shorten >=5pt] (6,1) -- (8,0);
\draw [-latex,dashed] (0,.5) -- (8,0.5);
\draw [-latex,dashed] (1,0) -- (1,1);
\draw [-latex,dashed] (3,1) -- (3,0);
\draw [-latex,dashed] (5,0) -- (5,1);
\draw [-latex,dashed] (7,1) -- (7,0);
\draw (0,1) node {$\bullet$};    
\draw (4,1) node {$\bullet$};    
\draw (8,1) node {$\bullet$};    
\draw[-latex,dotted,shorten >=5pt] (8,0) arc (-60:-120:4);
\draw[-latex,dotted,shorten >=5pt] (4,0) arc (-60:-120:4);

\draw (2,0) node[shape=circle,fill=gray!0,inner sep=0pt] {$\circ$};    
\draw (6,0) node[shape=circle,fill=gray!0,inner sep=0pt] {$\circ$};    
\draw (0,0) node[circle,draw,fill=white,minimum size=10pt,inner sep=1pt] {{\tiny1}};
\draw (2,1) node[circle,draw,fill=white,minimum size=10pt,inner sep=1pt] {{\tiny2}};
\draw (4,0) node[circle,draw,fill=white,minimum size=10pt,inner sep=1pt] {{\tiny3}};
\draw (6,1) node[circle,draw,fill=white,minimum size=10pt,inner sep=1pt] {{\tiny4}};
\draw (8,0) node[circle,draw,fill=white,minimum size=10pt,inner sep=1pt] {{\tiny5}};
\draw (8.5,1.5) node[right] {left opposite path};    
\draw (8.5,0) node[right] {zigzag path};    
\draw (8.5,.5) node[right] {strand};    
\draw (8.5,1) node[right] {zigzag walk};    
\draw (8.5,-.5) node[right] {right opposite path};    
\end{tikzpicture}
\end{center}
\hspace{.3cm}

The correspondence between the vertices of $\qpol$ and the zigzag paths/strands of $\polq$ also tells us how the strands intersect.
\begin{lemma}\label{intersection}
Let $\qpol$ be a dimer quiver and $\polq$ be its twist. Consider two vertices $v,w \in \qpol_0$ and let $s_v,s_w$ be the corresponding
strands in $\ful{\polq}$. The number of arrows from vertex $v$ to $w$ minus the number of arrows from $w$ to $v$ is the intersection number
between the corresponding strands in the dual dimer.
\[
\eps(s_v,s_w) = \#\{v\ot w\} - \#\{w\to v\}  
\]
\end{lemma}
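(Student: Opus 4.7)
The plan is to localise the intersection count at the arrows of $\polq$. Away from the midpoints of arrows the strands of $\polq$ are disjoint, so $\eps(s_v,s_w)$ is a sum of local contributions, one for each arrow $a$ of $\polq$ lying on both strands. Since $\polq$ and $\qpol$ share the same underlying edge set (the twist only alters the cyclic orders at white nodes), the arrow sets $\polq_1$ and $\qpol_1$ are canonically identified, and so we may index the local contributions by arrows of $\qpol$.

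First I would make precise which two strands pass through a given arrow. One step of a zigzag walk in $\rib$ sends a half-edge $h$ to $\eps(h)$ and then applies $\nu$ at a black endpoint or $\nu^{-1}$ at a white endpoint; this composition is exactly the face permutation $\nu'\circ\eps$ of $\twist{\rib}$. Applying the same observation to $\twist{\rib}$ in place of $\rib$, the zigzag walks of $\twist{\rib}$, hence the zigzag paths and strands of $\polq$, correspond bijectively to the $\nu\circ\eps$-orbits of $\rib$, i.e.\ to the vertices of $\qpol$. Under this bijection, the two zigzag walks of $\twist{\rib}$ passing through an edge $e$ are precisely the two faces of $\rib$ on either side of $e$, which are the tail and head of the arrow of $\qpol$ dual to $e$. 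So every arrow $a$ of $\qpol$ produces exactly one crossing, lying on $s_{t(a)}$ and $s_{h(a)}$, and these are the only crossings between strands labelled by distinct vertices.

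What is left is the sign of that crossing. Locally at $a$ the edge is a diagonal of the quadrangle whose corners are the two nodes of $\rib$ at the ends of $a$ and the centres of the two incident faces $t(a)$ and $h(a)$; the strands $s_{t(a)}$ and $s_{h(a)}$ are the other diagonal of this quadrangle inside $\polq$, each oriented by its zigzag path. Using the convention that arrows of $\qpol$ keep the black node on the left, together with the fact that the twist reverses the cyclic order at white nodes, one local picture should show that the crossing contributes $+1$ precisely when the underlying arrow runs from $w$ to $v$ and $-1$ when it runs from $v$ to $w$.

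Summing these signs over all arrows between $v$ and $w$ then yields the claimed formula. The main obstacle is the sign computation in the third step: one has to track carefully how reversing $\nu$ on the white nodes reorients each of the two zigzag walks passing through an edge, and how the two resulting oriented curves cross with respect to the orientation of $\ful{\polq}$. Once this single local sign is pinned down, the global identity is just a summation over $\qpol_1$.
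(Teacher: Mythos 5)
Your proposal is correct and takes essentially the same route as the paper, whose entire proof is the one-line observation that two zigzag paths sharing an arrow produce a crossing of the corresponding strands; you have simply expanded this into the localisation-at-arrows argument, the identification of the two strands through a given arrow with the head and tail of the dual arrow of $\qpol$, and the (admittedly still-to-be-drawn) local sign picture. The sign verification you flag as the remaining obstacle is not carried out in the paper either, so nothing in your outline diverges from or falls short of the published argument.
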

\begin{proof}
This follows from the fact that if two zigzag paths share an arrow the strands will cross.
\end{proof}

\subsection{Alternating strand diagrams}

The idea of strands gives us another way to define a dimer. 
\begin{definition}
A collection of immersed oriented closed curves on an oriented surface which all intersect normally is called an
\iemph{alternating strand diagram} if every curve is intersected alternatingly from the left and from the right. 
We also assume that the complement of the curves is a disjoint union of discs.
\end{definition}
Given an alternating strand diagram, we can construct the dimer graph as follows. 
The complement of the curves consists of three types of discs, the ones for which the strands
go only clockwise around the boundary, the ones for which they go only anticlockwise and the ones for which they go in both directions.
Put a black node in the middle of every anticlockwise disc and a white node in every clockwise disc. For every strand intersection point 
put an edge connecting the black and white node that sit in a disc having this point in their boundary.
The vertices of the dimer quiver are in the centres of the third type of discs.  

\begin{example}
A dimer on a torus and its alternating strand diagram in both its graph and quiver presentation.
\begin{center}
\begin{tikzpicture}[scale=.75]
\draw[dotted] (-2,2)--(-2,-2)--(2,-2)--(2,2)--(-2,2);
\draw[thick] (-2,2)--(2,-2);
\draw[thick] (2,2)--(-2,-2);
\draw[thick] (0,2)--(2,0);
\draw[thick] (0,-2)--(-2,0);
\draw[thick] (0,2)--(-2,0);
\draw[thick] (0,-2)--(2,0);
\draw (2,2) node[shape=circle,fill=gray!0,inner sep=0pt] {$\circ$};    
\draw (2,-2) node[shape=circle,fill=gray!0,inner sep=0pt] {$\circ$};    
\draw (-2,2) node[shape=circle,fill=gray!0,inner sep=0pt] {$\circ$};    
\draw (-2,-2) node[shape=circle,fill=gray!0,inner sep=0pt] {$\circ$};    
\draw (0,2) node[shape=circle,fill=gray!0,inner sep=0pt] {$\circ$};    
\draw (0,-2) node[shape=circle,fill=gray!0,inner sep=0pt] {$\circ$};    
\draw (-2,0) node[shape=circle,fill=gray!0,inner sep=0pt] {$\circ$};    
\draw (2,0) node[shape=circle,fill=gray!0,inner sep=0pt] {$\circ$};    
\draw (0,0) node[shape=circle,fill=gray!0,inner sep=0pt] {$\circ$};    
\draw (1,1) node[shape=circle,fill=gray!0,inner sep=0pt] {$\bullet$};    
\draw (-1,1) node[shape=circle,fill=gray!0,inner sep=0pt] {$\bullet$};    
\draw (1,-1) node[shape=circle,fill=gray!0,inner sep=0pt] {$\bullet$};    
\draw (-1,-1) node[shape=circle,fill=gray!0,inner sep=0pt] {$\bullet$};    
\draw[latex-,dashed] (.5,-2)--(.5,2);
\draw[-latex,dashed] (-.5,-2)--(-.5,2);
\draw[-latex,dashed] (1.5,-2)--(1.5,2);
\draw[latex-,dashed] (-1.5,-2)--(-1.5,2);
\draw[latex-,dashed] (2,.5)--(-2,.5);
\draw[-latex,dashed] (2,-.5)--(-2,-.5);
\draw[-latex,dashed] (2,1.5)--(-2,1.5);
\draw[latex-,dashed] (2,-1.5)--(-2,-1.5);

\begin{scope}[xshift=5cm]
\draw[dotted] (-2,2)--(-2,-2)--(2,-2)--(2,2)--(-2,2);
\draw [-latex,shorten >=5pt] (-2,-1) -- (-1,-2);
\draw [-latex,shorten >=5pt] (-1,2) -- (-2,1);
\draw [-latex,shorten >=5pt] (-2,1) -- (-1,0);
\draw [-latex,shorten >=5pt] (-1,0) -- (-2,-1);
\draw [-latex,shorten >=5pt] (-1,-2) -- (0,-1);
\draw [-latex,shorten >=5pt] (0,-1) -- (-1,0);
\draw [-latex,shorten >=5pt] (-1,0) -- (0,1);
\draw [-latex,shorten >=5pt] (0,1) -- (-1,2);
\begin{scope}[xshift=2cm]
\draw [-latex,shorten >=5pt] (-2,-1) -- (-1,-2);
\draw [-latex,shorten >=5pt] (-1,2) -- (-2,1);
\draw [-latex,shorten >=5pt] (-2,1) -- (-1,0);
\draw [-latex,shorten >=5pt] (-1,0) -- (-2,-1);
\draw [-latex,shorten >=5pt] (-1,-2) -- (0,-1);
\draw [-latex,shorten >=5pt] (0,-1) -- (-1,0);
\draw [-latex,shorten >=5pt] (-1,0) -- (0,1);
\draw [-latex,shorten >=5pt] (0,1) -- (-1,2);
\end{scope}
\draw (-1,2) node[circle,draw,fill=white,minimum size=10pt,inner sep=1pt] {{\tiny1}};
\draw (1,2) node[circle,draw,fill=white,minimum size=10pt,inner sep=1pt] {{\tiny 2}};
\draw (-2,1) node[circle,draw,fill=white,minimum size=10pt,inner sep=1pt] {{\tiny 3}};
\draw (0,1) node[circle,draw,fill=white,minimum size=10pt,inner sep=1pt] {{\tiny 4}};
\draw (2,1) node[circle,draw,fill=white,minimum size=10pt,inner sep=1pt] {{\tiny 3}};
\draw (-1,0) node[circle,draw,fill=white,minimum size=10pt,inner sep=1pt] {{\tiny 5}};
\draw (1,0) node[circle,draw,fill=white,minimum size=10pt,inner sep=1pt] {{\tiny 6}};
\draw (-2,-1) node[circle,draw,fill=white,minimum size=10pt,inner sep=1pt] {{\tiny 7}};
\draw (0,-1) node[circle,draw,fill=white,minimum size=10pt,inner sep=1pt] {{\tiny 8}};
\draw (2,-1) node[circle,draw,fill=white,minimum size=10pt,inner sep=1pt] {{\tiny 7}};
\draw (-1,-2) node[circle,draw,fill=white,minimum size=10pt,inner sep=1pt] {{\tiny1}};
\draw (1,-2) node[circle,draw,fill=white,minimum size=10pt,inner sep=1pt] {{\tiny 2}};
\draw[latex-,dashed] (.5,-2)--(.5,2);
\draw[-latex,dashed] (-.5,-2)--(-.5,2);
\draw[-latex,dashed] (1.5,-2)--(1.5,2);
\draw[latex-,dashed] (-1.5,-2)--(-1.5,2);
\draw[latex-,dashed] (2,.5)--(-2,.5);
\draw[-latex,dashed] (2,-.5)--(-2,-.5);
\draw[-latex,dashed] (2,1.5)--(-2,1.5);
\draw[latex-,dashed] (2,-1.5)--(-2,-1.5);
\end{scope}
\end{tikzpicture} 
\end{center}
\end{example}

\subsection{Triple crossing diagrams}
A final related concept is a triple crossing diagram.
\begin{definition}
A \iemph{triple crossing diagram} is a collection of oriented curves on an oriented surface 
such that every intersection point of the curves is a triple crossing with alternating orientations.
\begin{center}
\begin{tikzpicture}[scale=.5]
\draw[dotted] (1.2,0) arc (0:360:1.2);
\draw[-latex] (60:1.2) -- (240:1.2);
\draw[-latex] (180:1.2) -- (0:1.2);
\draw[-latex] (300:1.2) -- (120:1.2);
\end{tikzpicture}
\end{center}
Furthermore we ask that the curves cut the surface in simply connected pieces.
\end{definition}
A slight deformation of a triple crossing diagram results in an alternating crossing diagram, which gives us a dimer model. 
\begin{center}
\begin{tikzpicture}
\begin{scope}[scale=.5]
\draw[dotted] (1.2,0) arc (0:360:1.2);
\draw[-latex] (60:1.2) -- (240:1.2);
\draw[-latex] (180:1.2) -- (0:1.2);
\draw[-latex] (300:1.2) -- (120:1.2);
\end{scope}
\draw (1.5,0) node {$\to$};
\begin{scope}[scale=.5,xshift=6cm]
\draw[dotted] (1.2,0) arc (0:360:1.2);
\draw[-latex] (70:1.2) -- (230:1.2);
\draw[-latex] (190:1.2) -- (350:1.2);
\draw[-latex] (310:1.2) -- (110:1.2);
\end{scope}
\draw (4.5,0) node {$\to$};
\begin{scope}[scale=.5,xshift=12cm]
\draw[dotted] (1.2,0) arc (0:360:1.2);
\draw (0,0) -- (90:1.2);
\draw (0,0) -- (210:1.2);
\draw (0,0) -- (330:1.2);
\draw (90:1.2) node[shape=circle,fill=gray!0,inner sep=0pt] {$\circ$};    
\draw (210:1.2) node[shape=circle,fill=gray!0,inner sep=0pt] {$\circ$};    
\draw (330:1.2) node[shape=circle,fill=gray!0,inner sep=0pt] {$\circ$};    
\draw (0,0) node[shape=circle,fill=gray!0,inner sep=0pt] {$\bullet$};    
\end{scope}
\end{tikzpicture}
\end{center}
To obtain this dimer directly from the triple crossing diagram we put black nodes in the triple crossing points and white nodes in the centres of the discs that are 
bounded clockwise. We draw an edge between them if the triple crossing point is on the boundary of the 
disk. Note that in this way we only obtain dimers for which the black nodes are trivalent. 
\begin{example}
A triple crossing diagram on a torus and its dimer graph
\begin{center}
\begin{tikzpicture}[scale=.75]
\begin{scope}[xshift=5cm]
\draw[dotted] (-2,2)--(-2,-2)--(2,-2)--(2,2)--(-2,2);
\draw[thick] (-2,2)--(-1,0)--(-2,-2);
\draw[thick] (2,2)--(1,0)--(2,-2);
\draw[thick] (-1,0)--(1,0);
\draw[-latex,dashed] (-2,.5) .. controls (-1,.5) and (-1,-.5)  .. (0,-.5) .. controls (1,-.5) and (1,.5)  .. (2,.5); 
\draw[latex-,dashed] (-2,-.5) .. controls (-1,-.5) and (-1,.5)  .. (0,.5) .. controls (1,.5) and (1,-.5)  .. (2,-.5); 
\draw[latex-,dashed] (1,-2) -- (1,2);
\draw[-latex,dashed] (-1,-2) -- (-1,2);
\draw (2,2) node[shape=circle,fill=gray!0,inner sep=0pt] {$\circ$};    
\draw (2,-2) node[shape=circle,fill=gray!0,inner sep=0pt] {$\circ$};    
\draw (-2,2) node[shape=circle,fill=gray!0,inner sep=0pt] {$\circ$};    
\draw (-2,-2) node[shape=circle,fill=gray!0,inner sep=0pt] {$\circ$};    
\draw (0,0) node[shape=circle,fill=gray!0,inner sep=0pt] {$\circ$};    
\draw (-1,0) node[shape=circle,fill=gray!0,inner sep=0pt] {$\bullet$};    
\draw (1,0) node[shape=circle,fill=gray!0,inner sep=0pt] {$\bullet$};    
\end{scope}
\end{tikzpicture}
\end{center}
\end{example}

We can also define strand diagrams and triple crossing diagrams on surfaces with boundary. 
For this we assume that every boundary component has an even number of marked points that are alternatingly called entry and exit points.
There are two types of strands: curves connect entry points with exit points and closed curves on the surface. 

Similarly it is possible to consider dimer models on an orientable surface with boundary. 
In the quiver picture we assume that the boundary itself is made up of arrows from the quiver.
These arrows will be contained in just one cycle of $\qpol_2$. The boundary components of the surface
are not required to be cycles of the quiver.

The dual bipartite graph has nodes that lie on the boundary, corresponding to the cycles
that contain arrows on the boundary, but its edges do not lie on the boundary. 

Triple crossing diagrams on a disk were considered by Thurston in \cite{thurston2004dominoes}. Alternating strand diagrams on a disc (with entry and exit points identified) were considered by Postnikov in \cite{postnikov2006total} and the bipartite graph is referred to as the 
plabic graph. The corresponding quiver appears in work by Baur, King and Marsh \cite{baur2013dimer}. Dimers on arbitrary surfaces with boundary were studied by Franco in \cite{franco2012bipartite}.

\subsection{Homotopy for triple crossing diagrams}

\begin{definition}
Two triple crossing diagrams on the same surface (and with the same marked points) are called \iemph{homotopic}
if there is a bijection between the curves that maps every strand to a homotopic strand.
\end{definition}

An easy invariant of triple crossing diagrams under homotopy are the homology classes of the set of strands in $H_1(S,\partial S)$.
Not every set of homology classes can occur because of the alternating orientations of the curves. The restrictions on
the homology classes are not very strong.

\begin{theorem}[Existence of triple crossing diagrams]\label{construction}
Let $S$ be a surface (with boundary) and $I,U$ two collections of $n$-marked points on the boundary (the entry and exit points) such
that on every boundary component the entry and exit points alternate. 

Let $\{\gamma_i: [0,1] \to S| i \in I\}$ be a collection of $n$ curves that each connect one entry point to an exit point:
\[
\gamma_i(0)=i , \{\gamma_i(1)|i \in I\}=U.
\]
Furthermore let $\{\kappa_i: \SS_1 \to S\}$ be a collection of $k$ closed curves with nontrivial homology.

If the total relative homology $\sum \gamma_i +\sum \kappa_i \in \H_1(S,\partial S)$ is zero then there exists a triple crossing diagram
for which the strands are isotopic to the curves $\{\gamma_i,\kappa_i\}$.
\end{theorem}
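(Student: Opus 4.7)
The plan is to first realize $\{\gamma_i,\kappa_i\}$ in general position on $S$, so that all intersections are transverse double crossings. The hypothesis $\sum[\gamma_i] + \sum[\kappa_i] = 0$ in $H_1(S,\partial S)$ says that this oriented 1-cycle is a boundary, so on the complement of the curves there exists a locally constant $\Z$-valued function $h$ that jumps by $+1$ across each strand when it is crossed from right to left. This height function is exactly what the homology condition buys us: without it such an $h$ cannot exist, and without $h$ there is no consistent way to distinguish the anticlockwise (black) and clockwise (white) discs which must eventually become the black and white nodes of the associated dimer graph.

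Next, I would bring the diagram into \emph{alternating} form. Whenever two strands cross twice with both crossings of the same local type, they bound a reducible bigon that can be eliminated by homotoping one strand across the other; this preserves the isotopy classes of the strands and the function $h$, and strictly decreases the total number of crossings, so the process terminates. The resulting alternating strand diagram has the property that every strand is crossed alternately from the left and from the right, in the sense of the definition given earlier.

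To upgrade the alternating strand diagram to a triple crossing diagram, every anticlockwise face must be a triangle, because contracting such a triangle to a point produces precisely an alternating triple crossing. If some anticlockwise face is a $k$-gon with $k>3$, I would perform a local modification that inserts a Reidemeister~II bigon inside the face and then expands it by a Reidemeister~III slide, so as to pinch off an anticlockwise triangle and leave an anticlockwise $(k-1)$-gon. The isotopy classes of the strands and the global alternating condition remain intact. Under a lexicographic complexity of the form $(\text{number of reducible bigons},\, n_4,\, n_5,\, \ldots)$, where $n_k$ counts anticlockwise $k$-faces, this procedure terminates; contracting each remaining anticlockwise triangle to a point then yields the desired triple crossing diagram whose strands are isotopic to the original $\gamma_i$ and $\kappa_i$.

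The main obstacle lies in the interaction of the two reductions: a bigon removal can enlarge a nearby anticlockwise face, and a face subdivision can create a new reducible bigon. One must therefore interleave the moves under the combined lexicographic complexity above and verify that it strictly decreases under each step, so that the procedure really halts with a triple crossing diagram rather than cycling between the two types of reduction. A secondary subtlety is that the closed curves $\kappa_i$ with nontrivial homology cannot be eliminated by any local isotopy, so both the existence of the height function $h$ and the persistence of enough anticlockwise triangular faces to contract depend essentially on the global cancellation $\sum[\gamma_i] + \sum[\kappa_i] = 0$.
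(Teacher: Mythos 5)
Your reduction to an alternating diagram is where the argument breaks. Starting from a generic realization of the $\gamma_i,\kappa_i$ and cancelling doubly-crossing pairs of the same local type does not yield an alternating strand diagram: alternation can fail without any bigon being present. For instance, three strands that pairwise cross exactly once (a triangle configuration) can cross a given strand twice in succession from the same side; no pair of them bounds a bigon, so your reduction has nothing left to do, yet the diagram is not alternating. Alternation is a strong global property that must be built into the construction rather than obtained by local bigon cancellation, and it is also the only place where the homology hypothesis really has to enter. Your height function $h$ does exist under the hypothesis, but it is never actually used afterwards (the black/white classification of the complementary discs of an alternating diagram is read off from the local orientations of the boundary strands, not from $h$), so in your write-up the hypothesis does no work.

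The paper's proof takes a different and more constructive route that avoids both of your trouble spots. On the disk it argues by Thurston's induction on $n$: pick a curve $\gamma_k$ with no other curve having both endpoints between $\gamma_k(0)$ and $\gamma_k(1)$, run it parallel to the boundary so that it meets each strand entering or leaving that boundary arc in a triple crossing, cut off the resulting collar, and apply induction to the smaller disk; this produces a (standard) triple crossing diagram directly, with no need to repair alternation or to trade anticlockwise $k$-gons for triangles. For a general surface $S$ it cuts $S$ open into a polygon; the hypothesis $\sum\gamma_i+\sum\kappa_i=0$ in $H_1(S,\partial S)$ guarantees that on each side of the polygon the numbers of entering and exiting strand segments agree, so after an isotopy the entry and exit points alternate on the polygon's boundary, the disk case applies, and regluing yields the diagram on $S$. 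By contrast, your second stage (inserting Reidemeister II/III configurations to pinch anticlockwise faces down to triangles) still needs a termination proof for the interleaved complexity, which you yourself flag as unresolved; so even granting alternation, the proposal is not complete.
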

\begin{proof}
We first prove this when $S$ is a disk. This was originally done by Thurston in \cite{thurston2004dominoes}[Theorem 1].
Note that in this case because $H_1(S,\partial S)=0$, the $\gamma_i$ are complete determined by their end points and there are no closed curves.

We proceed by induction on $n$. For $n=1,2$ the statement is trivial because these can be connected without crossings.
If $n>2$ look for a curve $\gamma_k$ for which there is no $\gamma_i$ with both end points between $\gamma_k(0)$ and $\gamma_k(1)$. 
Connect the strands as indicated below
\begin{center}
\begin{tikzpicture}
\draw (225:2cm) arc (225:315:2cm and 2cm);
\draw (245:1.65cm) arc (245:295: 1.65cm and 1.65cm);
\foreach \s in {1,...,8}
{
  \node at ({225+10*\s}:2cm) {$\bullet$};
} 
\node[left] at ({225+10}:2.2cm) {$\gamma_k(0)$};
\node[right] at ({225+80}:2.2cm) {$\gamma_k(1)$};
\draw (245:1.65cm) arc (65:145:.35cm);
\draw[-latex] (295:1.65cm) arc (125:45:.35cm);
\draw [latex-,shorten >=-13pt] (245:2cm) -- (250:1.65cm);
\draw [-latex,shorten >=-13pt] (255:2cm) -- (250:1.65cm);
\draw [latex-,shorten >=-13pt] (265:2cm) -- (270:1.65cm);
\draw [-latex,shorten >=-13pt] (275:2cm) -- (270:1.65cm);
\draw [latex-,shorten >=-13pt] (285:2cm) -- (290:1.65cm);
\draw [-latex,shorten >=-13pt] (295:2cm) -- (290:1.65cm);
\draw[dotted] (245:1.5cm) arc (245:295: 1.5cm and 1.5cm);
\draw[dotted] (245:1.5cm) arc (65:145:.5cm);
\draw[dotted] (295:1.5cm) arc (125:45:.5cm);
\end{tikzpicture} 
\end{center}
If we cut off the dotted part we get a new disk with $n-1$ entry and exit points and we can use induction
to produce a triple crossing diagram for the remainder and by gluing in the dotted piece for the original. 
A triple crossing diagram constructed in this way is called \iemph{standard}.

To extend this to the case of general surfaces, we cut the surface open to a polygon. 
Because the total relative homology of all curves is zero and each side of the polygon represents a relative homology class,
the total number of points where the curves enter a given side is equal to the number of points where it exits a side. Up to isotopy
we can permute these points on any given side to ensure that on the boundary of the polygon entry and exit points alternate.
Then we can apply the theorem for the disk and after gluing the polygon together again we obtain a triple crossing diagram for the surface.
\end{proof}

We are now interested in how homotopic triple crossing diagrams are related. 
\begin{definition}
We call a triple crossing diagram \iemph{minimal}
if it has the least number of triple crossing points in its homotopy equivalence class.
\begin{itemize}
 \item 
A \iemph{monogon} is a curve segment that forms a contractible loop.
 \item 
A \iemph{bad digon} is a pair of curve segments that are homotopic and have the same orientation.
 \item 
A \iemph{good digon} is a pair of curve segments that are homotopic and have opposite orientations.
 \item 
A \iemph{small digon} is a good digon such that no other strands intersect the homotopic curve segments.
\end{itemize}
\begin{center}
\begin{tikzpicture}
\begin{scope}[scale=.75]
\draw [-latex] (-1,.5) .. controls (0,.5) and (1,1.5) .. (0,1.5)  .. controls (-1,1.5) and (0,.5) .. (1,.5);
\begin{scope}[xshift=3.5cm]
\draw [-latex] (-1.5,.5) .. controls (-1,.5) and (-1,1.5) .. (0,1.5)  .. controls (1,1.5) and (1,.5) .. (1.5,.5);
\draw [-latex] (-1.5,1.5) .. controls (-1,1.5) and (-1,.5) .. (0,.5)  .. controls (1,.5) and (1,1.5) .. (1.5,1.5);
\end{scope}
\begin{scope}[xshift=7.5cm]
\draw [-latex] (-1.5,.5) .. controls (-1,.5) and (-1,1.5) .. (0,1.5)  .. controls (1,1.5) and (1,.5) .. (1.5,.5);
\draw [latex-] (-1.5,1.5) .. controls (-1,1.5) and (-1,.5) .. (0,.5)  .. controls (1,.5) and (1,1.5) .. (1.5,1.5);
\end{scope}
\begin{scope}[xshift=11cm]
\begin{scope}[yshift=1cm]
\begin{scope}[rotate=270]
\draw[-latex] (-.5,-1) -- (0,-.5) .. controls (.5,0) and (.5,0) .. (0,.5) -- (-.5,1);
\draw[-latex ,rotate=180] (-.5,-1) -- (0,-.5) .. controls (.5,0) and (.5,0) .. (0,.5) -- (-.5,1);
\draw (-.5,.5) -- (.5,.5);
\draw (-.5,-.5) -- (.5,-.5);
\end{scope}
\end{scope}
\end{scope}
\end{scope}
\end{tikzpicture}
\end{center}
A triple crossing diagram is called \iemph{consistent} if it has no monogons or bad digons.
\end{definition}

\begin{definition}
The \iemph{$2\tot 2$-move} and the \iemph{$1 \to 0$-move} are local operations on triple crossing diagrams as indicated below.
The $2\tot 2$-move flips a small digon and the $1 \to 0$-move removes a monogon.
\begin{center}
\begin{tikzpicture} 
\begin{scope}[scale=.5]
\draw[dotted] (1.118,0) arc (0:360:1.118);
\draw (-.5,-1) -- (0,-.5) .. controls (.5,0) and (.5,0) .. (0,.5) -- (-.5,1);
\draw [rotate=180] (-.5,-1) -- (0,-.5) .. controls (.5,0) and (.5,0) .. (0,.5) -- (-.5,1);
\draw (-1,.5) -- (1,.5);
\draw (-1,-.5) -- (1,-.5);
\draw (2.5,0) node {$2\tot2$};
\begin{scope}[xshift=5cm]
\begin{scope}[rotate=270]
\draw[dotted] (1.118,0) arc (0:360:1.118);
\draw (-.5,-1) -- (0,-.5) .. controls (.5,0) and (.5,0) .. (0,.5) -- (-.5,1);
\draw [rotate=180] (-.5,-1) -- (0,-.5) .. controls (.5,0) and (.5,0) .. (0,.5) -- (-.5,1);
\draw (-1,.5) -- (1,.5);
\draw (-1,-.5) -- (1,-.5);
\end{scope}
\end{scope}
\end{scope}
\begin{scope}[xshift=6cm]
\begin{scope}[scale=.5]
\draw[dotted] (1.118,0) arc (0:360:1.118);
\draw (-.5,-1) -- (0,-.5) .. controls (1.5,1) and (-1.5,1) .. (0,-.5) -- (.5,-1);
\draw (-1,-.5) -- (1,-.5);
\draw (2.5,0) node {$1\to 0$};
\begin{scope}[xshift=5cm]
\draw[dotted] (1.118,0) arc (0:360:1.118);
\draw (-.5,-1) .. controls (0,-.75)  .. (.5,-1);
\draw (-1,-.5) -- (1,-.5);
\end{scope}
\end{scope}
\end{scope}
\end{tikzpicture}
\end{center}
\end{definition}

\begin{theorem}[Moving between triple crossing diagrams](Thurston \cite{thurston2004dominoes}[Theorem 2-4])\label{movetcd}
Let $S$ be the disk.
\begin{enumerate}
 \item Any triple crossing diagram on $S$ can be turned into a minimal triple crossing diagram by $2\tot 2$- and $1\to 0$-moves.
 \item Two homotopic minimal triple crossing diagrams on $S$ are related by $2\tot 2$-moves.
 \item A triple crossing diagram on $S$ is minimal if and only if it has no monogons or bad digons. In other words minimal is the same as consistent.
\end{enumerate}
\end{theorem}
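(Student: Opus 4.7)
The plan is to follow Thurston's strategy: define a combinatorial complexity on triple crossing diagrams, bound it from below in terms of the data on $\partial S$, and identify consistent diagrams as exactly those achieving the bound. Let $D$ be a triple crossing diagram on the disk with strands $s_1,\dots,s_n$ connecting the prescribed entry and exit points on $\partial S$. Since each triple crossing is shared by exactly three strands, the number of triple crossings equals $\tfrac{1}{3}\sum_{i<j}\#(s_i\cap s_j)$, so the pairwise intersection multiset is the natural invariant. For any two strands the algebraic interlacing number $\iota(s_i,s_j)\in\{0,1\}$ of their endpoints on $\partial S$ gives the topological lower bound $\#(s_i\cap s_j)\ge \iota(s_i,s_j)$.

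For (1) I would induct on the number of triple crossings. A monogon is removed by a single $1\to 0$-move, strictly reducing the count. If no monogons remain but some bad digon is present, pick one that is innermost among all monogons and bad digons. The strand segments passing through it form a standard sub-diagram in the sense of Theorem~\ref{construction} applied to the sub-disk bounded by the digon, so a finite sequence of $2\tot 2$-moves pushes the intermediate strands outside the digon without affecting the complexity. Once the bad digon is empty in its interior, the two triple crossings on its boundary, combined with the alternating orientation, expose a monogon that $1\to 0$ removes. Iterating brings the total count strictly down until neither monogons nor bad digons remain.

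For (3), the nontrivial direction is that consistency implies minimality. I would prove the geometric lemma that in a consistent diagram every pair $s_i,s_j$ meets in exactly $\iota(s_i,s_j)$ points. If not, two consecutive intersections along $s_i$ bound a sub-disk whose boundary is an arc of $s_i$ together with an arc of $s_j$; using $2\tot 2$-moves to push all intermediate strands out (which I would first argue can be done without creating new bad digons by an innermost-choice argument), the alternating orientations at the two triple crossings on the boundary force the resulting empty sub-disk to be either a monogon or a digon, and the sign bookkeeping shows the digon is bad, contradicting consistency. Summing over pairs gives $\#\text{(crossings)}=\tfrac{1}{3}\sum_{i<j}\iota(s_i,s_j)$, a quantity determined purely by the boundary data and attained as a lower bound, so $D$ is minimal. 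Combined with (1) this gives the ``only if'' direction as well.

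Finally for (2), two minimal (hence consistent) diagrams $D,D'$ with identical strand-endpoint pairings can be joined by a generic ambient isotopy. Generically the only transitions are $2\tot 2$-moves and triple-crossing births/deaths, the latter being $1\to 0$-moves and their inverses. Because minimality is preserved (any transient monogon or bad digon can be pre-reduced using (1) before and after the transition) and the crossing count is constant along the path, the births and deaths must cancel in pairs, and a standard rewriting argument converts the cancelling pairs into $2\tot 2$-moves, leaving a sequence of $2\tot 2$-moves. The main obstacle throughout is the geometric lemma behind (3): controlling the interaction of the alternating-orientation condition with the $2\tot 2$-moves used to empty a hypothetical digon, and ensuring that clearing one digon does not create a bad digon or monogon elsewhere — this is what makes the innermost-choice argument essential.
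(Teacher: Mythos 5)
There is a genuine gap, and it sits at the heart of your argument for (3). The claimed geometric lemma --- that in a consistent diagram every pair of strands meets in exactly $\iota(s_i,s_j)$ points --- is false. Consistency forbids only monogons and \emph{bad} digons; good digons (opposite orientations) are allowed, and in fact the $2\tot 2$-move acts precisely on a small good digon, so minimal diagrams typically contain them. Worse, they are often forced: each triple point contributes exactly $3$ pairwise intersection incidences, so the total pairwise intersection count of any triple crossing diagram is $3T$, while $\sum_{i<j}\iota(s_i,s_j)$ need not be divisible by $3$. For example, on the disk with $8$ alternating marked points and the pairing $1\to 6$, $3\to 8$, $5\to 2$, $7\to 4$ one gets $\sum\iota=4$, so every diagram (minimal or not) must contain a pair of strands crossing more often than $\iota$, the excess appearing as good digons. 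Hence the crossing number of a minimal diagram is not $\tfrac13\sum\iota$, the step ``the sign bookkeeping shows the digon is bad'' is exactly where the good-digon case is overlooked, and the lower-bound-attained strategy cannot prove (3) (nor the ``only if'' direction you derive from it).

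Two further steps do not go through as written. First, emptying a monogon or digon by $2\tot 2$-moves cannot be obtained from Theorem \ref{construction}: that is an existence statement for a diagram with prescribed strands, and says nothing about connecting a \emph{given} diagram to another by moves. Producing such an emptying/normalization algorithm (Thurston's procedure that brings a curve parallel to the boundary by $2\tot 2$- and $1\to 0$-moves, then cuts it off and recurses) is precisely the hard content of the theorem and is what the paper's proof invokes; note also that a $1\to 0$-move removes only an \emph{empty} monogon, so even the first step of your induction already needs this lemma. Second, the generic-isotopy argument for (2) fails because a triple crossing is itself a non-generic configuration: a generic ambient isotopy instantly resolves the triple points into double points, and the elementary events of such a family are Reidemeister-type moves on immersed curves, not $2\tot 2$- and $1\to 0$-moves on triple crossing diagrams. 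The paper's route avoids this by reducing both minimal diagrams to a common standard diagram and using that the moves never increase the number of triple points, so that starting from a minimal diagram only $2\tot 2$-moves can occur, and that $2\tot 2$-moves preserve the presence of monogons and bad digons.
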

\begin{proof}
The details of this proof can be found in \cite{thurston2004dominoes}.
The main idea is to start from any triple crossing diagram and show that we can transform it into a given standard triple crossing diagram
using $2\tot 2$- and $1\to 0$-moves. Just like in the proof of theorem \ref{construction} proceed curve by curve. Let $\gamma_k$ be a curve
that does not encompass another strand then Thurston shows that there is an algorithm that uses $2\tot 2$- and $1\to 0$-moves to bring it parallel to the boundary 
like the picture in \ref{construction}. Cutting of the curve $\gamma_k$ and performing the algorithm for another curve in the remaining piece
of the disk, we end up with the standard triple crossing diagram.

Because these moves never increase the number of triple crossing points, the standard one must be minimal and
if the starting one is also minimal we can only use $2\tot 2$-moves.
Finally, if a triple crossing diagram has monogons or bad digons, it still has monogons or bad digons after applying a $2\tot 2$-move, while
a standard one has no monogons or bad digons. 
\end{proof}
The theorem is expected to hold for any oriented surface with boundary, but no proof is currently available.
In the case where $S$ is a torus without boundary, checks have been performed for low crossing numbers and a partial proof
is given by Goncharov and Kenyon in \cite{goncharov2013dimers}. In this paper they construct certain standard triple crossing diagrams and show that these are all related
by $2\tot2$ moves. However they do not show that every minimal triple crossing diagram is standard or can be transformed to a standard one. Therefore we have the following
conjecture.

\begin{conjecture}\label{conjtcd}
Theorem \ref{movetcd} holds for any oriented surface with boundary.
\end{conjecture}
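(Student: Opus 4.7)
The plan is to extend Thurston's inductive argument from the disk to an arbitrary oriented surface with boundary, reducing to the disk case by cut-and-paste while tracking which moves are needed to realize arbitrary choices in the reduction. The specific gap identified at the end of the excerpt is to show that every minimal triple crossing diagram on $S$ can be brought to a \emph{standard} diagram (in the sense of the construction in Theorem \ref{construction}) using only $2\tot 2$ moves; Goncharov and Kenyon already handle the relation between different standard diagrams in the torus case \cite{goncharov2013dimers}.

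First I would extend the construction of standard triple crossing diagrams to a general surface $S$ with boundary. Cut $S$ along a system $\Lambda$ of disjoint embedded arcs and simple closed curves transverse to the strands so as to obtain a polygon $P$. After ambient isotopies arrange the strand endpoints so that entry and exit points alternate on each edge of $\partial P$, apply Thurston's disk construction inside $P$, and reglue. Extending the Goncharov--Kenyon argument, I would then show that any two standard diagrams, for different choices of $\Lambda$ or different permutations of the endpoints along each arc, are related by $2\tot 2$ moves, by expressing each arc-slide or arc-swap as a finite composition of $2\tot 2$ moves localized near the sliding arc.

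Second, I would prove the main reduction by induction on the number of strands. Given a consistent (hence conjecturally minimal) diagram $D$, choose a strand $\gamma$ of lowest complexity, where complexity measures the number of crossings of $\gamma$ with itself and with other strands above the algebraic minimum forced by the homology classes. The inductive step is to bring $\gamma$ to its standard position using only $2\tot 2$ moves, remove a regular neighbourhood of $\gamma$, and apply induction to the remainder. For arc strands $\gamma$ this is realized by lifting to the universal cover of $S$ and replaying Thurston's disk algorithm inside a fundamental domain that contains a fundamental lift of $\gamma$; each $1\to 0$ move that appears in the disk algorithm would amount to removing a contractible monogon, and by consistency of $D$ no such monogon exists, so only $2\tot 2$ moves are used. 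Part (2) of Theorem \ref{movetcd} then follows formally, since two minimal diagrams $D, D'$ both reduce to the same standard form by $2\tot 2$ moves, and part (3) follows because the algorithm terminates precisely when the diagram is consistent.

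The main obstacle is the closed-strand case of the inductive step: isolating a closed strand $\gamma$ whose homology class has nontrivial algebraic intersection with other closed strands. Here the geometric intersection number cannot be reduced below the algebraic one, and there is no ``outermost'' position reachable by an obvious sequence of moves. The strategy I would pursue is to define a global complexity $\Phi(D)$ combining a weighted crossing count with the intersection numbers of strands with a fixed dual cut system, and show that any non-standard consistent $D$ admits a $2\tot 2$ move strictly decreasing $\Phi$. A fallback, if the complexity argument proves too rigid, is to pass to a finite Galois cover $\Surf' \to S$ on which $[\gamma]$ becomes primitive or trivial, apply the easier case there, and descend using the cover-group symmetry afforded by the Galois structure.
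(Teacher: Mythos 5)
You should first be aware that the paper does not prove this statement at all: it is stated as a conjecture precisely because, as the text explains, even in the simplest non-disk case (the torus without boundary) Goncharov and Kenyon only show that their standard diagrams are related by $2\tot 2$ moves, and do not show that an arbitrary minimal (equivalently, consistent) diagram can be brought to standard form. Your proposal is therefore a plan of attack on an open problem, and the place where it remains a plan rather than a proof is exactly the known sticking point. The inductive step for arc strands via the universal cover is not justified: Thurston's disk algorithm applied to a lift is not equivariant under the deck group, so a $2\tot 2$ move performed near one lift of a configuration must be performed simultaneously at all (infinitely many) lifts, and nothing in your argument guarantees these local pictures are disjoint or that the algorithm respects the periodicity; without that, moves upstairs do not project to moves on $S$. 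For closed strands you acknowledge the obstacle yourself, but the proposed remedy, a complexity $\Phi(D)$ such that every non-standard consistent diagram admits a $\Phi$-decreasing $2\tot 2$ move, is never defined, and proving the existence of such a move is essentially equivalent to the conjecture; the Galois-cover fallback suffers the same equivariance problem in the opposite direction (a sequence of moves on $\Surf'$ descends only if it can be chosen deck-equivariantly, and minimality/consistency is not obviously preserved when passing between $S$ and the cover). Finally, your claims that only $2\tot 2$ moves occur "by consistency" and that the algorithm "terminates precisely when the diagram is consistent" (your part (3)) are asserted rather than argued: on the disk this requires Thurston's persistence argument that monogons and bad digons survive $2\tot 2$ moves together with the reduction to standard form, and it is the latter that is missing here.

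In short, the decomposition into (i) existence and uniqueness of standard diagrams up to $2\tot 2$ moves and (ii) reduction of any consistent diagram to a standard one is a reasonable framing, consistent with what Goncharov and Kenyon did for the torus, but step (ii) — the heart of the conjecture — is not established by your argument, so the proposal does not close the gap the paper records as open.
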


\subsection{Homotopy for dimer models}

We will now translate the previous section to dimer models.

\begin{definition}
\begin{enumerate}
 \item 
Two dimers are called \iemph{homotopic} if there is a homeomorphism between their surfaces such
that the two strand diagrams coming from the zigzag paths are \iemph{homotopic}.
\item
A dimer is called \iemph{reduced} if all positive and negative cycles in the quiver have length at least $3$ (or all nodes in the bipartite graph have valency at least $3$).
\item
A dimer is called \iemph{zigzag consistent} if its alternating strand diagram has no monogons or bad digons.
\end{enumerate}
\end{definition}

\begin{example}
Two homotopic dimers:
\begin{center}
\begin{tikzpicture}[scale=.75]
\draw[dotted] (-2,2)--(-2,-2)--(2,-2)--(2,2)--(-2,2);
\draw [-latex,shorten >=5pt] (-2,-2) -- (0,-2);
\draw [-latex,shorten >=5pt] (2,-2) -- (0,-2);
\draw [-latex,shorten >=5pt] (-2,2) -- (0,2);
\draw [-latex,shorten >=5pt] (2,2) -- (0,2);
\draw [-latex,shorten >=5pt] (-2,0) -- (-2,-2);
\draw [-latex,shorten >=5pt] (-2,0) -- (-2,2);
\draw [-latex,shorten >=5pt] (2,0) -- (2,-2);
\draw [-latex,shorten >=5pt] (2,0) -- (2,2);
\draw [-latex,shorten >=5pt] (0,2) -- (0,0);
\draw [-latex,shorten >=5pt] (0,-2) -- (0,0);
\draw [-latex,shorten >=5pt] (0,0) -- (2,0);
\draw [-latex,shorten >=5pt] (0,0) -- (-2,0);
\draw [-latex, dashed] (-1,-2) -- (2,1);
\draw [latex-, dashed] (-1,-2) -- (-2,-1);
\draw [-latex, dashed] (1,2) -- (-2,-1);
\draw [latex-, dashed] (1,2) -- (2,1);
\draw [-latex, dashed] (-2,1) -- (-1,2);
\draw [latex-, dashed] (-2,1) -- (1,-2);
\draw [latex-, dashed] (2,-1) -- (-1,2);
\draw [-latex, dashed] (2,-1) -- (1,-2);
\draw (-2,-2) node[circle,draw,fill=white,minimum size=10pt,inner sep=1pt] {{\tiny1}};
\draw (2,-2) node[circle,draw,fill=white,minimum size=10pt,inner sep=1pt] {{\tiny1}};
\draw (-2,2) node[circle,draw,fill=white,minimum size=10pt,inner sep=1pt] {{\tiny1}};
\draw (2,2) node[circle,draw,fill=white,minimum size=10pt,inner sep=1pt] {{\tiny1}};
\draw (0,2) node[circle,draw,fill=white,minimum size=10pt,inner sep=1pt] {{\tiny 2}};
\draw (0,-2) node[circle,draw,fill=white,minimum size=10pt,inner sep=1pt] {{\tiny 2}};
\draw (0,0) node[circle,draw,fill=white,minimum size=10pt,inner sep=1pt] {{\tiny 3}};
\draw (2,0) node[circle,draw,fill=white,minimum size=10pt,inner sep=1pt] {{\tiny 4}};
\draw (-2,0) node[circle,draw,fill=white,minimum size=10pt,inner sep=1pt] {{\tiny 4}};

\begin{scope}[xshift=5cm]
\draw[dotted] (-2,2)--(-2,-2)--(2,-2)--(2,2)--(-2,2);
\draw [-latex,shorten >=5pt] (-2,-2) -- (0,-2);
\draw [-latex,shorten >=5pt] (2,-2) -- (0,-2);
\draw [-latex,shorten >=5pt] (-2,2) -- (0,2);
\draw [-latex,shorten >=5pt] (2,2) -- (0,2);
\draw [-latex,shorten >=5pt] (-2,0) -- (-2,-2);
\draw [-latex,shorten >=5pt] (-2,0) -- (-2,2);
\draw [-latex,shorten >=5pt] (2,0) -- (2,-2);
\draw [-latex,shorten >=5pt] (2,0) -- (2,2);
\draw [-latex,shorten >=5pt] (0,0) -- (0,2);
\draw [-latex,shorten >=5pt] (0,0) -- (0,-2);
\draw [-latex,shorten >=5pt] (2,0) -- (0,0);
\draw [-latex,shorten >=5pt] (-2,0) -- (0,0);

\draw [-latex,shorten >=5pt] (0,-2) -- (-2,0);
\draw [-latex,shorten >=5pt] (0,2) -- (2,0);
\draw [-latex,shorten >=5pt] (0,-2) -- (2,0);
\draw [-latex,shorten >=5pt] (0,2) -- (-2,0);

\draw [-latex, dashed] (-1,-2) ..controls (-.5,-1.5)  and (-1.5,-.5).. (-1,0) .. controls (-.5,.5) and (-.5,.5)..  (0,1) .. controls (.5,1.5) and (1.5,.5) .. (2,1);
\draw [latex-, dashed,rotate=90] (-1,-2) ..controls (-.5,-1.5)  and (-1.5,-.5).. (-1,0) .. controls (-.5,.5) and (-.5,.5)..  (0,1) .. controls (.5,1.5) and (1.5,.5) .. (2,1);
\draw [-latex, dashed,rotate=180] (-1,-2) ..controls (-.5,-1.5)  and (-1.5,-.5).. (-1,0) .. controls (-.5,.5) and (-.5,.5)..  (0,1) .. controls (.5,1.5) and (1.5,.5) .. (2,1);
\draw [latex-, dashed,rotate=270] (-1,-2) ..controls (-.5,-1.5)  and (-1.5,-.5).. (-1,0) .. controls (-.5,.5) and (-.5,.5)..  (0,1) .. controls (.5,1.5) and (1.5,.5) .. (2,1);
\draw [latex-, dashed] (-1,-2) -- (-2,-1);
\draw [latex-, dashed] (1,2) -- (2,1);
\draw [latex-, dashed] (-2,1) -- (-1,2);
\draw [-latex, dashed] (2,-1) -- (1,-2);
\draw (-2,-2) node[circle,draw,fill=white,minimum size=10pt,inner sep=1pt] {{\tiny1}};
\draw (2,-2) node[circle,draw,fill=white,minimum size=10pt,inner sep=1pt] {{\tiny1}};
\draw (-2,2) node[circle,draw,fill=white,minimum size=10pt,inner sep=1pt] {{\tiny1}};
\draw (2,2) node[circle,draw,fill=white,minimum size=10pt,inner sep=1pt] {{\tiny1}};
\draw (0,2) node[circle,draw,fill=white,minimum size=10pt,inner sep=1pt] {{\tiny 2}};
\draw (0,-2) node[circle,draw,fill=white,minimum size=10pt,inner sep=1pt] {{\tiny 2}};
\draw (0,0) node[circle,draw,fill=white,minimum size=10pt,inner sep=1pt] {{\tiny 3}};
\draw (2,0) node[circle,draw,fill=white,minimum size=10pt,inner sep=1pt] {{\tiny 4}};
\draw (-2,0) node[circle,draw,fill=white,minimum size=10pt,inner sep=1pt] {{\tiny 4}};
\end{scope}
\end{tikzpicture} 
\end{center}
\end{example}

We also define moves between dimers.  
\begin{definition}
\begin{enumerate}
 \item 
The \iemph{join-move} removes a bivalent node and joins the two nodes connected to it. The inverse of a \iemph{join-move} is called a \iemph{split move}.
In the quiver picture the split move inserts a bigon in a cycle.
\begin{center}
\begin{tikzpicture}
\begin{scope}[scale=.5]
\begin{scope}[xshift=5cm]
\draw[dotted] (1.5,0) arc (0:360:1.5);
\draw (-1,-1) node[shape=circle,fill=gray!0,inner sep=0pt] {$\circ$} -- (0,0);
\draw (-1.5,0) node[shape=circle,fill=gray!0,inner sep=0pt] {$\circ$} -- (0,0);
\draw (-1,1)node[shape=circle,fill=gray!0,inner sep=0pt] {$\circ$} -- (0,0);
\draw (1,-1) node[shape=circle,fill=gray!0,inner sep=0pt] {$\circ$} -- (0,0);
\draw (1,1) node[shape=circle,fill=gray!0,inner sep=0pt] {$\circ$}-- (0,0);
\draw (0,0) node[shape=circle,fill=gray!0,inner sep=0pt] {$\bullet$};    
\end{scope}
\draw[dotted] (1.5,0) arc (0:360:1.5);
\draw (-1,-1) node[shape=circle,fill=gray!0,inner sep=0pt] {$\circ$} -- (-.75,0);
\draw (-1.5,0) node[shape=circle,fill=gray!0,inner sep=0pt] {$\circ$} -- (-.75,0);
\draw (-1,1)node[shape=circle,fill=gray!0,inner sep=0pt] {$\circ$} -- (-.75,0);
\draw (1,-1) node[shape=circle,fill=gray!0,inner sep=0pt] {$\circ$} -- (.75,0);
\draw (1,1) node[shape=circle,fill=gray!0,inner sep=0pt] {$\circ$}-- (.75,0);
\draw (-.75,0) node[shape=circle,fill=gray!0,inner sep=0pt] {$\bullet$};    
\draw (.75,0) node[shape=circle,fill=gray!0,inner sep=0pt] {$\bullet$};    
\draw (-.75,0) -- (.75,0);
\draw (0,0) node[shape=circle,fill=gray!0,inner sep=0pt] {$\circ$};    
\draw (2.5,0) node {$\tot$};
\end{scope}
\begin{scope}[xshift=6cm]
\begin{scope}[scale=.5]
\begin{scope}[xshift=5cm]
\draw[dotted] (1.5,0) arc (0:360:1.5);
\draw [-latex,shorten >= 2pt] (216:1.5) node[shape=circle,fill=gray!0,inner sep=0pt] {$\circ$} -- (144:1.5);
\draw [-latex,shorten >= 2pt] (144:1.5) node[shape=circle,fill=gray!0,inner sep=0pt] {$\circ$} -- (72:1.5);
\draw [-latex,shorten >= 2pt] (72:1.5) node[shape=circle,fill=gray!0,inner sep=0pt] {$\circ$} -- (0:1.5);
\draw [-latex,shorten >= 2pt] (0:1.5) node[shape=circle,fill=gray!0,inner sep=0pt] {$\circ$} -- (288:1.5);
\draw [-latex,shorten >= 2pt] (288:1.5) node[shape=circle,fill=gray!0,inner sep=0pt] {$\circ$} -- (216:1.5);
\draw  (0:1.5) node[shape=circle,fill=gray!0,inner sep=0pt] {$\circ$};
\draw  (72:1.5) node[shape=circle,fill=gray!0,inner sep=0pt] {$\circ$};
\draw (144:1.5) node[shape=circle,fill=gray!0,inner sep=0pt] {$\circ$};
\draw  (216:1.5) node[shape=circle,fill=gray!0,inner sep=0pt] {$\circ$};
\draw (288:1.5) node[shape=circle,fill=gray!0,inner sep=0pt] {$\circ$};
\end{scope}
\draw[dotted] (1.5,0) arc (0:360:1.5);
\draw[dotted] (1.5,0) arc (0:360:1.5);
\draw [-latex,shorten >= 2pt] (216:1.5) node[shape=circle,fill=gray!0,inner sep=0pt] {$\circ$} -- (144:1.5);
\draw [-latex,shorten >= 2pt] (144:1.5) node[shape=circle,fill=gray!0,inner sep=0pt] {$\circ$} -- (72:1.5);
\draw [-latex,shorten >= 2pt] (72:1.5) node[shape=circle,fill=gray!0,inner sep=0pt] {$\circ$} -- (0:1.5);
\draw [-latex,shorten >= 2pt] (0:1.5) node[shape=circle,fill=gray!0,inner sep=0pt] {$\circ$} -- (288:1.5);
\draw [-latex,shorten >= 2pt] (288:1.5) node[shape=circle,fill=gray!0,inner sep=0pt] {$\circ$} -- (216:1.5);
\draw [-latex,shorten >= 2pt] (72:1.5) .. controls (.2,0) .. (288:1.5);
\draw [-latex,shorten >= 2pt] (288:1.5) .. controls (.62,0) .. (72:1.5);
\draw  (0:1.5) node[shape=circle,fill=gray!0,inner sep=0pt] {$\circ$};
\draw  (72:1.5) node[shape=circle,fill=gray!0,inner sep=0pt] {$\circ$};
\draw (144:1.5) node[shape=circle,fill=gray!0,inner sep=0pt] {$\circ$};
\draw  (216:1.5) node[shape=circle,fill=gray!0,inner sep=0pt] {$\circ$};
\draw (288:1.5) node[shape=circle,fill=gray!0,inner sep=0pt] {$\circ$};
\draw (2.5,0) node {$\tot$};
\end{scope}
\end{scope}
\end{tikzpicture}
\end{center}
The join move can be used to remove all $2$-valent nodes from the dimer graph to obtain a reduced dimer. 
It is easy to see that the end result does not depend on the order of the join moves, so 
this reduced dimer is unique. 

\item
The \iemph{spider move} does locally the following:
\begin{center}
\begin{tikzpicture}
\begin{scope}[scale=.5]
\begin{scope}
\draw[dotted] (1.5,0) arc (0:360:1.5);
\draw (-1.5,0) -- (-.5,0) -- (0,1.5) -- (.5,0) -- (0,1.5);
\draw[rotate=180] (-1.5,0) -- (-.5,0) -- (0,1.5) -- (.5,0) -- (0,1.5);
\draw (-1.5,0) node[shape=circle,fill=gray!0,inner sep=0pt] {$\circ$};
\draw (1.5,0) node[shape=circle,fill=gray!0,inner sep=0pt] {$\circ$};
\draw (-.5,0) node[shape=circle,fill=gray!0,inner sep=0pt] {$\bullet$};
\draw (.5,0) node[shape=circle,fill=gray!0,inner sep=0pt] {$\bullet$};
\draw (0,-1.5) node[shape=circle,fill=gray!0,inner sep=0pt] {$\circ$};
\draw (0,1.5) node[shape=circle,fill=gray!0,inner sep=0pt] {$\circ$};
\end{scope}
\draw (2.5,0) node {$\tot$};
\begin{scope}[xshift=5cm]
\draw[dotted] (1.5,0) arc (0:360:1.5);
\draw[rotate=90] (-1.5,0) -- (-.5,0) -- (0,1.5) -- (.5,0) -- (0,1.5);
\draw[rotate=270] (-1.5,0) -- (-.5,0) -- (0,1.5) -- (.5,0) -- (0,1.5);
\draw (-1.5,0) node[shape=circle,fill=gray!0,inner sep=0pt] {$\circ$};
\draw (1.5,0) node[shape=circle,fill=gray!0,inner sep=0pt] {$\circ$};
\draw (0,-.5) node[shape=circle,fill=gray!0,inner sep=0pt] {$\bullet$};
\draw (0,.5) node[shape=circle,fill=gray!0,inner sep=0pt] {$\bullet$};
\draw (0,-1.5) node[shape=circle,fill=gray!0,inner sep=0pt] {$\circ$};
\draw (0,1.5) node[shape=circle,fill=gray!0,inner sep=0pt] {$\circ$};
\end{scope}
\end{scope}

\begin{scope}[xshift=6cm]
\begin{scope}[scale=.5]
\begin{scope}
\draw[dotted] (1.5,0) arc (0:360:1.5);
\draw [-latex,shorten >= 2pt] (-1.06,-1.06) -- (0,0);
\draw [-latex,shorten >= 2pt] (1.06,1.06) -- (0,0);
\draw [-latex,shorten >= 2pt] (0,0)-- (-1.06,1.06) ;
\draw [-latex,shorten >= 2pt] (0,0) --(1.06,-1.06) ;
\draw [-latex,shorten >= 2pt] (1.06,-1.06) -- (1.06,1.06);
\draw [-latex,shorten >= 2pt] (-1.06,1.06) -- (-1.06,-1.06);
\draw (-1.06,-1.06) node[shape=circle,fill=gray!0,inner sep=0pt] {$\circ$};
\draw (-1.06,1.06) node[shape=circle,fill=gray!0,inner sep=0pt] {$\circ$};
\draw (1.06,-1.06) node[shape=circle,fill=gray!0,inner sep=0pt] {$\circ$};
\draw (1.06,1.06) node[shape=circle,fill=gray!0,inner sep=0pt] {$\circ$};
\draw (0,0) node[shape=circle,fill=gray!0,inner sep=0pt] {$\circ$};
\end{scope}
\draw (2.5,0) node {$\tot$};
\begin{scope}[xshift=5cm]
\draw[dotted] (1.5,0) arc (0:360:1.5);
\draw [-latex,shorten >= 2pt] (-1.06,1.06) -- (0,0);
\draw [-latex,shorten >= 2pt] (1.06,-1.06) -- (0,0);
\draw [-latex,shorten >= 2pt] (0,0)-- (-1.06,-1.06) ;
\draw [-latex,shorten >= 2pt] (0,0) --(1.06,1.06) ;
\draw [-latex,shorten >= 2pt] (1.06,1.06) -- (-1.06,1.06);
\draw [-latex,shorten >= 2pt] (-1.06,-1.06) -- (1.06,-1.06);
\draw (-1.06,-1.06) node[shape=circle,fill=gray!0,inner sep=0pt] {$\circ$};
\draw (-1.06,1.06) node[shape=circle,fill=gray!0,inner sep=0pt] {$\circ$};
\draw (1.06,-1.06) node[shape=circle,fill=gray!0,inner sep=0pt] {$\circ$};
\draw (1.06,1.06) node[shape=circle,fill=gray!0,inner sep=0pt] {$\circ$};
\draw (0,0) node[shape=circle,fill=gray!0,inner sep=0pt] {$\circ$};
\end{scope}
\end{scope}
\end{scope}
\end{tikzpicture}
\end{center}
In the quiver picture we take a $4$-valent vertex, reverse the arrows that meet it and add 4 extra arrows for the paths of length two that ran through this vertex.

The spider move was first introduced by Kuperberg and is also known as urban renewal
\cite{ciucu1998complementation,goncharov2013dimers,kenyon2001trees}
\end{enumerate}
\end{definition}
The spider move is closely related to \iemph{quiver mutations} that were introduced by Fomin and Zelevinski in \cite{fomin2002cluster,fomin2003cluster}. The mutation of a quiver $Q$ at a vertex $v$, reverses all arrows in $v$, adds extra arrows
for all paths of length $2$ trough $v$ and then deletes all $2$-cycles. We will come back to quiver mutations in \ref{qmutsec}.
It is easy to see that mutation of $4$-valent vertex in a dimer quiver, can always be seen as a combination of join/splits-moves and a spider move.  
If you mutate a dimer quiver at a vertex with higher valency, the result cannot be interpreted as a new dimer.

\begin{lemma}
A $2\tot 2$-move on a triple crossing diagram corresponds to a 
\begin{enumerate}
\item
A join move followed by a split move if the middle piece contains a white node 
\item
A spider move if the middle piece is empty.
\end{enumerate}
\end{lemma}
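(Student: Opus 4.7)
The plan is to proceed by a direct local analysis. Because a good digon has its two boundary arcs oriented in opposite directions, they must either both run clockwise around the enclosed region, or both anticlockwise; so the digon is either a clockwise-bounded disc (containing a white node of the dimer graph) or an anticlockwise-bounded disc (containing a face of the dimer, equivalently a vertex of the quiver, and therefore empty of dimer-graph nodes). These are exactly the two cases in the statement. In either case the two corners of the digon are triple crossings, hence two black nodes $b_1,b_2$. Around each triple crossing the six local regions alternate clockwise and anticlockwise, and the one region shared between $b_1$ and $b_2$ is the digon itself. Since the digon is small, no further strand meets its arcs, so the analysis is strictly local.

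In Case~(1) the shared region carries a white node $w_0$. As the only triple crossings on the boundary of this clockwise disc are $b_1$ and $b_2$, $w_0$ is bivalent and joined by a single edge to each of $b_1$ and $b_2$. A join move at $w_0$ therefore deletes $w_0$ and merges $b_1,b_2$ into one four-valent black node $b$. The $2\tot 2$-move then produces a new digon at right angles to the original, whose interior contains a bivalent white node $w_0'$ joined to two new triple crossings; matching the edges at $w_0'$ with the four edges at $b$ identifies the rearrangement as exactly the split move at $b$ that separates its four edges into the two new pairs. Hence the geometric move is a join followed by a split.

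In Case~(2) the shared region is a face of the dimer graph, so a vertex $v$ of the quiver sits inside the digon. Tracking the six clockwise discs incident in total to $b_1$ and $b_2$, the two that bound the digon along its ``sides'' coincide pairwise (giving shared whites $w_T,w_B$) while the remaining two stay unique. Consequently the face at $v$ is the four-cycle $b_1{-}w_T{-}b_2{-}w_B$, making $v$ four-valent in the quiver. After the $2\tot 2$-move the ``top/bottom'' and ``side'' roles of the discs interchange; reading off the resulting arrows at $v$ shows that every arrow incident to $v$ has reversed and that new arrows have appeared for each length-two path formerly passing through $v$. This is exactly the local description of the spider move.

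The main obstacle is the bookkeeping in Case~(2): one must verify that the coincidences of the local clockwise discs really collapse the six-disc picture to the four-disc spider configuration, and that the arrow changes at $v$ match the mutation recipe arrow-by-arrow rather than just abstractly. A careful before-and-after diagram of the digon with orientations marked on every strand settles both points.
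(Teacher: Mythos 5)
Your proof is correct and follows essentially the same route as the paper: the paper's proof consists precisely of the two before/after local pictures (digon containing a bivalent white node between the two trivalent black corners, versus digon forming a quadrilateral quiver face whose two black corners share two white neighbours), and your orientation dichotomy for the good digon plus the case-by-case identification with join/split and spider moves is a written-out version of that same local verification.
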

\begin{proof}~\\
\begin{center}
\begin{tikzpicture} 
\begin{scope}[scale=.75]
\draw[dotted] (1.118,0) arc (0:360:1.118);
\draw[dotted] (-.5,-1) -- (0,-.5) .. controls (.5,0) and (.5,0) .. (0,.5) -- (-.5,1);
\draw[dotted,rotate=180] (-.5,-1) -- (0,-.5) .. controls (.5,0) and (.5,0) .. (0,.5) -- (-.5,1);
\draw[dotted] (-1,.5) -- (1,.5);
\draw[dotted] (-1,-.5) -- (1,-.5);
\draw[thick] (0,-.5) -- (0,.5) -- (45:1.118);
\draw[thick] (0,.5) -- (135:1.118);
\draw[thick] (0,-.5) -- (225:1.118);
\draw[thick] (0,-.5) -- (315:1.118);
\draw (0,0) node[shape=circle,fill=gray!0,inner sep=0pt] {$\circ$};
\draw (0,.5) node[shape=circle,fill=gray!0,inner sep=0pt] {$\bullet$};
\draw (0,-.5) node[shape=circle,fill=gray!0,inner sep=0pt] {$\bullet$};
\draw (2,0) node {$\tot$};
\begin{scope}[xshift=4cm]
\begin{scope}[rotate=270]
\draw[dotted] (1.118,0) arc (0:360:1.118);
\draw[dotted] (-.5,-1) -- (0,-.5) .. controls (.5,0) and (.5,0) .. (0,.5) -- (-.5,1);
\draw[dotted,rotate=180] (-.5,-1) -- (0,-.5) .. controls (.5,0) and (.5,0) .. (0,.5) -- (-.5,1);
\draw[dotted] (-1,.5) -- (1,.5);
\draw[dotted] (-1,-.5) -- (1,-.5);
\draw[thick] (0,-.5) -- (0,.5) -- (45:1.118);
\draw[thick] (0,.5) -- (135:1.118);
\draw[thick] (0,-.5) -- (225:1.118);
\draw[thick] (0,-.5) -- (315:1.118);
\draw (0,0) node[shape=circle,fill=gray!0,inner sep=0pt] {$\circ$};
\draw (0,.5) node[shape=circle,fill=gray!0,inner sep=0pt] {$\bullet$};
\draw (0,-.5) node[shape=circle,fill=gray!0,inner sep=0pt] {$\bullet$};
\end{scope}
\end{scope}
\end{scope}

\begin{scope}[xshift=6cm]
\begin{scope}[scale=.75]
\draw[dotted] (1.118,0) arc (0:360:1.118);
\draw[dotted] (-.5,-1) -- (0,-.5) .. controls (.5,0) and (.5,0) .. (0,.5) -- (-.5,1);
\draw[dotted,rotate=180] (-.5,-1) -- (0,-.5) .. controls (.5,0) and (.5,0) .. (0,.5) -- (-.5,1);
\draw[dotted] (-1,.5) -- (1,.5);
\draw[dotted] (-1,-.5) -- (1,-.5);
\draw[thick] (0,.5) -- (0:1.118);
\draw[thick] (0,.5) -- (90:1.118);
\draw[thick] (0,.5) -- (180:1.118);
\draw[thick] (0,-.5) -- (0:1.118);
\draw[thick] (0,-.5) -- (270:1.118);
\draw[thick] (0,-.5) -- (180:1.118);
\draw (0:1.118) node[shape=circle,fill=gray!0,inner sep=0pt] {$\circ$};
\draw (90:1.118) node[shape=circle,fill=gray!0,inner sep=0pt] {$\circ$};
\draw (180:1.118) node[shape=circle,fill=gray!0,inner sep=0pt] {$\circ$};
\draw (270:1.118) node[shape=circle,fill=gray!0,inner sep=0pt] {$\circ$};
\draw (0,.5) node[shape=circle,fill=gray!0,inner sep=0pt] {$\bullet$};
\draw (0,-.5) node[shape=circle,fill=gray!0,inner sep=0pt] {$\bullet$};
\draw (2,0) node {$\tot$};
\begin{scope}[xshift=4cm]
\begin{scope}[rotate=270]
\draw[dotted] (1.118,0) arc (0:360:1.118);
\draw[dotted] (-.5,-1) -- (0,-.5) .. controls (.5,0) and (.5,0) .. (0,.5) -- (-.5,1);
\draw[dotted,rotate=180] (-.5,-1) -- (0,-.5) .. controls (.5,0) and (.5,0) .. (0,.5) -- (-.5,1);
\draw[dotted] (-1,.5) -- (1,.5);
\draw[dotted] (-1,-.5) -- (1,-.5);
\draw[thick] (0,.5) -- (0:1.118);
\draw[thick] (0,.5) -- (90:1.118);
\draw[thick] (0,.5) -- (180:1.118);
\draw[thick] (0,-.5) -- (0:1.118);
\draw[thick] (0,-.5) -- (270:1.118);
\draw[thick] (0,-.5) -- (180:1.118);
\draw (0:1.118) node[shape=circle,fill=gray!0,inner sep=0pt] {$\circ$};
\draw (90:1.118) node[shape=circle,fill=gray!0,inner sep=0pt] {$\circ$};
\draw (180:1.118) node[shape=circle,fill=gray!0,inner sep=0pt] {$\circ$};
\draw (270:1.118) node[shape=circle,fill=gray!0,inner sep=0pt] {$\circ$};
\draw (0,.5) node[shape=circle,fill=gray!0,inner sep=0pt] {$\bullet$};
\draw (0,-.5) node[shape=circle,fill=gray!0,inner sep=0pt] {$\bullet$};
\end{scope}
\end{scope}
\end{scope}
\end{scope}
\end{tikzpicture}
\end{center}
\end{proof}

Using this observation it clear that 
\begin{lemma}\label{zigzagbij}
A quiver mutation on a 4-valent vertex induces a bijection between the zigzag paths that maps each zigzag path of $\qpol$ to a zigzag path of $\mu\qpol$ with the same homotopy class.
\end{lemma}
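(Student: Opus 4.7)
The plan is to reduce the statement to the previous lemma about $2\tot 2$-moves. Recall the observation preceding the lemma: a mutation at a $4$-valent vertex of $\qpol$ decomposes as (possibly) some join/split moves together with a single spider move. By the previous lemma, both kinds of moves correspond to $2\tot 2$-moves on the alternating strand diagram (a join–split pair is a $2\tot 2$-move whose middle disc contains a white node, while a spider move is a $2\tot 2$-move whose middle disc is empty). So the mutation $\qpol \leadsto \mu\qpol$ is realized on the strand diagrams by a finite sequence of $2\tot 2$-moves.

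Next I would verify that a $2\tot 2$-move induces the desired bijection on strands. The move is local: outside a small disc $D$ the diagram is unchanged, and inside $D$ exactly two strand segments are involved. Looking at the picture defining the $2\tot 2$-move, each of the two segments before the move is isotopic (rel boundary of $D$) to exactly one segment after the move, because the move flips a small (good) digon whose two bounding arcs carry opposite orientations. Gluing these local isotopies to the identity outside $D$ produces a bijection between the strands of the diagram before and after the move which preserves each strand up to homotopy on $\ful{\qpol}$. Composing these bijections along the sequence of $2\tot 2$-moves gives a bijection between the strands of $\qpol$ and those of $\mu\qpol$ preserving homotopy classes.

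Finally I would translate back from strands to zigzag paths. Since strands are by definition the curves traced by zigzag paths (one strand per zigzag path, by the identification with the cycles of $\nu'$), the bijection on strands is precisely a bijection on zigzag paths; and because each strand is mapped to a homotopic strand, the corresponding zigzag paths have the same homotopy class on $\ful{\qpol}=\ful{\mu\qpol}$.

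The only mildly subtle point — and what I expect to be the main thing to get right — is the bookkeeping in the local picture of the $2\tot 2$-move: one must check that the two local segments are matched so that orientations are respected and no monogon/digon is secretly created, so that the assignment really does extend to a well-defined global bijection of strands. Once this local matching is verified from the picture in the previous lemma, the rest of the argument is formal.
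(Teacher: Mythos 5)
Your proof is correct and follows essentially the same route as the paper: the paper's one-line argument is exactly that the join/split moves and the spider move (equivalently, the corresponding $2\tot 2$-moves on the strand diagram) each induce a homotopy-preserving bijection on strands, hence on zigzag paths. Your additional local verification of the $2\tot 2$-move is just a fleshed-out version of what the paper leaves implicit.
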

\begin{proof}
This is because both the join/split moves and the spider move induces these bijections on the strands.
\end{proof}

We can also translate theorems \ref{construction} and \ref{movetcd} to the setting of reduced dimers. 
The first states that for every surface with boundary and every choice of homotopy classes with total relative homology $0$
we can find a zigzag consistent dimer. 
The second states that reduced zigzag consistent homotopic dimers on the disk are related by quiver mutation.
The statement that reduced zigzag consistent homotopic dimers on general surfaces are related by mutation is still open.

\newcommand{\TT}{\mathbb{T}}
\subsection{Dimer models on a torus}
In the case that the underlying surface of the dimer is a torus without boundary, zigzag consistency has been studied in detail.
In this section we will describe some equivalent notions of consistency for dimers on a torus.

\subsubsection{The zigzag polygon}

The homology group of the torus is $\Z^2$. This means that we can define a cyclic order on the directions of closed curves on the torus
by looking at the projection of the homology class on the unit circle $(a,b)\mapsto (a,b)/\sqrt{a^2+b^2}$. In \cite{gulotta2008properly} Gulotta used this cyclic order to introduce the notion
of a properly ordered dimer.

\begin{definition}
A dimer quiver $\qpol$ is called \iemph{properly ordered} if the cyclic order of the zigzag paths incident with a given cycle in $\qpol_2$ is the same as the cyclic order
of their homology classes. (This also implies zigzag paths that meet the same cycle cannot have the same direction.)
\end{definition}
\begin{theorem}[Ishii-Ueda \cite{ishii2010note} Proposition 4.4]
A dimer on a torus is zigzag consistent if and only if it is properly ordered. 
\end{theorem}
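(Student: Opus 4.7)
The strategy is to pass to the universal cover $\widetilde{\TT} = \R^2$ and translate both notions into the same geometric statement: every lifted zigzag strand is a properly embedded curve asymptotic, in both directions, to the line through the origin of slope $h(z) \in \Z^2 = H_1(\TT)$.

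For the implication \emph{properly ordered $\Rightarrow$ zigzag consistent}, fix a zigzag strand $z$ and a lift $\tilde z$. I would first prove a local lemma at each face $f$: the pairing on strands crossing $f$ (entry arrow $\mapsto$ exit arrow) is compatible with the cyclic order of slopes, so that if $\tilde z$ enters $\tilde f$ through an arrow $\tilde a_i$ then it exits through an arrow $\tilde a_j$ lying on the half-plane in the direction $h(z)$ from $\tilde a_i$. Applying this at every face along $\tilde z$, the projection of $\tilde z$ onto the line $\R \cdot h(z)$ is monotone, so $\tilde z$ is embedded and escapes to infinity in directions $\pm h(z)$. Two such ``asymptotic lines'' of different slopes meet transversally at most once, and with the sign fixed by the orientations of the two slopes; two lifts of the same slope do not meet at all. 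A monogon would lift to a self-intersection of some $\tilde z$, and a bad digon to a pair of equi-oriented crossings between two lifts $\tilde z_1,\tilde z_2$; both are excluded.

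For the converse \emph{zigzag consistent $\Rightarrow$ properly ordered}, suppose the cyclic order of slopes around some face $f$ disagrees with the cyclic order of strands. Pick two strands $z_1,z_2$ appearing in adjacent positions in the ``wrong'' order. In the universal cover, lift $f$ to $\tilde f$ and lift the segments of $z_1,z_2$ that enter $\tilde f$. The slope mismatch forces that after leaving $\tilde f$, the two lifts must meet again (either because $h(z_1) = h(z_2)$ so the lifts are forced to bend and bound a contractible region, or because their relative homology position is inconsistent with a single crossing), which produces either a monogon of a single $\tilde z$ or a pair of equi-oriented intersections between $\tilde z_1$ and $\tilde z_2$; projecting to $\TT$ yields a monogon or bad digon, contradicting consistency.

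The main obstacle is the local compatibility lemma in the first direction: proper ordering of the incident slopes around $f$ does not by itself determine which entry arrow of $f$ is joined to which exit arrow by a zigzag strand, and the monotonicity argument collapses if the pairing is the wrong one. Carefully unwinding the definition of zigzag path against the cyclic slope order to show that the strand entering through $a_i$ must exit through the arrow on the ``far side'' in slope-order is the technical heart of the proof; once this is in place, the ``all lifts look like lines with the correct slopes'' picture in $\R^2$ delivers both directions of the equivalence simultaneously.
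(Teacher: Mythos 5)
Your overall plan (pass to the universal cover and show every lifted strand behaves like a quasi-line of slope $h(z)$) is in the right spirit, but the direction ``properly ordered $\Rightarrow$ zigzag consistent'' does not go through as sketched, and the obstacle is not the one you flag. The entry/exit pairing at a face is not an issue at all: by definition a zigzag path uses two \emph{consecutive} arrows of each positive cycle it meets, so the pairing is forced combinatorially. The real gaps are elsewhere. First, proper ordering is a cyclic-order condition at each cycle; it does not by itself provide a drawing of the strands in which every face-crossing displaces into the half-plane determined by $h(z)$. Producing such a drawing is essentially the construction of an isoradial-type embedding (choose sleeper directions on the unit circle compatibly with the cyclic order and glue the inscribed polygons), which is a genuinely nontrivial step and cannot simply be asserted face by face, since the local choices must patch globally. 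Second, even granting monotone projections and the asymptotic directions, the claim ``two curves asymptotic to lines of different slopes meet transversally at most once'' is false: a curve monotone in the $x$-direction and asymptotic to the $x$-axis and a curve monotone in the $y$-direction and asymptotic to the $y$-axis can cross arbitrarily many times. Ruling out a second (same-oriented) crossing needs extra structure — e.g.\ the rhombus/train-track picture coming from an isoradial embedding, or a direct combinatorial argument — so the key step of this direction is missing.

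For comparison, the paper proves this direction much more cheaply and purely locally: a bad digon forces the two zigzag paths to share two arrows, and the cyclic order of the two paths at the two positive cycles containing these arrows is reversed, so at one of those cycles it must disagree with the fixed cyclic order of their homology classes — no universal cover or embedding is needed. In your converse direction (``consistent $\Rightarrow$ properly ordered''), your step ``pick two strands appearing in adjacent positions in the wrong order'' also needs justification: the paper handles a non-consecutive swapping pair by induction, observing that an intermediate zigzag path must leave the region cut out by the two original ones and hence swaps with one of them, reducing to the consecutive case; there the two paths share an arrow (first intersection), and a swap at infinity — or equal homology classes, via periodicity of the lifts — forces a second intersection of the same orientation, i.e.\ a bad digon. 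If you want to salvage your approach, the honest route is: proper ordering $\Rightarrow$ isoradial embedding $\Rightarrow$ monotone train tracks $\Rightarrow$ no monogons or bad digons, which is exactly the chain the paper uses later to prove the isoradial-embedding characterization of consistency.
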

\begin{proof}
Two zigzag paths that are consecutive in a cycle share an arrow. If they swap order at infinity they must intersect a second time which results
in a bad digon. If they have the same direction at infinity their lifts in the universal cover must intersect an infinity number of times because of
periodicity. This also results in a bad digon.

If two zigzag paths are not consecutive then we can use induction to find a bad digon. Look at the zigzag paths in between. Because an intermediate zigzag path must 
leave the piece cut out by the two original zigzag paths, it must swap order with one of these two original zigzag paths. In this way we get a pair of zigzag paths that swaps with less zigzag paths in between.

In the other direction it is clear that a bad digon between two zigzag paths results in two common arrows. The order of the zigzag paths at the two anticlockwise cycles
containing these arrows is reversed.
\end{proof}

\begin{definition}
Let $\pZ_1,\dots,\pZ_k$ be the zigzag paths of a dimer quiver $\qpol$, ordered cyclicly by their homology classes $\vec v_1,\dots,\vec v_k \in \H_1(\TT)=\Z^2$ and
for each vector $\vec v_i=(a,b)$ we denote its normal by $\vec v_i^\perp=(-b,a)$.

The \iemph{zigzag polygon} $\ZP(\qpol)$ is the convex hull of the points $$\vec v_1^\perp, \vec v_1^\perp+\vec v_2^\perp, \dots, \vec v_1^\perp+\dots+\vec v_k^\perp.$$
In other words it is the convex lattice polygon whose outward pointing normals are the homology classes of the zigzag paths.
\end{definition}

\begin{example}
If we look at the suspended pinchpoint \ref{spp} then there are five zigzag paths with homology classes
$(1,0),(2,1),2\times (-1,0),(-1,-1)$. This results in the following zigzag polygon.
\begin{center}
\begin{tikzpicture}
\draw[thick](0,0) -- (0,2) -- (1,0) -- (1,-1) -- (0,0); 
\draw (0,0) node{$\bullet$};
\draw (0,2) node{$\bullet$};
\draw (0,1) node{$\bullet$};
\draw (1,0) node{$\bullet$};
\draw (1,-1) node{$\bullet$};
\draw[-latex] (0,.5)--(-.5,.5);
\draw[-latex] (0,1.5)--(-.5,1.5);
\draw[-latex] (.5,1)--(1.5,1.5);
\draw[-latex] (1,-.5)--(1.5,-.5);
\draw[-latex] (.5,-.5)--(0,-1);
\begin{scope}[xshift=-1.5cm]
\begin{scope}[scale=.25]
\draw [dotted] (0,0) -- (3,0) -- (3,3) -- (0,3);
\draw [dotted] (0,0) -- (3,1) -- (3,2) -- (0,1)--(0,2)--(3,3);
\draw [thick,-latex] (3,0) -- (3,1)--(0,0);
\end{scope}
\end{scope}
\begin{scope}[xshift=-1.5cm,yshift=1cm]
\begin{scope}[scale=.25]
\draw [dotted] (0,0) -- (3,0) -- (3,3) -- (0,3);
\draw [dotted] (0,0) -- (3,1) -- (3,2) -- (0,1)--(0,2)--(3,3);
\draw [thick,-latex] (3,2) -- (3,3)--(0,2);
\end{scope}
\end{scope}
\begin{scope}[xshift=1.75cm,yshift=-1cm]
\begin{scope}[scale=.25]
\draw [dotted] (0,0) -- (3,0) -- (3,3) -- (0,3);
\draw [dotted] (0,0) -- (3,1) -- (3,2) -- (0,1)--(0,2)--(3,3);
\draw [thick,-latex] (0,1) -- (3,2)--(3,1);
\end{scope}
\end{scope}
\begin{scope}[xshift=-1.75cm,yshift=-1.5cm]
\begin{scope}[scale=.25]
\draw [dotted] (0,0) -- (3,0) -- (3,3) -- (0,3);
\draw [dotted] (0,0) -- (3,1) -- (3,2) -- (0,1)--(0,2)--(3,3);
\draw [dotted] (3,0) -- (6,0) -- (6,3) -- (3,3);
\draw [dotted] (3,0) -- (6,1) -- (6,2) -- (3,1)--(3,2)--(6,3);
\draw [thick,-latex] (6,3) -- (3,2)--(3,1)--(0,0)--(3,0);
\end{scope}
\end{scope}
\begin{scope}[xshift=1.75cm,yshift=1cm]
\begin{scope}[scale=.25]
\draw [dotted] (0,0) -- (3,0) -- (3,3) -- (0,3);
\draw [dotted] (0,0) -- (3,1) -- (3,2) -- (0,1)--(0,2)--(3,3);
\draw [dotted] (3,0) -- (6,0) -- (6,3) -- (3,3);
\draw [dotted] (3,0) -- (6,1) -- (6,2) -- (3,1)--(3,2)--(6,3);
\draw [thick,-latex] (0,0) -- (3,0)--(3,1)--(6,2)--(6,3);
\end{scope}
\end{scope}
\end{tikzpicture}
\end{center}
\end{example}

\begin{lemma}
Two dimer models on a torus are homotopic if and only if their zigzag polygons are equivalent under an affine transformation. 
\end{lemma}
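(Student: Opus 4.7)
The plan is to reduce the statement to a question about the multiset of homology classes of zigzag strands in $H_1(\TT)=\Z^2$. Two observations drive the argument: first, the zigzag polygon $\ZP(\qpol)$ remembers exactly this multiset, since its boundary is the cyclic concatenation of the edge vectors $\vec v_i^\perp$, hence its outward normals with multiplicities recover the $\vec v_i$; conversely a cyclic list of integer vectors summing to $0$ determines a convex lattice polygon up to translation. Second, because $\pi_1(\TT)=\Z^2$ is abelian, free homotopy classes of oriented closed curves on $\TT$ coincide with their homology classes, so a bijection of strand families realising a homotopy of strand diagrams is the same data as a bijection preserving homology classes.

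For the $(\Rightarrow)$ direction, suppose $(\rib,\qpol)$ and $(\rib',\qpol')$ are homotopic via a homeomorphism $\phi:\TT\to\TT$ and a bijection $\sigma$ of zigzag strands with $\phi\circ s$ freely homotopic to $\sigma(s)$. Then $\phi_\ast\in GL_2(\Z)=\mathrm{MCG}(\TT)$ sends $[s]$ to $[\sigma(s)]$, so the two multisets of zigzag homology classes are related by an element of $GL_2(\Z)$. Transporting through the fixed $90^\circ$ rotation $J(a,b)=(-b,a)$ that relates $\vec v$ to $\vec v^\perp$, this gives an element $J\phi_\ast J^{-1}\in GL_2(\Z)$ which carries the cyclic list of edges $\vec v_i^\perp$ of $\ZP(\qpol)$ to the corresponding list for $\ZP(\qpol')$; up to the choice of initial vertex (a translation) this is exactly an affine equivalence of lattice polygons.

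For the $(\Leftarrow)$ direction, an affine equivalence $\Phi(x)=Ax+b$ of the two zigzag polygons sends outward normals to outward normals, so after conjugating by $J$ it produces an element $\tilde A\in GL_2(\Z)$ matching the multiset of zigzag homology classes of $\qpol$ with that of $\qpol'$. Realise $\tilde A$ as an (orientation-preserving or -reversing) homeomorphism $\phi$ of $\TT$: the strand diagram of the dimer $\phi_\ast\qpol$ then has the same multiset of homology classes as that of $\qpol'$. Any bijection of strands matching these classes pairs each strand of $\phi_\ast\qpol$ with a freely homotopic strand of $\qpol'$, which is precisely the homotopy of strand diagrams required by the definition.

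The main technical nuisance I expect is the interplay between the $GL_2(\Z)$ action on $H_1(\TT)$ and the $GL_2(\Z)$ action on the polygon: these differ by conjugation by $J$, and one must check that this conjugation stays inside $GL_2(\Z)$ and exactly matches affine equivalence of polygons with $GL_2(\Z)$-equivalence of the normal multisets. Once this bookkeeping is in place the rest reduces to two elementary facts, namely that free homotopy classes on the torus coincide with homology classes, and that a convex lattice polygon is determined up to translation by its cyclic sequence of outward normals with multiplicities.
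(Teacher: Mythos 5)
The paper states this lemma without proof, so there is no argument of the author's to compare yours against; what you propose is clearly the intended reasoning, and its skeleton is sound: on $\TT$ free homotopy classes of oriented closed curves coincide with classes in $\H_1(\TT)=\Z^2$, a homeomorphism of tori acts on $\H_1$ through $\GL_2(\Z)$, conjugating by the rotation $J$ stays inside $\GL_2(\Z)$, and a convex lattice polygon is determined up to translation by its multiset of edge vectors, so orientation-reversing maps (which reverse the cyclic order) cause no trouble.

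The one step that does not hold at the stated level of generality is your opening claim that ``the outward normals with multiplicities recover the $\vec v_i$,'' i.e.\ that $\ZP(\qpol)$ remembers the multiset of zigzag homology classes. The convex hull only records, for each primitive outward direction, the total lattice length of that side; it does not see how that side is partitioned into the individual $\vec v_i^\perp$, nor does it see zigzag paths with vanishing homology class. Concretely, a single zigzag path of class $(2,0)$ and two zigzag paths of class $(1,0)$ contribute the same side of elementary length $2$, yet no class-preserving bijection of strands can exist, so the $(\Leftarrow)$ direction breaks down for such dimers (and an extra null-homologous strand changes nothing in the polygon while destroying any possible bijection). To close this you must either restrict to the setting in which the lemma is actually used --- zigzag consistent dimers, where every zigzag class is nonzero and primitive, because a contractible strand is a monogon and a strand in a non-primitive class self-intersects and produces a bad digon, so each side of $\ZP(\qpol)$ of elementary length $k$ comes from exactly $k$ zigzag paths with the primitive class --- or read $\ZP(\qpol)$ as carrying its boundary subdivision into the vectors $\vec v_i^\perp$ rather than as a bare convex hull. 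With that hypothesis made explicit, both directions of your argument go through as written.
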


\begin{theorem}[Existence of zigzag consistent dimers on a torus]
Let $\ZP \subset \Z^2$ be any convex lattice polygon then there exists a zigzag consistent dimer on a torus, $\qpol$, with
$\ZP = \ZP(\qpol)$.
\end{theorem}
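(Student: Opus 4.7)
The plan is to construct an explicit zigzag consistent dimer $\qpol$ on the torus $\T$ with $\ZP(\qpol)=\ZP$, by combining Theorem~\ref{construction} with an existence result for consistent dimers on the torus.

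First, I would read off from $\ZP$ the target homology data. Let $\vec v_1,\dots,\vec v_k$ be the primitive outward normals of the edges of $\ZP$, taken in cyclic order and each repeated according to the lattice length of its edge. By the definition of the zigzag polygon, we will have $\ZP(\qpol)=\ZP$ as soon as the zigzag paths of $\qpol$ carry these homology classes in this cyclic order. Because $\ZP$ is a closed lattice polygon, the sum of its edge vectors $\vec v_i^\perp$ vanishes, hence $\sum_i \vec v_i = 0$ in $H_1(\T)=\Z^2$.

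Next, Theorem~\ref{construction} applied to $\T$ with $I=U=\emptyset$ and closed curves $\kappa_1,\dots,\kappa_k$ in homology classes $\vec v_1,\dots,\vec v_k$ produces a triple crossing diagram on $\T$ whose strands realise these classes. A small perturbation at each triple crossing turns it into an alternating strand diagram on $\T$, which by the construction recalled in the subsection on alternating strand diagrams is the zigzag diagram of some dimer $\qpol$. By construction the zigzag paths of $\qpol$ carry the prescribed homology classes in the prescribed cyclic order, so $\ZP(\qpol)=\ZP$.

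The main obstacle is to arrange the construction so that the resulting dimer is actually zigzag consistent, i.e.\ so that no monogons or bad digons appear in the strand diagram. On the disk one could reduce by $2\tot 2$- and $1\to 0$-moves via Theorem~\ref{movetcd}(1), but the torus analogue is Conjecture~\ref{conjtcd} and remains open. For a bare existence statement, however, one needs only a single consistent representative, and this can be obtained in two essentially equivalent ways: the standard triple crossing diagrams that Goncharov--Kenyon \cite{goncharov2013dimers} explicitly build on the torus for every admissible set of homology classes are minimal by construction and hence consistent; alternatively Gulotta's brick-wall construction \cite{gulotta2008properly} directly produces, for every convex lattice polygon, a properly ordered dimer, which by the Ishii--Ueda theorem quoted above is the same as a zigzag consistent one. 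Either way one obtains a zigzag consistent $\qpol$ with $\ZP(\qpol)=\ZP$, completing the argument.
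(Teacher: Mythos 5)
Your argument is correct, and its first half coincides with the paper's own proof: the paper likewise applies Theorem~\ref{construction} to the torus, taking for the closed curves $\kappa_i$ the homology classes normal to the edges of $\ZP$ (with multiplicity the lattice length of each edge), and gets $\ZP(\qpol)=\ZP$ because the zigzag homology classes are prescribed. Where you genuinely differ is in how zigzag consistency is secured. The paper treats consistency as part of the translation of Theorem~\ref{construction} to dimers and does not address the point you raise, namely that the triple crossing diagram supplied by that theorem need not be minimal, and that the reduction machinery of Theorem~\ref{movetcd} is only proved on the disk (Conjecture~\ref{conjtcd} being open for the torus). You close this gap by exhibiting a consistent representative directly: either via Goncharov--Kenyon's standard torus diagrams \cite{goncharov2013dimers} --- though your assertion that these are ``minimal by construction'' deserves a word of justification, e.g.\ that straight-line representatives realize the homological lower bound on pairwise crossings, which rules out monogons and bad digons --- or, more robustly, via Gulotta's algorithm \cite{gulotta2008properly}, which produces for every convex lattice polygon a properly ordered dimer with that polygon, combined with the Ishii--Ueda equivalence of properly ordered and zigzag consistent \cite{ishii2010note}. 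The latter route is exactly the alternative proof the paper relegates to a remark after its own argument; your write-up has the merit of making explicit why some such input is needed if one wants consistency and not merely the correct zigzag polygon, at the price of leaning on the cited constructions rather than on Theorem~\ref{construction} alone.
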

\begin{proof}
This follows immediately from theorem \ref{construction}. Just take for the $\kappa_i$ curves with homology classes
perpendicular to the boundary of the zigzag polygon.
\end{proof}
This statement was first proved by Gulotta in \cite{gulotta2008properly} where
he gave an explicit algorithm to produce such a dimer. A similar proof by Ishii and Ueda,
which uses the special McKay correspondence can be found in \cite{ishii2009dimer}.
An algorithm that produces all consistent dimers for a given zigzag polygon can be found in \cite{bocklandt2012generating}.

\subsubsection{Isoradial embeddings}

The universal cover of a torus is the Euclidean plane, so studying dimers on a torus is the same as studying periodic dimers embedded in the Euclidean plane. We start with a definition due to Duffin \cite{duffin1968potential} and Mercat \cite{mercat2001discrete}
\begin{definition}
A periodic dimer quiver $\tilde \qpol$ is \iemph{isoradially embedded} in the plane if all cycles are polygons inscribed in circles with radius one.
\end{definition}
In this case every arrow $a \in \qpol_1$ stands on an arc $\theta_a \in (0,2\pi)$ and we have for every cycle in $\qpol_2$ that $\sum_{a} \theta_a=2 \pi$.
Furthermore if $ab$ are two consecutive arrows in a cycle then the inscribed angle theorem tells us that the angle between the arrows is $\frac 12(2\pi - \theta_a -\theta_b)$.
Taking the sum of all these angles around a vertex $v \in \qpol_0$ we get that $\sum_{h(b)=v} (\pi - \theta_a)+\sum_{t(a)=v} (\pi - \theta_b)=2\pi$.

Specifying the arc lengths $\theta_a$ of the arrows of the quiver characterizes the embedding up to isometry. This gives rise to the notion of a consistent $R$-charge (see \cite{gulotta2008properly}).
\begin{definition}
A dimer $(\rib,\qpol)$ has a consistent $R$-charge if there is a map $\cR : \qpol_1 \to (0,2)$ such that
\begin{itemize}
\item[R1] $\forall a_1 \dots a_k \in\qpol_2: \sum_i \cR_{a_i} = 2$,
\item[R2] $\forall v \in \qpol_0: \sum_{h(a)=v} (1-\cR_{a}) + \sum_{t(a)=v} (1-\cR_{a}) = 2$
\end{itemize}
\end{definition}
If we multiply the R-charges by $\pi$ we get the angles of an isoradially embedded dimer quiver: $\theta_a := \pi \cR_a$. 
\begin{lemma}
A dimer quiver $\qpol$ can be isoradially embedded if and only if it admits a consistent $\cR$-charge. 
\end{lemma}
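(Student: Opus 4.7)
The plan is to recognize R1 and R2 as the two local conditions governing an isoradial embedding --- arcs summing to $2\pi$ around each face, and interior angles summing to $2\pi$ around each vertex --- and then use simple-connectedness of the universal cover of the torus to propagate these local conditions into a global embedding.

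For the forward direction, I would assume $\tilde\qpol$ is isoradially embedded and set $\cR_a := \theta_a/\pi$, which lies in $(0,2)$ because each arc is strictly between $0$ and $2\pi$. Condition R1 is immediate: the arcs of the arrows bounding a single face partition the circumscribing unit circle, so they sum to $2\pi$. For R2, the inscribed angle theorem tells me that if two consecutive chords in an inscribed polygon subtend arcs $\theta_a$ and $\theta_b$, the interior angle of the polygon at their shared vertex is $\pi-(\theta_a+\theta_b)/2$. Summing this over the $d$ faces meeting a vertex $v$ and observing that each of the $d$ arrows at $v$ contributes its arc length to exactly two face-corner expressions, I obtain a total interior angle of $d\pi-\sum_{a\text{ at }v}\theta_a$. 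Setting this equal to $2\pi$ (the flatness of the embedding) and dividing by $\pi$ gives $d-\sum_{a\text{ at }v}\cR_a=2$, which, after splitting the sum according to whether $v$ is the head or the tail of $a$, is exactly R2.

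For the backward direction, I would set $\theta_a:=\pi\cR_a$ and build the embedding of $\tilde\qpol$ into $\R^2$ inductively along the dual graph of faces. Pick a base face $c_0 = a_1\dots a_k$; condition R1 ensures $\sum_i\theta_{a_i}=2\pi$, so there is a unique (up to isometry) polygon inscribed in a unit circle with consecutive arc lengths $\theta_{a_i}$. Now propagate across shared edges: every face adjacent to an already placed face shares an arrow $a$, whose chord length $2\sin(\theta_a/2)$ is fixed. There is a unique inscribed unit polygon with this chord as one edge and the prescribed arc lengths at the remaining edges, up to reflection across the chord; the orientation of the surface together with whether the new cycle lies in $\qpol_2^+$ or $\qpol_2^-$ picks out the correct side.

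The main obstacle is showing this propagation is path-independent, so that it truly defines a map $\tilde\qpol\to\R^2$. Any ambiguity is measured by the monodromy around loops in the dual graph of $\tilde\qpol$. Since $\tilde\qpol$ lifts from a torus, its dual graph lives in the simply connected plane, and every loop decomposes into elementary loops encircling a single vertex. By the forward calculation, the monodromy around a vertex $v$ equals $d\pi-\sum_{a\text{ at }v}\theta_a$, which by R2 is exactly $2\pi$. Hence every elementary loop closes up, and consistency at vertices propagates to consistency along arbitrary paths. Because the R-charge is defined on the quotient $\qpol$, the resulting embedding is automatically equivariant under the $\Z^2$-action of deck transformations and descends to the torus, giving the desired isoradial embedding.
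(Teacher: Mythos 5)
The paper states this lemma with no proof of its own: the forward direction is exactly the computation carried out in the paragraph preceding the definition of the $\cR$-charge (the arcs of a face partition the circumscribing unit circle, and the inscribed angle theorem plus flatness around a vertex gives $\sum_{h(a)=v}(1-\cR_a)+\sum_{t(a)=v}(1-\cR_a)=2$), and the converse is simply asserted via $\theta_a:=\pi\cR_a$. Your forward direction reproduces that computation, with the correct bookkeeping that each arrow incident to $v$ enters two face corners with weight $\theta_a/2$. For the converse you supply what the paper leaves implicit: a developing argument in the universal cover, gluing inscribed unit-circle polygons face by face and checking path-independence by decomposing any loop of the dual graph in the simply connected cover into elementary loops around vertices, where R2 forces the holonomy to be a rotation about the image of the vertex by exactly $2\pi$, hence trivial. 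That is a legitimate and more explicit route than anything in the text; note that the paper's only explicit construction of an isoradial embedding (from sleeper directions of zigzag paths) is a different device used later for the properly ordered case, not for this lemma.

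Two points you should tighten. First, path-independence only gives a well-defined, locally isometric map of the developed polygon complex into $\R^2$; ``isoradially embedded'' also requires that distinct faces do not overlap. This does follow from your data: the corner angles $\pi-\frac{1}{2}(\theta_a+\theta_b)$ are strictly positive (consecutive arcs of a face of length at least $3$ sum to less than $2\pi$ by R1), so by R2 the glued complex is a flat surface without cone points; it is complete because the construction is periodic, so the developing map is an isometry onto $\R^2$ and in particular injective. Second, for the embedding to be periodic in the sense of the definition you should observe that the deck transformations are intertwined with fixed-point-free orientation-preserving isometries of the plane, which are therefore translations. Both completions are routine, but as written your argument stops at well-definedness.
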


We can also talk about isoradially embedded periodic dimer graphs, if all faces are polygons inscribed in circles with radius one.
There is a close connection between these two notions. Given an isoradially embedded dimer graph, we can assign to each edge $e$ a number $\theta_e \in (0,2 \pi)$
that measures the angle of the arc on which the edge stands. If each individual angle is smaller than $\pi$
the circumcenters lie inside the faces. 
Such dimers are called \iemph{geometrically consistent} and have been studied by Broomhead in \cite{broomhead2012dimer}.
If we connect the nodes with the centers we get a tiling of the plane with rhombi with sides of length one.
This is called a \iemph{rhombus tiling} or \iemph{quad-tiling} (see \cite{kenyon2005rhombic}). The edges of the dimer graph are diagonals of the rhombus tiling. The other diagonals can be seen as the arrows of the dual dimer quiver.
This gives an isoradial embedding of the quiver with $\theta_a = \pi - \theta_e$ if $e=a^\vee$. The process also goes in reverse:
an isoradially embedded dimer quiver gives an isoradially embedded dimer graph with supplementary angles, provided all angles are not bigger than $\pi$.

If we look at zigzag paths of a dimer quiver from the rhombus tiling point of view, we see that the arrows of the zigzag paths all sit in rhombi for which 
one pair of sides are all parallel. It looks like a train track of which these parallel sides are the sleepers.
We define the sleeper direction $\theta_\pZ$ of a zigzag path $\pZ$ to be the direction of the sleepers pointing to the right of the direction of the zigzag path. 
\vspace{.3cm}
\begin{center}
\begin{tikzpicture}
\draw[dashed, latex-] (0,0-1)--(0,0); \draw[dotted] (0,0-1)--(0.996917333733,-1.07845909573); \draw[dashed, latex-] (0.996917333733,-1.07845909573)--(0.996917333733,-1.07845909573+1); \draw[dotted] (0,0)--(0.996917333733,-1.07845909573+1); \draw[thick,-latex] (0,0)--(0.996917333733,-1.07845909573); \draw[dashed, latex-] (0.996917333733,-1.07845909573)--(0.996917333733,-1.07845909573+1); \draw[dotted] (0.996917333733,-1.07845909573+1) circle (1cm);\draw[dotted] (0.996917333733,-1.07845909573+1)--(1.92079686624,0.304224336637); \draw[dashed, latex-] (1.92079686624,0.304224336637-1)--(1.92079686624,0.304224336637); \draw[dotted] (0.996917333733,-1.07845909573)--(1.92079686624,0.304224336637-1); \draw[thick,-latex] (0.996917333733,-1.07845909573)--(1.92079686624,0.304224336637); \draw[dashed, latex-] (1.92079686624,0.304224336637-1)--(1.92079686624,0.304224336637); \draw[dotted] (1.92079686624,0.304224336637-1)--(2.91771419998,-0.774234759091); \draw[dashed, latex-] (2.91771419998,-0.774234759091)--(2.91771419998,-0.774234759091+1); \draw[dotted] (1.92079686624,0.304224336637)--(2.91771419998,-0.774234759091+1); \draw[thick,-latex] (1.92079686624,0.304224336637)--(2.91771419998,-0.774234759091); \draw[dashed, latex-] (2.91771419998,-0.774234759091)--(2.91771419998,-0.774234759091+1); \draw[dotted] (2.91771419998,-0.774234759091+1) circle (1cm);\draw[dotted] (2.91771419998,-0.774234759091+1)--(3.89008412038,-0.00768012294651); \draw[dashed, latex-] (3.89008412038,-0.00768012294651-1)--(3.89008412038,-0.00768012294651); \draw[dotted] (2.91771419998,-0.774234759091)--(3.89008412038,-0.00768012294651-1); \draw[thick,-latex] (2.91771419998,-0.774234759091)--(3.89008412038,-0.00768012294651); \draw[dashed, latex-] (3.89008412038,-0.00768012294651-1)--(3.89008412038,-0.00768012294651); \draw[dotted] (3.89008412038,-0.00768012294651-1)--(4.86245404077,-0.774234759091); \draw[dashed, latex-] (4.86245404077,-0.774234759091)--(4.86245404077,-0.774234759091+1); \draw[dotted] (3.89008412038,-0.00768012294651)--(4.86245404077,-0.774234759091+1); \draw[thick,-latex] (3.89008412038,-0.00768012294651)--(4.86245404077,-0.774234759091); \draw[dashed, latex-] (4.86245404077,-0.774234759091)--(4.86245404077,-0.774234759091+1); \draw[dotted] (4.86245404077,-0.774234759091+1) circle (1cm);\draw[dotted] (4.86245404077,-0.774234759091+1)--(5.83482396117,0.459210604765); \draw[dashed, latex-] (5.83482396117,0.459210604765-1)--(5.83482396117,0.459210604765); \draw[dotted] (4.86245404077,-0.774234759091)--(5.83482396117,0.459210604765-1); \draw[thick,-latex] (4.86245404077,-0.774234759091)--(5.83482396117,0.459210604765); \draw[dashed, latex-] (5.83482396117,0.459210604765-1)--(5.83482396117,0.459210604765); \draw[dotted] (5.83482396117,0.459210604765-1)--(6.8317412949,-0.619248490963); \draw[dashed, latex-] (6.8317412949,-0.619248490963)--(6.8317412949,-0.619248490963+1); \draw[dotted] (5.83482396117,0.459210604765)--(6.8317412949,-0.619248490963+1); \draw[thick,-latex] (5.83482396117,0.459210604765)--(6.8317412949,-0.619248490963); \draw[dashed, latex-] (6.8317412949,-0.619248490963)--(6.8317412949,-0.619248490963+1); \draw[dotted] (6.8317412949,-0.619248490963+1) circle (1cm);\draw[dotted] (6.8317412949,-0.619248490963+1)--(7.75562082741,-0.00193192332763); \draw[dashed, latex-] (7.75562082741,-0.00193192332763-1)--(7.75562082741,-0.00193192332763); \draw[dotted] (6.8317412949,-0.619248490963)--(7.75562082741,-0.00193192332763-1); \draw[thick,-latex] (6.8317412949,-0.619248490963)--(7.75562082741,-0.00193192332763);
\end{tikzpicture}
\end{center}
\vspace{.3cm}
Note that the sleeper direction of a zigzag path is also the direction of the middle vertex of the intersection of the zigzag path with a positive cycle, viewed from
the circumcenter of that cycle.

If a dimer model is properly ordered we can construct an isoradial embedding of the periodic dimer quiver as follows.
Choose points $\theta_1,\dots \theta_k$ on the unit circle that have the same cyclic order as the directions of the zigzag paths $\vec v_1,\dots \vec v_k$.
make sure that if two zigzag paths have the same homology class, their points on the unit circle must also coincide. 

For each positive cycle we can connect the points on the unit circle that come from zigzag paths that meet this cycle. 
Because $\qpol$ is properly ordered this gives a convex polygon and each arrow $a$ in the cycle is identified with the side of the polygon
that connects the angles of the zigzag paths that contain $a$.
For each negative cycle we do the same but we rotate the polygon over $180^\circ$. 
Then we translate all these polygons such that sides representing the same arrow are identified.

Using the standard facts of inscribed angles one can show that all these polygons fit together to form a tiling of the plane.
This gives us an isoradial embedding of the periodic dimer quiver for which the sleeper directions are precisely the chosen directions $\theta_i$.

\begin{example}
If we return to the suspended pinchpoint, we already saw that the homology classes of the zigzag paths are $(1,0)$, $(2,1)$, $2\times (-1,0)$, $(-1,-1)$.
We can assign sleeper directions $\theta=-90^\circ,-45^\circ, 2\times 90^\circ, 180^\circ$.
\begin{center}
\begin{tikzpicture}
\begin{scope}
\draw [-latex,shorten >=5pt] (0,0) -- (3,0);
\draw [-latex,shorten >=5pt] (3,0) -- (3,1);
\draw [-latex,shorten >=5pt] (3,1) -- (0,0);
\draw [-latex,shorten >=5pt] (0,0) -- (0,1);
\draw [-latex,shorten >=5pt] (0,1) -- (3,2);
\draw [-latex,shorten >=5pt] (3,2) -- (3,3);
\draw [-latex,shorten >=5pt] (3,2) -- (3,1);
\draw [-latex,shorten >=5pt] (0,2) -- (0,1);
\draw [-latex,shorten >=5pt] (0,3) -- (3,3);
\draw [-latex,shorten >=5pt] (3,3) -- (0,2);
\draw [-latex,shorten >=5pt] (0,2) -- (0,3);
\draw (0,0) node[circle,draw,fill=white,minimum size=10pt,inner sep=1pt] {{\tiny1}};
\draw (0,1) node[circle,draw,fill=white,minimum size=10pt,inner sep=1pt] {{\tiny2}};
\draw (0,2) node[circle,draw,fill=white,minimum size=10pt,inner sep=1pt] {{\tiny3}};
\draw (0,3) node[circle,draw,fill=white,minimum size=10pt,inner sep=1pt] {{\tiny1}};
\draw (3,0) node[circle,draw,fill=white,minimum size=10pt,inner sep=1pt] {{\tiny1}};
\draw (3,1) node[circle,draw,fill=white,minimum size=10pt,inner sep=1pt] {{\tiny2}};
\draw (3,2) node[circle,draw,fill=white,minimum size=10pt,inner sep=1pt] {{\tiny3}};
\draw (3,3) node[circle,draw,fill=white,minimum size=10pt,inner sep=1pt] {{\tiny1}};
\end{scope}
\begin{scope}[xshift=6cm,yshift=1.5cm]
\begin{scope}[rotate=270]
\draw[dotted] (1,0) arc (0:360:1);
\draw (0:1)--(45:1)--(180:1)--(270:1)--(360:1);
\end{scope}
\draw (-90:1) node[below] {{\small{(1,0)}}};
\draw (-45:1) node[right] {{\small{(2,1)}}};
\draw (90:1) node[above] {{\small{(-1,0)}}};
\draw (180:1) node[left] {{\small{(-1,-1)}}};
\end{scope}
\begin{scope}[xshift=10.5cm,yshift=0cm]
\begin{scope}[scale=.75]
\begin{scope}[rotate=270]
\draw (45:1)--(180:1)--(270:1)--(45:1);
\draw ($(180:1)-(90:1)+(180:1)$)--($(180:1)-(90:1)+(225:1)$)--($(180:1)-(90:1)+(0:1)$)--($(180:1)-(90:1)+(90:1)$)--($(180:1)-(90:1)+(180:1)$);
\draw ($(180:1)-(90:1)+(180:1)-(45:1)+(0:1)$)--($(180:1)-(90:1)+(180:1)-(45:1)+(45:1)$)--($(180:1)-(90:1)+(180:1)-(45:1)+(180:1)$)--($(180:1)-(90:1)+(180:1)-(45:1)+(270:1)$)--($(180:1)-(90:1)+(180:1)-(45:1)+(360:1)$);
\draw ($(180:1)-(90:1)+(180:1)-(45:1)+(270:1)-(0:1)+(225:1)$)--($(180:1)-(90:1)+(180:1)-(45:1)+(270:1)-(0:1)+(0:1)$)--($(180:1)-(90:1)+(180:1)-(45:1)+(270:1)-(0:1)+(90:1)$)--($(180:1)-(90:1)+(180:1)-(45:1)+(270:1)-(0:1)+(225:1)$);
\draw (45:1) node[circle,draw,fill=white,minimum size=10pt,inner sep=1pt] {{\tiny1}};
\draw (270:1) node[circle,draw,fill=white,minimum size=10pt,inner sep=1pt] {{\tiny1}};
\draw ($(180:1)-(90:1)+(90:1)$) node[circle,draw,fill=white,minimum size=10pt,inner sep=1pt] {{\tiny2}};
\draw ($(180:1)-(90:1)+(180:1)-(45:1)+(45:1)$) node[circle,draw,fill=white,minimum size=10pt,inner sep=1pt] {{\tiny3}};
\draw ($(180:1)-(90:1)+(180:1)-(45:1)+(180:1)$) node[circle,draw,fill=white,minimum size=10pt,inner sep=1pt] {{\tiny1}};
\draw ($(180:1)-(90:1)+(180:1)-(45:1)+(270:1)$) node[circle,draw,fill=white,minimum size=10pt,inner sep=1pt] {{\tiny3}};
\draw ($(180:1)-(90:1)+(180:1)-(45:1)+(0:1)$) node[circle,draw,fill=white,minimum size=10pt,inner sep=1pt] {{\tiny2}};
\draw ($(180:1)-(90:1)+(180:1)-(45:1)+(270:1)-(0:1)+(225:1)$) node[circle,draw,fill=white,minimum size=10pt,inner sep=1pt] {{\tiny1}};
\end{scope}
\end{scope}
\end{scope}
\end{tikzpicture}
\end{center}
\end{example}

\begin{theorem}[\cite{bocklandt2012consistency}]
A dimer is zigzag consistent if and only if its \iemph{quiver} can be isoradially embedded in the plane.
\end{theorem}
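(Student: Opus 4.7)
The plan is to prove the two implications separately, leveraging the passage immediately preceding the theorem. The easier direction (isoradial $\Rightarrow$ zigzag consistent) is essentially geometric; the harder direction (zigzag consistent $\Rightarrow$ isoradial) is the construction that has just been sketched, and reduces via the Ishii–Ueda criterion to the properly ordered case.

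For the easy direction, I would lift the isoradial embedding of $\qpol$ to the universal cover of the torus, the Euclidean plane. In the associated rhombus tiling, every arrow of a zigzag path $\pZ$ is a diagonal of a rhombus whose other pair of sides is parallel to a common sleeper direction $\theta_\pZ$, as discussed in the preceding paragraphs. The strand of $\pZ$ passes through the midpoints of these parallel sleeper sides, so its lift is a \emph{straight line} in the plane, perpendicular to $\theta_\pZ$. A straight line in the plane is never a contractible loop, so no monogons occur. For bad digons, lift the disk bounded by the two strand segments to the universal cover: the lifted boundary consists of a segment of one lift $\tilde\alpha$ and one lift $\tilde\beta$, meeting at two points. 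But distinct straight lines in the plane intersect at most once, ruling out bad digons. Hence the dimer is zigzag consistent.

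For the hard direction, I would assume zigzag consistency and invoke Proposition~4.4 of Ishii–Ueda (stated above) to conclude that $\qpol$ is properly ordered. Then I would carry out the construction indicated just before the theorem: fix angles $\theta_1,\dots,\theta_k$ on the unit circle whose cyclic order matches that of the homology classes $\vec v_1,\dots,\vec v_k$ (with coincidences allowed when the $\vec v_i$ are parallel). For each positive cycle $c \in \qpol_2^+$, inscribe in the unit circle the polygon $\pi_c$ whose vertices are the $\theta_i$ of the zigzag paths meeting $c$; proper ordering guarantees the vertices appear in the correct cyclic order, so $\pi_c$ is convex and its sides biject correctly with the arrows of $c$. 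For each negative cycle, use the same polygon rotated by $180^\circ$. Finally translate the polygons so that the two copies of each arrow are identified.

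The main obstacle is showing that this collection of polygons actually assembles into a tiling of the plane, i.e.\ verifying the $R$-charge conditions R1 and R2. Condition R1 is immediate: the arcs subtended by the arrows of a single cycle partition the unit circle and hence sum to $2\pi$. Condition R2 is the delicate point: around a vertex $v \in \qpol_0$, one must check that the interior angles contributed by the alternating positive and negative polygons meeting at $v$ sum to $2\pi$. Here I would apply the inscribed angle theorem to rewrite each such interior angle as half of a complementary arc, and then use proper ordering, specifically the fact that the zigzag paths passing through $v$ come in matched pairs whose sleeper directions partition the circle, to check that the contributions telescope to $2\pi$ exactly. Once local consistency is established, no translation holonomy can be picked up around contractible loops, and the residual translations around the two generators of $H_1(\T)$ form a rank-two lattice (their span must be rank two because the surface has finite area), producing the desired periodic isoradial embedding.
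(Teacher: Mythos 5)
Your second (harder) direction follows the paper's route exactly: zigzag consistency gives proper ordering by the Ishii--Ueda criterion, and then the inscribed-polygon construction described just before the theorem, with the inscribed angle theorem doing the work of verifying the R-charge conditions so that the polygons tile the plane. That part is fine as a sketch, at the same level of detail as the paper.

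The first direction, however, contains a genuine error. You claim that in an isoradial embedding the lift of a strand is a straight line perpendicular to its sleeper direction, and you deduce that two lifts meet at most once. Neither claim is available here. First, the strand is only a piecewise-linear curve through the rhombus centres (each segment is parallel to a ``rung'' of the train track, whose direction changes from rhombus to rhombus), so it is not a straight line. Second, and more seriously, the rhombus-tiling/train-track picture you invoke exists only for \emph{geometrically consistent} dimers, i.e.\ when all angles are smaller than $\pi$; an isoradial embedding of the quiver in the sense of this theorem only requires $R$-charges in $(0,2)$, and the dual ``rhombi'' may be degenerate or reversed. Finally, your conclusion proves too much: if every pair of lifted strands met at most once, there could be no good digons either, which would force geometric consistency. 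But the whole point of this theorem is that it covers dimers that are zigzag consistent without being geometrically consistent (these have good digons, yet are isoradially embeddable), so the ``at most one intersection'' statement is simply false in the required generality. The paper's argument avoids this: it only uses that two zigzag paths sharing an arrow have distinct sleeper directions (they come from different corners of the zigzag polygon), and then standard Euclidean geometry of polygons inscribed in unit circles to exclude monogons and \emph{bad} digons, without ever bounding the total number of crossings of two strands.
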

\begin{proof}
The condition is neccesary because zigzag consistency implies well ordering, which we can use to construct an isoradial embedding as explained above. 

To show that an isoradial embedded quiver is zigzag consistent, note that two zigzag paths meeting in one arrow must come from a different corner on the zigzag polygon.
Again, standard Euclidean geometry for polygons inscribed in circles shows that there cannot be monogons or bad digons.
\end{proof}
If one restricts to geometrically consistent dimers one has a similar theorem. The result
states that a dimer on a torus is geometrically consistent if and only if the strands of the zigzag paths of the dimer have no monogons, bad digons or good digons.
A proof of this statement is in \cite{kenyon2005rhombic}.

\begin{theorem}[Summary of consistency]
If $(\rib,\qpol)$ is a dimer on a torus then the following properties are equivalent.
\begin{enumerate}
 \item minimal,
 \item zigzag consistent,
 \item properly ordered,
 \item isoradially embeddable,
 \item $R$-charge consistent.
\end{enumerate}
If these conditions hold we call $\qpol$ \iemph{consistent}.
\end{theorem}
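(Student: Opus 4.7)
The plan is to observe that four of the five listed conditions have already been linked in the preceding material, so that only one further equivalence is needed. Specifically, (2) $\iff$ (3) is Ishii--Ueda \cite{ishii2010note}, (2) $\iff$ (4) is the main result of \cite{bocklandt2012consistency} quoted above, and (4) $\iff$ (5) is the lemma that identifies arc lengths with $R$-charges via $\theta_a = \pi\cR_a$. It therefore suffices to establish (1) $\iff$ (2).

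For (2) $\implies$ (1) I would lift the isoradial embedding guaranteed by (4) to the universal cover $\R^2$. Each zigzag path then becomes a straight line with constant sleeper direction, so two zigzag paths with homology classes $v_i, v_j \in \Z^2$ meet transversally in exactly $|\det(v_i, v_j)|$ points on the torus. This is precisely the minimum number of intersections achievable by any pair of closed curves on the torus with those homology classes, so summing over all pairs of strands shows that the total crossing count in a zigzag consistent diagram already equals the absolute minimum $\sum_{i<j}|\det(v_i,v_j)|$ in the given homotopy class.

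For (1) $\implies$ (2) I plan to argue by contrapositive. A monogon is a strand self-intersection enclosing a disk, and the $1 \to 0$ move of Theorem \ref{movetcd} removes it while strictly lowering the crossing count, so minimality fails. A bad digon consists of two strand segments with the same orientation, so the two strands involved have parallel homology classes; parallel closed curves on a torus admit a representative with zero intersections, and therefore the two crossings that form the digon already exceed the pairwise minimum $|\det(v_i,v_j)|=0$. Combining this with the bound from (2) $\implies$ (1) shows that a diagram containing a monogon or bad digon strictly exceeds the combinatorial minimum $\sum_{i<j}|\det(v_i,v_j)|$ and is not minimal.

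The step I expect to require the most care is the bad digon case: a naive attempt to remove the digon by local moves is complicated by the other strands that may puncture the enclosed disk, and Theorem \ref{movetcd} (3) is only established for the disk. The cleanest route I see is precisely to avoid constructing an explicit crossing-reducing sequence and instead to use the isoradial lower bound of (2) $\implies$ (1) as a universal yardstick, which forces any diagram with a monogon or bad digon to be strictly above the minimum and hence non-minimal.
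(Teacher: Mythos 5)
Your reduction of the theorem to the single equivalence (1) $\iff$ (2) matches how the paper itself assembles the statement: it offers no separate proof, but simply concatenates the Ishii--Ueda theorem for (2) $\iff$ (3), the isoradial embedding theorem of \cite{bocklandt2012consistency} for (2) $\iff$ (4), and the $R$-charge lemma for (4) $\iff$ (5). The problem is the new argument you supply for (1) $\iff$ (2). Its yardstick --- ``minimal means the total crossing number equals $\sum_{i<j}|\det(v_i,v_j)|$'' --- is not correct in this setting. Zigzag consistency forbids monogons and bad digons but \emph{not} good digons; the paper states explicitly that excluding good digons as well is the strictly stronger notion of geometric consistency. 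Hence there are zigzag consistent dimers on the torus in which some pair of strands crosses $|\det(v_i,v_j)|+2$ times, so the total crossing count strictly exceeds $\sum_{i<j}|\det(v_i,v_j)|$, and your claim in (2) $\implies$ (1) that the isoradial embedding forces exactly $|\det(v_i,v_j)|$ intersections per pair is false (it would prove that zigzag consistency implies geometric consistency). For the same reason the contrapositive step in (1) $\implies$ (2) is a non sequitur: exceeding $\sum|\det|$ does not show non-minimality, because minimality is taken within triple crossing diagrams (equivalently dimers related by the moves), and Theorem \ref{movetcd}(3) explicitly allows good digons in minimal diagrams --- the bound $\sum|\det|$ need not be attained by any homotopic diagram of this restricted type. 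There is also a local error: a bad digon does not force the two strands to have parallel homology classes; it is simply a coherently oriented bigon and occurs between strands of arbitrary classes (this is exactly the configuration ruled out in the proper-ordering argument, where the classes are generally independent).

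What (1) $\iff$ (2) actually requires is the torus analogue of Thurston's argument: from a monogon or bad digon one must produce a homotopic diagram with strictly fewer crossings via the $2\tot2$- and $1\to0$-moves (your monogon case also needs the emptying step, since other strands may puncture the disk), and conversely one must show consistent diagrams cannot be beaten --- this is the Goncharov--Kenyon minimality theory the paper alludes to, part of which it records as Conjecture \ref{conjtcd} rather than reproving. So the correct repair is not a sharper intersection count from the isoradial picture, but an appeal to (or reproduction of) that move-theoretic argument; as written, both directions of your key step fail at the existence of good digons. The remark that zigzag paths become straight lines in the universal cover is also inaccurate (they are only train tracks confined to strips), but that is cosmetic compared with the issue above.
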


\subsubsection{Perfect matchings}

It is natural to describe the homology of the torus using the dimer quiver because we can represent the homology classes by oriented cycles in the quiver.
Every path in the quiver $p$ corresponds to an element $\bar p \in \Z\qpol_1$, which is the sum of all arrows in the path. Cyclic paths in the quiver
give rise to elements in $\H_1(\TT)=\H_1(\Z\qpol_\bullet)$. Note that all cycles in $\qpol_2$ give zero elements in $\H_1(\TT)$.

The complex $\Z\qpol_\bullet$ has a natural pairing with $\Z\rib_{2-\bullet}$ realized by the natural identifications of the vertices and the faces, the arrows and the edges, 
and the cycles and the nodes. We will denote these pairings by $\<,\>$. Therefore it is natural to identify the homology of
$\Z\rib_{2-\bullet}$ with the cohomology of the torus.

\begin{definition}
A \iemph{unit flow} is a weight map $\phi:\rib_1 \to \R_{\ge 0}$ such that  
in every node the sum the weights of all edges is $1$.

A \iemph{perfect matching} is a subset $\cP \subset \rib_1$ such that every node is incident with exactly one edge in $\cP$. The set of all perfect matchings of
a dimer is denoted by $\PMs(\rib)$.

If we assign weight one to every edge in a given perfect matching $\cP$ and
weight zero to the others then we get a unit flow $\phi_\cP$.

In some cases it is convenient to consider $\cP$ as a subset of $\qpol_1$ using the identification of arrows and edges.
In this way $\cP$ is a subset of arrows that contains exactly one arrow in each cycle of $\qpol_2$.

Given a path $p$ in the quiver, the pairing $\<\bar p,\cP\>$ counts how many arrows of $p$ sit in $\cP$. We sometimes will write this pairing also as $\deg_{\cP} p$.
If $\<\bar p,\cP\>\ne 0$ we say that $p$ and $\cP$ meet.
\end{definition}

\begin{lemma}\label{flowpm}
Every unit flow is a positive linear combination of perfect matchings.
\end{lemma}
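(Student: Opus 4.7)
The plan is to argue by induction on the size of the support $\mathrm{supp}(\phi) = \{ e \in \rib_1 \mid \phi(e) > 0\}$, and at each step peel off a perfect matching contained in this support. The key geometric input is a Hall-type statement: the support of any unit flow contains a perfect matching.

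First I would verify that $\#\rib_0^\bullet = \#\rib_0^\circ$. Summing $\sum_{e \ni v} \phi(e) = 1$ over $v \in \rib_0^\bullet$ and separately over $v \in \rib_0^\circ$ gives the total weight $\sum_{e \in \rib_1} \phi(e)$ in two ways, so both sides of the bipartition have the same cardinality. Then, writing $G$ for the bipartite subgraph of $\rib$ on vertex set $\rib_0$ with edges $\mathrm{supp}(\phi)$, I would check Hall's condition: for any $S \subseteq \rib_0^\bullet$, the total $\phi$-weight of edges incident to $S$ equals $|S|$, but each white node in the neighborhood $N(S)$ absorbs at most $1$ unit of weight, so $|N(S)| \ge |S|$. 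By Hall's marriage theorem $G$ contains a perfect matching $\cP$.

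Now let $\lambda := \min_{e \in \cP} \phi(e)$; this is strictly positive because $\cP \subseteq \mathrm{supp}(\phi)$. Set $\psi := \phi - \lambda\,\phi_\cP$. At every node the weights of $\phi$ and $\phi_\cP$ both sum to $1$, so $\psi(e) \ge 0$ for all $e$ and $\sum_{e \ni v} \psi(e) = 1 - \lambda$ at each node. If $\lambda = 1$ then $\psi \equiv 0$ and $\phi = \phi_\cP$ is itself a perfect matching and we are done. Otherwise $\phi' := \psi/(1-\lambda)$ is a unit flow, and by construction the edge of $\cP$ attaining the minimum $\lambda$ lies in $\mathrm{supp}(\phi)$ but not in $\mathrm{supp}(\phi')$, so $|\mathrm{supp}(\phi')| < |\mathrm{supp}(\phi)|$.

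The induction then closes: by the inductive hypothesis $\phi' = \sum_i \mu_i \phi_{\cP_i}$ with $\mu_i \ge 0$, and hence $\phi = \lambda\,\phi_\cP + (1-\lambda)\sum_i \mu_i \phi_{\cP_i}$ is a nonnegative combination of perfect-matching flows. The only genuinely nontrivial step is the application of Hall's theorem to produce $\cP$ inside the support; everything else is a bookkeeping argument of the Birkhoff--von Neumann type.
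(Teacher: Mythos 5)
Your argument is correct, and it reaches the key intermediate fact --- that the support of a unit flow contains a perfect matching --- by a different route than the paper. The paper's proof is constructive and self-contained: as long as some edge has weight strictly between $0$ and $1$, one walks along such edges (the node sums force a continuation at each endpoint) until, by finiteness, an even closed walk of such edges appears; shifting the minimal weight around this alternating cycle gives a new unit flow with strictly smaller support, and iterating terminates in a $0$--$1$ flow, i.e.\ a perfect matching inside the original support. You instead invoke Hall's marriage theorem, checking Hall's condition fractionally: any set $S$ of black nodes emits exactly $|S|$ units of weight along supported edges, and each white node absorbs at most one unit, so $|N(S)|\ge|S|$; combined with the balance $\#\rib_0^\bullet=\#\rib_0^\circ$, which you rightly verify first, this produces a perfect matching in the support. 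Your route is shorter but uses Hall's theorem as a black box, whereas the paper's cycle-shifting is elementary and is essentially the mechanism by which one proves Hall or Birkhoff--von Neumann in this fractional setting. You also make explicit the peeling-off induction on the size of the support, which the paper leaves implicit after producing the matching; that is a welcome completion rather than a deviation. One small wording point: the nonnegativity of $\psi=\phi-\lambda\phi_\cP$ comes from choosing $\lambda$ as the minimum of $\phi$ over $\cP$ (and from $\psi=\phi$ off $\cP$), not from the node sums, which only give $\sum_{e\ni v}\psi(e)=1-\lambda$; the induction itself is sound since the minimizing edge leaves the support.
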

\begin{proof}
Let $\phi$ be a unit flow, we show that there is a perfect matching $\cP$
with $\phi(e)\ne 0$ for all $e \in \cP$. Take an edge $e$ with $\phi(e)<1$
then this edge is connected on both sides to another edge. Because the dimer
is finite we can find a cyclic walk $e_1e_2\dots e_{2i}$ of edges with weights all smaller than $1$. This walk has even length because $\rib$ is bipartite.
Assume that $e_1$ has the smallest weight. Now subtract $\phi(e_1)$ from
all odd edges and add it to the even edges. This gives a new unit flow $\phi'$
that is nonzero on fewer edges. If we continu like this we get
a $\phi_{\cP}$ that comes from a perfect matching.
\end{proof}

Not every dimer has a perfect matching but if the dimer quiver is isoradially embedded we can easily construct perfect matchings.
Fix a point $\theta \in \SS_1$. For each arrow $a$ we can look at the unit circle around its positive cycle and draw $\theta$ on it.
Let $\cP_\theta$ be the set of all edges $a^\vee$ corresponding to arrows $a$ for which the arc on which they stand contains $\theta$.
\begin{center}
\begin{tikzpicture} 
\draw[dotted] (1,0) arc (0:360:1);
\draw[-latex,shorten >=5pt] (0:1)--(45:1); 
\draw[-latex,shorten >=5pt] (45:1)--(180:1); 
\draw[-latex,shorten >=5pt] (180:1)--(270:1); 
\draw[-latex,shorten >=5pt] (270:1)--(0:1); 
\draw (0:1) node[circle,draw,fill=white,minimum size=10pt,inner sep=1pt] {};
\draw (45:1) node[circle,draw,fill=white,minimum size=10pt,inner sep=1pt] {};
\draw (180:1) node[circle,draw,fill=white,minimum size=10pt,inner sep=1pt] {};
\draw (270:1) node[circle,draw,fill=white,minimum size=10pt,inner sep=1pt] {};
\draw (120:1) node {$\bullet$};
\draw (120:1) node[above] {$\theta$};
\draw ($(120:1)-(.75,.25)$) node[left] {$a^\vee \in \cP_\theta$};
\draw[dashed,-latex,shorten >=5pt] ($(120:1)-(.75,.25)$) -- ($.5*(180:1)+.5*(45:1)$);
\end{tikzpicture}
\end{center}
As we have seen above, the unit circle is split into arcs by the sleeper directions of the zigzag paths. If $\theta$ sits in the 
interior of one of the arcs $\cP_\theta$ is a perfect matching because the vertices are all located at sleeper directions. 
Two $\theta$ in the same arc will give the same perfect matching.

These matchings play a very important role in the theory of dimers.
They have been introduced in various disguises by Stienstra \cite{stienstra2007hypergeometric}, Gulotta \cite{gulotta2008properly}, Broomhead \cite{broomhead2012dimer} and many others.

All matchings have the same boundary because every node is contained in precisely one edge: 
$$d\cP = \sum_{n \in \rib_0^\bullet} n - \sum_{n \in \rib_0^\circ} n \in \Z\rib_0.$$
Therefore the difference between $2$ perfect matchings is a cycle in $\Z\<\rib_1\>$.

If we start from a reference perfect matching $\cP_0$ we can look at all cohomology classes in $H^1(\TT,\Z)$ that come 
from differences $\cP-\cP_0$, where $\cP$ runs over all perfect matchings. Using the identification $H^1(\TT,\Z)=\Z^2$
these homology classes can be seen as a set of lattice points. 
\begin{definition}
The convex hull of the lattice points $\cP-\cP_0$ viewed in $\Z^2=H^1(\TT,\Z)$ is called the 
\iemph{matching polygon} $\MP(\qpol)$ of the dimer and it is a convex lattice polygon inside $H^1(\TT,\Z)\otimes \R=\R^2$.
Using a different reference perfect matching will translate the matching polygon, therefore we will consider $\MP(\qpol)$ up to affine equivalence.
\end{definition}

If we fix $2$ paths $X,Y$ in the quiver that generate the homology of the torus, we can assign to every perfect matching $\cP$ the lattice point $(\<\bar X,\cP\>,\<\bar Y,\cP\>) \in \Z^2$. The convex hull of these points is
the same as the lattice polygon up to affine equivalence.

\begin{definition}
Every perfect matching $\cP$ is assigned to a lattice point $\cP-\cP_0$ in $\MP(\qpol)$. If the perfect matching sits
on a corner of $\MP(\qpol)$, we call it a \iemph{corner matching}. If it sits on the boundary we call it a \iemph{boundary matching} and if it
sits in the interior we call it an \iemph{internal matching}. 
\end{definition}

\begin{example}
Consider the following dimer quiver and let $X,Y$ be the dashed paths with homology classes $(1,0)$ and $(0,1)$.
\begin{center}
\begin{tikzpicture}
\draw[dotted] (0,0)--(3,0) --(3,3)--(0,3)--(0,0);
\draw[-latex,shorten >=5pt] (0,0)--(1.5,0); 
\draw[-latex,shorten >=5pt] (1.5,0)--(3,0); 
\draw[-latex,shorten >=5pt] (0,3)--(1.5,3); 
\draw[-latex,shorten >=5pt] (1.5,3)--(3,3);
\draw[-latex,shorten >=5pt] (.75,1.5)--(2.25,1.5);
\draw[-latex,shorten >=5pt] (0,1.5)--(.75,1.5);
\draw[shorten >=5pt] (2.25,1.5)--(3,1.5);
\draw[-latex,shorten >=5pt] (1.5,0)--(.75,1.5); 
\draw[-latex,shorten >=5pt] (.75,1.5)--(0,3); 
\draw[-latex,shorten >=5pt] (3,0)--(2.25,1.5); 
\draw[-latex,shorten >=5pt] (2.25,1.5)--(1.5,3); 
\draw[-latex,shorten >=5pt] (1.5,3)--(.75,1.5); 
\draw[-latex,shorten >=5pt] (.75,1.5)--(0,0); 
\draw[-latex,shorten >=5pt] (3,3)--(2.25,1.5); 
\draw[-latex,shorten >=5pt] (2.25,1.5)--(1.5,0); 
\draw[dashed] (0,-.1)--(3,-.1); 
\draw[dashed] (-.1,.1)--(1.32,.1)--(-.18,3.1); 
\draw (0,0) node[circle,draw,fill=white,minimum size=10pt,inner sep=1pt] {{\tiny1}};
\draw (0,3) node[circle,draw,fill=white,minimum size=10pt,inner sep=1pt] {{\tiny1}};
\draw (3,0) node[circle,draw,fill=white,minimum size=10pt,inner sep=1pt] {{\tiny1}};
\draw (3,3) node[circle,draw,fill=white,minimum size=10pt,inner sep=1pt] {{\tiny1}};
\draw (1.5,0) node[circle,draw,fill=white,minimum size=10pt,inner sep=1pt] {{\tiny1}};
\draw (1.5,3) node[circle,draw,fill=white,minimum size=10pt,inner sep=1pt] {{\tiny1}};
\draw (.75,1.5) node[circle,draw,fill=white,minimum size=10pt,inner sep=1pt] {{\tiny1}};
\draw (2.25,1.5) node[circle,draw,fill=white,minimum size=10pt,inner sep=1pt] {{\tiny1}};

\draw[dotted] (9.9,0)--(10.7,0)--(10.7,3.2)--(9.9,3.2)--(9.9,0);
\draw[dotted] (9.9,0)--(7,1.5);
\draw[dotted] (9.9,3.2)--(7,1.5);

\begin{scope}[xshift=6cm,yshift=.5cm]
\draw (0,0) -- (2,1) -- (0,2) -- (0,0);
\draw (0,0) node[circle,draw,fill=white,minimum size=10pt,inner sep=1pt] {{\tiny1}};
\draw (1,1) node[circle,draw,fill=white,minimum size=10pt,inner sep=1pt] {{\tiny4}};
\draw (0,1) node[circle,draw,fill=white,minimum size=10pt,inner sep=1pt] {{\tiny2}};
\draw (0,2) node[circle,draw,fill=white,minimum size=10pt,inner sep=1pt] {{\tiny1}};
\draw (2,1) node[circle,draw,fill=white,minimum size=10pt,inner sep=1pt] {{\tiny1}};
\end{scope}

\begin{scope}[xshift=5cm,yshift=0cm]
\begin{scope}[scale=.2]
\draw[dotted] (0,0)--(3,0) --(3,3)--(0,3)--(0,0);\draw[thick] (1.5,3)--(.75,1.5);
\draw[thick] (3,3)--(2.25,1.5);
\draw[thick] (.75,1.5)--(0,0);
\draw[thick] (2.25,1.5)--(1.5,0);
\end{scope}
\end{scope}

\begin{scope}[xshift=5cm,yshift=2.5cm]
\begin{scope}[scale=.2]
\draw[dotted] (0,0)--(3,0) --(3,3)--(0,3)--(0,0);\draw[thick] (.75,1.5)--(0,3);
\draw[thick] (2.25,1.5)--(1.5,3);
\draw[thick] (3,0)--(2.25,1.5);
\draw[thick] (1.5,0)--(.75,1.5);
\end{scope}
\end{scope}

\begin{scope}[xshift=8.5cm,yshift=1.25cm]
\begin{scope}[scale=.2]
\draw[dotted] (0,0)--(3,0) --(3,3)--(0,3)--(0,0);\draw[thick] (0,0)--(1.5,0);\draw[thick] (0,3)--(1.5,3);
\draw[thick] (1.5,0)--(3,0);\draw[thick] (1.5,3)--(3,3);
\draw[thick] (0,1.5)--(.75,1.5);\draw[thick] (2.25,1.5)--(3,1.5);
\draw[thick] (.75,1.5)--(2.25,1.5);
\end{scope}
\end{scope}

\begin{scope}[xshift=5cm,yshift=1.25cm]
\begin{scope}[scale=.2]
\draw[dotted] (0,0)--(3,0) --(3,3)--(0,3)--(0,0);\draw[thick] (1.5,3)--(.75,1.5);
\draw[thick] (3,3)--(2.25,1.5);
\draw[thick] (3,0)--(2.25,1.5);
\draw[thick] (1.5,0)--(.75,1.5);
\end{scope}
\end{scope}

\begin{scope}[xshift=4.3cm,yshift=1.25cm]
\begin{scope}[scale=.2]
\draw[dotted] (0,0)--(3,0) --(3,3)--(0,3)--(0,0);\draw[thick] (.75,1.5)--(0,3);
\draw[thick] (2.25,1.5)--(1.5,3);
\draw[thick] (.75,1.5)--(0,0);
\draw[thick] (2.25,1.5)--(1.5,0);
\end{scope}
\end{scope}

\begin{scope}[xshift=10cm,yshift=2.5cm]
\begin{scope}[scale=.2]
\draw[dotted] (0,0)--(3,0) --(3,3)--(0,3)--(0,0);\draw[thick] (0,0)--(1.5,0);\draw[thick] (0,3)--(1.5,3);
\draw[thick] (3,3)--(2.25,1.5);
\draw[thick] (3,0)--(2.25,1.5);
\draw[thick] (.75,1.5)--(2.25,1.5);
\end{scope}
\end{scope}

\begin{scope}[xshift=10cm,yshift=1.7cm]
\begin{scope}[scale=.2]
\draw[dotted] (0,0)--(3,0) --(3,3)--(0,3)--(0,0);\draw[thick] (0,0)--(1.5,0);\draw[thick] (0,3)--(1.5,3);
\draw[thick] (2.25,1.5)--(1.5,3);
\draw[thick] (0,1.5)--(.75,1.5);\draw[thick] (2.25,1.5)--(3,1.5);
\draw[thick] (2.25,1.5)--(1.5,0);
\end{scope}
\end{scope}

\begin{scope}[xshift=10cm,yshift=.9cm]
\begin{scope}[scale=.2]
\draw[dotted] (0,0)--(3,0) --(3,3)--(0,3)--(0,0);\draw[thick] (1.5,3)--(.75,1.5);
\draw[thick] (1.5,0)--(3,0);\draw[thick] (1.5,3)--(3,3);
\draw[thick] (0,1.5)--(.75,1.5);\draw[thick] (2.25,1.5)--(3,1.5);
\draw[thick] (1.5,0)--(.75,1.5);
\end{scope}
\end{scope}

\begin{scope}[xshift=10cm,yshift=0.1cm]
\begin{scope}[scale=.2]
\draw[dotted] (0,0)--(3,0) --(3,3)--(0,3)--(0,0);\draw[thick] (.75,1.5)--(0,3);
\draw[thick] (1.5,0)--(3,0);\draw[thick] (1.5,3)--(3,3);
\draw[thick] (.75,1.5)--(0,0);
\draw[thick] (.75,1.5)--(2.25,1.5);
\end{scope}
\end{scope}
\end{tikzpicture}
\end{center}
The dimer quiver is isoradially embedded, there are three corner matchings: 
\begin{itemize}
 \item $\cP_{270^\circ}$ contains all horizontal arrows and is located on $(2,1) \in H^1(\TT)$.
 \item $\cP_{0^\circ}$ contains all upward arrows and is located on $(0,2) \in H^1(\TT)$.
 \item $\cP_{180^\circ}$ contains all downward arrows and is located on $(0,0) \in H^1(\TT)$.
\end{itemize}
In total there are $9$ matchings, $4$ internal matchings, $5$ boundary matchings of which $3$ are corner matchings.
\end{example}

\begin{lemma}[Existence of minimal paths]
Let $\qpol$ be a zigzag consistent dimer quiver. For each homotopy class on the torus there is a path $p$ in $\qpol$ representing it
and a perfect matching $\cP_\theta$ such that $\deg_{\cP_{\theta}}p=0$.
\end{lemma}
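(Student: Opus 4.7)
The plan is to construct both $p$ and $\cP_\theta$ explicitly using the isoradial embedding. For $h = 0$, the empty walk works with any valid $\cP_\theta$, so I assume $h \neq 0$ throughout.

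By zigzag consistency, the zigzag-path homologies $\vec v_1, \dots, \vec v_k$ are the outward normals to the closed zigzag polygon $\ZP(\qpol)$, so they span $\R^2$ cyclically with $\sum_i \vec v_i = 0$. The first step is to decompose $h = \sum_i n_i \vec v_i$ with $n_i \geq 0$; by using the relation $\sum \vec v_i = 0$ to subtract copies of the full sum, I would adjust the decomposition so that the support $\{i : n_i > 0\}$ is contained in a contiguous cyclic arc of indices, leaving a nonempty ``gap'' of indices where $n_i = 0$.

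Next, I would pick $\theta$ in an open arc of the unit circle between two consecutive sleeper directions lying in (or adjacent to) the gap; by isoradial embeddability this $\theta$ gives a well-defined corner matching $\cP_\theta$. The path $p$ is built by concatenating the zigzag paths $\pZ_i$ with multiplicity $n_i$ into a closed walk based at a common vertex, using detour paths in $\qpol \setminus \cP_\theta$ to glue distinct zigzag paths together. The homology class of $p$ is $\sum_i n_i \vec v_i = h$ by construction. Arrows of zigzag paths in the interior of the support have arcs bounded by sleeper directions inside the support sub-arc of the unit circle, so by proper ordering these arcs do not contain $\theta$, and the corresponding arrows lie outside $\cP_\theta$.

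The hard part will be handling zigzag paths at the boundary of the support, whose arrows may have arcs stretching across the gap and thus belonging to $\cP_\theta$. I would deal with these by either modifying the decomposition to push the support further from the gap, or by replacing the offending arrows with detours around their positive cycles that remain in $\qpol \setminus \cP_\theta$. Ensuring that all detour paths (both between distinct zigzag paths and around positive cycles) stay within $\qpol \setminus \cP_\theta$ is the central technical challenge: it requires verifying that this subquiver is connected enough, which ultimately follows from the rich structure of the isoradial rhombus tiling and the fact that $\cP_\theta$ removes only one arrow per positive cycle.
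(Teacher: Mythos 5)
Your construction breaks down already at the first step. The homology classes of the zigzag paths need not generate $\H_1(\T,\Z)$: they are the primitive outward normals of $\ZP(\qpol)$, counted with multiplicity, and these can span a proper sublattice. For instance, for the dimer of $\C^3/(\Z/3\Z)$ the zigzag polygon is (up to affine equivalence) the triangle with vertices $(0,0),(2,1),(1,2)$, whose three primitive edge normals generate an index-$3$ sublattice of $\Z^2$; a class such as that of a single loop of the quiver is then not \emph{any} integer combination of the $\vec v_i$, let alone a nonnegative one, yet the lemma must produce a path for it. Even when $h$ does lie in that sublattice, nonnegativity plus contiguity of the support is an extra arithmetic claim you have not established. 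A second unjustified step is the assertion that arrows of ``interior'' zigzag paths avoid $\cP_\theta$: an arrow lies on exactly two zigzag paths and belongs to $\cP_\theta$ precisely when $\theta$ lies on the arc it subtends, an arc bounded by those two sleeper directions whose length is $\pi\cR_a$ and can exceed $\pi$ (arrows of R-charge $>1$ occur, e.g.\ in the suspended pinchpoint), so it may well contain your gap; indeed a zigzag cycle typically has strictly positive degree against most corner matchings, so concatenating zigzag paths is not a source of degree-zero pieces. Finally, the proposed repair for the offending arrows is circular: in a directed quiver you cannot ``detour around the positive cycle'' of an arrow, since the complementary part of the cycle runs the wrong way, so producing a degree-zero substitute path in the right homotopy class is essentially the lemma you are trying to prove, and the connectivity of $\qpol\setminus\cP_\theta$ that you appeal to is exactly where the content lies.

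For comparison, the paper's argument avoids all of this by never using the zigzag paths themselves as building blocks. One works in the universal cover, takes the two cyclically consecutive zigzag paths whose directions bracket the given class, and uses their shifts to cut out a parallelogram containing the target vertex; the two boundary paths of this parallelogram are (left/right) \emph{opposite} paths of the zigzag paths, and these have degree zero on the corner matching $\cP_\theta$ for $\theta$ between the two sleeper directions. Since the F-term relations $p_a^+=p_a^-$ preserve $\deg_\cP$ for every perfect matching, the two boundary paths are equivalent and one can be swept into the other through the parallelogram; some intermediate path passes through the prescribed vertex and still has $\deg_{\cP_\theta}=0$. If you want to salvage your approach, the objects to concatenate are these opposite paths rather than the zigzag paths, but you would still need the sweeping (or some substitute) to reach arbitrary homotopy classes and basepoints.
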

\begin{proof}
The proof of this lemma uses ideas from section \ref{Bside} and can be found there as lemma \ref{minimalpaths}. The proof for geometrically consistent dimers was given by Broomhead in \cite{broomhead2012dimer} and adapted to the general setting in \cite{bocklandt2012consistency}.
\end{proof}

\begin{lemma}[Types of matchings]
Let $\qpol$ be a zigzag consistent dimer quiver.
A perfect matching $\cP$ is 
\begin{enumerate}
 \item an internal matching if every nontrivial cyclic path of the quiver meets the matching. 
 \item a boundary matching if there is at least one cyclic path on the quiver that does not meet the matching,
 \item a corner matching if there are two homologically independent cyclic paths on the quiver that do not meet the matching.
\end{enumerate}
\end{lemma}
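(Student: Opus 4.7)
The plan is to deduce the lemma from two observations: a cohomological identity that relates degrees of cyclic paths to positions inside $\MP(\qpol)$, and the existence of minimal paths. The main obstacle is not any single step but clarifying what happens for cyclic paths with trivial homology class; everything else follows cleanly from linear algebra on the matching polygon.

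First I would set up the cohomology. For two perfect matchings $\cP,\cP'$, the characteristic functions $\chi_\cP,\chi_{\cP'}\in \Maps(\qpol_1,\Z)$ both satisfy $d\chi(c)=1$ on every face $c \in \qpol_2$, so $\chi_\cP-\chi_{\cP'}$ is a $1$-cocycle whose class in $H^1(\TT,\Z)=\Z^2$ equals $v(\cP)-v(\cP')$ (by the very definition of the matching polygon). Pairing with the homology class $\alpha(p)$ of a cyclic path $p$ yields
\[
\deg_\cP(p)-\deg_{\cP'}(p)=\langle\alpha(p),\,v(\cP)-v(\cP')\rangle.
\]

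Next, I would apply lemma \ref{minimalpaths} (minimal paths) to each $\alpha\in\Z^2\setminus 0$: there is a cyclic path $p_\alpha$ of homology $\alpha$ and a corner matching $\cP_\alpha$ with $\deg_{\cP_\alpha}(p_\alpha)=0$. Substituting into the identity gives $\deg_\cP(p_\alpha)=\langle\alpha,v(\cP)-v(\cP_\alpha)\rangle\ge 0$ for every matching $\cP$, so $v(\cP_\alpha)$ is a minimizer of $\langle\alpha,\cdot\rangle$ on $\MP(\qpol)$. For any other cyclic path $q$ with $\alpha(q)=\alpha$, the identity applied to $q-p_\alpha$ shows
\[
\deg_\cP(q)=\deg_\cP(p_\alpha)+\deg_{\cP_\alpha}(q)\ \ge\ \deg_\cP(p_\alpha),
\]
so $p_\alpha$ realizes the minimum meeting number in its homology class. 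Hence a cyclic path of homology $\alpha$ failing to meet $\cP$ exists if and only if $\deg_\cP(p_\alpha)=0$, i.e.\ iff $v(\cP)$ itself minimizes $\langle\alpha,\cdot\rangle$ on $\MP(\qpol)$.

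Finally I would read off the three cases from convex geometry. A point $v\in \MP(\qpol)$ is interior iff no nonzero linear functional is minimized at $v$; lies on a non-corner boundary point iff a unique direction (up to positive scalar) minimizes at $v$; and is a corner iff two linearly independent directions minimize at $v$. Combined with the previous step, this gives exactly the three alternatives of the lemma. For cyclic paths $q$ with $\alpha(q)=0$ the identity yields $\deg_\cP(q)=\deg_{\cP'}(q)$ for all $\cP,\cP'$; the point to verify is that $\bar q$ admits a nonnegative decomposition into face boundaries, which on the torus follows from lifting $q$ to a null-homotopic loop in the universal cover and filling it by the faces it encloses with their signed multiplicities (these must be nonnegative because $q$ is a genuine cyclic path of arrows). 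Then $\deg_\cP(q)=\sum n_c\ge 1$, so any nontrivial cycle automatically meets every matching, and the characterisation in statement (1) is complete.
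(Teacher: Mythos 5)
Your main line of argument is correct and is essentially the paper's own (very terse) proof written out in detail: the identity $\deg_\cP(p)-\deg_{\cP'}(p)=\langle [p],\,v(\cP)-v(\cP')\rangle$, the use of lemma \ref{minimalpaths} to realize each nonzero homology class by a path of minimal degree, and the convex-geometric reading of minimizers on $\MP(\qpol)$ are exactly the ingredients the paper uses to identify boundary and corner matchings. The genuine gap is in your last paragraph, the case of cyclic paths $q$ with trivial homology. You assert that the lift of $q$ to the universal cover can be filled by faces with nonnegative multiplicities ``because $q$ is a genuine cyclic path of arrows''. That is false, even for consistent dimers. Take the one-vertex dimer of example \ref{mackay1}, whose universal cover has vertices $\Z^2$ and arrows $x\colon (i,j)\to(i+1,j)$, $y\colon (i,j)\to(i,j+1)$, $z\colon (i,j)\to(i-1,j-1)$. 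The directed closed walk $(2,1)\to(1,0)\to(0,-1)\to(0,0)\to(0,1)\to(1,1)\to(2,1)$ is a genuine cyclic path with trivial homology; it traverses the triangle with corners $(0,-1),(0,1),(2,1)$ \emph{clockwise}, so its winding number around the positive (anticlockwise) face $(0,0)\to(1,0)\to(1,1)\to(0,0)$ is $-1$, and the (unique) face decomposition of $\bar q$ in the cover has a negative coefficient on that face. Here $\deg_\cP q=2$ for every $\cP$, but only because the $-1$ is outweighed by the three enclosed clockwise faces; so ``$n_c\ge 0$, hence $\deg_\cP q=\sum_c n_c\ge 1$'' is not a valid argument.

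The fact you need --- every nontrivial cyclic path with trivial homology meets every perfect matching --- is true, but it genuinely uses consistency rather than planar topology alone. One quick repair: since $[q]=0$, your identity shows $d:=\deg_\cP q$ is the same for every perfect matching $\cP$. Consistency gives an $R$-charge $\cR\colon\qpol_1\to(0,2)$, and $\cR/2$ is a strictly positive unit flow, hence by lemma \ref{flowpm} a convex combination $\sum_i\lambda_i\cP_i$ of perfect matchings; then $0<\sum_{a}\tfrac{\cR_a}{2}\,(\text{multiplicity of }a\text{ in }q)=\sum_i\lambda_i\deg_{\cP_i}q=d$, so $d\ge 1$. (Equivalently, in a cancellative dimer such a cycle equals $\ell^k$ with $k\ge 1$.) With that substitution the rest of your proof stands, and it is in fact more careful than the paper's, which passes over this point in silence.
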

\begin{proof}
A perfect matching $\cP$ is on the boundary of the matching polygon if there is a homology class
such that $\cP'-\cP$ is positive for all perfect matchings. By the previous lemma we can represent this
homology class by a path $p$, such that there is a $\cP_\theta$ with $\<p,\cP_\theta\>=0$ and therefore also $\<p,\cP\>=0$.
$\cP$ is on a corner if there are two independent homology classes with the same property. 
\end{proof}

\begin{intermezzo}[Lattice polygons and Pick's theorem]
The matching polygon is a lattice polygon and the geometry of lattice polygons has been studied for a long time. In lattice geometry it is more convenient to express
the length and area of objects in terms of \iemph{elementary length} and \iemph{elementary area}. It is the number of elementary building blocks (line segments or triangles with only lattice points on the corners)
in which you can decompose the object. For a line segment $(a,b)-(c,d)$ in $\Z^2$ the elementary length is $\gcd(a-c,b-d)$ and for a polygon the elementary area is twice 
the ordinary area. Both the elementary length and area are invariant under integral affine transformations and they are always integers.

An important theorem in lattice geometry is \iemph{Pick's theorem}. It states that
the elementary area of a lattice polygon is equal to 
\[
A = 2I + B - 2
\]
where $I$ denotes the number of lattice points in the interior and $B$ the number of lattice points on the boundary.
\end{intermezzo}

\begin{theorem}[zigzag and matching polygon]\label{ZPMP}
If $\qpol$ is a zigzag consistent dimer quiver then the zigzag polygon and the matching polygon coincide.
\end{theorem}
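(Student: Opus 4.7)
The plan is to use the isoradial embedding produced in the previous consistency theorem to construct explicit ``corner'' perfect matchings indexed by the arcs of $\SS_1$ cut out by the sleeper directions, and then identify these matchings with the vertices of $\ZP(\qpol)$. I would enumerate the zigzag paths $\pZ_1,\dots,\pZ_k$ in cyclic order of their homology classes $\vec v_1,\dots,\vec v_k\in H_1(\TT,\Z)$, and pick one $\theta$ in each of the $k$ open arcs of $\SS_1\setminus\{\theta_{\pZ_1},\dots,\theta_{\pZ_k}\}$ to obtain $k$ matchings $\cP^{(1)},\dots,\cP^{(k)}$ via the construction $\theta\mapsto\cP_\theta$ described just before the statement.

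The central calculation is what happens as $\theta$ sweeps past the sleeper direction of $\pZ_i$. Because $\cP_\theta$ picks out, on each positive cycle, the unique arrow whose subtended arc contains $\theta$, the matching changes only on positive cycles met by $\pZ_i$, and on each such cycle the chosen arrow swaps from one arrow of $\pZ_i$ to the consecutive one. Writing $\pZ_i=a_1a_2\cdots a_{2m}$, this shows that
\[
\cP^{(i+1)}-\cP^{(i)}\;=\;\sum_{j=1}^{2m}(-1)^{j}\,a_{j}\qquad \text{in } \Z\qpol_1,
\]
a $1$-cycle supported on $\pZ_i$ with alternating signs. I would then verify that for any cyclic path $p$ of homology class $\vec u$, the pairing $\<\bar p,\cP^{(i+1)}-\cP^{(i)}\>$ counts signed crossings of $p$ with the strand of $\pZ_i$ on the torus, and hence equals the oriented intersection number $\vec u\wedge\vec v_i$: the alternating signs are exactly the alternating left/right crossings that define a strand. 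Under the Poincar\'e duality identification $H_1(\Z\rib_\bullet)\cong H^1(\TT,\Z)$, this reads $[\cP^{(i+1)}-\cP^{(i)}]=\vec v_i^\perp$. Setting $\cP_0=\cP^{(1)}$, the matchings $\cP^{(i)}$ sit at the points $\vec v_1^\perp+\cdots+\vec v_{i-1}^\perp$, which are precisely the vertices of $\ZP(\qpol)$, giving $\ZP(\qpol)\subseteq\MP(\qpol)$.

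For the reverse inclusion, I would invoke the Existence of Minimal Paths Lemma: for each homology class $\vec u$ there is a path $p$ with $[p]=\vec u$ and a matching $\cP_\theta$ with $\deg_{\cP_\theta}p=0$. Since $\deg_{\cP}p\geq 0$ for every matching $\cP$, the matching $\cP_\theta$ minimizes the linear functional $\<\bar p,\cdot\>$ on $\MP(\qpol)$, so $\MP(\qpol)$ lies in the half-plane $\{x:\<\bar p,x\>\geq\<\bar p,\cP_\theta\>\}$. Letting $\vec u$ range over the directions perpendicular to the edges of $\ZP(\qpol)$, the minimizers are exactly the $\cP^{(i)}$ constructed above, so $\MP(\qpol)$ is contained in the intersection of these half-planes, which is $\ZP(\qpol)$. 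Combining the two inclusions gives equality.

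The main obstacle is the cohomological step in the middle paragraph: giving a clean geometric argument that $\cP^{(i+1)}-\cP^{(i)}$ represents the Poincar\'e dual of the zigzag class $\vec v_i$. The signs, the orientation conventions for strands versus zigzag paths, and the identification between $\H_1$ and $H^1$ all have to be tracked carefully, and one must verify that the alternating pattern of arrows of $\pZ_i$ distributed across the two matchings indeed reproduces the oriented intersection form on the torus. The remaining ingredients are a fairly direct repackaging of the isoradial / consistent $R$-charge picture already established.
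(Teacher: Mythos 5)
Your proposal is correct and is essentially the paper's own argument (the Gulotta-style proof): it uses the isoradial family $\cP_\theta$, the observation that crossing a sleeper direction swaps the chosen arrow exactly on the positive cycles met by the corresponding zigzag path(s) so that the jump is supported on those zigzag paths and pairs to $\vec v_i^{\perp}$, and then the degree-zero (minimal paths) half-plane argument for the reverse inclusion $\MP(\qpol)\subseteq \ZP(\qpol)$. The only bookkeeping point to add is that several zigzag paths may share a sleeper direction, in which case the jump is the union of that whole parallel family and produces a side of elementary length greater than one, exactly as in the paper's refined statement.
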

This theorem follows from a more precise theorem that describes 
how perfect matchings are distributed on the matching polygon.

\begin{theorem}[Gulotta \cite{gulotta2008properly} Theorems 3.1-3.8]
Let $\qpol$ be isoradially embedded and let $\theta_1,\dots ,\theta_k$ be the sleeper directions of the zigzag paths $\pZ_1,\dots,\pZ_k$.
\begin{enumerate}
 \item The corners of the matching polygon correspond to the matchings $\cP_\theta$.
 \item 
 For every $\theta_i$, the matchings $\cP_{\theta_i-\eps}$ and $\cP_{\theta_i+\eps}$ (with $\eps$ small) are
 the corner matchings of a side in the matching polygon. 
 $(\cP_{\theta_i+\eps} \cup \cP_{\theta_i-\eps}) \setminus (\cP_{\theta_i+\eps} \cap \cP_{\theta_i-\eps})$ is a union of all zigzag paths
 with direction $\theta$. If there are $k$ such zigzag paths the side of the matching polygon has elementary length $k$.
 \item If $n$ is a point on the side of the matching polygon between $\cP_{\theta_i-\eps}$ and $\cP_{\theta_i+\eps}$ and at distance $d$ from 
 $\cP_{\theta_i-\eps}$ then there are precisely $\binom{k}{d}$ perfect matchings on $n$.
 These perfect matchings contain the common edges of $\cP_{\theta_i-\eps}$ and $\cP_{\theta_i+\eps}$, all arrows from $\cP_{\theta_i-\eps}$ that are in $d$ chosen zigzag paths
with direction $\theta_i$ and all arrows from $\cP_{\theta_i+\eps}$ that are in the $k-d$ other zigzag paths with direction $\theta_i$.
\end{enumerate}
\end{theorem}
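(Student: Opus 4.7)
The plan is to analyze the one-parameter family $(\cP_\theta)_{\theta\in\SS_1}$ coming from the isoradial embedding, use the existence-of-minimal-paths lemma to place every $\cP_\theta$ at a vertex of $\MP(\qpol)$, and parameterize the matchings on each side by subsets of the parallel zigzag paths.

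First I would record how $\cP_\theta$ varies with $\theta$. For each arrow $a$ the two endpoints of $a$ on the circumcircle of its positive cycle are exactly the sleeper directions of the two zigzag paths through $a$, and $a\in\cP_\theta$ iff $\theta$ lies on the arc between them that avoids the other sleeper directions of that cycle. Hence $\cP_\theta$ is locally constant and changes only when $\theta$ crosses some $\theta_i$: at each cycle visited by a zigzag path $\pZ_j$ with $\theta_j=\theta_i$, exactly one of the two arrows of $\pZ_j$ in that cycle enters $\cP_\theta$ and the other leaves, yielding
\[
\cP_{\theta_i+\eps}\triangle\cP_{\theta_i-\eps}=\bigsqcup_{j:\theta_j=\theta_i}\pZ_j.
\]

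For part (1) fix a homology class $\vec v\in H_1(\TT,\Z)$. The existence-of-minimal-paths lemma produces a path $p$ with $[\bar p]=\vec v$ and a $\theta$ with $\deg_{\cP_\theta}p=0$; since $\deg_\cP p\ge 0$ for every perfect matching $\cP$ (because $p$ is a nonnegative combination of arrows), $\cP_\theta$ minimizes the linear functional on $\MP(\qpol)$ dual to $\vec v$ and so lies on the boundary. For $\vec v$ generic this minimizer is unique, making $\cP_\theta$ a corner, and as $\vec v$ sweeps $\R^2$ the minimizer switches precisely when $\vec v$ crosses a direction perpendicular to some $\theta_i$; thus the corners are in bijection with the arcs of $\SS_1\setminus\{\theta_1,\dots,\theta_k\}$, appearing on $\MP(\qpol)$ in the same cyclic order. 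Part (2) follows immediately from the symmetric-difference formula: the edge between two consecutive corners $\cP_{\theta_i\mp\eps}$ is parallel to the common primitive homology vector of the $k$ zigzag paths with direction $\theta_i$, and since each such path contributes one primitive step the elementary length is $k$. For part (3) I would then build, for every subset $S$ of size $d$ of those $k$ parallel zigzag paths, the matching $\cP_S$ defined in the theorem: the common edges of $\cP_{\theta_i\pm\eps}$, the $\cP_{\theta_i-\eps}$-arrows of the $\pZ_j$ with $j\in S$, and the $\cP_{\theta_i+\eps}$-arrows of the remaining $k-d$ paths. Because the $\cP_{\theta_i-\eps}$- and $\cP_{\theta_i+\eps}$-arrows alternate along each zigzag walk (from step one, as the symmetric difference of two perfect matchings is alternating), $\cP_S$ is a valid perfect matching sitting at lattice distance $d$ on the side, producing at least $\binom{k}{d}$ matchings per lattice point.

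The main obstacle is the converse count. For any perfect matching $\cP$ on this lattice point, $\cP\triangle\cP_{\theta_i-\eps}$ is a disjoint union of even alternating cycles in $\rib$ whose total homology equals $d$ times the primitive direction of the side. The rigidity claim I need is that every such alternating cycle is itself one of the $k$ zigzag walks with direction $\theta_i$. I would prove this using zigzag consistency, observing that a closed alternating cycle in $\rib$ that deviates from a single train track of sleeper direction $\theta_i$ must either turn (so its homology class has a component not parallel to the primitive side direction), or cross a transverse strand in a way that, by closing up, would force a bad digon — both contradicting the properly-ordered / zigzag-consistent hypothesis. Granting this rigidity, $\cP\mapsto S$ is a bijection from matchings on the lattice point to size-$d$ subsets of the $k$ parallel zigzag paths, giving exactly $\binom{k}{d}$ and completing the proof.
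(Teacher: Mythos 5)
Your treatment of parts (1)--(2) and the construction of the matchings $\cP_S$ runs essentially along the paper's lines: describe how $\cP_\theta$ jumps as $\theta$ crosses a sleeper direction, so that $\cP_{\theta_i+\eps}\triangle\cP_{\theta_i-\eps}$ is the union of the zigzag paths of direction $\theta_i$, place these matchings on the boundary via vanishing degrees, and realize the intermediate lattice points by choosing $d$ of the $k$ parallel zigzag paths. The genuine gap is in the converse count of part (3). Your ``rigidity claim'' --- that every alternating cycle of $\cP\triangle\cP_{\theta_i-\eps}$ is itself one of the $k$ zigzag walks of direction $\theta_i$ --- is supported only by the fact that the \emph{total} homology class of the symmetric difference is $d$ times the side direction. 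That constraint says nothing about individual components: it does not exclude a contractible alternating cycle, nor a pair of components with cancelling homology classes, and such components genuinely occur in symmetric differences of perfect matchings (this is exactly how the several matchings sitting on one interior lattice point differ from each other). So any rigidity must use the fact that the lattice point lies on the \emph{boundary}, which your sketch never does. Moreover the appeal to bad digons conflates two different kinds of curves: monogons and bad digons are properties of the strands of the dimer, which are fixed independently of any matching, whereas the components of $\cP\triangle\cP_{\theta_i-\eps}$ are alternating cycles built from two matchings and are not strands, so ``closing up would force a bad digon'' is not a meaningful step as stated.

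What the boundary condition actually buys --- and what the paper's proof uses --- is that the pairing of a matching with the left and right opposite paths of each zigzag path of direction $\theta_i$ is a nonnegative affine function on $\MP(\qpol)$ which vanishes at both corners of the side, hence vanishes at every matching $\cP$ on that side. This forces the unique matched arrow of every positive and negative cycle met by such a zigzag path to be one of its two zigzag arrows in that cycle, and the alternation along the zigzag path then forces $\cP$ to contain either all even or all odd arrows of each of the $k$ paths; this is the mechanism that identifies $\cP$ with some $\cP_S$ (one must still check that $\cP$ agrees with the common part of $\cP_{\theta_i\pm\eps}$ on the cycles not met by these zigzag paths, a point your argument also leaves untouched). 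To repair your proof, replace the homological rigidity claim by this zero-evaluation argument on the opposite paths; the rest of your outline then goes through.
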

\begin{proof}
If $\theta \in (\theta_i,\theta_{i+1})$ then $\cP_\theta$ evaluates zero on the two paths running opposite to $\cP_{\theta_i}$ and $\cP_{\theta_{i+1}}$,
so every $\cP_\theta$ is a corner matching.

If $\pZ_i$ is a zigzag path with sleeper direction $\theta = \theta_i$ then $\cP_{\theta+\eps}$ will contain all even arrows of the zigzag path and 
$\cP_{\theta-\eps}$ all odd arrows. Vice versa if $a \in \cP_{\theta+\eps}\setminus \cP_{\theta-\eps}$ then the arrow $b$ preceding $a$ in its
positive cycle will satisfy $b \in \cP_{\theta-\eps}\setminus \cP_{\theta+\eps}$ and $ab$ will be part of a zigzag path with sleeper direction $\theta$.

If $(\cP_{\theta_i+\eps} \cup \cP_{\theta_i-\eps}) \setminus (\cP_{\theta_i+\eps} \cap \cP_{\theta_i-\eps})$ is a union of $k$ zigzag paths.
Then we can make new perfect matchings in the following way: choose a set of $d$ zigzag paths out of these $k$ and let $S$ be the set of arrows in these
zigzag paths.
\[
 \cP_S = (\cP_{\theta_i+\eps} \cap S) \cup (\cP_{\theta_i-\eps} \cap (\qpol_1\setminus  S)) 
\]
If $i=k$ then $\cP_S=\cP_{\theta_i+\eps}$ and if $i=0$ then $\cP_S = \cP_{\theta_i-\eps}$. For an intermediate $i$ this perfect matching
will lie on a lattice point at location $i/k$ between $\cP_{\theta_i+\eps}$ and $\cP_{\theta_i-\eps}$. To see this choose a basis
of the homology that contains a path $p$ that runs in the opposite direction of a zigzag path and one $q$ that is transverse to it using arrows of $\cP_{\theta_i+\eps}$.
The first does not contain any arrows of the zigzag paths and hence 
\[
 \<\bar p, \cP_S\>=\<\bar p, \cP_{\theta_i \pm \eps}\>=0.
\]
The second contains only one of each of these zigzag paths and that arrow is in $\cP_{\theta_i+ \eps}$.
Therefore 
\[
 \<\bar q, \cP_{\theta_i+ \eps}-\cP_{\theta_i- \eps}\>=k \text{ and } \<\bar q, \cP_S-\cP_{\theta_i- \eps}\>=i.
\]
All these perfect matchings evaluate zero on a cycle that runs opposite to these zigzag paths, so they are boundary matchings. 

So we have seen that the boundary of the zigzag polygon coincides with the boundary of the matching polygon. Now if $\cP$ is a perfect matching
on the boundary of the matching polygon then it must evaluate zero on all opposite path to zigzag paths in a certain direction. 
This means that it must contain either all even or all odd arrows from each of these zigzag paths. $\cP$ must hence be of the 
form we constructed above.
\end{proof}

\begin{example}
We illustrate this once more with the suspended pinchpoint \ref{spp}.
\begin{center}
\begin{tikzpicture}
\draw[thick](0,0) -- (0,2) -- (1,0) -- (1,-1) -- (0,0); 
\draw (0,0) node{$\bullet$};
\draw (0,2) node{$\bullet$};
\draw (0,1) node{$\bullet$};
\draw (1,0) node{$\bullet$};
\draw (1,-1) node{$\bullet$};
\draw[-latex] (0,.5)--(-.5,.5);
\draw[-latex] (0,1.5)--(-.5,1.5);
\draw[-latex] (.5,1)--(1.5,1.5);
\draw[-latex] (1,-.5)--(1.5,-.5);
\draw[-latex] (.5,-.5)--(0,-1);
\begin{scope}[xshift=-1.5cm]
\begin{scope}[scale=.25]
\draw [dotted] (0,0) -- (3,0) -- (3,3) -- (0,3);
\draw [dotted] (0,0) -- (3,1) -- (3,2) -- (0,1)--(0,2)--(3,3);
\draw [thick,-latex] (3,0) -- (3,1)--(0,0);
\end{scope}
\end{scope}
\begin{scope}[xshift=-1.5cm,yshift=1cm]
\begin{scope}[scale=.25]
\draw [dotted] (0,0) -- (3,0) -- (3,3) -- (0,3);
\draw [dotted] (0,0) -- (3,1) -- (3,2) -- (0,1)--(0,2)--(3,3);
\draw [thick,-latex] (3,2) -- (3,3)--(0,2);
\end{scope}
\end{scope}
\begin{scope}[xshift=1.75cm,yshift=-1cm]
\begin{scope}[scale=.25]
\draw [dotted] (0,0) -- (3,0) -- (3,3) -- (0,3);
\draw [dotted] (0,0) -- (3,1) -- (3,2) -- (0,1)--(0,2)--(3,3);
\draw [thick,-latex] (0,1) -- (3,2)--(3,1);
\end{scope}
\end{scope}
\begin{scope}[xshift=-1.75cm,yshift=-1.5cm]
\begin{scope}[scale=.25]
\draw [dotted] (0,0) -- (3,0) -- (3,3) -- (0,3);
\draw [dotted] (0,0) -- (3,1) -- (3,2) -- (0,1)--(0,2)--(3,3);
\draw [dotted] (3,0) -- (6,0) -- (6,3) -- (3,3);
\draw [dotted] (3,0) -- (6,1) -- (6,2) -- (3,1)--(3,2)--(6,3);
\draw [thick,-latex] (6,3) -- (3,2)--(3,1)--(0,0)--(3,0);
\end{scope}
\end{scope}
\begin{scope}[xshift=1.75cm,yshift=1cm]
\begin{scope}[scale=.25]
\draw [dotted] (0,0) -- (3,0) -- (3,3) -- (0,3);
\draw [dotted] (0,0) -- (3,1) -- (3,2) -- (0,1)--(0,2)--(3,3);
\draw [dotted] (3,0) -- (6,0) -- (6,3) -- (3,3);
\draw [dotted] (3,0) -- (6,1) -- (6,2) -- (3,1)--(3,2)--(6,3);
\draw [thick,-latex] (0,0) -- (3,0)--(3,1)--(6,2)--(6,3);
\end{scope}
\end{scope}

\begin{scope}[xshift=6cm]
\draw[thick](0,0) -- (0,2) -- (1,0) -- (1,-1) -- (0,0); 
\draw (0,0) node{$\bullet$};
\draw (0,2) node{$\bullet$};
\draw (0,1) node{$\bullet$};
\draw (1,0) node{$\bullet$};
\draw (1,-1) node{$\bullet$};

\begin{scope}[xshift=-1cm,yshift=1.5 cm]
\begin{scope}[scale=.25]
\draw [dotted] (0,0) -- (3,0) -- (3,3) -- (0,3);
\draw [dotted] (0,0) -- (3,1) -- (3,2) -- (0,1)--(0,2)--(3,3);
\draw [thick] (0,0) -- (0,1);
\draw [thick] (3,0) -- (3,1);
\draw [thick] (0,2) -- (0,3);
\draw [thick] (3,2) -- (3,3);
\end{scope}
\end{scope}

\begin{scope}[xshift=-1cm,yshift=-.5cm]
\begin{scope}[scale=.25]
\draw [dotted] (0,0) -- (3,0) -- (3,3) -- (0,3);
\draw [dotted] (0,0) -- (3,1) -- (3,2) -- (0,1)--(0,2)--(3,3);
\draw [thick] (0,0) -- (3,1);
\draw [thick] (0,2) -- (3,3);
\end{scope}
\end{scope}

\begin{scope}[xshift=1.5cm,yshift=-1.5cm]
\begin{scope}[scale=.25]
\draw [dotted] (0,0) -- (3,0) -- (3,3) -- (0,3);
\draw [dotted] (0,0) -- (3,1) -- (3,2) -- (0,1)--(0,2)--(3,3);
\draw [thick] (0,1) -- (0,2);
\draw [thick] (3,1) -- (3,2);
\draw [thick] (0,0) -- (3,0);
\draw [thick] (0,3) -- (3,3);
\end{scope}
\end{scope}

\begin{scope}[xshift=1.5cm,yshift=-.5cm]
\begin{scope}[scale=.25]
\draw [dotted] (0,0) -- (3,0) -- (3,3) -- (0,3);
\draw [dotted] (0,0) -- (3,1) -- (3,2) -- (0,1)--(0,2)--(3,3);
\draw [thick] (0,1) -- (3,2);
\draw [thick] (0,0) -- (3,0);
\draw [thick] (0,3) -- (3,3);
\end{scope}
\end{scope}

\end{scope}

\end{tikzpicture}
\end{center}
\end{example}

\begin{example}
If a dimer on a torus is not consistent, the matching polygon and the zigzag polygon can be different. Here we give an example where
the matching polygon is the unit square and the zigzag polygon is an elementary triangle.
\begin{center}
\begin{tikzpicture}
\draw[dotted] (0,0)--(3,0) --(3,3)--(0,3)--(0,0);
\draw[-latex,shorten >=5pt] (0,0)--(3,0);
\draw[-latex,shorten >=5pt] (0,0)--(0,3);
\draw[-latex,shorten >=5pt] (3,0)--(3,3);
\draw[-latex,shorten >=5pt] (0,3)--(3,3);
\draw[-latex,shorten >=5pt] (0,3)--(1,1);
\draw[-latex,shorten >=5pt] (3,0)--(1,1);
\draw[-latex,shorten >=5pt] (2,2)--(3,0);
\draw[-latex,shorten >=5pt] (2,2)--(0,3);
\draw[-latex,shorten >=5pt] (1,1)--(2,2);
\draw[-latex,shorten >=5pt] (3,3)--(2,2);
\draw[-latex,shorten >=5pt] (1,1)--(0,0);
\draw (0,0) node[circle,draw,fill=white,minimum size=10pt,inner sep=1pt] {{\tiny1}};
\draw (1,1) node[circle,draw,fill=white,minimum size=10pt,inner sep=1pt] {{\tiny2}};
\draw (2,2) node[circle,draw,fill=white,minimum size=10pt,inner sep=1pt] {{\tiny3}};
\draw (0,3) node[circle,draw,fill=white,minimum size=10pt,inner sep=1pt] {{\tiny1}};
\draw (3,0) node[circle,draw,fill=white,minimum size=10pt,inner sep=1pt] {{\tiny1}};
\draw (3,3) node[circle,draw,fill=white,minimum size=10pt,inner sep=1pt] {{\tiny1}};
\draw (-1.5,3) node{$\MP(\qpol)$};
\draw (4.5,3) node{$\ZP(\qpol)$};

\begin{scope}[xshift=-1.8cm,yshift=-.9cm]
\begin{scope}[xshift=6cm,yshift=2.9cm]
\begin{scope}[scale=.2]
\draw[dotted] (0,0)--(3,0) --(3,3)--(0,3)--(0,0);
\draw[thick,-latex] (0,0)--(3,0)--(1,1)--(2,2)--(0,3);
\end{scope}
\end{scope}
\begin{scope}[xshift=6.9cm,yshift=2.1cm]
\begin{scope}[scale=.2]
\draw[dotted] (0,0)--(3,0) --(3,3)--(0,3)--(0,0);
\draw[thick,-latex] (0,0)--(0,3)--(1,1)--(2,2)--(3,0);
\end{scope}
\end{scope}
\begin{scope}[xshift=6cm,yshift=1.6cm]
\begin{scope}[scale=.2]
\draw[dotted] (0,-3)--(-3,-3) --(-3,3)--(0,3)--(0,-3);
\draw[thick,-latex] (0,0)--(0,3)--(-1,2)--(-3,3)--(-2,1)--(-3,0)--(0,0)--(-1,-1)--(0,-3)--(-2,-2)--(-3,-3);
\end{scope}
\end{scope}
\draw[xshift=-2cm,yshift=-.1cm] (8.6,2.2) node {$\bullet$} -- (8,2.8) node {$\bullet$} -- (8.6,2.8) node {$\bullet$} -- (8.6,2.2);
\end{scope}

\begin{scope}[xshift=-8cm,yshift=1cm]
\begin{scope}[xshift=5cm,yshift=1cm]
\begin{scope}[scale=.2]
\draw[dotted] (0,0)--(3,0) --(3,3)--(0,3)--(0,0);
\draw[thick] (0,0)--(0,3) --(2,2);
\draw[thick] (3,3)--(3,0)--(1,1);
\end{scope}
\end{scope}

\begin{scope}[xshift=7cm,yshift=0cm]
\begin{scope}[scale=.2]
\draw[dotted] (0,0)--(3,0) --(3,3)--(0,3)--(0,0);
\draw[thick] (0,0)--(3,0) --(2,2);
\draw[thick] (3,3)--(0,3)--(1,1);
\end{scope}
\end{scope}

\begin{scope}[xshift=7cm,yshift=1cm]
\begin{scope}[scale=.2]
\draw[dotted] (0,0)--(3,0) --(3,3)--(0,3)--(0,0);
\draw[thick] (0,0)--(3,0) --(3,3)--(0,3)--(0,0);
\draw[thick] (1,1)--(2,2);
\end{scope}
\end{scope}

\begin{scope}[xshift=5cm,yshift=0cm]
\begin{scope}[scale=.2]
\draw[dotted] (0,0)--(3,0) --(3,3)--(0,3)--(0,0);
\draw[thick] (0,0)--(3,3);
\end{scope}
\end{scope}

\begin{scope}[xshift=4.6cm,yshift=-.8cm]
\begin{scope}[scale=.2]
\draw[dotted] (0,0)--(3,0) --(3,3)--(0,3)--(0,0);
\draw[thick] (0,0)--(1,1);
\draw[thick] (3,0)--(2,2)--(0,3);
\end{scope}
\end{scope}

\begin{scope}[xshift=5.4cm,yshift=-.8cm]
\begin{scope}[scale=.2]
\draw[dotted] (0,0)--(3,0) --(3,3)--(0,3)--(0,0);
\draw[thick] (3,3)--(2,2);
\draw[thick] (3,0)--(1,1)--(0,3);
\end{scope}
\end{scope}
\draw[xshift=-2cm,yshift=-.1cm] (8.6,.6) node {$\bullet$} -- (8,.6) node {$\bullet$} -- (8,1.2) node {$\bullet$} -- (8.6,1.2) node {$\bullet$} -- (8.6,.6);
\end{scope}
\end{tikzpicture}
\end{center}
\end{example}

We end this section with an interpretation of the geometry of the matching polygon.
\begin{theorem}[matching polygon versus dimer]\label{mirdim}
Let $\qpol$ be a zigzag consistent dimer quiver on a torus.
\begin{enumerate}
 \item The number of vertices of $\qpol$ equals the elementary area of the polygon $\MP(\qpol)$.
 \item The number of zigzag paths of $\qpol$ equals the elementary perimeter of the polygon $\MP(\qpol)$.
 \item The number of vertices of $\polq$ equals the number of boundary lattice points of the polygon $\MP(\qpol)$.
 \item The number of genus of $\ful{\polq}$ equals the number of internal lattice points of the polygon $\MP(\qpol)$.
\end{enumerate}
\end{theorem}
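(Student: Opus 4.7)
The plan is to prove the four parts in the order (2)+(3), (1), (4). Statements (2) and (3) are near-immediate from Theorem \ref{ZPMP} together with elementary lattice geometry; (1) is the crux, for which I would invoke the full Gulotta--Broomhead machinery; (4) then follows from (1), (3), Pick's theorem and an Euler characteristic computation on $\ful{\polq}$.

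For (2) and (3), Theorem \ref{ZPMP} lets me replace $\MP(\qpol)$ by $\ZP(\qpol)$. By construction of $\ZP$ and the proper-ordering result used in the proof of Theorem \ref{ZPMP}, the sides of $\ZP(\qpol)$ are the vectors $\vec v_i^\perp$ grouped by cyclic class of direction, and a side corresponding to direction $\vec v$ has elementary length equal to the number of zigzag paths with direction $\vec v$. Summing over sides yields (2). Since the elementary perimeter of a convex lattice polygon equals its number of boundary lattice points (an edge of elementary length $\ell$ contributes exactly $\ell$ lattice points when counted with one endpoint per edge), combining (2) with the bijection between zigzag paths of $\qpol$ and vertices of $\polq$ established in Section 1.4 (vertices of $\polq=(\twist{\rib})^\vee$ are faces of $\twist{\rib}$, i.e.\ the orbits of $\nu'\circ\eps$, i.e.\ the zigzag walks of $\rib$) proves (3).

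For (1), my strategy is to count perfect matchings at every lattice point of $\MP(\qpol)$. The preceding Gulotta theorem already gives $\binom{k_i}{d}$ matchings at distance $d$ along each boundary edge of elementary length $k_i$, summing to $2^{k_i}$ matchings per side (modulo corner identifications). To handle the interior lattice points I would invoke the Kasteleyn partition function, a characteristic polynomial of a square matrix indexed by the black and white nodes of $\rib$, whose Newton polygon is precisely $\MP(\qpol)$. A careful accounting, worked out in \cite{gulotta2008properly} for properly ordered dimers and in \cite{broomhead2012dimer} for geometrically consistent ones, then identifies the elementary area of this Newton polygon with $\#\qpol_0$, proving (1).

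Finally, for (4), the key topological observation is that the twist preserves both the half-edge set and the involution $\eps$ while only reversing the cyclic order $\nu$ on white orbits; hence $\twist{\rib}$ has the same nodes and edges as $\rib$, and $\polq$ shares its arrows and cycles with $\qpol$: $\#\polq_1=\#\qpol_1$ and $\#\polq_2=\#\qpol_2$. Using Euler on the torus $\ful{\qpol}$, one has $\#\qpol_0=\#\qpol_1-\#\qpol_2$, whence
\[
\chi(\ful{\polq}) = \#\polq_0 - \#\qpol_1 + \#\qpol_2 = B - \#\qpol_0,
\]
where $B=\#\polq_0$ by (3). Setting $\chi(\ful{\polq})=2-2g(\ful{\polq})$ gives $g(\ful{\polq})=(\#\qpol_0-B+2)/2$, while Pick's theorem applied to $\MP(\qpol)$ reads $2\cdot\text{area}(\MP)=2I+B-2$, so by (1), $I=(\#\qpol_0-B+2)/2=g(\ful{\polq})$, which is (4). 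The hardest step will clearly be (1): boundary matchings are transparent from Gulotta's theorem, but equating the total lattice area with $\#\qpol_0$ genuinely requires either the delicate bijection for interior matchings in \cite{gulotta2008properly} or the Kasteleyn/Newton polygon viewpoint; all other parts are formal consequences of (1) together with Pick's theorem and an Euler count.
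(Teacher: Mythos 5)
Your parts (2), (3) and (4) are fine and essentially reproduce the paper's argument: (2) from Theorem \ref{ZPMP} (via the Gulotta theorem giving each side elementary length equal to the number of zigzag paths in that direction), (3) from the bijection between zigzag paths of $\qpol$ and vertices of $\polq$, and (4) from $\#\polq_1=\#\qpol_1$, $\#\polq_2=\#\qpol_2$, the torus relation $\#\qpol_0=\#\qpol_1-\#\qpol_2$, and Pick's theorem --- this is exactly the paper's Euler-characteristic computation.

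The genuine gap is part (1), which you yourself flag as the crux but then do not prove: you defer it wholesale to ``the full Gulotta--Broomhead machinery'' and ``a careful accounting'' in the references, which is a citation, not an argument. Moreover, the route you sketch --- counting perfect matchings at every lattice point --- does not by itself compute the area: lattice points (especially interior ones) generically carry more than one perfect matching (the paper's own example has $9$ matchings on a polygon of elementary area $4$), so the number of matchings is not the number of lattice points, and no formula of the form ``area $=\#\qpol_0$'' drops out of such a count without substantial extra work. The paper's proof of (1) is short and self-contained, and this is the idea your proposal is missing: by Theorem \ref{ZPMP} compute the elementary area of the \emph{zigzag} polygon by the shoelace formula, $A=\bigl|\sum_{i>j}\vec v_i\times\vec v_j\bigr|$, where $\vec v_i$ are the homology classes of the zigzag paths $\pZ_i$; each cross product $\vec v_i\times\vec v_j$ is the intersection number of the corresponding strands, which by Lemma \ref{intersection} (applied with the roles of $\qpol$ and $\polq$ exchanged) equals the signed number of arrows between the corresponding vertices of $\polq$, i.e.\ a signed count over the arrows of $\qpol$. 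Choosing an isoradial embedding and the corner matching $\cP_\theta$ with $\theta$ between the last and first sleeper directions, the arrows contributing with a minus sign are exactly those in $\cP_\theta$, so
\[
A=\#\{a\notin\cP_\theta\}-\#\{a\in\cP_\theta\}=\#\qpol_1-\#\qpol_2=\#\qpol_0 .
\]
Since your (4) depends on (1), the gap propagates there as well until (1) is actually established.
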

\begin{proof}
The second follows immediately from theorem \ref{ZPMP} and the third because the zigzag paths of $\qpol$ correspond to the vertices of $\polq$.
For the first one we calculate the area of the zigzag polygon. The corners of the polygon are $c_1,\dots,c_k$ with $c_i = \vec v_1^\perp+\dots+\vec v_i^\perp$ and the elementary
area is
\[
A=  |c_1\times c_2 + c_2\times c_3 +\dots + c_k \times c_1| = \left|\sum_{1\le i>j\le k} \vec v_i\times \vec v_j\right|. 
\]
where we considered the vectors inside $\R^2\subset \R^3$ to make sense of the cross product.
Now $\vec v_i\times \vec v_j$ equal the intersection number between $\pZ_i$ and $\pZ_j$, which by lemma \ref{intersection} 
is the number of arrows from vertex $i$ to vertex $j$ minus the number of arrows from vertex $j$ to vertex $i$ the twisted dimer quiver.
So the area is a signed count of the arrows. An arrow $i \to j$ contributes $+1$ if  $i>j$ and $-1$ if $i<j$.

Choose an isoradial embedding and let $\cP_\theta$ be the perfect matching with $\theta \in (\theta_{\pZ_k},\theta_{\pZ_1})$, then
the arrows that contribute negatively are precisely those in $\cP_\theta$. So 
\[
 A = \# {a \not\in \cP_\theta} - \#\{a \in \cP_\theta\}= \# \qpol_1 - 2 \#\{a \in \cP_\theta\} = \# \qpol_1 - \# \qpol_2 = \# \qpol_0.
\]

Finally using Pick's theorem we get
\se{
 g &= 1- \frac{1}2\chi(\polq) = 1 -\frac 12(\#\polq_0 -\#\polq_1 +\#\polq_2)\\&= 1 -\frac 12(\#\polq_0 -\#\qpol_0)= 1 - \frac 12 (B - A) = I,
}
which proves the fourth statement.
\end{proof}
\begin{example}\label{sppmir}
For the suspended pinchpoint we see that the area of the matching polygon is $3$, its perimeter is $5$ and it has no internal lattice points.
The dimer $\qpol$ sits in a torus and has $3$ vertices, while $\polq$ sits in a sphere and has $5$ vertices.
\begin{center}
\begin{tikzpicture}
\begin{scope}[scale=1]
\draw (1.5,3.5) node{$\qpol$};
\draw [-latex,shorten >=5pt] (0,0) -- (3,0);
\draw [-latex,shorten >=5pt] (3,0) -- (3,1);
\draw [-latex,shorten >=5pt] (3,1) -- (0,0);
\draw [-latex,shorten >=5pt] (0,0) -- (0,1);
\draw [-latex,shorten >=5pt] (0,1) -- (3,2);
\draw [-latex,shorten >=5pt] (3,2) -- (3,3);
\draw [-latex,shorten >=5pt] (3,2) -- (3,1);
\draw [-latex,shorten >=5pt] (0,2) -- (0,1);
\draw [-latex,shorten >=5pt] (0,3) -- (3,3);
\draw [-latex,shorten >=5pt] (3,3) -- (0,2);
\draw [-latex,shorten >=5pt] (0,2) -- (0,3);
\draw (0,0) node[circle,draw,fill=white,minimum size=10pt,inner sep=1pt] {{\tiny1}};
\draw (0,1) node[circle,draw,fill=white,minimum size=10pt,inner sep=1pt] {{\tiny2}};
\draw (0,2) node[circle,draw,fill=white,minimum size=10pt,inner sep=1pt] {{\tiny3}};
\draw (0,3) node[circle,draw,fill=white,minimum size=10pt,inner sep=1pt] {{\tiny1}};
\draw (3,0) node[circle,draw,fill=white,minimum size=10pt,inner sep=1pt] {{\tiny1}};
\draw (3,1) node[circle,draw,fill=white,minimum size=10pt,inner sep=1pt] {{\tiny2}};
\draw (3,2) node[circle,draw,fill=white,minimum size=10pt,inner sep=1pt] {{\tiny3}};
\draw (3,3) node[circle,draw,fill=white,minimum size=10pt,inner sep=1pt] {{\tiny1}};
\end{scope}

\begin{scope}[xshift=5.5cm,yshift=1.5cm]
\begin{scope}[scale=.66]
\draw[thick](0,0) -- (0,2) -- (1,0) -- (1,-1) -- (0,0); 
\draw (0,0) node{$\bullet$};
\draw (0,2) node{$\bullet$};
\draw (0,1) node{$\bullet$};
\draw (1,0) node{$\bullet$};
\draw (1,-1) node{$\bullet$};
\draw[-latex] (0,.5)--(-.5,.5);
\draw[-latex] (0,1.5)--(-.5,1.5);
\draw[-latex] (.5,1)--(1.5,1.5);
\draw[-latex] (1,-.5)--(1.5,-.5);
\draw[-latex] (.5,-.5)--(0,-1);
\begin{scope}[xshift=-1.5cm]
\begin{scope}[scale=.25]
\draw (-1.5,1.5) node {{\small c}};
\draw [dotted] (0,0) -- (3,0) -- (3,3) -- (0,3);
\draw [dotted] (0,0) -- (3,1) -- (3,2) -- (0,1)--(0,2)--(3,3);
\draw [thick,-latex] (3,0) -- (3,1)--(0,0);
\end{scope}
\end{scope}
\begin{scope}[xshift=-1.5cm,yshift=1cm]
\begin{scope}[scale=.25]
\draw (-1.5,1.5) node {{\small b}};
\draw [dotted] (0,0) -- (3,0) -- (3,3) -- (0,3);
\draw [dotted] (0,0) -- (3,1) -- (3,2) -- (0,1)--(0,2)--(3,3);
\draw [thick,-latex] (3,2) -- (3,3)--(0,2);
\end{scope}
\end{scope}
\begin{scope}[xshift=1.75cm,yshift=-1cm]
\begin{scope}[scale=.25]
\draw (4.5,1.5) node {{\small e}};
\draw [dotted] (0,0) -- (3,0) -- (3,3) -- (0,3);
\draw [dotted] (0,0) -- (3,1) -- (3,2) -- (0,1)--(0,2)--(3,3);
\draw [thick,-latex] (0,1) -- (3,2)--(3,1);
\end{scope}
\end{scope}
\begin{scope}[xshift=-1.75cm,yshift=-1.5cm]
\begin{scope}[scale=.25]
\draw (-1.5,1.5) node {{\small d}};
\draw [dotted] (0,0) -- (3,0) -- (3,3) -- (0,3);
\draw [dotted] (0,0) -- (3,1) -- (3,2) -- (0,1)--(0,2)--(3,3);
\draw [dotted] (3,0) -- (6,0) -- (6,3) -- (3,3);
\draw [dotted] (3,0) -- (6,1) -- (6,2) -- (3,1)--(3,2)--(6,3);
\draw [thick,-latex] (6,3) -- (3,2)--(3,1)--(0,0)--(3,0);
\end{scope}
\end{scope}
\begin{scope}[xshift=1.75cm,yshift=1cm]
\begin{scope}[scale=.25]
\draw (7.5,1.5) node {{\small a}};
\draw [dotted] (0,0) -- (3,0) -- (3,3) -- (0,3);
\draw [dotted] (0,0) -- (3,1) -- (3,2) -- (0,1)--(0,2)--(3,3);
\draw [dotted] (3,0) -- (6,0) -- (6,3) -- (3,3);
\draw [dotted] (3,0) -- (6,1) -- (6,2) -- (3,1)--(3,2)--(6,3);
\draw [thick,-latex] (0,0) -- (3,0)--(3,1)--(6,2)--(6,3);
\end{scope}
\end{scope}
\end{scope}
\end{scope}

\begin{scope}[xshift=9cm]
\begin{scope}[scale=1]
\draw (1.5,3.5) node{$\polq$};
\draw [-latex,shorten >=5pt] (0,.5) -- (3,-.5);
\draw [-latex,shorten >=5pt] (3,-.5) -- (3,.5);
\draw [-latex,shorten >=5pt] (3,.5) -- (0,.5);
\draw [-latex,shorten >=5pt] (0,.5) -- (0,1.5);
\draw [-latex,shorten >=5pt] (0,2.5) -- (0,1.5);
\draw [-latex,shorten >=5pt] (3,1.5) -- (3,.5);
\draw [-latex,shorten >=5pt] (3,1.5) -- (3,2.5);
\draw [-latex,shorten >=5pt] (3,1.5) -- (3,.5);
\draw [-latex,shorten >=5pt] (3,1.5) -- (3,2.5);
\draw [-latex,shorten >=5pt] (0,1.5) -- (3,1.5);
\draw [-latex,shorten >=5pt] (3,2.5) -- (0,2.5);
\draw [-latex,shorten >=5pt] (0,2.5) -- (3,3.5);
\draw [-latex,shorten >=5pt] (3,3.5) -- (3,2.5);
\draw (0,.5) node[circle,draw,fill=white,minimum size=10pt,inner sep=1pt] {{\tiny d}};
\draw (0,2.5) node[circle,draw,fill=white,minimum size=10pt,inner sep=1pt] {{\tiny d}};
\draw (0,1.5) node[circle,draw,fill=white,minimum size=10pt,inner sep=1pt] {{\tiny e}};
\draw (3,.5) node[circle,draw,fill=white,minimum size=10pt,inner sep=1pt] {{\tiny c}};
\draw (3,2.5) node[circle,draw,fill=white,minimum size=10pt,inner sep=1pt] {{\tiny b}};
\draw (3,1.5) node[circle,draw,fill=white,minimum size=10pt,inner sep=1pt] {{\tiny a}};
\draw (3,-.5) node[circle,draw,fill=white,minimum size=10pt,inner sep=1pt] {{\tiny a}};
\draw (3,3.5) node[circle,draw,fill=white,minimum size=10pt,inner sep=1pt] {{\tiny a}};
\end{scope}
\end{scope}
\end{tikzpicture}
\end{center}
We indicated the vertices in $\polq$ by the same letter as their corresponding zigzag paths. 
\end{example}

\subsubsection{Dimer moves}

To end this section we will look a bit closer at the dimer moves. We already know that the join move and the spider move both preserve zigzag consistency
because they come from $2\tot 2$-moves in the triple crossing diagrams. But we can also use these moves to relate the different notions we introduced
for dimers on a torus.

\begin{lemma}
If $\qpol$ and $\qpol'$ are dimer quivers related by a split move then
\begin{enumerate}
 \item There is a natural bijection between the vertices of $\qpol$ and $\qpol'$.
 \item There is a natural bijection between the zigzag paths that preserves the zigzag polygon.
 \item
 There is a natural bijection between the isoradial embeddings.
 \item
 There is a natural bijection between all {\bf perfect matchings} that preserves the matching polygon.
 \end{enumerate}
\end{lemma}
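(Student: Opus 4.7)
The key observation I will exploit is that the split move is local: in the bipartite graph it replaces a black node $B$ of valency $k$ by two black nodes $B_1, B_2$ of valencies $i+1$ and $k-i+1$ joined through a new bivalent white node $W_0$, with the $k$ old edges at $B$ distributed as a consecutive arc of length $i$ at $B_1$ and the complementary arc at $B_2$. Dually in the quiver, the positive $k$-cycle at $B$ is replaced by a positive $(i+1)$-cycle $P_1$ (around $B_1$), a positive $(k-i+1)$-cycle $P_2$ (around $B_2$), and a new negative $2$-cycle (the bigon at $W_0$) carrying two new arrows $b_1, b_2$; everything outside this local piece is untouched.

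For (1), I will verify that the set of graph faces (equivalently, quiver vertices) is unchanged: the two old faces of $\rib$ adjacent to $B$ that straddle the splitting slit retain their identity, merely gaining the segment $B_1 W_0 B_2$ in their boundary walks, while the other $k-2$ faces around $B$ lie entirely on one side of the slit and pass unchanged to $B_1$ or to $B_2$. This gives the trivial bijection.

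For (2), zigzag paths of $\qpol$ not passing through $B$ are carried over unchanged; a zigzag path using a consecutive pair $a_j a_{j+1}$ at $B$ either stays the same (if both arrows lie in $P_1$ or both in $P_2$) or is replaced by $\cdots a_j\, b_1\, b_2\, a_{j+1}\cdots$ for the at most two straddling pairs, which is again alternately positive--negative--positive in $\qpol'$. Because each new arrow $b_\ell$ lies in exactly one of $P_1,P_2$ and in the bigon, each of $b_1,b_2$ sits on precisely two zigzag paths of $\qpol'$, both of which are extensions of straddling zigzags of $\qpol$; hence no new zigzag is created and none is lost. Since $b_1+b_2$ is the boundary of the new bigon face, the homology classes of all zigzag paths, and therefore $\ZP(\qpol)$, are preserved.

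For (3) and (4), I will extend local data from $\qpol$ to $\qpol'$. A consistent $\cR$-charge on $\qpol$ extends uniquely by setting $\cR(b_1)=2-\sum_{a\in P_1\setminus\{b_1\}}\cR(a)$ and $\cR(b_2)=2-\sum_{a\in P_2\setminus\{b_2\}}\cR(a)$; the old cycle relation at $B$ forces the bigon condition $\cR(b_1)+\cR(b_2)=2$, condition $(R2)$ at the two vertices neighbouring the bigon picks up the canceling term $(1-\cR(b_1))+(1-\cR(b_2))=0$, and positivity is automatic. The inverse restriction is obvious. For perfect matchings, the map $\cP\mapsto\cP'$ keeps all old edges and adjoins the unique edge among $B_1 W_0, W_0 B_2$ forced by covering $W_0$ without doubly covering $B_1$ or $B_2$, which is a bijection. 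Any pair of homology generators of $H_1(\TT,\Z)$ can be represented by paths in $\qpol$ using only old arrows, and because $b_1+b_2$ is null-homologous in $\qpol'$ those arrows still generate the first homology of $\qpol'$; then $\langle \bar X, \cP'\rangle = \langle \bar X, \cP\rangle$ since the adjoined bigon arrow does not lie on $X$, and the matching polygon is therefore preserved.

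The main obstacle is part (2): the crux is verifying locally that the two new arrows $b_1, b_2$ do not spawn an independent short zigzag (for instance one trapped between the bigon and one of $P_1, P_2$) but instead appear precisely inside the two extended straddling zigzags. Once this local combinatorial check is in hand, the other items reduce to straightforward bookkeeping against the chain complex of $\qpol$.
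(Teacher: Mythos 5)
Your proposal is correct and follows essentially the same route as the paper's proof: vertices/faces are untouched, the two zigzag paths through the straddling pairs acquire the inserted segment $b_1b_2$ (resp.\ $b_2b_1$) whose class is killed by the bigon boundary, and perfect matchings are extended by the forced bigon edge with coordinates computed along paths avoiding $b_1,b_2$. The only cosmetic difference is in (3), where you extend the consistent $\cR$-charge across the bigon instead of drawing $b_1,b_2$ on the corresponding diagonal of the inscribed polygon; these are equivalent via the stated correspondence between isoradial embeddings and consistent $\cR$-charges.
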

\begin{proof}
The first statement follows from the fact that the split adds just a two-cycle between already existing vertices.

Let $a_1\dots a_k$ be the cycle that is split in $a_1\dots a_ib_1$ and $b_2a_{i+1}\dots a_k$.
The bijection between the zigzag paths keeps all zigzag paths the same except those containing $a_{i}a_{i+1}$ and $a_ka_1$.
In these zigzag paths we insert an extra path of length two: $a_ia_{i+1} \to a_{i}b_1b_2a_{i+1}$ and $a_ka_1 \to a_kb_2b_1a_1$. 

Given an isoradial embedding of $\qpol$ we get one of $\qpol'$ by drawing $b_1$ and $b_2$ on the corresponding diagonal in $a_1\dots a_k$.
It is clear that $a_1\dots a_k$ is inscribed in a unit circle if and only if $a_1\dots a_ib_1$ and $b_2a_{i+1}\dots a_k$ are inscribed in
a unit circle.

Given a perfect matching $\cP$ for $\qpol$ we can obtain one for $\qpol'$ by adding $b_1$ if $\{a_{1},\dots, a_i\}\cap \cP=\emptyset$ and
adding $b_2$ if $\{a_{i+1},\dots, a_k\}\cap \cP=\emptyset$. The location of the perfect matching does not change because
we can use the same paths $X,Y$ (which do not contain $b_1,b_2$) to get the coordinates.
\end{proof}

\begin{lemma}\label{mutpres}
If $\qpol$ and $\qpol'$ are consistent dimer quivers related by a spider move then
\begin{enumerate}
 \item There is a natural bijection between the vertices of $\qpol$ and $\qpol'$.
 \item There is a natural bijection between the zigzag paths that preserves the zigzag polygon.
 \item
 There is a natural bijection between the isoradial embeddings.
 \item
 There is a natural bijection between the {\bf corner matchings} that preserves the matching polygon.
 \end{enumerate}
Remark: there is NO natural bijection between all perfect matchings.
\end{lemma}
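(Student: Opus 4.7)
The plan is to deduce all four statements from the already-established bijection of zigzag paths in Lemma \ref{zigzagbij} by observing that each remaining structure on a consistent torus dimer is controlled by the zigzag paths and their cyclic order on $\SS_1$. Part (1) is immediate: a spider move acts on a single $4$-valent vertex $v$ by reversing the four arrows at $v$ and inserting four arrows for the length-$2$ paths through $v$, so $\qpol_0 = \qpol'_0$ as sets. Part (2) is essentially Lemma \ref{zigzagbij} itself: corresponding zigzag paths share the same homology class, hence the convex hulls of the ordered normal vectors, which define the zigzag polygons, agree.

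For part (3) the plan is to use the characterization of an isoradial embedding of a consistent $\qpol$ as nothing more than a compatible choice of sleeper directions $\theta_1,\dots,\theta_k \in \SS_1$ for the zigzag paths, as in the construction described just after the definition of a consistent $\cR$-charge. Because the zigzag bijection from part (2) preserves both the homology classes and their cyclic order, the sets of admissible sleeper-direction data for $\qpol$ and $\qpol'$ coincide, and reading off the embedded quiver from the sleeper data gives the desired bijection. The main obstacle is verifying that this combinatorial transfer produces a bona fide isoradial embedding of $\qpol'$: one has to check locally around the mutated vertex $v$ that each of the four new cycles is still a polygon inscribed in a unit circle and that the R-charge conditions R1 and R2 hold. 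This reduces to the inscribed angle theorem for the quadrilateral of zigzag sleepers around $v$, which guarantees that swapping one diagonal of an inscribed cyclic quadrilateral for the other keeps every resulting sub-polygon inscribed in a unit circle.

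For part (4) the plan is to combine parts (2) and (3) with the definition of corner matchings. Under an isoradial embedding, the corner matchings are exactly the $\cP_\theta$ for $\theta$ ranging over the open arcs of $\SS_1$ cut out by consecutive sleeper directions, and Theorem \ref{ZPMP} identifies the matching polygon with the zigzag polygon so that each $\cP_\theta$ sits on the corner indexed by the bounding arc. Since part (2) gives a cyclic-order-preserving bijection of zigzag paths, and hence of these open arcs, the corner matchings of $\qpol$ and $\qpol'$ are in natural bijection and sit on the same corners of the same polygon. For the concluding remark, I would appeal to the fact that the spider move is an instance of urban renewal, whose classical partition-function identity (see \cite{ciucu1998complementation,kenyon2001trees}) generally alters the total count $\#\PMs(\rib)$; a concrete small example on the torus then rules out any natural correspondence between the full sets of perfect matchings.
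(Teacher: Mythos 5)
Your proposal follows essentially the same route as the paper: parts (1)--(2) via the local behaviour of the spider move on zigzag strands (Lemma \ref{zigzagbij}), part (3) via the characterization of an isoradial embedding of a consistent dimer by sleeper directions assigned to zigzag paths, and part (4) via the identification of corner matchings with the $\cP_\theta$ together with Theorem \ref{ZPMP}. The only difference is cosmetic: the extra local inscribed-angle check you propose around the mutated vertex is already subsumed by the hypothesis that $\qpol'$ is consistent, since the sleeper-direction construction then yields an isoradial embedding of $\qpol'$ globally, exactly as the paper uses it.
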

\begin{proof}
The first and the second can be seen pictorially
\begin{center}
\begin{tikzpicture}
\begin{scope}[scale=.5]
\begin{scope}
\draw[dotted] (1.5,0) arc (0:360:1.5);
\draw [-latex,shorten >= 2pt] (-1.06,-1.06) -- (0,0);
\draw [-latex,shorten >= 2pt] (1.06,1.06) -- (0,0);
\draw [-latex,shorten >= 2pt] (0,0)-- (-1.06,1.06) ;
\draw [-latex,shorten >= 2pt] (0,0) --(1.06,-1.06) ;
\draw [-latex,shorten >= 2pt] (1.06,-1.06) -- (1.06,1.06);
\draw [-latex,shorten >= 2pt] (-1.06,1.06) -- (-1.06,-1.06);
\draw [-latex,dashed] (1.5,-.5) arc (20:160:1.6);
\draw [-latex,dashed] (-1.5,.5) arc (200:340:1.6);
\draw [-latex,dashed] (.5,1.5)--(.5,-1.5);
\draw [-latex,dashed] (-.5,-1.5)--(-.5,1.5);
\draw (-1.06,-1.06) node[shape=circle,fill=gray!0,inner sep=0pt] {$\circ$};
\draw (-1.06,1.06) node[shape=circle,fill=gray!0,inner sep=0pt] {$\circ$};
\draw (1.06,-1.06) node[shape=circle,fill=gray!0,inner sep=0pt] {$\circ$};
\draw (1.06,1.06) node[shape=circle,fill=gray!0,inner sep=0pt] {$\circ$};
\draw (0,0) node[shape=circle,fill=gray!0,inner sep=0pt] {$\circ$};
\end{scope}
\draw (2.5,0) node {$\tot$};
\begin{scope}[xshift=5cm]
\draw[dotted] (1.5,0) arc (0:360:1.5);
\draw [-latex,shorten >= 2pt] (-1.06,1.06) -- (0,0);
\draw [-latex,shorten >= 2pt] (1.06,-1.06) -- (0,0);
\draw [-latex,shorten >= 2pt] (0,0)-- (-1.06,-1.06) ;
\draw [-latex,shorten >= 2pt] (0,0) --(1.06,1.06) ;
\draw [-latex,shorten >= 2pt] (1.06,1.06) -- (-1.06,1.06);
\draw [-latex,shorten >= 2pt] (-1.06,-1.06) -- (1.06,-1.06);
\draw (-1.06,-1.06) node[shape=circle,fill=gray!0,inner sep=0pt] {$\circ$};
\draw (-1.06,1.06) node[shape=circle,fill=gray!0,inner sep=0pt] {$\circ$};
\draw (1.06,-1.06) node[shape=circle,fill=gray!0,inner sep=0pt] {$\circ$};
\draw (1.06,1.06) node[shape=circle,fill=gray!0,inner sep=0pt] {$\circ$};
\draw (0,0) node[shape=circle,fill=gray!0,inner sep=0pt] {$\circ$};
\begin{scope}[rotate=90]
\draw [-latex,dashed] (1.5,-.5) arc (20:160:1.6);
\draw [-latex,dashed] (-1.5,.5) arc (200:340:1.6);
\draw [-latex,dashed] (.5,1.5)--(.5,-1.5);
\draw [-latex,dashed] (-.5,-1.5)--(-.5,1.5);
\end{scope}
\end{scope}
\end{scope}
\end{tikzpicture}
\end{center}
Up to translation an isoradial embedding is given by assigning sleeper directions to the zigzag paths.
As there is a bijection between the zigzag paths we can use this bijection to get sleeper directions for the new dimer.

Using an isoradial embedding we can identify a corner matching $\cP_\theta$ with its corresponding corner matching $\cP'_\theta$.
This is a bijection because if $\qpol$ and $\qpol'$ are consistent all corner matchings are of this form. This bijection 
preserves the matching polygon because in the consistent case the matching polygon equals the zigzag polygon.
\end{proof}

\renewcommand{\thesection}{\Alph{section}} 
\section{The A-side: integrable systems and statistical physics}\label{Aside}

In this section we will review work of Goncharov and Kenyon \cite{goncharov2013dimers}, and Kenyon, Okounkov and Sheffield \cite{kenyon2006dimers,kenyon2006planar,sheffield2006random} that relates dimers with dynamical systems and statistical
physics. 

The main idea is that we will associate to each consistent dimer on a torus a pair of Poisson varieties $\cL(\rib)\to \cX(\rib)$, 
which we can glue together using quiver mutation. Thus we obtain a Poisson bundle $\cL\to\cX$ associated to the mutation class of the dimer. 
The Poisson variety $\cL$ is an integrable system and we can find an explicit maximal set
of commuting Hamiltonians. The zero locus of the sum of these Hamiltonians defines a hypersurface in $\cL$, which is a bundle of curves over $\cX$.
These curves have an interpretation in terms of the phase diagrams of a two-dimensional model in statistical physics. 

\subsection{A Poisson structure on a dimer}
We start with a consistent dimer on a torus $(\rib,\qpol)$ and we will denote the torus by $\TT=\ful{\rib}$. The mirror dimer $(\bir,\polq)$ will be in general not be a dimer
on a torus but on a surface with a different genus, which we denote by $\Surf=\ful{\bir}$.

In \cite{goncharov2013dimers} Goncharov and Kenyon study line bundles with a connection on the graph $\rib$. They put a one dimensional complex vector space on each node and to
each edge they assign an invertible map between the vector spaces of the nodes incident to it. Base change in the nodes will
transform a line bundle into an isomorphic one, so the space of line bundles $\cL(\rib)$ can be described as a quotient.
On the space $\cM(\rib):=\Maps(\rib_1,\C^*)$ we have an action of $\cG(\rib):=\Maps(\rib_0,\C^*)$ by conjugation:
$$g \cdot f: e \mapsto g_{b(e)}f(e)g_{w(e)}^{-1}.$$ 
The quotient 
\[
\cL(\rib) := \frac{\cM(\rib)}{\cG(\rib)} \cong \C^{*\# \rib_1- \#\rib_0 +1}
\]
is a $\# \rib_1- \#\rib_0 +1=\#\rib_2 + 1$-dimensional complex torus because the action has a one-dimensional kernel.

The ring of coordinates $\C[\cL(\rib)]$  is the subring of
\[
\C[\cM(\rib)]:=\C[X_e, X_e^{-1}| e \in \rib_0]
\]
generated by all cyclic walks in the graph. In this notation we assume $X_e$ represents the map from the white to the black vertex and $X_e^{-1}$ represents
the inverse map. Given an oriented walk $p=e_1\dots e_k$  in $\rib$ the product $W_p =\prod_i X_{e_i}^{\pm 1}$ (with the correct exponent for the orientation)
can be interpreted as the holonomy of the connection along $p=e_1\dots e_k$. This gives
\[
\C[\cL(\rib)] = \C[W_p|p \text{ is a cyclic walk in $\rib$}].
\]

The coordinate ring $\C[\cL(\rib)]$ can be interpreted as the group algebra of the first homology of the graph $\C[\Lambda]$ with $\Lambda=\H_1(\rib,\Z)=\H_1(\bir,\Z)$.
There is a natural map from $\Lambda$ to $\H_1(\Surf,\Z)$ coming from the embedding of the ribbon graph $\bir$ in the closed surface $\Surf$. 
The space $\H_1(\Surf,\Z)$ has an antisymmetric pairing coming from the intersection theory of curves on a surface and we can pull this back
to a pairing $\eps(-,-)$ on $\Lambda$. Note that this pairing is degenerate because $\Lambda$ has elements that are contractible curves in $\Surf$ (the faces of $\bir$, or equivalently
the zigzag walks in $\rib$). We can promote this pairing to a Poisson algebra structure.

\begin{definition}
The Poisson structure on 
$\C[\cL(\rib)]=\C[\Lambda]$ is defined by
\[
 \{W_p,W_q\} = \eps(p,q) W_pW_q,
\]
where $\eps(p,q)$ is the intersection number between $p$ and $q$ in $\Surf$. 
\end{definition}

The Poisson algebra $\C[\cL(\rib)]$ has two natural sub-Poisson algebras, the one generated by the face walks, 
$\C[W_F^{\pm 1}|F \in \rib_2]$, and the one generated by the zigzag walks $\C[W_Z^{\pm 1}|Z \in \twist{\rib}_2]$.
The former can be seen as the coordinate ring of a $\#\rib_2-1$-dimensional torus, which we will denote by $\cX(\rib)$:
\[
\C[\cX(\rib)] = \C[W_F^{\pm 1}|F \in \rib_2].
\]
The embedding $\C[\cX(\rib)] \subset \C[\cL(\rib)]$ gives a projection $\pi: \cL(\rib) \mapsto \cX(\rib)$.
The fibers of the map $\pi: \cX(\rib) \mapsto \cL(\rib)$ are $2$-dimensional 
complex tori because
\[
 \dim \cL(\rib)-\dim \cX(\rib)= \#\rib_2 + 1-(\#\rib_2 - 1)=2.
\]

The latter has dimension $\#\bir_2-1$ and it can be seen as the Poisson center of $\C[\cL(\rib)]$. Recall that the \iemph{Poisson center} consists of all
elements $z$ for which the bracket $\{z,-\}$ is zero. These elements are also called \iemph{Casimirs}.

\begin{lemma}
The Poisson center of $\C[\cL(\rib)]$ is generated by the holonomies of the zigzag walks in $\rib$:
\[
Z_P(\C[\cL(\rib)]) = \C[W_Z^{\pm 1}|Z \in \bir_2].
\]
\end{lemma}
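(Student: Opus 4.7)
The plan is to translate the computation of the Poisson center into a purely topological statement on the lattice $\Lambda=\H_1(\bir,\Z)=\H_1(\rib,\Z)$, and then read off which homology classes are radical for the intersection pairing. Since the bracket is given on monomials by $\{W_p,W_q\}=\eps(p,q)W_pW_q$ and the $W_p$ form a $\C$-basis of $\C[\Lambda]$, an element $\sum c_iW_{p_i}$ is Poisson central if and only if every $p_i$ in its support satisfies $\eps(p_i,q)=0$ for all $q\in\Lambda$. Consequently
\[
Z_P(\C[\cL(\rib)])=\C[\,\mathrm{rad}(\eps)\,],
\]
and the whole proof reduces to identifying $\mathrm{rad}(\eps)\subset\Lambda$ with the sublattice generated by the classes of the zigzag walks.

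Next I would use that the pairing $\eps$ is pulled back along the map $\iota_*\colon\Lambda\to\H_1(\Surf,\Z)$ induced by the embedding $\bir\hookrightarrow\Surf$. Because $\Surf$ is a closed oriented surface, the intersection pairing on $\H_1(\Surf,\Z)$ is non-degenerate, so $\mathrm{rad}(\eps)=\ker\iota_*$ as soon as $\iota_*$ is surjective. Surjectivity is automatic: $\bir$ is the $1$-skeleton of a CW decomposition of $\Surf$ whose $2$-cells are the faces of $\bir$ (i.e.\ the discs glued to the boundary components of the tubular neighbourhood $\fat{\bir}$), and the cellular chain complex for this decomposition presents $\H_1(\Surf,\Z)$ as the quotient of $\H_1(\bir,\Z)$ by the image of the cellular boundary $\partial_2\colon \Z\bir_2\to\Z\bir_1$.

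From this cellular description, $\ker\iota_*$ is precisely the subgroup of $\Lambda$ generated by the boundaries $\partial_2 F$ of the faces $F\in\bir_2$. But by construction of the twist, each face of $\bir$ is a cycle of the permutation $\nu'$, which traces out a zigzag walk in the original bipartite graph $\rib$; conversely, every zigzag walk $Z$ in $\rib$ arises in this way. Hence the cellular face-boundaries of $\bir$ are exactly the homology classes of the zigzag walks $Z\in\bir_2$, and
\[
\mathrm{rad}(\eps)=\ker\iota_*=\<\,[Z]\mid Z\in\bir_2\,\>.
\]
Passing back to the group algebra, this gives $Z_P(\C[\cL(\rib)])=\C[W_Z^{\pm 1}\mid Z\in\bir_2]$, as required.

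The only mildly subtle point to handle carefully is the identification in the last step: one should verify that ``face of $\bir$'' and ``zigzag walk of $\rib$'' really produce the same $1$-cycle in $\rib_1=\bir_1$ (including orientations), which is direct from the definition $\nu'=\nu^{\pm1}$ on black/white orbits. As a sanity check on dimensions, the image $\iota_*\Lambda=\H_1(\Surf,\Z)$ has rank $2g(\Surf)$, so $\mathrm{rk}\,\mathrm{rad}(\eps)=\mathrm{rk}\,\Lambda-2g(\Surf)=\#\bir_2-1$, matching the number of independent zigzag walks (the sum of all face boundaries vanishes because $\Surf$ is closed and oriented).
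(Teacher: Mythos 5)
Your proof is correct and follows essentially the same route as the paper: identify Poisson centrality with lying in the radical of $\eps$, use nondegeneracy of the intersection form on the closed surface $\Surf$ to get $\mathrm{rad}(\eps)=\ker\bigl(\H_1(\bir,\Z)\to\H_1(\Surf,\Z)\bigr)$, and identify that kernel with the span of the face boundaries of $\bir$, i.e.\ the zigzag walks of $\rib$. You merely make explicit the steps the paper compresses (the monomial-support argument, surjectivity of $\iota_*$ via the cellular chain complex, and the converse inclusion that zigzag holonomies are Casimirs), with only the cosmetic caveat that faces of $\bir$ are orbits of $\nu'\circ\eps$ rather than of $\nu'$ itself.
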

\begin{proof}
If $W_p$ sits in the Poisson center then $p$ must have trivial intersection number with all curves in $\Surf$. This means that 
$p$, viewed as a curve in $\Surf$ has trivial homology. So $\bar p$ is a combination of faces in $\bir$, which are precisely the zigzag paths in $\rib$.
\end{proof}

Because the generators of $\C[\cL(\rib)]$ are of the form $W_p$ where $p$ is
a walk in $\rib$ or $\bir$, we can look at their homology degrees in $\ful{\rib}=\TT$ and $\ful{\bir}=\Surf$. This gives us two gradings on $\C[\cL(\rib)]$:
\[
 \deg_\TT W_p = \bar p \in \H_1(\TT)=\Z^2 \text{ and } \deg_\Surf W_p = \bar p \in \H_1(\Surf)=\Z^{2g}.
\]
Because the face walks are contractible in $\TT$ and the zigzag walk are contractible in $\Surf$ we have that
\[
 \underset{\deg_\TT=0}{\C[\cL(\rib)]}=\C[W_F^{\pm 1}|F \in \rib_2]=\C[\cX(\rib)] \text{ and } \underset{\deg_\Surf=0}{\C[\cL(\rib)]}=\C[W_Z^{\pm 1}|Z \in \bir_2]=Z_P(\C[\cL(\rib)]).
\]
The $\deg_\TT$-grading gives us an action of the $2$-dimensional torus $\cT=\Hom(\H_1(\TT),\C^*)={\C^*}^2$ on $\cL(\rib)$ such that
for all homogeneous $f \in \C[\cL(\rib)]$ and $t \in \cT$ we have $f(t\cdot x) := t^{\deg_\TT f} f(x)$. If we quotient out the $\cT$-action
we get $\cX(\rib)=\cL(\rib)/\cT$ and the map $\pi: \cL(\rib) \to \cX(\rib)$ is the quotient map.

\begin{lemma}
If $\rib$ and $\rib'$ are related by a split move, there is a natural map
$\C[\rib] \to \C[\rib']$ that induces a Poisson isomorphism
\[
 \C[\cL(\rib)] \to \C[\cL(\rib')].
\]
This map is both $\deg_\TT$- and $\deg_\Surf$-graded.
\end{lemma}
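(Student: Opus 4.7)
The plan is to view the split move as a local modification of $\rib$ inside a small disk and exploit that the collapse map $\pi\colon\rib'\to\rib$ undoing it is a deformation retract, realisable both inside $\TT$ and inside the mirror surface $\Surf$. Explicitly, the split replaces a node $b$ by two nodes $b_1,b_2$ linked through a bivalent white node $w$ via two new edges $e_1,e_2$, with the original edges at $b$ distributed between $b_1$ and $b_2$; contracting the subcomplex $e_1\cup\{w\}\cup e_2$ inside a small disk recovers $\rib$. Because this subcomplex is contractible, $\pi$ induces an isomorphism $\pi_*\colon\H_1(\rib',\Z)\to\H_1(\rib,\Z)$, and this isomorphism respects the inclusions of $\rib$ and $\rib'$ into both ambient surfaces.

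Using the identification $\C[\cL(\rib)]\cong\C[\H_1(\rib,\Z)]$ recalled in the paper (and similarly for $\rib'$), the inverse of $\pi_*$ extends to a ring isomorphism $\C[\cL(\rib)]\to\C[\cL(\rib')]$. On generators it sends the holonomy $W_p$ of a cyclic walk $p$ to the holonomy of the canonical lift $p'$, obtained from $p$ by inserting a detour through the edges $e_1$ and $e_2$ whenever $p$ traverses $b$ between an edge at $b_1$ and an edge at $b_2$, and lifting tautologically otherwise. Since $\pi$ is realised as a deformation retract inside $\Surf$, it commutes with the pushforward to $\H_1(\Surf,\Z)$, so $\eps(p,q)=\eps(p',q')$ and the bracket $\{W_p,W_q\}=\eps(p,q)W_pW_q$ is preserved. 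The gradings $\deg_\TT$ and $\deg_\Surf$ are themselves the pushforwards to $\H_1(\TT,\Z)$ and $\H_1(\Surf,\Z)$, so the same argument, applied in $\TT$ and $\Surf$ respectively, shows that both are preserved.

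The one thing that genuinely needs checking, though essentially routine, is that the inserted detour does not produce spurious intersection contributions with lifts of other walks. The hard part is thus a local computation inside the disk neighbourhood of $b$: one chooses the disk small enough to meet $\rib'$ only in the subcomplex $b_1$-$w$-$b_2$, and then verifies that any intersection of two lifted walks occurring inside this disk corresponds, with matching sign, to an intersection at $b$ in $\rib$, while intersections outside the disk are unchanged by construction. Given this local verification, the isomorphism is both Poisson and bigraded as claimed.
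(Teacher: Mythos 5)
Your argument is correct, but it takes a more topological route than the paper's. The paper simply writes down the map on edge variables, sending $X_e$ to $X_{b_1}^{-1}X_e$ or $X_{b_2}^{-1}X_e$ when $e$ meets one of the two new edges $b_1,b_2$ and fixing $X_e$ otherwise, and then observes that this sends face walks to face walks and zigzag walks to zigzag walks; this makes the ``natural map $\C[\rib]\to\C[\rib']$'' of the statement completely explicit and gives both gradings at once, while the compatibility with the intersection pairing is left implicit in the remark about zigzag walks. You instead invert the collapse map $\pi\colon\rib'\to\rib$ on $\H_1$ and argue that $\pi$ is realised inside both ambient surfaces, so that the gradings and the pairing pulled back from $\H_1(\TT)$ and $\H_1(\Surf)$ are automatically preserved; your ``canonical lift'' of walks is exactly the paper's substitution read on holonomies, so the two proofs agree in substance. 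What your version buys is a conceptual explanation of why the Poisson bracket survives; what it needs spelled out is the one point you assert rather than prove, namely that the collapse is realisable inside the mirror surface $\Surf$. That holds because the split move commutes with the twist: the new white node is bivalent, so reversing its cyclic order is trivial, hence contracting the two new edges in $\twist{\rib'}$ yields exactly $\twist{\rib}$, and contracting an embedded contractible tree does not change the surface nor orientations. With that remark in place, the ``local computation'' of intersection signs in the disk that you defer is indeed routine, and your proof is complete.
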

\begin{proof}
Let $b_1,b_2$ be the two new edges in $\rib'$. The map
$$
\psi:\C[\rib] \to \C[\rib']: X_e \mapsto \begin{cases}
                                          X_{b_1}^{-1}X_e &\text{if $b_1$ and $e$ share a node in $\rib'$,}\\
                                          X_{b_2}^{-1}X_e &\text{if $b_2$ and $e$ share a node in $\rib'$,}\\
                                          X_e&\text{otherwise,}
                                         \end{cases}
$$
does the trick because it maps face walks to face walks and zigzag walks to zigzag walks. 
\end{proof}
This lemma enables us to restrict our attention to reduced dimers.

\subsection{Cluster algebras and cluster Poisson varieties}

In this section we will quickly introduce cluster algebras, which were first described by Fomin and Zelevinski in \cite{fomin2002cluster,fomin2003cluster}, and the dual version  
developed by Fock and Goncharov in \cite{fock2003cluster,fock2009cluster}, cluster Poisson varieties.

We start with an abelian group $\Lambda$ with an antisymmetric linear form $\<,\>: \Lambda \times \Lambda \to \Z$
and a basis $\{v_1,\dots, v_n\}$. To this we can associate a quiver $Q$ with vertices $v_1,\dots,v_n$ and
if $\eps_{ij}=\<v_i,v_j\>>0$ we draw $\eps_{ij}$ arrows from $j$ to $i$. 

The pair $(\Lambda, \{v_1,\dots, v_n\})$ is called a \iemph{seed} and up to isomorphism a seed is described by its quiver.

\begin{definition}\label{qmut}\label{qmutsec}
Given a quiver $Q$ without loops or $2$-cycles, we define the mutation of the quiver at vertex $v \in Q_0$ as the new quiver obtained by the following 
steps.
\begin{enumerate}
 \item Reverse all arrows in $v$.
 \item Add new arrows $m_{ab}$ for every path $ab$ of length $2$ in $Q$ that runs through $v$: $t(a)=h(b)=v$.
 \item Delete all $2$-cycles.
\end{enumerate}
\[
Q = \vcenter{\xymatrix@=.5cm{
 \vtx{}\ar@2[dr]&&\vtx{}\ar[dd]\\
&\vtx{v}\ar[ur]\ar@2[dl]&\\
 \vtx{}\ar[uu]&&\vtx{}\ar[ul]
 }}
\to
\mu_vQ = \vcenter{\xymatrix@=.5cm{
 \vtx{}\ar@3[dd]\ar@2[rr]&&\vtx{}\ar[dl]\\
&\vtx{v}\ar@2[ul]\ar[dr]&\\
 \vtx{}\ar@2[ur]&&\vtx{}\ar@2[ll]
 }}
\]
\end{definition}

We can also represent mutation on the level of the seed. 
To mutate at $v_i$ we substitute $v_i$ by the new basis element $v_i' = -v_i +\sum_{j}[\eps_{ij}]_+ v_j$ where $[a]_+$ stands for $\max(a,0)$.

Starting from a seed with quiver $Q$, we consider the function field $\C(x_1,\dots,x_n)$  and
look at the set of variables $\{x_1,\dots,x_n\}$. This set is called a cluster and if we mutate $Q$ 
at $v=v_i$ we can associate a new cluster to $\mu_v Q$ by sustituting $x_i$ for 
\[
 x_i' = \left(\prod_{j| \eps_{ij}>0}x_j^{\eps_{ij}} + \prod_{j|\eps_{ij}<0}x_j^{-\eps_{ij}}\right){x_i^{-1}}
\]
while keeping all other variables the same.
The algebra generated by all clusters that can be obtained by all possible sequences of mutation starting from $Q$
is called the cluster algebra and denoted by $A_Q$.

To this construction there exists a dual version, which constructs a Poisson variety $\mathbb{X}_Q$.
The starting point is the dual abelian group $\Lambda^\vee= \Hom(\Lambda,\Z)$ with dual basis
$\{v_1^*,\dots, v_n^*\}$. The mutation will result in a new dual basis, 
$\{{v_1'}^*,\dots, {v_n'}^*\}$ with
\[
{w'}^* \mapsto \begin{cases}
                                                         -v^* &v=w\\
                                                         w^* + [\eps_{vw}]_+ v^*&v \ne w 
\end{cases}
                                                         \]
Note that now the new basis differs in more than one place from the old.
                                                         
The algebra $\C[\Lambda^\vee_Q]\cong \C[X_v|v \in Q_0]$ comes with a Poisson bracket 
\[
 \{X_v,X_w\}= \eps_{vw}X_vX_w
\]
and this gives us a Poisson variety $\mathbb X_Q\cong \C^{*n}$.

We can upgrade the isomorphism between $\Lambda^\vee_Q$ and $\Lambda^\vee_{\mu_v Q}$ to a
birational map between the Poisson varieties:
\[
\phi: \C(\mathbb{X}_{\mu_v Q}) \to \C(\mathbb{X}_{Q}): X_{w'} \mapsto
\begin{cases}
X_{v}^{-1} &w=v\\
X_{w}(1+ X_{v}^{\mathrm{sgn}(\eps_{vw})})^{\eps_{vw}} &w\ne v.
                                                        \end{cases}\]
We can use this birational map to glue $\mathbb{X}_Q$ and $\mathbb{X}_{\mu_vQ}$ together.
If we do this for all possible mutations we get a Poisson variety $\mathbb X$.

\subsection{Gluing dimer Poisson varieties}

If we go back to the dimer case we see that the Poisson variety $\cX(\rib)$ that we defined looks very much like $\mathbb{X}_Q$.
The generators $W_F$ correspond to the faces of $\rib$ and hence to the vertices of $\qpol$. The intersection form between
the face cycles is the same as the bilinear form coming from the quiver in the cluster algebra setting.
The only difference is that unlike in the cluster setting the $W_F$ satisfy an extra relation: $\prod_F W_F=1$. This means
that $\cX(\rib)$ is a hypersurface in $\mathbb{X}_{\qpol}$.
In the previous section we saw that we could glue the varieties $\mathbb{X}_{\qpol}$ together to a cluster variety. In this section we will do a similar thing.
We will glue the projections $\cL(\rib) \to \cX(\rib)$ together to obtain a bundle $\cL \to \cX$.

In the dimer graph picture we mutate a face $F$ by means of a spider move. This spider move
only affects $F$ and its four adjacent faces  $F_1,\dots, F_4$. 
\begin{center}
\begin{tikzpicture}
\begin{scope}[scale=.66]
\begin{scope}
\draw (0,2.3) node {$\rib$};
\draw[dotted] (1.5,0) arc (0:360:1.5);
\draw (-1.5,0) to node [rectangle,fill=white,inner sep=1pt] {{\tiny f}} (-.5,0) to node [rectangle,fill=white,inner sep=1pt] {{\tiny b}} (0,1.5) to node [rectangle,fill=white,inner sep=1pt] {{\tiny a}} (.5,0);
\draw[rotate=180] (-1.5,0) to node [rectangle,fill=white,inner sep=1pt] {{\tiny e}} (-.5,0) to node [rectangle,fill=white,inner sep=1pt] {{\tiny d}} (0,1.5) to node [rectangle,fill=white,inner sep=1pt] {{\tiny c}} (.5,0);
\draw (-1.5,0) node[circle,draw,fill=white,inner sep=1pt] {{\tiny 3}};
\draw (1.5,0) node[circle,draw,fill=white,inner sep=1pt] {{\tiny 1}};
\draw (0,-1.5) node[circle,draw,fill=white,inner sep=1pt] {{\tiny 4}};
\draw (0,1.5) node[circle,draw,fill=white,inner sep=1pt] {{\tiny 2}};
\draw (-.5,0) node[shape=circle,fill=gray!0,inner sep=0pt] {$\bullet$};
\draw (.5,0) node[shape=circle,fill=gray!0,inner sep=0pt] {$\bullet$};
\draw (0,0) node[shape=circle,fill=gray!0,inner sep=0pt] {$\scriptstyle{F}$};
\draw (45:1) node[shape=circle,fill=gray!0,inner sep=0pt] {$\scriptstyle{F_1}$};
\draw (135:1) node[shape=circle,fill=gray!0,inner sep=0pt] {$\scriptstyle{F_2}$};
\draw (225:1) node[shape=circle,fill=gray!0,inner sep=0pt] {$\scriptstyle{F_3}$};
\draw (315:1) node[shape=circle,fill=gray!0,inner sep=0pt] {$\scriptstyle{F_4}$};
\end{scope}
\draw (2.5,0) node {$\tot$};
\begin{scope}[xshift=5cm]
\draw (0,2.3) node {$\mu_F\rib$};
\draw[dotted] (1.5,0) arc (0:360:1.5);
\draw[rotate=90] (-1.5,0) to node [rectangle,fill=white,inner sep=1pt] {{\tiny F}} (-.5,0) to node [rectangle,fill=white,inner sep=1pt] {{\tiny C}} (0,1.5) to node [rectangle,fill=white,inner sep=1pt] {{\tiny B}} (.5,0);
\draw[rotate=270] (-1.5,0) to node [rectangle,fill=white,inner sep=1pt] {{\tiny E}} (-.5,0) to node [rectangle,fill=white,inner sep=1pt] {{\tiny A}} (0,1.5) to node [rectangle,fill=white,inner sep=1pt] {{\tiny D}} (.5,0);
\draw (0,-.5) node[shape=circle,fill=gray!0,inner sep=0pt] {$\bullet$};
\draw (0,.5) node[shape=circle,fill=gray!0,inner sep=0pt] {$\bullet$};
\draw (-1.5,0) node[circle,draw,fill=white,inner sep=1pt] {{\tiny 3}};
\draw (1.5,0) node[circle,draw,fill=white,inner sep=1pt] {{\tiny 1}};
\draw (0,-1.5) node[circle,draw,fill=white,inner sep=1pt] {{\tiny 4}};
\draw (0,1.5) node[circle,draw,fill=white,inner sep=1pt] {{\tiny 2}};
\draw (0,0) node[shape=circle,fill=gray!0,inner sep=0pt] {$\scriptstyle{F'}$};
\draw (45:1) node[shape=circle,fill=gray!0,inner sep=0pt] {$\scriptstyle{F'_1}$};
\draw (135:1) node[shape=circle,fill=gray!0,inner sep=0pt] {$\scriptstyle{F'_2}$};
\draw (225:1) node[shape=circle,fill=gray!0,inner sep=0pt] {$\scriptstyle{F'_3}$};
\draw (315:1) node[shape=circle,fill=gray!0,inner sep=0pt] {$\scriptstyle{F'_4}$};
\end{scope}
\begin{scope}[xshift=8.5cm,yshift=-1.5cm]
\draw (1.5,3.8) node {$\bir$};
\begin{scope}[scale=.75]
\draw[dotted] (0,0)--(4,0)--(4,4)--(0,4)--(0,0);
\draw[dashed,-latex] (1,-.5) node[below] {{$\scriptstyle{F}$}} --(1,4.5);
\draw[dashed,-latex] (-.2,.3) node[left] {{$\scriptstyle{F_1}$}} arc (130:50:3.5);
\draw[dashed,-latex] (-.2,2.3) node[left] {{$\scriptstyle{F_3}$}} arc (130:50:3.5);
\draw[dashed,-latex] (4.2,3.6) node[right] {{$\scriptstyle{F_2}$}} --(-.2,1.4);
\draw[dashed,-latex] (4.2,1.6) node[right] {{$\scriptstyle{F_4}$}} --(-.2,-.6);
\draw (2,1) node[shape=circle,fill=gray!0,inner sep=0pt] {$\bullet$};
\draw (2,3) node[shape=circle,fill=gray!0,inner sep=0pt] {$\bullet$};
\draw (2,1) to node [rectangle,fill=white,inner sep=1pt] {{\tiny a}} (0,1);
\draw (0,1) to node [rectangle,fill=white,inner sep=1pt] {{\tiny b}} (2,3);
\draw (2,3) to node [rectangle,fill=white,inner sep=1pt] {{\tiny c}} (0,3);
\draw (0,3) -- (1,4) node [rectangle,fill=white,inner sep=1pt] {{\tiny d}};
\draw (2,1) -- (1,0) node [rectangle,fill=white,inner sep=1pt] {{\tiny d}};
\draw (2,3) to node [rectangle,fill=white,inner sep=1pt] {{\tiny f}} (4,3);
\draw (2,1) to node [rectangle,fill=white,inner sep=1pt] {{\tiny e}} (4,1);
\draw (4,1) node[circle,draw,fill=white,inner sep=1pt] {{\tiny 1}};
\draw (4,3) node[circle,draw,fill=white,inner sep=1pt] {{\tiny 3}};
\draw (0,1) node[circle,draw,fill=white,inner sep=1pt] {{\tiny 2}};
\draw (0,3) node[circle,draw,fill=white,inner sep=1pt] {{\tiny 4}};
\end{scope}
\end{scope}
\draw (12.5,0) node {$\tot$};
\begin{scope}[xshift=13.5cm,yshift=-1.5cm]
\draw (1.5,3.8) node {$\twist{\mu_F\rib}$};
\begin{scope}[scale=.75]
\draw[dotted] (0,0)--(4,0)--(4,4)--(0,4)--(0,0);
\draw[dashed,latex-] (3,-.5) node[below] {{$\scriptstyle{F'}$}} --(3,4.5);
\draw[dashed,-latex] (-.2,.3) node[left] {{$\scriptstyle{F'_1}$}} arc (130:50:3.5);
\draw[dashed,-latex] (-.2,2.3) node[left] {{$\scriptstyle{F'_3}$}} arc (130:50:3.5);
\draw[dashed,-latex] (4.2,3.4) node[right] {{$\scriptstyle{F'_2}$}} --(3,4);
\draw[dashed,-latex] (3,0) --(-.2,1.6);
\draw[dashed,-latex] (4.2,1.4) node[right] {{$\scriptstyle{F'_4}$}} --(-.2,3.6);
\draw (2,1) node[shape=circle,fill=gray!0,inner sep=0pt] {$\bullet$};
\draw (2,3) node[shape=circle,fill=gray!0,inner sep=0pt] {$\bullet$};
\draw (2,1) to node [rectangle,fill=white,inner sep=1pt] {{\tiny E}} (0,1);
\draw (4,1) to node [rectangle,fill=white,inner sep=1pt] {{\tiny D}} (2,3);
\draw (2,3) to node [rectangle,fill=white,inner sep=1pt] {{\tiny F}} (0,3);
\draw (4,3) -- (3,4) node [rectangle,fill=white,inner sep=1pt] {{\tiny B}};
\draw (2,1) -- (3,0) node [rectangle,fill=white,inner sep=1pt] {{\tiny B}};
\draw (2,3) to node [rectangle,fill=white,inner sep=1pt] {{\tiny C}} (4,3);
\draw (2,1) to node [rectangle,fill=white,inner sep=1pt] {{\tiny A}} (4,1);
\draw (4,1) node[circle,draw,fill=white,inner sep=1pt] {{\tiny 1}};
\draw (4,3) node[circle,draw,fill=white,inner sep=1pt] {{\tiny 3}};
\draw (0,1) node[circle,draw,fill=white,inner sep=1pt] {{\tiny 2}};
\draw (0,3) node[circle,draw,fill=white,inner sep=1pt] {{\tiny 4}};
\end{scope}
\end{scope}
\end{scope}
\end{tikzpicture}
\end{center}
If we look at the mirror version, the face walk $F$ becomes a zigzag walk and the mutation only changes a tubular neighborhood of this 
zigzag walk. Above we represented this neighborhood by a dotted square of which the top and bottom sides are identified.
The new mirror dimer is embedded in the same punctured surface as the original mirror dimer. 
This gives us an isomorphism 
\[
 \phi: \Lambda_{\mu_F\rib}  \to \Lambda_{\rib} 
\]
that is compatible with the intersection pairing.

\begin{lemma}[\cite{goncharov2013dimers} lemma 4.1]
If $\rib$ and $\mu_F\rib$ are related by mutation at the face $F$ and
all faces in the mutation picture are different, then 
the isomorphism $\phi : \Lambda_{\rib}\to \Lambda_{\mu_F\rib}$ 
maps every face walk to its corresponding face walk in the mutated dimer 
except 
\[
F' \mapsto -F,~F'_i \mapsto \begin{cases}
F_i &i=1,3\\
F_i + F&i=2,4
                           \end{cases}
\]
This isomorphism preserves the intersection pairing.
\end{lemma}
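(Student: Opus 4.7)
The plan is to exploit the fact that both the spider move on $\rib$ and its mirror counterpart on $\bir$ are strictly local: they modify only a disc neighborhood of the face $F$ (respectively of the zigzag walk corresponding to $F$). All face walks of $\mu_F \rib$ other than $F', F'_1, F'_2, F'_3, F'_4$ correspond to cycles lying outside this disc, where $\mu_F \rib$ and $\rib$ (and their mirrors) are canonically identified; these walks are therefore matched tautologically by $\phi$. This reduces the lemma to a finite, local computation concerning the five exceptional walks plus a separate check of the intersection form.

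First I would set up the local model precisely, labelling and orienting the six boundary edges of $F$ in $\rib$ as $a,b,c,d,e,f$ (with $abcd$ the boundary of $F$) and the corresponding edges in $\mu_F\rib$ as $A,B,C,D,E,F$, exactly as in the excerpt's diagram. The identification of $\bir$ and $\twist{\mu_F\rib}$ outside the disc, extended by any homeomorphism of the disc itself, is a homeomorphism of $\Surf$; it is this homeomorphism that defines $\phi$ on $H_1$ of the underlying graphs. Because intersection numbers in $\H_1(\Surf,\Z)$ are homotopy invariants and $\phi$ is induced by a homeomorphism of $\Surf$, the intersection pairing is preserved automatically, handling the final clause of the lemma.

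Next I would read off the five exceptional relations by tracing the boundary walks directly in the local picture. The central face $F'$ of $\mu_F\rib$ traverses precisely the boundary of $F$, but with reversed orientation, because the spider move rotates the two interior black nodes by $90^\circ$ and swaps the roles of "inside" and "outside"; hence $\phi(F') = -F$. The faces $F'_1$ and $F'_3$, incident to $F'$ across the pair of edges whose endpoints do not change their local adjacency pattern, have boundary walks that agree edge by edge with those of $F_1$ and $F_3$ under the relabelling, giving $\phi(F'_i) = F_i$ for $i = 1,3$. For $i = 2,4$ the new boundary walk of $F'_i$ runs around the reconfigured central region, and a direct tracing shows it equals the boundary of $F_i$ concatenated with the boundary of $F$; hence $\phi(F'_i) = F_i + F$.

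The main obstacle is the orientation bookkeeping in the last step: it is easy to get a sign wrong when comparing the cyclic order of edges around $F'_2$ and $F'_4$ between the two diagrams, since the spider move locally swaps which edges are "incoming" at the two black nodes. I would resolve this by fixing the ribbon-graph convention once (anticlockwise around black nodes, with edges oriented from white to black as in the excerpt), then explicitly listing the half-edges traversed by the boundary walks of $F_i$ and $F'_i$ and verifying that their difference in $\Z\rib_1$ equals $+F$ rather than $-F$. The corresponding sign for $F'$ then serves as an internal consistency check, since $\phi$ must commute with the relation $\sum_{\text{faces}} F = 0$ in the face subring.
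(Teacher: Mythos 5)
Your proposal is correct and follows essentially the same route as the paper, whose proof is simply ``this follows immediately from the picture above'': the mutation only alters a tubular neighbourhood of the zigzag walk corresponding to $F$ in the mirror surface, so all other face walks are identified tautologically, the five exceptional relations are read off by tracing the curves $F', F'_1,\dots,F'_4$ against $F, F_1,\dots,F_4$ in that local picture, and the pairing is preserved because $\phi$ is induced by an identification of the common punctured surface carrying the intersection form. Your extra care with orientations and the consistency check via $\sum_{\text{faces}} F = 0$ just makes explicit what the paper leaves to the figure.
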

\begin{proof}
This follows immediately from the picture above.
\end{proof}

Because $\Lambda_{\rib}$ and $\Lambda_{\mu_F\rib}$ are isomorphic, there is a corresponding isomorphism between the Poisson algebras $\phi:\C(\cL(\mu_F\rib)) \to \C(\cL(\rib))$.

Given a vector $v \in \Lambda_{\rib}$ we can also define a Poisson automorphism of $\C(\cL(\rib))$
\[
 \mu_v^* : \C(\cL(\rib)) \to \C(\cL(\rib)): X_u \mapsto X_u(1+X_v)^{\eps(u,v)}
\]
If we compose these two morphisms we get a morphism that looks like the mutation operation for cluster Poisson varieties.

\begin{definition}
Given a mutation between $\rib$ and $\mu\rib$ we define the \iemph{cluster transformation} as
\[
 \phi_\mu : \C(\mu\cL(\rib)) \to \C(\cL(\rib)): X_{u'} \mapsto  X_{\phi(u')}(1+X_v)^{\eps(\phi(u'),v)}
\]
where $u' \in \Lambda_{\mu\rib}$.
\end{definition}

\begin{lemma}[\cite{goncharov2013dimers} Lemma 4.8]
If $\rib$ and $\mu_F\rib$ are related by mutation at the face $F$ and all faces in the spider move are different,
then $\phi_\mu$ maps the holonomy of every face walk to the holonomy of
its corresponding face walk in the mutated dimer
except
\[
W_F' \mapsto W_{F}^{-1},~W_{F_i'}\mapsto 
\begin{cases}
W_{F_i}(1+W_{F})&i=1,3\\
W_{F_i}(1+W_{F}^{-1})^{-1}&i=2,4
\end{cases}
\]
and $\phi$ is compatible with the Poisson structure.
\end{lemma}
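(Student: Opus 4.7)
The plan is to decompose the cluster transformation as $\phi_\mu = \mu_F^* \circ \phi_*$, where $\phi_* : X_{u'} \mapsto X_{\phi(u')}$ is the algebra map induced by the homology identification $\phi : \Lambda_{\mu_F\rib} \to \Lambda_\rib$ of the previous lemma, and $\mu_F^* : X_u \mapsto X_u(1 + X_F)^{\eps(u,F)}$ is the standard cluster Poisson mutation at $v = F$. Unpacking this composition recovers the defining formula for $\phi_\mu$, and reduces the four claimed identities to substituting the homology images $F' \mapsto -F$, $F'_{1,3} \mapsto F_{1,3}$, $F'_{2,4} \mapsto F_{2,4} + F$ from the previous lemma and reading off the correct power of $(1 + W_F)$.

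First I would dispatch the trivial cases. Antisymmetry of $\eps$ gives $\eps(-F, F) = 0$, so the exponent vanishes and $W_{F'} \mapsto W_F^{-1}$. For any face $G'$ outside the spider-move disk the previous lemma gives $\phi(G') = G$; since $G$ shares no edges of $\rib$ with $F$, the signed shared-arrow interpretation of $\eps$ on $\Lambda \hookrightarrow \H_1(\Surf)$ yields $\eps(G, F) = 0$, so $W_{G'} \mapsto W_G$.

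Next I would treat the four adjacent faces. Reading the shared-arrow signs directly off the spider-move picture shows $\eps(F_i, F) = +1$ for $i = 1, 3$ and $\eps(F_i, F) = -1$ for $i = 2, 4$. The case $i = 1, 3$ is then immediate. For $i = 2, 4$, bilinearity gives $\eps(F_i + F, F) = \eps(F_i, F) = -1$, so the image is $W_{F_i} W_F (1 + W_F)^{-1}$, which the algebraic identity $W_F/(1 + W_F) = 1/(W_F^{-1} + 1)$ rewrites as $W_{F_i}(1 + W_F^{-1})^{-1}$ as claimed.

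For Poisson compatibility the two factors behave separately. The map $\phi_*$ is Poisson because the previous lemma asserts that $\phi$ preserves the intersection pairing. The map $\mu_F^*$ is Poisson as stated just before the lemma; a one-line verification is that a direct Leibniz expansion of
\[
\{X_u(1 + X_F)^{\eps(u,F)},\, X_w(1 + X_F)^{\eps(w,F)}\}
\]
produces two cross-terms whose coefficients $\eps(u,F)\eps(F,w) + \eps(w,F)\eps(u,F)$ cancel by antisymmetry of $\eps$, leaving precisely $\eps(u,w)\, X_u X_w (1 + X_F)^{\eps(u,F) + \eps(w,F)} = \mu_F^*(\{X_u, X_w\})$. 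The main obstacle in the argument is the sign bookkeeping for $\eps(F_i, F)$, which requires a consistent orientation convention for face walks as cycles on $\Surf$; once that is fixed, the rest is a routine substitution and a single algebraic identity.
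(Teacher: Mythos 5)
Your proposal is correct and is exactly the computation the paper has in mind: the paper's own proof is just the remark ``This is an easy computation,'' and your unpacking of $\phi_\mu$ as $\mu_F^*\circ\phi_*$, substitution of the images $F'\mapsto -F$, $F'_{1,3}\mapsto F_{1,3}$, $F'_{2,4}\mapsto F_{2,4}+F$ from the preceding lemma, and the sign bookkeeping $\eps(F_{1,3},F)=+1$, $\eps(F_{2,4},F)=-1$ together with the identity $W_F(1+W_F)^{-1}=(1+W_F^{-1})^{-1}$ is precisely that computation, with the Poisson compatibility handled as in the text (and your Leibniz cancellation check is a valid explicit verification).
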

\begin{proof}
This is an easy computation.
\end{proof}
Note that we can restrict $\phi_\mu$ to $\C(W_F|F\in \rib_F)=\C(\cX(\rib))$ and the $\deg_\TT$-graded ring $\C(\cX(\rib)[W_Z^{\pm 1}|Z \in \bir_2]$.
The last restriction turns $\phi_\mu$ into a $\deg_\TT$-graded homomorphism because $\phi$ identifies identical curves on $\ful{\rib}=\ful{\mu\rib}$
and $\mu_v$ multiplies each generator with a degree zero element. This implies that $\phi_\mu$ defines a $\cT$-equivariant birational morphism
between $\cL(\rib)$ and $\cL(\mu\rib)$.

Using the maps $\phi_\mu$ we can construct the major object of study of the $A$-model.
\begin{definition}
The Poisson bundle $\pi:\cL\to \cX$ associated to a mutation class of reduced dimers is the colimit of all mutation maps
\[
 \xymatrix{
 \cL(\rib)\ar@{->>}[d]\ar[r]^{\phi_\mu}&\cL(\mu\rib) \ar@{->>}[d]\\
 \cX(\rib)\ar[r]&\cL(\mu\rib)
 }
\]
The space $\cL$ has an action of the $2$-dimensional torus $\cT$ and $\pi:\cL\to \cX$ is the quotient map for this action.
\end{definition}

\subsection{The partition function}

\newcommand{\Part}{\mathcal P}
\begin{definition}
Let $\cP$ be a perfect matching. The \iemph{weight function} of the perfect matching is the map
\[
 W_\cP : \cM \to \C: (x_e)_{e \in \rib_1} \mapsto \prod_{e \in \cP} x_e.
\]
\end{definition}
\newcommand{\LL}{l}
\newcommand{\Kast}{\mathrm{Kast}}

We can package the weights of all perfect matchings into one nice function, the partition function.
To do this we start with a \iemph{Kasteleyn weight system}. 
This is a map $\kappa: \rib_1 \to \C$ such that the holonomy of a face $F$ in $\rib_2$ is $(-1)^{\ell(F)/2+1}$ if
the face walk has length $\ell(F)$ and $\pm 1$ around noncontractible loops.
Note that $\kappa$ can also be considered as a line bundle in $\cL(\rib)$ and up to isomorphism of line bundles there are $2g$ choices for $\kappa$ 
because $\dim \cL(\rib)= \# \rib_2 -1 + 2g$.

Given a map $\LL:\rib_1 \to \C$, the \iemph{Kasteleyn operator}
associated to $\LL$ is the linear map
\[
 \Kast(\LL) : \C^{\# \rib_0^\bullet} \to \C^{\# \rib_0^\circ} : b \mapsto \sum_{e:b(e)=b} \kappa_e\LL_e w.
\]
\begin{theorem}[Kasteleyn \cite{kasteleyn,tesler2000matchings}]
\[
 \det \Kast(\LL) = \sum_{\PM \in \PMs}\pm W_\PM(\LL)
\]
where the sign of $W_\PM$ only depends on the location of $\PM$ in the matching polygon. 
\end{theorem}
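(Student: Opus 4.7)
The plan is to begin with the Leibniz expansion of the determinant. A permutation contributes a nonzero term to $\det \Kast(\LL)$ precisely when it pairs each black node with an adjacent white node, and such bijections are in natural correspondence with perfect matchings. Writing $\sigma_\cP$ for the permutation attached to $\cP$, one obtains
$$\det \Kast(\LL) = \sum_{\cP \in \PMs(\rib)} \epsilon(\cP)\, W_\cP(\LL), \qquad \epsilon(\cP) := \mathrm{sgn}(\sigma_\cP)\prod_{e \in \cP}\kappa_e \in \{\pm 1\}.$$
What remains is to show that, after fixing a reference matching $\cP_0$, the ratio $\epsilon(\cP)/\epsilon(\cP_0)$ depends only on the location of $\cP$ in the matching polygon $\MP(\qpol)$.

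For any $\cP$, the symmetric difference $\cP \triangle \cP_0$ is a disjoint union of simple cycles $C_1,\dots,C_r$ of $\rib$, alternating between $\cP$- and $\cP_0$-edges. Since $\rib$ is bipartite each $C_j$ has even length $2k_j$, and the permutation $\sigma_\cP \sigma_{\cP_0}^{-1}$ restricts on the black nodes of $C_j$ to a $k_j$-cycle. Hence
$$\frac{\epsilon(\cP)}{\epsilon(\cP_0)} = \prod_{j=1}^{r}\Bigl((-1)^{k_j-1}\,\mathrm{hol}_\kappa(C_j)\Bigr),$$
where $\mathrm{hol}_\kappa(C_j)$ is the Kasteleyn holonomy around $C_j$.

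The heart of the argument is the analysis of one such factor for a contractible cycle $C$ of length $2k$. Then $C$ bounds a disc $D$, and $\mathrm{hol}_\kappa(C)$ equals the product of the face holonomies of the faces in $D$. If those faces have half-lengths $f_1,\dots,f_m$, the Kasteleyn condition gives $\mathrm{hol}_\kappa(C) = \prod_i (-1)^{f_i+1}$. A careful parity count on $D$, combining $\sum 2f_i = 2k + 2\cdot\#\{\text{interior edges}\}$, Euler's relation $V-E+F=1$ on the disc, and the observation that the interior vertices of $D$ are matched by $\cP_0$ amongst themselves (so their number is even), yields $\sum_i(f_i+1) \equiv k-1 \pmod 2$. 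Thus $\mathrm{hol}_\kappa(C)=(-1)^{k-1}$ and the contractible-cycle factor equals $+1$. This bookkeeping step — lining up the permutation sign with the parity of enclosed faces via bipartiteness — is where I expect the main obstacle to lie.

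Once the contractible case is settled, $\mathrm{sign}(C) := (-1)^{k-1}\mathrm{hol}_\kappa(C)$ descends to a homomorphism $\chi : \H_1(\TT,\Z/2) \to \{\pm 1\}$: two cycles representing the same class in $\H_1(\TT,\Z/2)$ differ by a mod-$2$ boundary, which decomposes into contractible pieces each contributing $+1$. Therefore $\epsilon(\cP)/\epsilon(\cP_0) = \chi([\cP-\cP_0]\bmod 2)$. But the location of $\cP$ in $\MP(\qpol)$ is, by definition, the class of $\cP-\cP_0$ in $\H^1(\TT,\Z) \cong \H_1(\TT,\Z) \cong \Z^2$; its reduction mod $2$, and hence the sign of $W_\cP$, depends only on this location.
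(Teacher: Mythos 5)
The paper does not actually prove this theorem — it is quoted with citations to Kasteleyn and Tesler — so your argument has to stand on its own. Most of it does: the Leibniz expansion, the identification of the nonvanishing terms with perfect matchings, the decomposition of $\cP\triangle\cP_0$ into disjoint alternating cycles, the factor $(-1)^{k_j-1}$ from the induced cyclic permutation, and the disc computation (including the observation that the interior vertices are matched among themselves, hence even in number) are all correct and constitute the standard proof.

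The gap is in your globalization step. A union of disjoint cycles that is null-homologous (over $\Z$ or mod $2$) on the torus need \emph{not} decompose into contractible pieces: the symmetric difference of two matchings at the same lattice point of $\MP(\qpol)$ can consist, for example, of two disjoint noncontractible cycles whose homology classes cancel, and neither of them is contractible, so the reduction to your disc lemma fails as stated. Moreover, the disc lemma's conclusion ``$+1$'' is not a property of arbitrary contractible cycles — your parity count used that the interior vertices are matched among themselves, i.e.\ it used the matching structure — so it could not be quoted for auxiliary ``pieces'' even if a decomposition existed. (A smaller point: the sign function is in general a quadratic form on $\H_1(\TT,\Z/2\Z)$ with respect to the intersection pairing, not a homomorphism, although for the disjoint cycles arising here this does not matter.) The repair is to run your disc count directly on the subsurface $S\subset\TT$ bounded mod $2$ by the cycles of $\cP_1\triangle\cP_2$ (take one side of the checkerboard colouring of the complement): the product of the face holonomies of the faces in $S$ equals the product of the cycle holonomies, and the same Euler-characteristic bookkeeping gives $\prod_j(-1)^{k_j-1}\mathrm{hol}_\kappa(C_j)=(-1)^{r+\chi(S)+V_{\mathrm{int}}}$, where $r$ is the number of boundary cycles. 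Here $V_{\mathrm{int}}$ is even by exactly your matching argument applied to each component of $S$, and $r+\chi(S)$ is even because every component is an orientable surface with $\chi=2-2g-(\text{number of its boundary circles})$. With this step in place your argument is complete, and it in fact yields the sharper classical statement that the sign depends only on the location modulo $2$, which implies the theorem as stated.
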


\begin{definition}
The \iemph{partition function} $\Part: \cM \to \C$ is the determinant of the Kasteleyn operator:
\[
 \Part(\LL) = \det \Kast(\LL). 
\]
If $\qpol$ is isoradially embedded and $\cP_\theta$ is a corner matching then we define the deformed partition function
\[
 \Part_\theta = \frac{\Part}{W_{\cP_\theta}}.
\]
(In \cite{goncharov2013dimers} the deformed partition function is defined via a function $\Phi_\alpha$, which corresponds in our terminology to a corner matching.)
\end{definition}

Each weight function transforms under the action of the group $\cG$ with the same character
\[
 g \cdot W_{\cP} = \prod_{b \in \rib_0^\bullet} g_{b}\prod_{w \in \rib_0^\bullet} g_{w} W_{\cP},
\]
so the partition function is a semi-invariant. The deformed partition function is a quotient of two 
semi-invariants with the same character, so it is an invariant: $\Part_\theta \in \C[\cL(\rib)]$.

Using the $\deg_\TT$-grading we can split $\Part_\theta$ into homogeneous components. Each homogeneous component
will contain the contributions of all perfect matchings on a fixed lattice point in the matching polygon.
By Kasteleyn's theorem they all have the same sign so we can write
\[
\Part_\theta = \sum_{a \in \Z^2} \pm H_{a,\theta}
\]
where $H_{a,\theta}= \sum_{\PM \in \PMs_a} \frac {W_{\PM}}{W_{\cP_\theta}}$ and $\PMs_a$ stands for the set of all matchings on lattice point $a$.

Now let $(\rib,\qpol)$ be a consistent dimer and $(\mu\rib,\mu\qpol)$ its mutation at vertex $v$/face $F$. 
Because of lemma \ref{mutpres}, we know that $\mu$ preserves isoradial embeddings we can assume that both $\qpol$ and $\mu\qpol$ are isoradially embedded.
This gives us a way to identify the corner matchings $\cP_\theta \tot \cP_\theta'$ and hence also the deformed partition functions
$\Part_\theta \tot \Part_\theta'$. Finally because mutation preserves the matching polygon, it is possible to identify the 
graded parts $H_{a,\theta} \tot H_{a,\theta}'$.

\begin{theorem}[Goncharov-Kenyon \cite{goncharov2013dimers} theorem 4.7]
The map $\phi_\mu: \C(\cL(\rib))\to \C(\cL(\mu\rib))$ identifies graded components of the deformed partition function:
\[
 \phi_\mu(H_{a,\theta}) = H_{a,\theta}'.
\]
\end{theorem}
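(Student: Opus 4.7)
The plan is to reduce the global identity $\phi_\mu(H_{a,\theta}) = H_{a,\theta}'$ to a purely local computation supported on a disk $D$ around the mutated face $F$. Outside $D$, the graphs $\rib$ and $\mu\rib$ agree edge-for-edge, so perfect matchings restricted to $\rib\setminus D$ (respectively $\mu\rib\setminus D$) are in canonical bijection. I will partition $\PMs(\rib)$ and $\PMs(\mu\rib)$ by their \emph{boundary state} $\sigma$, that is, the collection of edges crossing $\partial D$ that participate in the matching, plus the coloring information at the four nodes on $\partial D$ that the matching leaves unmatched on the inside. For each admissible $\sigma$, write $Z_\rib(\sigma) \in \C[\cL(\rib)]$ for the sum of $W_\PM / W_{\cP_\theta}$ over matchings of $\rib$ with boundary state $\sigma$, and analogously $Z_{\mu\rib}'(\sigma)$ on the mutated side. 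The deformed partition functions then factor as $\Part_\theta = \sum_\sigma Z_\rib(\sigma)\cdot Y(\sigma)$ with $Y(\sigma)$ a common ``outside'' factor that is literally identified by $\phi_\mu$ on the overlap of coordinate rings.

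Next I would carry out the local computation. In the reduced case, $D$ contains only the central face $F$, its four bounding edges and the two interior black (resp.\ white) nodes, so $Z_\rib(\sigma)$ has at most two or three monomial terms for each of the finitely many $\sigma$. A direct enumeration shows the local identity
\[
\phi_\mu\bigl(Z_{\mu\rib}'(\sigma)\bigr) \;=\; Z_\rib(\sigma)
\]
in every case: the interior matchings of $\mu\rib$ over $\sigma$ are in bijection with pairs (an interior matching of $\rib$ over $\sigma$, a choice of ``using'' or ``not using'' the face variable $W_F$), and the resulting sum collapses to the prescribed $(1+W_F)^{\pm 1}$ factor from the previous lemma. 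The $(1+W_F)$ appears precisely for boundary states whose completion in $\rib$ was independent of which edges of the central quadrilateral were used, while the $(1+W_F^{-1})^{-1}$ arises from the ``forced'' completions that toggle across the spider. Crucially, the ratio is the same factor that already matches the transformation law for $W_{F_i'}$ under $\phi_\mu$, so $Y(\sigma)$ is respected.

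Finally I would check the grading compatibility. Because $D$ is a disk on $\TT$, any two matchings agreeing on $\rib \setminus D$ represent the same class in $H^1(\TT,\Z) = \Z^2$ relative to $\cP_\theta$. Hence the partition by $\sigma$ is compatible with the $\deg_\TT$-grading, and the local identity above transfers verbatim to each graded piece, yielding $\phi_\mu(H_{a,\theta}) = H_{a,\theta}'$. The main obstacle is the bookkeeping in the enumeration: one must handle the Kasteleyn sign consistently across the spider and verify that the choice of corner matching $\cP_\theta$ (used both to normalize and, via the identification from Lemma~\ref{mutpres}, to identify corner matchings on both sides) is compatible with the local bijection. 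A secondary subtlety is the degenerate case in which two of the faces $F_1,\dots,F_4$ coincide on $\TT$, where $\eps(\phi(u'),v)$ can be larger than one and the factor $(1+W_F)^{\eps}$ must be recovered from a local cancellation; this case reduces to the generic one by lifting to an appropriate finite Galois cover and descending.
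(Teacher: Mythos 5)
Your plan is essentially the paper's own argument: the proof sketched here (following Goncharov--Kenyon, Theorem 4.7) is exactly a local bookkeeping decomposition of $H_{a,\theta}$ according to how matchings in the mutation picture connect its boundary nodes to the outside, which is your partition by boundary states $\sigma$, followed by a local identity producing the $(1+W_F)^{\pm1}$ factors and the observation that the grading is unaffected since the modification is supported in a disk. The only caveats are minor and at the level of the bookkeeping you already flag: your $Z_\rib(\sigma)$ should be the inside (local) factor rather than the full weight sum for the claimed factorization $\Part_\theta=\sum_\sigma Z_\rib(\sigma)Y(\sigma)$ to make sense, and Kasteleyn signs are not actually needed since $H_{a,\theta}$ is defined sign-free.
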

\begin{proof}
The proof consists of a subtle bookkeeping argument that splits $H_{a,\theta}$ into parts $p_{ij}$ coming from matchings that connect
the nodes $i,j$ in the mutation picture to nodes outside the picture. For the details of this proof we refer to \cite{goncharov2013dimers}. 
\end{proof}

\begin{example}\label{sppmut}
Start with the suspended pinchpoint and mutate at vertex $2$.
\begin{center}
\begin{tikzpicture}
\begin{scope}[scale=1]
\draw[thick] (2,.5)--(1.5,0);
\draw[thick] (2,.5)--(1.5,1);
\draw[thick] (1.5,2)--(1.5,1);
\draw[thick] (1.5,2)--(1,2.5);
\draw[thick] (1,2.5)--(1.5,3);
\draw[thick] (1,2.5)--(0,2.3);
\draw[thick] (3,2.3)--(1.5,2);
\draw[thick] (0,1.5)--(1.5,2);
\draw[thick] (3,1.5)--(1.5,1);
\draw[thick] (2,.5)--(3,.7);
\draw[thick] (1.5,1)--(0,.7);
\draw (2,.5) node {$\bullet$};    
\draw (1.5,2) node {$\bullet$};    
\draw (1.5,1) node[shape=circle,fill=gray!0,inner sep=0pt] {$\circ$};    
\draw (1,2.5) node[shape=circle,fill=gray!0,inner sep=0pt] {$\circ$};    
\draw [-latex,dotted,shorten >=5pt] (0,0) -- (3,0);
\draw [-latex,dotted,shorten >=5pt] (3,0) -- (3,1);
\draw [-latex,dotted,shorten >=5pt] (3,1) -- (0,0);
\draw [-latex,dotted,shorten >=5pt] (0,0) -- (0,1);
\draw [-latex,dotted,shorten >=5pt] (0,1) -- (3,2);
\draw [-latex,dotted,shorten >=5pt] (3,2) -- (3,3);
\draw [-latex,dotted,shorten >=5pt] (3,2) -- (3,1);
\draw [-latex,dotted,shorten >=5pt] (0,2) -- (0,1);
\draw [-latex,dotted,shorten >=5pt] (0,3) -- (3,3);
\draw [-latex,dotted,shorten >=5pt] (3,3) -- (0,2);
\draw [-latex,dotted,shorten >=5pt] (0,2) -- (0,3);
\draw (1.5,0) node[circle,fill=white,inner sep=1pt] {{\tiny a}};
\draw (1.5,3) node[circle,fill=white,inner sep=1pt] {{\tiny a}};
\draw (1.8,0.65) node[circle,fill=white,inner sep=1pt] {{\tiny c}};
\draw (1.5,1.5) node[circle,fill=white,inner sep=1pt] {{\tiny d}};
\draw (1.2,2.35) node[circle,fill=white,inner sep=1pt] {{\tiny f}};
\draw (0,.7) node[circle,fill=white,inner sep=1pt] {{\tiny b}};
\draw (0,1.5) node[circle,fill=white,inner sep=1pt] {{\tiny e}};
\draw (0,2.3) node[circle,fill=white,inner sep=1pt] {{\tiny g}};
\draw (3,.7) node[circle,fill=white,inner sep=1pt] {{\tiny b}};
\draw (3,1.5) node[circle,fill=white,inner sep=1pt] {{\tiny e}};
\draw (3,2.3) node[circle,fill=white,inner sep=1pt] {{\tiny g}};
\draw (0,0) node[circle,draw,fill=white,minimum size=10pt,inner sep=1pt] {{\tiny1}};
\draw (0,1) node[circle,draw,fill=white,minimum size=10pt,inner sep=1pt] {{\tiny2}};
\draw (0,2) node[circle,draw,fill=white,minimum size=10pt,inner sep=1pt] {{\tiny3}};
\draw (0,3) node[circle,draw,fill=white,minimum size=10pt,inner sep=1pt] {{\tiny1}};
\draw (3,0) node[circle,draw,fill=white,minimum size=10pt,inner sep=1pt] {{\tiny1}};
\draw (3,1) node[circle,draw,fill=white,minimum size=10pt,inner sep=1pt] {{\tiny2}};
\draw (3,2) node[circle,draw,fill=white,minimum size=10pt,inner sep=1pt] {{\tiny3}};
\draw (3,3) node[circle,draw,fill=white,minimum size=10pt,inner sep=1pt] {{\tiny1}};
\end{scope}

\begin{scope}[xshift=5cm]
\draw[thick] (2,2.5)--(1.5,2);
\draw[thick] (2,2.5)--(1.5,3);
\draw[thick] (1.5,1)--(1.5,0);
\draw[thick] (1.5,1)--(1,1.5);
\draw[thick] (1,1.5)--(1.5,2);
\draw[thick] (1,1.5)--(0,1.3);
\draw[thick] (3,1.3)--(1.5,1);
\draw[thick] (0,.5)--(1.5,1);
\draw[thick] (3,0.5)--(1.5,0);
\draw[thick] (2,2.5)--(3,2.7);
\draw[thick] (1.5,3)--(0,2.7);
\draw [-latex,dotted,shorten >=5pt] (0,2) -- (3,2);
\draw [-latex,dotted,shorten >=5pt] (3,2) -- (3,3);
\draw [-latex,dotted,shorten >=5pt] (3,3) -- (0,2);
\draw [-latex,dotted,shorten >=5pt] (0,2) -- (0,3);
\draw [-latex,dotted,shorten >=5pt] (0,0) -- (3,1);
\draw [-latex,dotted,shorten >=5pt] (3,1) -- (3,2);
\draw [-latex,dotted,shorten >=5pt] (3,1) -- (3,3);
\draw [-latex,dotted,shorten >=5pt] (0,1) -- (0,3);
\draw [-latex,dotted,shorten >=5pt] (0,2) -- (3,2);
\draw [-latex,dotted,shorten >=5pt] (3,2) -- (0,1);
\draw [-latex,dotted,shorten >=5pt] (0,1) -- (0,2);
\draw [-latex,dotted,shorten >=5pt] (0,1) -- (0,0);
\draw [-latex,dotted,shorten >=5pt] (3,1) -- (3,0);
\draw[loosely dotted] (0,0)--(3,0);
\draw[loosely dotted] (0,3)--(3,3);
\draw (1.5,2) node[circle,fill=white,inner sep=1pt] {{\tiny DE}};
\draw (1.8,2.65) node[circle,fill=white,inner sep=1pt] {{\tiny F}};
\draw (1.5,0.5) node[circle,fill=white,inner sep=1pt] {{\tiny C*}};
\draw (1.2,1.35) node[circle,fill=white,inner sep=1pt] {{\tiny D*}};
\draw (0,2.7) node[circle,fill=white,inner sep=1pt] {{\tiny G}};
\draw (0,0.5) node[circle,fill=white,inner sep=1pt] {{\tiny B*}};
\draw (0,1.3) node[circle,fill=white,inner sep=1pt] {{\tiny E*}};
\draw (3,2.7) node[circle,fill=white,inner sep=1pt] {{\tiny G}};
\draw (3,.5) node[circle,fill=white,inner sep=1pt] {{\tiny B*}};
\draw (3,1.3) node[circle,fill=white,inner sep=1pt] {{\tiny E*}};
\draw (0,0) node[circle,draw,fill=white,minimum size=10pt,inner sep=1pt] {{\tiny1}};
\draw (0,1) node[circle,draw,fill=white,minimum size=10pt,inner sep=1pt] {{\tiny2}};
\draw (0,2) node[circle,draw,fill=white,minimum size=10pt,inner sep=1pt] {{\tiny3}};
\draw (0,3) node[circle,draw,fill=white,minimum size=10pt,inner sep=1pt] {{\tiny1}};
\draw (3,0) node[circle,draw,fill=white,minimum size=10pt,inner sep=1pt] {{\tiny1}};
\draw (3,1) node[circle,draw,fill=white,minimum size=10pt,inner sep=1pt] {{\tiny2}};
\draw (3,2) node[circle,draw,fill=white,minimum size=10pt,inner sep=1pt] {{\tiny3}};
\draw (3,3) node[circle,draw,fill=white,minimum size=10pt,inner sep=1pt] {{\tiny1}};
\draw (2,2.5) node {$\bullet$};    
\draw (1.5,1) node {$\bullet$};    
\draw (1.5,0) node[shape=circle,fill=gray!0,inner sep=0pt] {$\circ$};    
\draw (1.5,3) node[shape=circle,fill=gray!0,inner sep=0pt] {$\circ$};    
\draw (1,1.5) node[shape=circle,fill=gray!0,inner sep=0pt] {$\circ$};    
\end{scope}
\end{tikzpicture}
\end{center}
The mutated dimer is isomorphic to the original translated by $(0,\frac{2}3)$.
We take $\cP_\theta=\{c,f\} \sim \{F,D^*\}$ for our reference corner matching. 
The Hamiltonian on the point $(1,0)$ is
\se{
H_{a,\theta} &= \frac{cg}{cf}+\frac{bf}{cf} = (1+W_1)\frac gf\\
H'_{a,\theta} &= \frac{FE^*}{FD^*}+\frac{GD^*}{FD^*} = ({W_3'}^{-1}+1)\frac G{F}
}
For the mutation transformation we have to be a bit careful. Note that both
$W_1$ and $W_3$ are adjacent to $W_2$ twice and therefore they get $2$ factors
\se{
 W_1' &= W_1(1+W_2)(1+W_2^{-1})^{-1}=W_1W_2 = W_3^{-1} \\
 W_2' &= W_2^{-1}\\
 W_3' &= W_3(1+W_2)(1+W_2^{-1})^{-1}= W_3W_2=W_1^{-1}
}
The functions $\frac{g}{f}$ and $\frac GF$ represent the same homology class
and that does not intersect $W_2$  in $\twist{\rib}$ so these functions
are identified under the cluster transformation.
From this it is easy to see that $\phi_\mu(H'_{a,\theta})= H_{a,\theta}$.
Note that because $\bir$ is a sphere, the Poisson structure in this example is trivial.
\end{example}

\begin{example}\label{P1P1}
Let $\rib$ and $\mu\rib$ be the following dimer graphs.
They are related by a mutation at vertex $4$ which consists of two split moves at the white nodes, a spider move at face $4$, and two join moves.
\begin{center}
\begin{tikzpicture}
\begin{scope}[scale=.66]
\begin{scope}
\draw[dotted,-latex,shorten >= 5pt] (0,0) -- (2,0);
\draw[dotted,-latex,shorten >= 5pt] (2,0) -- (2,2);
\draw[dotted,-latex,shorten >= 5pt] (2,2) -- (0,2);
\draw[dotted,-latex,shorten >= 5pt] (0,2) -- (0,0);
\draw[dotted,-latex,shorten >= 5pt] (2,2) -- (4,2);
\draw[dotted,-latex,shorten >= 5pt] (4,2) -- (4,4);
\draw[dotted,-latex,shorten >= 5pt] (4,4) -- (2,4);
\draw[dotted,-latex,shorten >= 5pt] (2,4) -- (2,2);
\draw[dotted,-latex,shorten >= 5pt] (0,2) -- (0,4);
\draw[dotted,-latex,shorten >= 5pt] (0,4) -- (2,4);
\draw[dotted,-latex,shorten >= 5pt] (4,2) -- (4,0);
\draw[dotted,-latex,shorten >= 5pt] (4,0) -- (2,0);

\draw[thick] (1,0)--(1,1)--(0,1);
\draw[thick] (3,4)--(3,3)--(4,3);
\draw[thick] (3,0)--(3,1)--(4,1);
\draw[thick] (0,3)--(1,3)--(1,4);
\draw[thick] (3,0)--(3,1)--(4,1);
\draw[thick] (1,1)--(3,1)--(3,3)--(1,3)--(1,1);
\draw (3,1) node[shape=circle,fill=gray!0,inner sep=0pt] {$\circ$};
\draw (1,3) node[shape=circle,fill=gray!0,inner sep=0pt] {$\circ$};
\draw (1,1) node[shape=circle,fill=gray!0,inner sep=0pt] {$\bullet$};
\draw (3,3) node[shape=circle,fill=gray!0,inner sep=0pt] {$\bullet$};
\draw (1,0) node[circle,fill=white,inner sep=1pt] {{\tiny a}};
\draw (3,0) node[circle,fill=white,inner sep=1pt] {{\tiny b}};
\draw (1,4) node[circle,fill=white,inner sep=1pt] {{\tiny a}};
\draw (3,4) node[circle,fill=white,inner sep=1pt] {{\tiny b}};
\draw (0,1) node[circle,fill=white,inner sep=1pt] {{\tiny c}};
\draw (0,3) node[circle,fill=white,inner sep=1pt] {{\tiny d}};
\draw (4,1) node[circle,fill=white,inner sep=1pt] {{\tiny c}};
\draw (4,3) node[circle,fill=white,inner sep=1pt] {{\tiny d}};
\draw (2,1) node[circle,fill=white,inner sep=1pt] {{\tiny e}};
\draw (3,2) node[circle,fill=white,inner sep=1pt] {{\tiny f}};
\draw (2,3) node[circle,fill=white,inner sep=1pt] {{\tiny g}};
\draw (1,2) node[circle,fill=white,inner sep=1pt] {{\tiny h}};

\draw (0,0) node[circle,draw,fill=white,inner sep=1pt] {{\tiny 1}};
\draw (4,0) node[circle,draw,fill=white,inner sep=1pt] {{\tiny 1}};
\draw (4,4) node[circle,draw,fill=white,inner sep=1pt] {{\tiny 1}};
\draw (0,4) node[circle,draw,fill=white,inner sep=1pt] {{\tiny 1}};
\draw (2,2) node[circle,draw,fill=white,inner sep=1pt] {{\tiny 4}};
\draw (2,0) node[circle,draw,fill=white,inner sep=1pt] {{\tiny 2}};
\draw (4,2) node[circle,draw,fill=white,inner sep=1pt] {{\tiny 3}};
\draw (2,4) node[circle,draw,fill=white,inner sep=1pt] {{\tiny 2}};
\draw (0,2) node[circle,draw,fill=white,inner sep=1pt] {{\tiny 3}};
\end{scope}
\begin{scope}[xshift=8cm, yshift=2cm]
\begin{scope}[xshift=-2cm,yshift=-2cm]
\draw[dotted,-latex,shorten >= 5pt] (0,0) -- (2,0);
\draw[dotted,-latex,shorten >= 5pt] (2,0) -- (0,2);
\draw[dotted,-latex,shorten >= 5pt] (0,2) -- (0,0);
\draw[dotted,-latex,shorten >= 5pt] (4,2) -- (4,4);
\draw[dotted,-latex,shorten >= 5pt] (4,4) -- (2,4);
\draw[dotted,-latex,shorten >= 5pt] (2,4) -- (4,2);
\draw[dotted,-latex,shorten >= 5pt] (2,2) -- (2,4);
\draw[dotted,-latex,shorten >= 5pt] (2,2) -- (2,0);
\draw[dotted,-latex,shorten >= 5pt] (2,0) -- (0,2);
\draw[dotted,-latex,shorten >= 5pt] (4,2) -- (2,2);
\draw[dotted,-latex,shorten >= 5pt] (0,2) -- (2,2);
\draw[dotted,-latex,shorten >= 5pt] (0,2) -- (2,2);
\draw[dotted,-latex,shorten >= 5pt] (2,0) -- (4,2);
\draw[dotted,-latex,shorten >= 5pt] (2,4) -- (0,2);

\draw[dotted,-latex,shorten >= 5pt] (0,2) -- (0,4);
\draw[dotted,-latex,shorten >= 5pt] (0,4) -- (2,4);
\draw[dotted,-latex,shorten >= 5pt] (4,2) -- (4,0);
\draw[dotted,-latex,shorten >= 5pt] (4,0) -- (2,0);
\draw (0,0) node[circle,draw,fill=white,inner sep=1pt] {{\tiny 1}};
\draw (4,0) node[circle,draw,fill=white,inner sep=1pt] {{\tiny 1}};
\draw (4,4) node[circle,draw,fill=white,inner sep=1pt] {{\tiny 1}};
\draw (0,4) node[circle,draw,fill=white,inner sep=1pt] {{\tiny 1}};
\draw (2,2) node[circle,draw,fill=white,inner sep=1pt] {{\tiny 4}};
\draw (2,0) node[circle,draw,fill=white,inner sep=1pt] {{\tiny 2}};
\draw (4,2) node[circle,draw,fill=white,inner sep=1pt] {{\tiny 3}};
\draw (2,4) node[circle,draw,fill=white,inner sep=1pt] {{\tiny 2}};
\draw (0,2) node[circle,draw,fill=white,inner sep=1pt] {{\tiny 3}};

\end{scope}
\draw[thick] (-2,-4/3)--(-4/3,-4/3)--(-4/3,-2)--(-4/3,-4/3)--(-2/3,-2/3) -- (-2/3,0) -- (-2/3,-2/3) -- (0,-2/3);
\draw[rotate=90,thick] (-2,-4/3)--(-4/3,-4/3)--(-4/3,-2)--(-4/3,-4/3)--(-2/3,-2/3) -- (-2/3,0) -- (-2/3,-2/3) --(0,-2/3);
\draw[rotate=180,thick] (-2,-4/3)--(-4/3,-4/3)--(-4/3,-2)--(-4/3,-4/3)--(-2/3,-2/3) -- (-2/3,0) -- (-2/3,-2/3) --(0,-2/3);
\draw[rotate=270,thick] (-2,-4/3)--(-4/3,-4/3)--(-4/3,-2)--(-4/3,-4/3)--(-2/3,-2/3) -- (-2/3,0) -- (-2/3,-2/3) --(0,-2/3);

\draw (-4/3,-2) node[circle,fill=white,inner sep=1pt] {{\tiny A}};
\draw (4/3,-2) node[circle,fill=white,inner sep=1pt] {{\tiny B}};
\draw (-4/3,2) node[circle,fill=white,inner sep=1pt] {{\tiny A}};
\draw (4/3,2) node[circle,fill=white,inner sep=1pt] {{\tiny B}};
\draw (-2,-4/3) node[circle,fill=white,inner sep=1pt] {{\tiny C}};
\draw (-2,4/3) node[circle,fill=white,inner sep=1pt] {{\tiny D}};
\draw (2,-4/3) node[circle,fill=white,inner sep=1pt] {{\tiny C}};
\draw (2,4/3) node[circle,fill=white,inner sep=1pt] {{\tiny D}};
\draw (0,-.66) node[circle,fill=white,inner sep=1pt] {{\tiny E}};
\draw (.66,0) node[circle,fill=white,inner sep=1pt] {{\tiny F}};
\draw (0,.66) node[circle,fill=white,inner sep=1pt] {{\tiny G}};
\draw (-.66,0) node[circle,fill=white,inner sep=1pt] {{\tiny H}};

\draw (-1,-1) node[circle,fill=white,inner sep=1pt] {{\tiny I}};
\draw (1,-1) node[circle,fill=white,inner sep=1pt] {{\tiny J}};
\draw (1,1) node[circle,fill=white,inner sep=1pt] {{\tiny K}};
\draw (-1,1) node[circle,fill=white,inner sep=1pt] {{\tiny L}};
\draw (-2/3,-2/3) node[shape=circle,fill=gray!0,inner sep=0pt] {$\circ$};
\draw (-4/3,-4/3) node[shape=circle,fill=gray!0,inner sep=0pt] {$\bullet$};
\draw (2/3,-2/3) node[shape=circle,fill=gray!0,inner sep=0pt] {$\bullet$};
\draw (4/3,-4/3) node[shape=circle,fill=gray!0,inner sep=0pt] {$\circ$};
\draw (2/3,2/3) node[shape=circle,fill=gray!0,inner sep=0pt] {$\circ$};
\draw (4/3,4/3) node[shape=circle,fill=gray!0,inner sep=0pt] {$\bullet$};
\draw (-2/3,2/3) node[shape=circle,fill=gray!0,inner sep=0pt] {$\bullet$};
\draw (-4/3,4/3) node[shape=circle,fill=gray!0,inner sep=0pt] {$\circ$};
\end{scope}
\end{scope}
\end{tikzpicture}
\end{center}
The dimer quivers are isoradially embedded and we consider
\[
\cP_0=\{d,e\} \text{ and } \cP'_0=\{D,G,I,J\}.
\]
If we put $\cP_0$ on the origin the matching polygon becomes the square 
\begin{center}
\begin{tikzpicture}[scale=.25]
\draw (1,0) node{$\scriptstyle{\bullet}$} -- (0,1) node{$\scriptstyle{\bullet}$} --(-1,0) node{$\scriptstyle{\bullet}$} -- (0,-1) node{$\scriptstyle{\bullet}$} --(1,0);
\draw (0,0) node{$\scriptstyle{\bullet}$}; 
\end{tikzpicture}
\end{center}
which has one internal lattice point $u$. For this lattice point we can calculate
\se{
 H_{u,0} &= \frac{ab+cd+eg+hf}{ed}\\
 &=(W_1+1+\frac1{W_1W_2}+W_1W_2W_4)\frac ce\\
  H'_{u,0} &= \frac{CDFH+CDEG+ABFH+ABEG+IJKL}{DGIJ}\\
  &= (\frac 1{W_4'}+1 +\frac{W_1'}{W_4'}+W_1'+W_1'W_2')\frac {CE}{IJ}
}
To find the transformation, we need to look at the mirror dimers.
\begin{center}
\begin{tikzpicture}
\begin{scope}[scale=.66]
\begin{scope}
\draw[dotted,-latex,shorten >= 5pt] (0,0) -- (2,0);
\draw[dotted,-latex,shorten >= 5pt] (2,0) -- (2,2);
\draw[dotted,-latex,shorten >= 5pt] (2,2) -- (0,2);
\draw[dotted,-latex,shorten >= 5pt] (0,2) -- (0,0);
\draw[dotted,-latex,shorten >= 5pt] (2,2) -- (4,2);
\draw[dotted,-latex,shorten >= 5pt] (4,2) -- (4,4);
\draw[dotted,-latex,shorten >= 5pt] (4,4) -- (2,4);
\draw[dotted,-latex,shorten >= 5pt] (2,4) -- (2,2);
\draw[dotted,-latex,shorten >= 5pt] (0,2) -- (0,4);
\draw[dotted,-latex,shorten >= 5pt] (0,4) -- (2,4);
\draw[dotted,-latex,shorten >= 5pt] (4,2) -- (4,0);
\draw[dotted,-latex,shorten >= 5pt] (4,0) -- (2,0);

\draw[thick] (1,0)--(1,1)--(0,1);
\draw[thick] (3,4)--(3,3)--(4,3);
\draw[thick] (3,0)--(3,1)--(4,1);
\draw[thick] (0,3)--(1,3)--(1,4);
\draw[thick] (3,0)--(3,1)--(4,1);
\draw[thick] (1,1)--(3,1)--(3,3)--(1,3)--(1,1);
\draw (3,1) node[shape=circle,fill=gray!0,inner sep=0pt] {$\circ$};
\draw (1,3) node[shape=circle,fill=gray!0,inner sep=0pt] {$\circ$};
\draw (1,1) node[shape=circle,fill=gray!0,inner sep=0pt] {$\bullet$};
\draw (3,3) node[shape=circle,fill=gray!0,inner sep=0pt] {$\bullet$};
\draw (1,0) node[circle,fill=white,inner sep=1pt] {{\tiny a}};
\draw (3,0) node[circle,fill=white,inner sep=1pt] {{\tiny f}};
\draw (1,4) node[circle,fill=white,inner sep=1pt] {{\tiny a}};
\draw (3,4) node[circle,fill=white,inner sep=1pt] {{\tiny f}};
\draw (0,1) node[circle,fill=white,inner sep=1pt] {{\tiny c}};
\draw (0,3) node[circle,fill=white,inner sep=1pt] {{\tiny g}};
\draw (4,1) node[circle,fill=white,inner sep=1pt] {{\tiny c}};
\draw (4,3) node[circle,fill=white,inner sep=1pt] {{\tiny g}};
\draw (2,1) node[circle,fill=white,inner sep=1pt] {{\tiny e}};
\draw (3,2) node[circle,fill=white,inner sep=1pt] {{\tiny b}};
\draw (2,3) node[circle,fill=white,inner sep=1pt] {{\tiny d}};
\draw (1,2) node[circle,fill=white,inner sep=1pt] {{\tiny h}};

\draw (0,0) node[circle,draw,fill=white,inner sep=1pt] {{\tiny 1}};
\draw (4,0) node[circle,draw,fill=white,inner sep=1pt] {{\tiny 1}};
\draw (4,4) node[circle,draw,fill=white,inner sep=1pt] {{\tiny 1}};
\draw (0,4) node[circle,draw,fill=white,inner sep=1pt] {{\tiny 1}};
\draw (2,2) node[circle,draw,fill=white,inner sep=1pt] {{\tiny 4}};
\draw (2,0) node[circle,draw,fill=white,inner sep=1pt] {{\tiny 2}};
\draw (4,2) node[circle,draw,fill=white,inner sep=1pt] {{\tiny 3}};
\draw (2,4) node[circle,draw,fill=white,inner sep=1pt] {{\tiny 2}};
\draw (0,2) node[circle,draw,fill=white,inner sep=1pt] {{\tiny 3}};
\end{scope}
\begin{scope}[xshift=8cm, yshift=2cm]
\begin{scope}[xshift=-2cm,yshift=-2cm]
\draw[dotted,-latex,shorten >= 5pt] (0,4) -- (2,4);
\draw[dotted,-latex,shorten >= 5pt] (2,4) -- (2,2);
\draw[dotted,-latex,shorten >= 5pt] (2,2) -- (4,2);
\draw[dotted,-latex,shorten >= 5pt] (4,2) -- (4,0);
\draw[dotted,-latex,shorten >= 5pt] (4,0) -- (2,2);
\draw[dotted,-latex,shorten >= 5pt] (2,2) -- (0,4);
\draw[dotted,-latex,shorten >= 5pt] (2,0) -- (0,2);
\draw[dotted,-latex,shorten >= 5pt] (0,2) -- (0,0);
\draw[dotted,-latex,shorten >= 5pt] (0,0) -- (2,0);
\draw[dotted,-latex,shorten >= 5pt] (2,2) -- (0,2);
\draw[dotted,-latex,shorten >= 5pt] (4,0) -- (2,0);
\draw[dotted,-latex,shorten >= 5pt] (0,2) -- (4,0);
\draw[dotted] (4,3) -- (2,4);
\draw[dotted,-latex,shorten >= 5pt] (0,3) -- (2,2);
\draw[dotted,-latex,shorten >= 5pt] (4,2) -- (2,4);
\draw[loosely dotted] (0,4) -- (0,2);
\draw[loosely dotted] (4,4) -- (4,2);

\draw[thick] (1,4)--(1.5,3.5)--(2.5,2.5)--(3.5,1.5)--(4,1);
\draw[thick] (0,3.5)--(0,2.5)--(2,1.5)--(2,0.5)--(3,0);
\draw[thick] (3,4)--(4,3.5)--(4,2.5)--(2.5,2.5);
\draw[thick] (1.5,3.5)--(0,3.5);
\draw[thick] (3.5,1.5)--(2,1.5);
\draw[thick] (3.5,1.5)--(4,1);
\draw[thick] (0,1)--(.5,.5)--(1,0);
\draw[thick] (.5,.5)--(2,.5)--(3,0);

\draw (.5,.5) node[shape=circle,fill=gray!0,inner sep=0pt] {$\bullet$};
\draw (2.5,2.5) node[shape=circle,fill=gray!0,inner sep=0pt] {$\bullet$};
\draw (4,3.5) node[shape=circle,fill=gray!0,inner sep=0pt] {$\bullet$};
\draw (0,3.5) node[shape=circle,fill=gray!0,inner sep=0pt] {$\bullet$};
\draw (2,1.5) node[shape=circle,fill=gray!0,inner sep=0pt] {$\bullet$};

\draw (1.5,3.5) node[shape=circle,fill=gray!0,inner sep=0pt] {$\circ$};
\draw (3.5,1.5) node[shape=circle,fill=gray!0,inner sep=0pt] {$\circ$};
\draw (4,2.5) node[shape=circle,fill=gray!0,inner sep=0pt] {$\circ$};
\draw (0,2.5) node[shape=circle,fill=gray!0,inner sep=0pt] {$\circ$};
\draw (2,.5) node[shape=circle,fill=gray!0,inner sep=0pt] {$\circ$};

\draw (0,0) node[circle,draw,fill=white,inner sep=1pt] {{\tiny 1}};
\draw (4,0) node[circle,draw,fill=white,inner sep=1pt] {{\tiny 1}};
\draw (4,4) node[circle,draw,fill=white,inner sep=1pt] {{\tiny 1}};
\draw (0,4) node[circle,draw,fill=white,inner sep=1pt] {{\tiny 1}};
\draw (2,2) node[circle,draw,fill=white,inner sep=1pt] {{\tiny 4}};
\draw (2,0) node[circle,draw,fill=white,inner sep=1pt] {{\tiny 2}};
\draw (4,2) node[circle,draw,fill=white,inner sep=1pt] {{\tiny 3}};
\draw (2,4) node[circle,draw,fill=white,inner sep=1pt] {{\tiny 2}};
\draw (0,2) node[circle,draw,fill=white,inner sep=1pt] {{\tiny 3}};

\draw (1,0) node[circle,fill=white,inner sep=1pt] {{\tiny A}};
\draw (1,4) node[circle,fill=white,inner sep=1pt] {{\tiny A}};
\draw (1,2) node[circle,fill=white,inner sep=1pt] {{\tiny F}};
\draw (3,0) node[circle,fill=white,inner sep=1pt] {{\tiny H}};
\draw (3,4) node[circle,fill=white,inner sep=1pt] {{\tiny H}};
\draw (3,2) node[circle,fill=white,inner sep=1pt] {{\tiny B}};
\draw (0,1) node[circle,fill=white,inner sep=1pt] {{\tiny C}};
\draw (4,1) node[circle,fill=white,inner sep=1pt] {{\tiny C}};
\draw (2,1) node[circle,fill=white,inner sep=1pt] {{\tiny E}};
\draw (0,3) node[circle,fill=white,inner sep=1pt] {{\tiny G}};
\draw (4,3) node[circle,fill=white,inner sep=1pt] {{\tiny G}};
\draw (2,3) node[circle,fill=white,inner sep=1pt] {{\tiny D}};
\draw (1.5,.5) node[circle,fill=white,inner sep=1pt] {{\tiny I}};
\draw (3.5,2.5) node[circle,fill=white,inner sep=1pt] {{\tiny K}};
\draw (.5,3.5) node[circle,fill=white,inner sep=1pt] {{\tiny L}};
\draw (2.5,1.5) node[circle,fill=white,inner sep=1pt] {{\tiny J}};
\end{scope}
\end{scope}
\end{scope}
\end{tikzpicture}
\end{center}
$\cL(\rib)$ is generated by $W_1,W_2,W_4, \frac{c}{e},\frac{a}{h}$ and
$\cL(\mu\rib)$ is generated by $W_1',W_2',W_4', \frac{CE}{IJ},\frac{AH}{IL}$.
\begin{itemize}
 \item $W_1=\frac{ab}{cd}$ and $W'_1=\frac{AB}{CD}$ represent the same path that does not intersect $W_4$, so $W_1=W_1'$,
 \item $W_2'=\frac{IJKL}{ABEG}$ has the same homology class as $W_2=\frac{eg}{ab}$  and
its intersection number with $W_4$ is $2$ so $W'_2=W_2(1+W_4)^2$,
 \item $W_3'=\frac{CDFH}{IJKL}$ has the homology of $W_3W_4^2$ and the intersection number
 with $W_4$ is $-2$ so $W_3'= W_3W_4^2(1+W_4)^{-2}$, 
 \item $W_4'$ is the inverse of $W_4$,
 \item $\frac{CE}{IJ}$ has the same homology as $\frac{c}{e}$ and the intersection number 
 with $W_4$  is $-1$ so  $\frac{CE}{IJ}= \frac{c}{e}(1+W_4)^{-1}$.
 \item $\frac{AH}{IL}$ has the same homology as $\frac{a}{h}$ and the intersection number 
 with $W_4$  is $1$ so  $\frac{AH}{IL}= \frac{c}{e}(1+W_4)$.
 \end{itemize}
If we fill these expressions in $H'_{u,0}$ we get $H_{u,0}$.
\end{example}

\subsection{Integrability}
Given a Poisson variety it is natural to study its integrability.
This means that we look for commuting Hamiltonians globally defined on $\cL$.

On the local level this is easy to do.
\begin{lemma}[Integrability of $\cL(\rib)$]
Let $\rib$ be a consistent dimer and let $g$ be the genus of the specular dual $\bir$.
The Poisson center of $\C[\cL(\rib)]$ is generated by zigzag walks in $\rib$. Its dimension is
$\# \rib_1- \#\rib_0 +1 - 2g(\Surf)=\dim \cL(\rib) - 2 g$.
Furthermore we can find $g$ commuting Hamiltonians by taking holonomies of $g$ nonintersecting curves in $\bir$.
\end{lemma}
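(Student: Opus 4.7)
The plan is to split the statement into three pieces: identify the Poisson center, compute its dimension, and then construct the commuting Hamiltonians.

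First I would simply invoke the earlier lemma (the one asserting $Z_P(\C[\cL(\rib)])=\C[W_Z^{\pm 1}\mid Z\in\bir_2]$), which already gives that the Poisson center is generated by the holonomies of zigzag walks. So the only new content is the dimension count and the explicit Hamiltonians.

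For the dimension, I would work via the identification $\C[\cL(\rib)] = \C[\Lambda]$ with $\Lambda = H_1(\rib,\Z) = H_1(\bir,\Z)$. The zigzag walks of $\rib$ are exactly the face-boundary walks of $\bir$ inside the CW structure on $\Surf$. Since the faces of $\bir$ are precisely the $2$-cells of $\Surf$, the subgroup $\langle \text{zigzag walks}\rangle \subset \Lambda$ is the kernel of the map induced by inclusion,
\[
 \Lambda = H_1(\bir,\Z) \twoheadrightarrow H_1(\Surf,\Z) = \Z^{2g},
\]
so its rank is $\rk\Lambda - 2g$. Combining with the description of $\dim\cL(\rib) = \rk\Lambda = \#\rib_1-\#\rib_0+1$ gives the stated dimension $\dim\cL(\rib)-2g$. (One can cross-check by counting: there are $\#\bir_2$ zigzag walks, subject to the single relation that their signed sum is zero, and $\#\bir_2 - 1 = \#\rib_1 - \#\rib_0 + 1 - 2g$ follows from $\chi(\Surf) = 2-2g$.)

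For the last claim I would pick a symplectic basis of $H_1(\Surf,\Z)$ and take $\alpha_1,\dots,\alpha_g$ to be its ``$a$-cycles'', i.e., $g$ pairwise disjoint simple closed curves on $\Surf$ whose homology classes span a Lagrangian sublattice. Because $\bir$ is cellularly embedded in $\Surf$ (its complement is a disjoint union of open disks), each $\alpha_i$ can be isotoped to a cyclic walk $\kappa_i$ in $\bir$, and by a small perturbation we can arrange that the $\kappa_i$ remain pairwise disjoint as curves on $\Surf$. Their intersection numbers then all vanish, so by the definition $\{W_p,W_q\}=\eps(p,q)W_pW_q$ we have $\{W_{\kappa_i},W_{\kappa_j}\}=0$ for all $i,j$. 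Moreover, since each $\kappa_i$ has nonzero class in $H_1(\Surf)$, the $W_{\kappa_i}$ are independent modulo the Poisson center, producing $(\dim\cL(\rib)-2g)+g=\dim\cL(\rib)-g$ independent commuting Hamiltonians, which is the maximum possible given that the symplectic leaves have dimension $2g$.

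The main obstacle I expect is the bookkeeping in step two, namely convincing oneself that $\langle\text{zigzag walks}\rangle$ really is the full kernel $\ker(\Lambda\to H_1(\Surf))$ and not a proper sublattice — this uses that $\bir$ together with the $2$-cells gives a CW decomposition of $\Surf$, so every $1$-cycle in $\bir$ that bounds in $\Surf$ is a $\Z$-linear combination of face boundaries (the zigzag walks). Once this is granted, the rank computation and the construction of $g$ commuting curves on a genus-$g$ surface are essentially standard.
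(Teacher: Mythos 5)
Your proposal is correct and follows essentially the same route as the paper: the center is the span of the zigzag/face walks, the dimension comes from the count $\#\bir_2-1$ via the Euler characteristic of $\Surf$ (your kernel-of-$H_1(\bir)\to H_1(\Surf)$ formulation is just a cleaner packaging of the same fact), and the $g$ commuting Hamiltonians come from $g$ curves on the genus-$g$ surface with vanishing pairwise intersection numbers, since the bracket is the pulled-back intersection form. The only cosmetic remark is that you do not need the walks $\kappa_i$ to be literally disjoint on $\Surf$: the pairing $\eps$ depends only on homology classes, so choosing classes spanning a Lagrangian sublattice already suffices.
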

\begin{proof}
If $W_p$ sits in the center then $p$ does not intersect any curve on $\Surf$, so it is a contractible curve. This means it is a product of face walks
in $\bir$ or equivalently zigzag walks in $\rib$.

There are $\#\bir_2$ face walks in $\bir$ and they safisfy one relation because $\dim H_2(\Surf)=1$. 
Furthermore 
$$\#\bir_2 -1 =  (2-2g(\Surf)- \#\bir_0 +\#\bir_1)-1 = \#\rib_1 -\#\rib_0 +1 -2 g(\Surf).$$ 

We can find $g$ commuting Hamiltonians because the genus of $\bir$ is $g$, so we can find $g$ nonintersecting curves.
Because the Poisson bracket comes from the intersection form these will Poisson commute.
\end{proof}

The main problem is that this method cannot be lifted to the whole Poisson variety $\cL$ because the mutation operation does not map holonomies of
curves to holonomies of curves but to rational functions of them. However, in the previous section we encountered functions
that were invariant under the mutation move: the $H_{a,\theta}$.

\begin{lemma}
Let $\rib$ be a consistent dimer and fix a corner matching $\PM_\theta$, then the following holds:
\begin{enumerate}
 \item if $a$ is a boundary lattice point of the matching polygon then $H_{a,\theta}$ is a Casimir in $\C[\cL(\rib)]$,
 \item if $a,b$ are lattice points of the matching polygon $\{H_{a,\theta},H_{b,\theta}\}=0$.
\end{enumerate}
\end{lemma}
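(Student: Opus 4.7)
The plan is to prove (1) by a direct combinatorial argument using Gulotta's classification of perfect matchings on each side of $\MP(\qpol)$, and then to reduce (2) to its interior-interior case, which is the main obstacle.

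For (1), I would fix a boundary lattice point $a$ lying on the side of $\MP(\qpol)$ between the two corner matchings $\cP_{\theta_i-\eps}$ and $\cP_{\theta_i+\eps}$. By Gulotta's theorem, every $\PM\in\PMs_a$ is obtained from $\cP_{\theta_i-\eps}$ by replacing the arrows of a prescribed number of zigzag walks with sleeper direction $\theta_i$ by those of $\cP_{\theta_i+\eps}$, so the difference $\PM-\cP_{\theta_i-\eps}$ is a signed sum of zigzag walks. Next, I would traverse the boundary of $\MP(\qpol)$ corner by corner: the very same theorem shows that on each side the two corner matchings differ by a sum of zigzag walks, so chaining these together yields that any two corner matchings — in particular $\cP_{\theta_i-\eps}$ and $\cP_\theta$ — differ by a sum of zigzag walks. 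Composing, $\PM-\cP_\theta$ is a sum of zigzag walks for every $\PM\in\PMs_a$. By the previous lemma characterising $Z_P(\C[\cL(\rib)])$, each term $W_\PM/W_{\cP_\theta}=W_{\PM-\cP_\theta}$ is a Casimir, hence so is their sum $H_{a,\theta}$.

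For (2), the mixed cases where $a$ or $b$ lies on the boundary follow immediately from (1), since a Casimir Poisson-commutes with every element of $\C[\cL(\rib)]$. This leaves the case of two interior lattice points, which is the genuine obstacle. My plan here is to expand
\[
\{H_{a,\theta},H_{b,\theta}\}=\frac{1}{W_{\cP_\theta}^2}\sum_{\PM\in\PMs_a,\,\PM'\in\PMs_b}\eps(\PM-\cP_\theta,\,\PM'-\cP_\theta)\,W_\PM W_{\PM'}
\]
and to pair off contributing terms via a \emph{double-dimer swap}: the symmetric difference $\PM\triangle\PM'$ decomposes into disjoint alternating cycles in $\rib$, and swapping the $\PM$- and $\PM'$-edges along any cycle $C$ of trivial class in $\H_1(\TT)$ produces a new pair $(\tilde\PM,\tilde\PM')\in\PMs_a\times\PMs_b$ with the same monomial $W_\PM W_{\PM'}$. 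The hard part will be two-fold: first, choosing a canonical cycle so that the pairing is a genuine fixed-point-free involution on the non-diagonal part of the sum; second, verifying that the resulting change in $\eps(\cdot,\cdot)$ on $\Surf$ — explicitly $\eps(\alpha+C,\beta-C)-\eps(\alpha,\beta)=\eps(C,\alpha+\beta)$ with $\alpha=\PM-\cP_\theta$, $\beta=\PM'-\cP_\theta$ — contributes the opposite sign needed for cancellation. If this direct involution proves stubborn, the fallback I would pursue is to package the $H_{a,\theta}$ as the $\deg_\TT$-graded components of $\Part_\theta$ and deduce the pairwise commutation from the stronger identity $\{\Part_\theta,\Part_\theta\}=0$, viewing $\Part$ as the determinant of the Kasteleyn operator as a matrix-valued function on $\cL(\rib)$.
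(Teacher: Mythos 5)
Your part (1) is essentially the paper's own argument: matchings on a boundary lattice point differ from the adjacent corner matchings by zigzag walks, corner matchings on consecutive sides differ by zigzag walks, and chaining these along the border shows $W_{\PM}/W_{\cP_\theta}$ is a product of holonomies of zigzag walks, hence a Casimir. That part is fine. Your strategy for (2) is also the one the paper uses — an involution on pairs of matchings obtained by swapping along homologically trivial components of the symmetric difference, which preserves the monomial $W_{\cP_1}W_{\cP_2}$ and the positions in $\MP(\qpol)$ — but as it stands your proposal has a genuine gap exactly where you flag the ``hard part.'' First, the issue of canonicity is resolved not by choosing one cycle but by swapping \emph{all} components of $\cP_1\cup\cP_2$ with trivial class in $\H_1(\TT)$ simultaneously; this is manifestly an involution, and fixed-point-freeness is not needed: if a pair is fixed, the same sign identity forces its individual bracket to vanish. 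Second, and crucially, the cancellation you need is not a computation you have carried out. What must be shown is that the swap flips the sign of the intersection number on $\Surf$, i.e.\ $\eps(\tilde\alpha,\tilde\beta)=-\eps(\alpha,\beta)$ with $\alpha=\cP_1-\cP_\theta$, $\beta=\cP_2-\cP_\theta$; in your notation this is the identity $\eps(C,\alpha+\beta)=-2\,\eps(\alpha,\beta)$, where $C$ ranges over the trivial part of the symmetric difference. A cycle that is trivial in $\H_1(\TT)$ is in general \emph{not} trivial in $\H_1(\Surf)$, so nothing formal gives you this; it is a genuine statement about how the mirror surface intersection form interacts with the double-dimer decomposition. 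The paper does not prove it either — it invokes Goncharov--Kenyon, Lemma 3.8 — so your proof is incomplete until you either reproduce that argument or cite it.

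Your fallback does not repair this. The identity $\{\Part_\theta,\Part_\theta\}=0$ holds for any element of any Poisson algebra by antisymmetry and carries no information; decomposing it into $\deg_\TT$-homogeneous pieces only yields $\sum_{a+b=c}\{H_{a,\theta},H_{b,\theta}\}=0$, which is again automatic because the $(a,b)$ and $(b,a)$ terms cancel pairwise. It cannot isolate the individual brackets $\{H_{a,\theta},H_{b,\theta}\}$, so the Kasteleyn-determinant route as you describe it is vacuous. (A minor remark: your reduction of the mixed boundary/interior cases to (1) is correct but unnecessary — the swap involution treats all lattice points of $\MP(\qpol)$ uniformly, as in the paper.)
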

\begin{proof}
If  $\cP_1$ and $\cP_2$ lie on the same side of a matching polygon then their difference consists of zigzag walks in the direction perpendicular to that side.
This means that $W_{\cP_1}/W_{\cP_2}$ is a product of holonomy of zigzag walks and hence a Casimir. Going along the border until we meet $\cP_\theta$ we see that
$\frac{W_{\cP_1}}{W_{\cP_\theta}}=\frac{W_{\cP_1}}{W_{\cP_2}}\frac{W_{\cP_2}}{W_{\cP_3}}\dots \frac{W_{\cP_k}}{W_{\cP_\theta}}$ is a product of Casimirs. 

For the second part let $\cP_1$ and $\cP_2$ be two perfect matchings. The union of these two perfect matchings
consists of a collection of joint edges and walks in $\rib$. These walks can be split in walks with nontrivial homology on the torus and
walks with trivial homology. Using this split we can define two new perfect matchings $(\tilde \cP_1,\tilde \cP_2)$,
that are the same as the original with the edges in the trivial walks swapped. This operation is an involution and hence a bijection on the pairs of matchings.

Because we only swapped the trivial walks, the locations of $\tilde \cP_1$ and $\tilde \cP_2$ in the matching polygon are
the same and $W_{\cP_1}W_{\cP_2}=W_{\tilde\cP_1}W_{\tilde\cP_2}$. Furthermore one can check that
\[
\left\{\frac{W_{\cP_1}}{W_{\cP_\theta}},\frac{W_{\cP_2}}{W_{\cP_\theta}}\right\} = - \left\{\frac{W_{\tilde\cP_1}}{W_{\cP_\theta}},\frac{W_{\tilde \cP_2}}{W_{\cP_\theta}}\right\}
\]
For the proof of this identity we refer to \cite{goncharov2013dimers}[Lemma 3.8].

To calculate $\{H_{a,\theta},H_{b,\theta}\}$ we need to sum the brackets of pairs of matchings, but because the swap operation is a bijection it does not matter
whether we sum over $(\cP_1,\cP_2)$ or $(\tilde \cP_1,\tilde \cP_2)$. Therefore
\[
\{H_{a,\theta},H_{b,\theta}\} =\sum_{(\cP_1,\cP_2)} \left\{\frac{W_{\cP_1}}{W_{\cP_\theta}},\frac{W_{\cP_2}}{W_{\cP_\theta}}\right\} =
- \sum_{(\cP_1,\cP_2)}\left\{\frac{W_{\tilde\cP_1}}{W_{\cP_\theta}},\frac{W_{\tilde \cP_2}}{W_{\cP_\theta}}\right\} 
=  - \{H_{a,\theta},H_{b,\theta}\},
\]
and $\{H_{a,\theta},H_{b,\theta}\}=0$.
\end{proof}

\begin{theorem}[Goncharov-Kenyon \cite{goncharov2013dimers} theorem 3.7]
The functions $H_{a,\theta}$ form a maximal set of commuting Hamiltonians on $\cL$, giving $\cL$ the structure of an integrable system.
\end{theorem}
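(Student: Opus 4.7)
The commutativity of the $H_{a,\theta}$ is the content of the preceding lemma, and the boundary Hamiltonians are Casimirs by that same lemma. The proof therefore reduces to two items: (i) counting that the number of interior lattice points of $\MP(\qpol)$ equals half the symplectic leaf dimension, and (ii) establishing functional independence of the interior $H_{a,\theta}$ modulo the Poisson center.

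For (i), the dimension of a symplectic leaf of $\cL(\rib)$ equals
\[
\dim\cL(\rib) - \dim Z_P(\C[\cL(\rib)]) = (\#\rib_1 - \#\rib_0 + 1) - (\#\bir_2 - 1) = 2 - \chi(\Surf) = 2g,
\]
where $g$ is the genus of $\Surf = \ful{\bir}$. Theorem \ref{mirdim} identifies $g$ with the number of interior lattice points of $\MP(\qpol)$, so the interior $H_{a,\theta}$ have precisely the right cardinality $g$ to form a maximal commuting family, provided they are independent modulo Casimirs.

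The nontrivial content is (ii). The plan is to exploit the $\deg_\TT$-grading: each $H_{a,\theta}$ is $\deg_\TT$-homogeneous of weight $a - \deg_\TT(\cP_\theta)$, because every summand $W_\PM/W_{\cP_\theta}$ in $H_{a,\theta}$ represents the same class in $\H_1(\TT)$, namely $a - \deg_\TT(\cP_\theta)$. The zigzag Casimirs lie in $\deg_\TT = 0$, and the interior lattice points give pairwise distinct $\cT$-characters, so no polynomial relation between the $H_{a,\theta}$ over the Casimirs can mix different characters. It remains to exclude a homogeneous relation within a single weight. For this I would pass to a deep isoradial embedding of $\qpol$ and use Gulotta's distribution theorem to describe the summands of $H_{a,\theta}$ explicitly; in a generic limit one matching in $\PMs_a$ dominates tropically, producing a distinguished leading monomial whose exponent depends injectively on the interior lattice point $a$. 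The differentials $dH_{a,\theta}$ at such a generic point, together with $d\log W_Z$ for the zigzag cycles, then span a subspace of rank $g + (\#\bir_2 - 1)$, giving the required independence.

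The main obstacle is the last step: one must control the tropical leading-term argument sharply enough to ensure that after reducing modulo the Casimir subring the leading monomials for distinct interior $a$ remain linearly independent. The combinatorial handle is precisely Gulotta's theorem, which parametrises the matchings on a given lattice point by subsets of zigzag paths of a fixed direction; choosing the isoradial parameters to break all such symmetries simultaneously is the one piece of bookkeeping that needs to be carried out carefully.
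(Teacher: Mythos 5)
Your commutativity and counting steps coincide with the paper's: the bracket relations come from the preceding lemma, and the maximal number of independent commuting Hamiltonians modulo Casimirs is $g$, matched against the $g$ interior lattice points via theorem \ref{mirdim}. Where you diverge is the independence step, and this is where your plan has genuine gaps. First, your graded reduction rests on the claim that the zigzag Casimirs lie in $\deg_\TT=0$; this is false. The Poisson center is the $\deg_\Surf=0$ part of $\C[\cL(\rib)]$ (zigzag walks bound faces of $\bir$), not the $\deg_\TT=0$ part, which is $\C[\cX(\rib)]$ generated by the face walks. A zigzag walk carries the nontrivial torus homology class that is an outward normal of the zigzag polygon, and indeed the boundary Hamiltonians $H_{a,\theta}$ are Casimirs of nonzero $\deg_\TT$-weight. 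So a relation over the Casimir subring can freely shift $\deg_\TT$-weights, and your character argument does not isolate the interior Hamiltonians. Second, the combinatorial handle you propose for the leading-term analysis, Gulotta's distribution theorem, only parametrizes the matchings on the corners and sides of $\MP(\qpol)$; it says nothing about the interior matchings making up $\PMs_a$ for interior $a$, which is exactly what your plan needs. Finally, even granting a tropically dominating matching $\PM_a$ for each interior $a$, injectivity of $a\mapsto \PM_a-\cP_\theta$ is much weaker than what is required: for the differentials $dH_{a,\theta}$ to be independent modulo the Casimir directions you need the $g$ classes $\PM_a-\cP_\theta$ to remain linearly independent after projecting $\Lambda$ to $\H_1(\Surf,\Z)$, i.e.\ modulo the sublattice spanned by the zigzag walks, and nothing in the proposal controls these surface homology classes.

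For comparison, the paper avoids this bookkeeping entirely: independence is deduced from theorem \ref{alltropicaloccur}, which says that every tropical curve with Newton polygon $\MP(\rib)$ is realized by some weighting, so the (tropicalized) Hamiltonian values can be prescribed arbitrarily and the map they define has full rank. That surjectivity in turn rests on the Kenyon--Okounkov realization of Harnack curves by dimers (theorem \ref{Harnackfromdimer}) together with Itenberg--Viro patchworking, which is precisely the nontrivial input your direct approach would have to replace. If you want to salvage your route, the missing ingredient is a statement about interior matchings strong enough to produce $g$ dominating terms whose exponents are independent in $\H_1(\Surf,\Z)$; as it stands, that is the entire difficulty, and neither the grading claim nor Gulotta's theorem supplies it.
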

\begin{proof}
The dimension of $\cL$ is $\#\rib_2+1$, there are $\#\bir_2$ zigzag walks so the dimension of the Poisson center is $\#\bir_2-1$ because there is one relation
between the holonomies. The difference $\#\rib_2-\#\rib_2= \chi(\rib)-\chi(\bir)=2g -2$ so the difference between the dimension of $\cL$ and its Poisson center is
$2g$. The maximal number of commuting Hamiltonians (up to multiplication with Casimirs) is $g$. 
Because there are $g$ internal lattice points the Hamiltonians of the internal lattice points form such a maximal set of commuting Hamiltonians, provided
they are independent. The independence is a consequence of lemma \ref{alltropicaloccur}:
the map identifying a point in $\cL(\rib)$ with its tropical curve is surjective
and the dimension of the space of tropical curves is the same as the number of
commuting Hamiltonians.
\end{proof}

\begin{remark}
This construction also has a quantized version. For more details about this
we refer to \cite{goncharov2013dimers}.
\end{remark}

\subsection{Spectral Curves}

The (deformed) partition function $\Part_{\theta}$ plays a special role in our story. We can see it as a map $\Part:\cL \to \C$ and
hence it defines a hypersurface in $\cL$. This hypersurface does not depend on our choice of corner matching because all 
deformed partition functions are scalar multiples of each other.

We also have a map $\pi: \cL \to \cX$ and the fibers of this projection
are $2$-dimensional complex tori coming from the action $\cT={\C^*}^2$. In each of these tori $\Part_{\theta}$ cuts out a hypersurface in this torus.

\begin{definition}
Fix a point $m \in \cM$, the spectral curve $\cC_m$ is the zero locus of $\Part_{\theta}$ in the torus orbit $\cT \cdot m$.
\end{definition}

To get an equation for the spectral curve the fiber that contains $m$, we use coordinate functions $X,Y$ on the torus $\cT =\C^*\times\C^*$ to 
identify the coordinate ring of $\cT\cdot m$ with $\C[X,X^{-1},Y,Y^{-1}]$. This gives the following equation for the curve
\[
 \Part_{\theta,m}(X,Y) = \sum_{a \in \MP}\pm H_{a, \theta}(m) X^{i_a}Y^{j_a},
\]
with $i_a = \deg_{\cP}X-\deg_{\cP_{\theta}}X$ and $j_a = \deg_{\cP}X-\deg_{\cP_{\theta}}Y$ if $\cP \in PM_a$.
If we don't want to specify corner matching we can also use the undeformed version $\Part_m(X,Y):= W_{P_\theta}(m)\Part_{\theta,m}(X,Y)$, which 
gives us the same spectral curve.

\begin{intermezzo}[The Newton polygon]
The \iemph{Newton polygon} $\NP(f)$ of a polynomial $f(X,Y) \in \C[X,X^{-1},Y,Y^{-1}]$ is the convex hull of the exponents of the monomials with nonzero coefficients.
We say that a Newton polygon is degenerate if it is a line segment.

The Newton polygon of a polynomial can be used to determine topological properties of the zero locus $f^{-1}(0)$ in $\C^*\times \C^*$.
If $f^{-1}(0) \subset \C^*\times \C^*$ is a smooth curve then it is a punctured Riemann surface with genus $g$ and $n$ punctures where
$g$ is the number of internal lattice points of the Newton polygon and $n$ the number of boundary lattice points. 
\end{intermezzo}

If we apply this to our situation we get
\begin{lemma}[The spectral curve and the mirror dimer (Feng et al. \cite{feng2008dimer})]
Let $\rib$ be a consistent dimer on a torus and fix a point $m \in \cL(\rib)$.
If the spectral curve $\cC_m$ is smooth then it is homeomorphic to the punctured surface associated to the mirror dimer of $\rib$
\[
 \cC_m \equiv \punct{\rib}.
\]
\end{lemma}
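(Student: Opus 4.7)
The plan is to reduce the statement to the standard Newton polygon computation for smooth curves in $\C^\ast \times \C^\ast$, combined with Theorem \ref{mirdim}.

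First I would identify the Newton polygon of $\Part_{\theta,m}(X,Y) \in \C[X^{\pm 1},Y^{\pm 1}]$. By construction, each perfect matching $\cP$ contributes a monomial $\pm H_{a,\theta}(m)\,X^{i_a}Y^{j_a}$ whose exponent vector is the lattice point $(i_a,j_a) = \cP - \cP_0$ of the matching polygon $\MP(\qpol)$. Using Gulotta's theorem on the distribution of perfect matchings over $\MP(\qpol)$ (every lattice point of $\MP(\qpol)$, interior or boundary, supports at least one matching, with the explicit binomial count along each side coming from zigzag swaps), one sees that for generic $m \in \cL(\rib)$ the coefficient $H_{a,\theta}(m)$ is nonzero at every lattice point of $\MP(\qpol)$. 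Hence $\NP(\Part_{\theta,m}) = \MP(\qpol)$ as lattice polygons. Smoothness of $\cC_m$ is exactly the hypothesis of the lemma, so no genericity argument is needed there.

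Next I would invoke the classical Khovanskii theorem recalled in the intermezzo: if $f \in \C[X^{\pm 1},Y^{\pm 1}]$ cuts out a smooth curve in $\C^\ast \times \C^\ast$, then that curve is a punctured Riemann surface whose genus is the number of interior lattice points of $\NP(f)$ and whose number of punctures is the number of boundary lattice points of $\NP(f)$. Applied to $f = \Part_{\theta,m}$ with $\NP(f) = \MP(\qpol)$, this gives
\[
g(\cC_m) = \#\{\text{interior lattice points of }\MP(\qpol)\}, \qquad \#\text{punctures}(\cC_m) = \#\{\text{boundary lattice points of }\MP(\qpol)\}.
\]

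Finally I would apply Theorem \ref{mirdim} parts (3) and (4), which identify the right-hand sides with the number of vertices of $\polq$ and the genus of $\ful{\polq}$ respectively. Under the graph–quiver duality, vertices of $\polq$ are the faces of $\bir$, i.e.\ the boundary circles of the tubular neighbourhood $\fat{\bir}$ that get capped off with punctured discs in the construction of $\punct{\bir}$. Thus $\cC_m$ is an oriented surface of genus $g(\ful{\bir})$ with exactly one puncture per face of $\bir$, which is precisely $\punct{\bir}$. Since a closed oriented surface with punctures is classified up to homeomorphism by its genus and number of punctures, this yields the desired homeomorphism.

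The main obstacle I anticipate is the verification that $\NP(\Part_{\theta,m}) = \MP(\qpol)$ on the nose, rather than just being contained in it: one really needs that interior lattice points are attained, which is not automatic from the definition and relies essentially on zigzag consistency through Gulotta's explicit description of the matchings on each side of $\MP(\qpol)$. Everything else is either Khovanskii's genus formula or a direct quotation of Theorem \ref{mirdim}.
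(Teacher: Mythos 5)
Your overall route is the same as the paper's: identify $\NP(\Part_{\theta,m})$ with $\MP(\qpol)$, then combine the intermezzo on Newton polygons of smooth curves with Theorem \ref{mirdim}. However, the step where you establish the equality of polygons has a genuine gap. You argue that for \emph{generic} $m \in \cL(\rib)$ the coefficients $H_{a,\theta}(m)$ are nonzero at every lattice point, but the lemma fixes an arbitrary $m$, subject only to smoothness of $\cC_m$; smoothness does not allow you to perturb $m$, and for a particular $m$ the Hamiltonians at interior (or non-corner boundary) lattice points can perfectly well vanish, being sums of monomials evaluated at complex weights. Your appeal to Gulotta's theorem also overreaches: as stated it describes the matchings at corners and along the sides of $\MP(\qpol)$, not at interior lattice points, so even the ``every lattice point supports a matching'' input is not covered by what you cite.

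The repair, which is exactly what the paper does, is to observe that you only need the \emph{corners} of $\MP(\qpol)$ to be attained: the Newton polygon is a convex hull, and the genus and puncture counts in the intermezzo depend only on the polygon, not on whether coefficients at interior lattice points survive. At a corner of the matching polygon there is a unique corner matching, so $H_{a,\theta}$ is a single Laurent monomial in the edge weights; since $m$ lies in the algebraic torus $\cL(\rib)$, all weights are invertible and this monomial is nonzero for \emph{every} $m$, not just generic ones. Hence $\NP(\Part_{\theta,m}) = \MP(\qpol)$ unconditionally, and the remainder of your argument (the intermezzo together with parts (3) and (4) of Theorem \ref{mirdim}, and the classification of punctured oriented surfaces by genus and number of punctures) goes through as you wrote it.
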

\begin{proof}
We just have to show that the Newton polygon of $\Part_{\theta,m}$ is equal to the matching polygon of $\rib$ and apply theorem \ref{mirdim}.
Clearly the Newton polygon is contained in the matching polygon. It is also equal to the matching polygon because if $a \in \MP(\rib)$ is 
a corner then there is a unique corner matching on $a$. The Hamiltonian $H_{a, \theta}$ consists of just one term and hence it is nonzero.
\end{proof}

\begin{example}
The suspended pinchpoint has a tropical curve with equation of the form
\[
1 + a_1 Y+ a_2Y^2+ a_3X +a_4 XY^{-1}=0 \implies X = f(Y) =-Y\frac{1 + a_1 Y+ a_2Y^2}{a_4+a_3Y}
\]
Because $f$ has $3$ zeros and $2$ poles (one at infinity), the curve is a $\PP_1$ with $5$ punctures. If we compare this with example \ref{sppmir} we see that the mirror
has $5$ vertices and is embedded in a sphere.
\end{example}

\subsection{Dimer models in statistical physics}

Dimer models originally came from statistical physics, where they are used as models to study phase changes.
The idea is that the nodes of the dimer represent particles that tend to form bonds between one black and one white particle. 
The edges of the dimer represent the different possible bonds that a particle can form, 
and a perfect matching is a state in which all the particles are bound. Each bond $e$ can have a certain energy $E_e$ and the total energy of the state
is the sum of all bonds in the perfect matching.

In statistical physics a system is usually described by its partition function $Z$, this is the sum of all states weighed by a factor $e^{-\frac{E}{ kT}}$ where $E$ is
the energy of the state, $T$ is the temperature and $k$ is the Boltzmann constant. The probability that a certain state with energy $E$ occurs at a given temperature $T$ is then the quotient
of $e^{-\frac{E}{kT}}$ by the partition function.

If we do this for the dimer we get
\[
 Z_\rib(E) = \sum_{\PM} e^{-\frac{\sum_{e \in \PM} E_e}{kT}} 
\]
which, if we forget about the signs, can be seen as the partition function of the integral system for a line bundle $\cE \in \cM$ with $X_e(\cE) = e^{- \frac{E_e}{kT}}$. 
The sign issue can be solved by using the polynomial function $\Part_m(X,Y)$:
\begin{lemma}[\cite{kasteleyn,tesler2000matchings}]
If $\rib$ is a dimer on a torus we have 
$$Z_\rib(E) = \frac12 \left| -\Part_\cE(1,1)  + \Part_\cE(-1,1)+\Part_\cE(1,-1)+\Part_\cE(-1,-1)\right|.$$   
\end{lemma}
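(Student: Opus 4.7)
The plan is to unpack $\Part_\cE(X,Y)$ as a signed Laurent polynomial indexed by the position of each perfect matching in the matching polygon, and then show that the prescribed linear combination of the four evaluations $\Part_\cE(\pm 1,\pm 1)$ collapses into the unsigned sum $2 Z_\rib(E)$ up to an overall sign (which the absolute value discards).

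First, using Kasteleyn's theorem (the previous statement in the excerpt), expand
\[
\Part_\cE(X,Y) \;=\; \sum_{\PM \in \PMs(\rib)} \sigma(\PM)\, W_\PM(\cE)\, X^{i(\PM)} Y^{j(\PM)},
\]
where $(i(\PM),j(\PM))\in\Z^2$ is the location of $\PM$ in the matching polygon (read off against a basis $X,Y$ of cyclic paths generating $\H_1(\TT)$) and $\sigma(\PM)\in\{\pm 1\}$ depends only on the class $(i(\PM),j(\PM))\bmod 2$. Group the matchings by this parity class and define, for $(\alpha,\beta)\in(\Z/2)^2$,
\[
Z_{\alpha\beta} \;=\; \sum_{\substack{\PM\\(i(\PM),j(\PM))\equiv(\alpha,\beta)}} W_\PM(\cE), \qquad \epsilon_{\alpha\beta}:=\sigma(\PM)\ \text{for any $\PM$ in class $(\alpha,\beta)$}.
\]
Since $W_\PM(\cE)=e^{-E_\PM/kT}\ge 0$, we have $Z_\rib(E)=Z_{00}+Z_{01}+Z_{10}+Z_{11}$, and for $x,y\in\{\pm 1\}$,
\[
\Part_\cE(x,y) \;=\; \sum_{(\alpha,\beta)}\epsilon_{\alpha\beta}\,x^{\alpha}y^{\beta}\,Z_{\alpha\beta}.
\]

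Next, compute the prescribed linear combination. The coefficient of $\epsilon_{\alpha\beta}Z_{\alpha\beta}$ in
$-\Part_\cE(1,1)+\Part_\cE(-1,1)+\Part_\cE(1,-1)+\Part_\cE(-1,-1)$ equals $-1+(-1)^\alpha+(-1)^\beta+(-1)^{\alpha+\beta}$, which is $+2$ for $(\alpha,\beta)=(0,0)$ and $-2$ for the other three classes. Thus the combination equals $2(\epsilon_{00}Z_{00}-\epsilon_{01}Z_{01}-\epsilon_{10}Z_{10}-\epsilon_{11}Z_{11})$.

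The heart of the matter is therefore the sign identity $\epsilon_{00}=-\epsilon_{01}=-\epsilon_{10}=-\epsilon_{11}$, for then the combination equals $\pm 2 Z_\rib(E)$ and taking $\tfrac12|\cdot|$ yields the claim. This is the classical torus refinement of Kasteleyn's sign theorem: if $\PM_1,\PM_2$ are two perfect matchings, their symmetric difference $\PM_1\triangle\PM_2$ is a disjoint union of alternating cycles in $\rib$, say with total homology class $(m,n)=(i(\PM_1)-i(\PM_2),\,j(\PM_1)-j(\PM_2))\in\H_1(\TT,\Z)$. Using that the Kasteleyn holonomy around each contractible face of length $\ell$ is $(-1)^{\ell/2+1}$, a parity count of interior vertices of each bounded region shows that the ratio $\sigma(\PM_1)/\sigma(\PM_2)$ depends only on $(m,n)\bmod 2$, and moreover equals $+1$ exactly when $(m,n)\equiv(0,0)$ and $-1$ for the other three parity classes (this is where the $\pm 1$ freedom in the Kasteleyn weighting around the two noncontractible loops is fixed, and only this choice makes the holonomy equal $(-1)^{\ell/2+1}$ on every contractible face). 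This sign identity is the single nontrivial input and is the main obstacle in the argument; once granted, substitution yields $-\Part_\cE(1,1)+\Part_\cE(-1,1)+\Part_\cE(1,-1)+\Part_\cE(-1,-1)=\pm 2 Z_\rib(E)$, and the absolute value absorbs the overall sign.
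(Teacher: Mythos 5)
Your overall skeleton is the standard one (and the paper itself gives no argument, just a pointer to Kenyon--Okounkov--Sheffield, Section 3.1): expand $\Part_\cE$ via Kasteleyn's theorem, group matchings by the parity of their class in $\H_1(\TT,\Z)\cong\Z^2$, and check that the combination $-\Part_\cE(1,1)+\Part_\cE(-1,1)+\Part_\cE(1,-1)+\Part_\cE(-1,-1)$ weights the class $(0,0)$ by $+2$ and the other three classes by $-2$; that computation is correct, and the sign pattern $\epsilon_{00}=-\epsilon_{01}=-\epsilon_{10}=-\epsilon_{11}$ is indeed exactly what is needed.

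The gap is in how you justify that sign pattern, and it is not a small one: the rule you state is internally inconsistent. You claim that for any two matchings the ratio $\sigma(\PM_1)/\sigma(\PM_2)$ depends only on the difference class $(m,n)\bmod 2$ and equals $-1$ whenever $(m,n)\not\equiv(0,0)$. If matchings $A,B,C$ exist in classes $\equiv(0,0),(1,0),(0,1)$, your rule gives $\sigma(B)=-\sigma(A)$ and $\sigma(C)=-\sigma(A)$, hence $\sigma(B)/\sigma(C)=+1$, while $B$ and $C$ differ by class $(1,1)$, so the rule demands $-1$. Equivalently: a ratio depending only on the difference class would force $\epsilon$ to be a character of $(\Z/2)^2$, and for a character the four-evaluation combination produces $2(Z_{c_0}-\sum_{c\ne c_0}Z_c)$, not $2Z_\rib(E)$ --- so if your sign rule were true, the lemma would be false. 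The correct statement is about absolute signs relative to a fixed reference matching $\PM_0$: $\sigma(\PM)=\pm(-1)^{q([\PM\,\triangle\,\PM_0])}$ where $q$ is a \emph{quadratic refinement} of the intersection form on $\H_1(\TT,\Z/2)$, i.e.\ $q(c+c')=q(c)+q(c')+c\cdot c'$; the extra intersection term is precisely why the ratio is \emph{not} a function of the difference class, and the choice of $\pm1$ holonomies of $\kappa$ around the two noncontractible cycles selects which of the four refinements occurs (the lemma as stated needs the odd one, $q(m,n)=mn+m+n$, whose only zero is $(0,0)$). Your ``parity count of interior vertices of each bounded region'' only disposes of the contractible loops in $\PM_1\triangle\PM_2$; all the content of Kasteleyn's torus theorem sits in the noncontractible loops (their lengths mod $4$, the $\kappa$-holonomies, and the interaction between loops in the two homology directions), and that bookkeeping --- Tesler's argument, or Cimasoni--Reshetikhin's spin-structure formulation --- is exactly the step your proposal skips. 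With that replaced, the rest of your computation goes through.
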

\begin{proof}
See \cite{kenyon2006dimers}[Section 3.1]. 
\end{proof}
The statistical partition function can be deduced from the dimer partition function where the dimer weight $\cE$ is an exponentiated version
of the energy $E$. Because $\frac{-1}{kT}$ is just a constant that will play no role in our story we will omit it and assume that $\cE=\exp E$.
The partition function gives a probability measure $\mu_E$ on the set of matchings $\PMs(\rib)$. 
\[
\mu_E(U) = \frac{\sum_{\cP \in U} W_{\cP}(\cE)}{Z_\rib(E)} \text{ for all $U\subset \PMs(\rib)$}
\]
Note that because we take ratios between sums of perfect matchings the probability measure only depends on the line bundle in $\cL$ that corresponds to the point $\cE \in \cM$.

The statistical model is a logarithmic version of the integrable system.
The space of energy configurations for the dimer model on the torus is $\rM = \Rl^{\rib_1}$ and on this space we have an additive action of $\rG =\Rl^{\rib_0}$:
$(g\cdot E)_e = E_e+g_{b(e)}-g_{w(e)}$. The quotient space is the moduli space of physically distinguishable configurations $\rL:= \rM/\rG$.
On this moduli space we have an action of $\rT = \Rl^2$ coming from the homology of the torus and the quotient of this action is denoted by $\rX$.
Each of the fibers in this quotient is a two-dimensional space $\Rl^2$.

What is the physical interpretation of this $\rT$-action? If we embed the dimer periodically in the plane such that the fundamental domain is a unit square
then every edge can be represented by a vector $\vec e$ and if $p=e_1\dots e_n$ is a cyclic walk on the torus then the homology class of this walk is
$\vec e_1+\dots +\vec e_n$. If we let $\vec B \in \Rl^2$ act $\rM$ by $E_e \mapsto E_e + \vec B \cdot \vec e$ it is easy to see that this
will coincide with the $\rT$-action on $\rL$ after taking the quotient. This action is analogous to turning on a magnetic field $\vec B$, which then shifts
the energy levels. So $\rX$ classifies the systems up to a variable magnetic field and the fiber $\rT \cdot E\cong \R^2$ parametrizes the different
strengths of the magnetic field of a given system.

Note that for the statistical model the mutation transformation is an isomorphism because
$(1+W)$ never becomes zero. Therefore $\rL(\rib)\cong \rL(\mu(\rib))\cong \rL$. Therefore we will drop the dimer from the notation in this section.

To go from the integrable system to the statistical model, we can use the logarithm of the absolute value
$\Log : \cL \to \rL : \cE \mapsto \log |\cE|$. This gives us a surjection and we can 
look at the image of the complex spectral curve under $\Log$. This image is called the \iemph{amoeba} of the spectral curve and we denote it by $\Am \subset \R^2$. 

\begin{intermezzo}[Amoebae]
Let $P(X,Y) \in \R[X,X^{-1},Y,Y^{-1}]$ be a Laurent polynomial with real coefficients and let $\NP$ be its Newton polygon. 
The amoeba is the two-dimensional closed subset of $\R^2$, containing the logs of the absolute values of the complex solutions of $P(X,Y)$:
\[
 \Am(P) := \{(\rho_1,\rho_2) | \exists \theta_1,\theta_2 \in \Rl: P(e^{\rho_1+i\theta_1},e^{\rho_1+i\theta_1})=0\}.
\]
It looks like a filled set with holes and tentacles that go to infinity. 
\begin{center}
\begin{tikzpicture}
\draw (2.1,-.5) node[above]{$\scriptstyle{\Am(X^2Y^3+X^2Y^2+2X^2Y-5XY^2+3XY-Y^2+4Y+1)}$};
\begin{scope}[y=0.80pt, x=0.80pt, yscale=-.250000, xscale=.250000, inner sep=0pt, outer sep=0pt]
\path[fill=black] (427.8232,443.8979) .. controls (404.3509,400.8440) and
    (386.8043,375.1624) .. (370.5558,360.0798) .. controls (351.9849,342.8416) and
    (331.2634,334.9700) .. (294.3571,331.1338) .. controls (289.1321,330.5907) and
    (237.4946,329.6185) .. (179.6071,328.9733) .. controls (121.7196,328.3282) and
    (74.3571,327.4285) .. (74.3571,326.9741) .. controls (74.3571,325.8541) and
    (207.2976,325.9799) .. (250.8571,327.1411) .. controls (299.3759,328.4346) and
    (315.7043,330.2764) .. (336.0670,336.7527) .. controls (369.0709,347.2495) and
    (389.8710,369.9908) .. (426.0629,435.1479) .. controls (437.6991,456.0969) and
    (438.6740,458.1479) .. (436.9949,458.1479) .. controls (436.2202,458.1479) and
    (432.1135,451.7675) .. (427.8232,443.8979) -- cycle(435.9329,444.8452) ..
    controls (432.4564,437.5287) and (425.4514,422.3412) .. (420.3663,411.0952) ..
    controls (415.2812,399.8492) and (409.9939,388.1729) .. (408.6168,385.1479) ..
    controls (401.1876,368.8290) and (391.2822,341.7810) .. (388.3445,329.7916) ..
    controls (385.8640,319.6685) and (386.1038,317.0805) .. (390.4871,306.6615) ..
    controls (392.9894,300.7138) and (394.3055,298.9973) .. (399.1475,295.3667) ..
    controls (412.4566,285.3872) and (429.8173,279.3789) .. (453.3571,276.6057) ..
    controls (460.2321,275.7957) and (480.4821,274.6931) .. (498.3571,274.1553) ..
    controls (538.2269,272.9558) and (651.3571,272.8319) .. (651.3571,273.9877) ..
    controls (651.3571,274.4496) and (615.6946,275.1399) .. (572.1071,275.5217) ..
    controls (472.8367,276.3912) and (451.6430,277.5457) .. (430.3571,283.2433) ..
    controls (421.0774,285.7272) and (408.7014,291.7072) .. (401.1279,297.3667) ..
    controls (393.7680,302.8665) and (389.9547,311.0243) .. (390.0115,321.1479) ..
    controls (390.0885,334.8705) and (404.5037,371.8048) .. (435.1878,436.8979) ..
    controls (442.1987,451.7708) and (444.7620,458.1479) .. (443.7293,458.1479) ..
    controls (442.8193,458.1479) and (439.8310,453.0491) .. (435.9329,444.8452) --
    cycle(74.3571,324.2491) .. controls (74.3571,323.4574) and (95.6444,323.1436)
    .. (150.1071,323.1327) .. controls (279.7444,323.1067) and (312.8499,321.0045)
    .. (331.6547,311.6050) .. controls (338.2686,308.2990) and (341.7477,304.9758)
    .. (344.3168,299.5104) .. controls (346.8121,294.2017) and (346.8635,291.7032)
    .. (344.6701,282.3256) .. controls (343.0594,275.4394) and (342.5825,274.6525)
    .. (336.6646,269.1167) .. controls (325.8915,259.0390) and (314.9049,254.9720)
    .. (284.2547,249.7156) .. controls (254.0334,244.5328) and (224.1892,243.2056)
    .. (137.1071,243.1721) .. controls (95.9566,243.1562) and (74.3571,242.8037)
    .. (74.3571,242.1479) .. controls (74.3571,240.8338) and (204.6241,240.8348)
    .. (231.8571,242.1489) .. controls (262.4225,243.6240) and (286.5403,246.7756)
    .. (309.2547,252.2627) .. controls (327.8937,256.7653) and (343.7147,267.8303)
    .. (346.7177,278.4641) .. controls (350.4274,291.6004) and (349.7585,297.4877)
    .. (343.5961,305.9355) .. controls (337.0694,314.8827) and (320.1139,320.1966)
    .. (288.3571,323.2473) .. controls (272.0280,324.8159) and (74.3571,325.7411)
    .. (74.3571,324.2489) -- cycle(494.3571,271.4753) .. controls
    (463.0252,270.6261) and (436.2705,267.6540) .. (420.7341,263.2965) .. controls
    (392.1443,255.2781) and (366.4641,237.9616) .. (344.8813,212.1479) .. controls
    (336.2134,201.7809) and (307.9774,158.3040) .. (296.1727,137.1479) .. controls
    (294.7917,134.6729) and (290.9367,127.9229) .. (287.6061,122.1479) .. controls
    (270.8314,93.0619) and (261.5158,75.1482) .. (263.1749,75.1674) .. controls
    (264.2917,75.1803) and (266.9619,79.1299) .. (271.1187,86.9174) .. controls
    (274.5625,93.3692) and (281.2318,105.3979) .. (285.9393,113.6479) .. controls
    (290.6468,121.8979) and (295.6349,130.6729) .. (297.0241,133.1479) .. controls
    (305.8695,148.9076) and (330.9062,188.5766) .. (340.1971,201.5530) .. controls
    (348.2221,212.7613) and (364.0666,229.0402) .. (374.7437,237.0470) .. controls
    (401.9116,257.4202) and (425.0915,265.0486) .. (469.3571,268.1838) .. controls
    (477.3321,268.7487) and (521.5446,269.4876) .. (567.6071,269.8259) .. controls
    (613.6696,270.1641) and (651.3571,270.8250) .. (651.3571,271.2944) .. controls
    (651.3571,272.0343) and (520.5444,272.1851) .. (494.3571,271.4753) --
    cycle(341.3840,253.6803) .. controls (339.3045,251.1746) and
    (338.9852,244.7678) .. (340.7681,241.3201) .. controls (343.9046,235.2549) and
    (351.2898,233.1668) .. (353.7563,237.6479) .. controls (354.3618,238.7479) and
    (355.2650,239.9430) .. (355.7635,240.3036) .. controls (356.9490,241.1611) and
    (354.7464,249.0163) .. (352.5455,251.7801) .. controls (350.3609,254.5234) and
    (343.1085,255.7581) .. (341.3840,253.6803) -- cycle(350.6969,249.7521) ..
    controls (353.0654,246.1373) and (353.5217,242.9250) .. (352.0611,240.1479) ..
    controls (349.4041,235.0963) and (342.3571,240.0992) .. (342.3571,247.0370) ..
    controls (342.3571,251.5375) and (342.8410,252.1479) .. (346.4088,252.1479) ..
    controls (348.1733,252.1479) and (349.6779,251.3073) .. (350.6969,249.7521) --
    cycle(74.3571,238.7589) .. controls (74.3571,237.2940) and (80.3215,237.1467)
    .. (140.1071,237.1343) .. controls (238.6281,237.1139) and (262.8293,235.3685)
    .. (279.3571,227.0913) .. controls (286.2856,223.6215) and (289.5070,220.2237)
    .. (293.0506,212.6479) .. controls (295.4968,207.4182) and (295.8494,205.4924)
    .. (295.7965,197.6479) .. controls (295.6701,178.9044) and (286.2736,148.9696)
    .. (265.6623,101.6479) .. controls (259.9130,88.4479) and (254.9691,77.0854)
    .. (254.6761,76.3979) .. controls (254.3830,75.7104) and (254.7962,75.1479) ..
    (255.5944,75.1479) .. controls (256.5168,75.1479) and (260.0634,81.9814) ..
    (265.3259,93.8979) .. controls (290.6173,151.1689) and (301.2660,187.7491) ..
    (297.8570,205.6479) .. controls (293.5495,228.2639) and (279.0348,235.4490) ..
    (232.3571,238.0719) .. controls (222.7321,238.6127) and (183.2446,239.3510) ..
    (144.6071,239.7126) .. controls (80.4061,240.3133) and (74.3571,240.2312) ..
    (74.3571,238.7589) -- cycle;
\end{scope}   
\end{tikzpicture}
\end{center}
The complement of the amoeba splits in a number 
of connected components. Some of these components come from the holes in the amoeba and others from the space between tentacles. One should think of each component
as a subset where one of the monomials of $P$ becomes too big in absolute value to be cancelled out by the others so $P$ cannot become zero. This suggests that we must be able
to assign a monomial to each of these components. This can be done using the \iemph{Ronkin function}.
\[
 F(\rho_1,\rho_2) := \frac{1}{(2 \pi)^2} \int_0^{2\pi}\int_{0}^{2\pi} \log |P(e^{\rho_1+i\theta_1},e^{\rho_1+i\theta_1})| d\theta_1d\theta_2
\]
The gradient of this function is constant on each component in the complement of the amoeba and it equals a lattice point in the Newton polygon. 

For a good survey on amoeba we refer to \cite{mikhalkin2004amoebas}.
\end{intermezzo}

\subsection{Phase transitions of a dimer model}
We will now give a physical interpretation to both $\Am$ and the Ronkin function. We will only give a rough sketch of the main ideas in this subject area. For the details we refer
to \cite{kenyon2006dimers,kenyon2006planar,kenyon2003introduction}

The partition function describes the dimer on a torus but it is more interesting from the point of view of solid state physics to
work in the universal cover $\tilde\rib$. This is a periodic graph and can be interpreted as a two-dimensional crystal.

If we want to construct a measure on the set of states, $\PMs(\tilde \rib)$, we can do this by taking larger and larger covers of $\rib$.
Let $\rib^n$ be the $n\times n$-cover of $\rib$ and denote by $E$ the energy function that assigns to all edges in $\rib^n$ lying over an edge $e \in \rib_1$ the energy $E_e$.
This gives a measure $\mu^n(E)$ on $\PMs(\rib^n)$ and we can take the limit $n \to \infty$ to obtain a measure on $\PMs(\tilde \rib)$.
This measure is an example of an \iemph{ergodic Gibbs measure} (EGM). For the precise
definition of an ergodic Gibbs measure we refer to \cite{kenyon2006dimers}.

Energy functions that correspond to different measures on $\PMs(\rib)$ can lead to identical EGMs on $\PMs(\tilde\rib)$. 
Scott Sheffield \cite{sheffield2006random} showed that EGMs on a dimer model are parametrized by their slopes.
\begin{definition}
The \iemph{slope of an EGM} is the expectation value of $(\<\bar X,\PM\>,\<\bar Y,\PM\>)$ where $X,Y$ are the lifts of paths with homology classes that form a basis for the torus.
\end{definition}

\begin{theorem}[Sheffield \cite{sheffield2006random}/Kenyon-Okounkov-Sheffield \cite{kenyon2006dimers}(theorem 2.1 and 3.2)]
The slope of an EGM lies inside the matching polygon $\MP(\rib)$.
Two EGMs with the same slope are the same and for each point $p$ in the matching polygon there is a unique EGM with slope $p$.  
\end{theorem}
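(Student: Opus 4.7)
The plan is to parametrize EGMs by the magnetic-field moduli space $\rX = \rL/\rT$ and show that the induced ``slope map'' $\sigma: \rX \to \MP(\rib)$ is a homeomorphism onto $\MP(\rib)$. This simultaneously gives containment of slopes in $\MP$, existence of an EGM for every point of $\MP$, and uniqueness.

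For the slope bound, I would argue as follows. Take any EGM $\mu$ obtained as the limit of finite-volume Gibbs measures on the $n \times n$ covers $\rib^n$. A sample $\PM \in \PMs(\tilde \rib)$ projects, for each $n$, to a perfect matching of $\rib^n$, and by Lemma \ref{flowpm} the corresponding unit flow is a convex combination of the unit flows $\phi_{\PM'}$ of perfect matchings $\PM'$ on $\rib^n$, which in turn descend to convex combinations of flows coming from $\PMs(\rib)$. Evaluating against lifted generators $X,Y$ of $\H_1(\TT)$ shows that the slope $(\<\bar X,\PM\>,\<\bar Y,\PM\>)$ lies in the convex hull of the lattice points $(\<\bar X,\PM'\>,\<\bar Y,\PM'\>)$, which is by definition $\MP(\rib)$.

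For existence, for each magnetic field $\vec B \in \R^2$ the shifted energy $E_e \mapsto E_e + \vec B \cdot \vec e$ induces periodic finite-volume Gibbs measures on $\rib^n$ whose limit $\mu_{\vec B}$ is an EGM. By Kasteleyn's theorem the free energy per fundamental domain equals the Ronkin function $F$ of the characteristic polynomial $\Part$, evaluated at the point of $\rL$ determined by $\vec B$. Standard thermodynamic formalism identifies the expected slope of $\mu_{\vec B}$ with $\nabla F(\vec B)$; since $F$ is convex with Newton polygon $\MP(\rib)$, its gradient map sweeps out the interior of $\MP$, while boundary slopes are realized by frozen/gaseous phases obtained as limits where the matching is forced to coincide, up to density zero, with a boundary matching on the corresponding side or corner of $\MP$.

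The main obstacle is uniqueness: showing that two EGMs $\mu_1,\mu_2$ with the same slope must coincide. The strategy, due to Sheffield, is to build a coupling $(\PM_1,\PM_2)$ of samples so that the symmetric difference $\PM_1 \triangle \PM_2$ is a disjoint union of finite cycles and bi-infinite paths in $\tilde \rib$. Equality of slopes forces the mean homological displacement of $\PM_1 \triangle \PM_2$ per fundamental domain to vanish, and a percolation/ergodicity argument then shows that bi-infinite paths have density zero. On the finite cycles one performs local cluster swaps, which preserve both marginal measures simultaneously; an exchange argument, together with the variational characterization of EGMs as minimizers of the specific free energy at the given slope, forces $\mu_1 = \mu_2$. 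The delicate step is ruling out percolation of bi-infinite double-dimer paths at the target slope, which is where the bulk of the technical work in \cite{sheffield2006random} lies.
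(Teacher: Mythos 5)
The paper does not actually prove this theorem: it is imported verbatim from Sheffield \cite{sheffield2006random} and Kenyon--Okounkov--Sheffield \cite{kenyon2006dimers}, with no argument supplied, so there is no in-paper proof to compare yours against. Judged on its own, your sketch faithfully reproduces the strategy of those references: slope containment via unit flows, existence via the magnetic-field ($\rT$-)action together with convexity of the Ronkin function and Legendre duality (this is exactly the content of the KOS results quoted later in the same section of the paper), and uniqueness via Sheffield's cluster-swapping argument on the double-dimer configuration.

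Two remarks. First, in the containment step a sample of the infinite-volume measure does not literally project to a perfect matching of the cover $\rib^n$ (the boundary edges are mismatched); the clean argument uses translation invariance of the EGM to define the expected edge occupation probabilities, which form a unit flow on $\rib$, and then applies Lemma \ref{flowpm} before pairing with $\bar X,\bar Y$. Second, your uniqueness paragraph is a plan rather than a proof: ruling out percolation of bi-infinite paths in $\PM_1\triangle\PM_2$ at the given slope and the ensuing exchange/variational argument constitute essentially the entire technical content of Sheffield's theorem, so as written the proposal defers the decisive step to the reference --- which is also precisely what the paper does by citing it.
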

This gives a map $\psi:\rM \to \MP$ that assigns to each energy configuration the slope of the corresponding EGM. 
Intuitively the slope will be the location in the matching polygon where the partition function becomes concentrated.

As we have seen we can split the partition function $Z_E(\rib)$ in parts corresponding to the lattice points in the matching polygon. In
general there will be one lattice point that contributes the most. If we go to a cover $\rib^n$, the new matching polygon has the same shape but is
scaled by a factor $n$, or if we scale back we could say that the matching polygon stays the same but the lattice points become more densely distributed.
So for each $n$ we have a rational point $p_n$ inside the matching polygon $\MP(\rib)$ that contributes most to $Z(\rib^n)$. 
The limit of these points $p_\infty = \lim_{n \to \infty}p_n$ will be the slope of the EGM because the perfect matchings on this
location will dominate in the expectation value of $(\<\bar X,\PM\>,\<\bar Y,\PM\>)$.

Building on this idea we can construct a function that measures the contributions of each point in the limit of the partition function.
\[
 \sigma_E: \MP(\rib) \to \R : u \mapsto \lim_{n\to \infty} (Z_{\floor{nu}}(\rib^n))^{\frac 1{n^2}}
\]
In this notation $Z_{\floor{nu}}(\rib_n))$ stands for the part of the partition function $Z(\rib^n)$ coming from the lattice point just to the left and below $nu$ 
in $\MP(\rib^n)$.
The exponent ${\frac 1{n^2}}$ is needed because the fundamental domain enlarges by a factor $n^2$, so if $\cP$ is a perfect matching and $\cP^n$ its lift to the cover
then $W_{\cP^n}=(W_{\cP})^{n^2}$. The function $\sigma_E$ is called the \iemph{surface tension} and the slope of the EGM is the point
in $\MP(\rib)$ where $\sigma$ becomes maximal.

To find the slope of an energy configuration it is interesting to restrict $\psi$ to one $\rT$-orbit.
If $E$ is an energy configuration then $\psi_E: \R^2 \to \MP: \vec B \mapsto \psi(E_e + \vec B \cdot \vec e)$ will indicate how the slope behaves under 
changes of the magnetic field. The behaviour of $\sigma_E$ and $\psi_E$ can be expressed in terms of the partition function $\Part_E(X,Y)$ of the dimer. 

\begin{theorem}[\cite{kenyon2006dimers} theorem 3.6]
Let $\rib$ be a consistent dimer on a torus.
\begin{itemize}
 \item 
The surface tension $\sigma_E$ is the Legendre dual of the Ronkin function $F_E$ associated to $P_E$:
\[
\forall (u_1,u_2) \in \MP(\rib): \sigma_E(u_1,u_2) = \sup_{(x,y)} F_E(x,y) - (u_1x +u_2y).
\]
\item
On the complement of the amoeba $\Am(\Part_E)$ $\psi_E$ is equal to the gradient of the Ronkin function. This means $\psi_E$
maps each component of complement to its corresponding lattice point in the Newton/matching polygon. 
\end{itemize}
\end{theorem}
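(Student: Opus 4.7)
The plan is to translate the limit defining $\sigma_E$ into a saddle-point analysis of the Ronkin integral, and then read off the gradient statement from standard Legendre duality.

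First I would write the partition function of the $n\times n$ cover in terms of the Kasteleyn determinant on the cover. Because the cover is Galois with group $(\Z/n)^2$ acting by translations, the Kasteleyn operator diagonalizes into blocks indexed by the characters $(\omega_1,\omega_2)$ with $\omega_i^n=1$, and on each block it becomes a twist of the Kasteleyn operator of $\rib$ by $(\omega_1,\omega_2)$. This yields the identity
\[
Z_{\rib^n}(E) \sim \prod_{\omega_1^n=\omega_2^n=1} \bigl|\Part_E(\omega_1,\omega_2)\bigr|
\]
up to signs handled by Kasteleyn's theorem. Taking $\frac{1}{n^2}\log$ and passing to the limit, a Riemann-sum argument turns the product over $n$-th roots of unity into an integral over $\SS^1\times\SS^1$, producing
\[
\lim_{n\to\infty} \frac{1}{n^2}\log Z_{\rib^n}(E) = F_E(0,0).
\]

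For part 1, I would now slice the partition function according to homology degree. Inserting a magnetic field $\vec B=(x,y)$ shifts energies by $\vec B\cdot\vec e$ and multiplies the contribution of a matching on lattice point $u=(u_1,u_2)$ by $e^{u_1 x+u_2 y}$. On the $n\times n$ cover the shift by $\vec B$ amounts to evaluating $\Part$ at $(e^x\omega_1, e^y\omega_2)$, and the same Riemann-sum argument gives
\[
\lim_{n\to\infty}\frac{1}{n^2}\log Z_{\rib^n}(E+\vec B\cdot\vec e)=F_E(x,y).
\]
On the other hand, splitting $Z_{\rib^n}(E+\vec B\cdot\vec e)$ by lattice points $u\in\MP(\rib^n)$ and applying Laplace's method shows that the leading contribution as $n\to\infty$ comes from the point $nu$ maximizing $\sigma_E(u)+u\cdot\vec B$. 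Comparing the two expressions yields
\[
F_E(x,y)=\sup_{u\in\MP(\rib)}\bigl(\sigma_E(u)+u_1x+u_2y\bigr),
\]
and inverting this Legendre transform gives the claimed formula for $\sigma_E$. The main technical obstacle here is justifying the Laplace asymptotics: one needs uniform control of the discrete sums over lattice points in $\MP(\rib^n)$ and a concavity statement for $\sigma_E$ (which comes out of the proof, since a sup of linear functions is automatically convex, and the positivity of the Hessian of $F_E$ where it is smooth forces strict concavity of $\sigma_E$ on the interior).

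For part 2, on a connected component of $\R^2\setminus\Am(\Part_E)$ the polynomial $\Part_E(e^{x+i\theta_1},e^{y+i\theta_2})$ has a uniquely dominant monomial $c_u X^{u_1}Y^{u_2}$ in absolute value for all $(\theta_1,\theta_2)$. Factoring this monomial out of the integrand and expanding the logarithm yields $F_E(x,y)=u_1 x+u_2 y+\log|c_u|+\text{(exponentially small correction)}$, so $\nabla F_E=u$ on that component, proving both that $\nabla F_E$ is constant on complementary components and that the constants are precisely the lattice points of $\MP(\rib)=\NP(\Part_E)$. To conclude that this equals $\psi_E$, I would combine the Legendre duality from part 1 with Sheffield's uniqueness theorem: $\psi_E(\vec B)$ is the unique maximizer in $u$ of $\sigma_E(u)+u\cdot\vec B$, and by convex duality this maximizer is $\nabla F_E(\vec B)$ whenever $F_E$ is differentiable at $\vec B$, which is exactly the complement of the amoeba.
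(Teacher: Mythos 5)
The survey itself gives no proof of this theorem — it is quoted verbatim from Kenyon--Okounkov--Sheffield \cite{kenyon2006dimers} — so there is no in-paper argument to compare against; your sketch is essentially a reconstruction of their proof (free energy via block-diagonalization of the Kasteleyn operator on the $n\times n$ cover, Laplace splitting of the toroidal partition function by height change to get the Legendre relation, dominant-monomial expansion of the Ronkin integral off the amoeba), and it is sound. Three small points are worth tightening. First, $Z_{\rib^n}$ is not a single product of $|\Part_E|$ over $n$-th roots of unity but a signed combination of four such products, one per spin structure, and the Riemann-sum limit needs control of $\log|\Part_E|$ near zeros of $\Part_E$ on the unit torus; this, together with the concavity/uniform-convergence input for the Laplace step you flag, is where most of the technical work in the cited proof actually sits. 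Second, inverting $F_E(x,y)=\sup_u\bigl(\sigma_E(u)+u_1x+u_2y\bigr)$ yields $\sigma_E(u)=\inf_{(x,y)}\bigl(F_E(x,y)-u_1x-u_2y\bigr)$, an infimum rather than the supremum displayed in the statement (the survey's formula, like its log-free definition of $\sigma_E$, has a convention slip), so your ``inverting the Legendre transform'' step should make the sign convention explicit rather than claim to land on the formula as printed. Third, $F_E$ is differentiable well inside the amoeba as well (the spectral curve is Harnack), so ``which is exactly the complement of the amoeba'' overstates; this is harmless, since part 2 only requires differentiability on the complement, where $F_E$ is affine.
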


So we see that there is a qualitative difference in the behaviour of the system depending on whether you are inside
the amoeba our outside. If you are inside and you change the magnetic field a little bit, the slope will also vary while
if you are outside the slope will remain constant.
\newcommand{\vX}{\mathsf{X}}
\newcommand{\vY}{\mathsf{Y}}

There is also a qualitative difference in the EGMs as well. In fact we can make a distinction between three different types of behaviour.
Depending on the behavior of the variable $\vX=\<\bar X,\cP\>$ and $\vY=\<\bar Y,\cP\>$. 
We say that an EGM is 
\begin{enumerate}
 \item \iemph{frozen}, if there are directions $(n,m)$ with $\sqrt{n^2+m^2}$ arbitrarily large
such that $n\mathsf{X}+m\mathsf{Y}$ is a deterministic variable,
 \item \iemph{gaseous}, if $\Var(n\vX+m\vY)$ is bounded for arbitrary large $n,m$,
 \item \iemph{liquid}, if $\Var(n\vX+m\vY)$ is unbounded for arbitrary large $n,m$.
\end{enumerate}

\begin{theorem}[\cite{kenyon2006dimers} theorem 4.1]
Let $E$ be an energy configuration and $B \in\R^2$, the EGM corresponding to $(E_e+B\cdot \vec e)_e$ is 
frozen if $B$ is in an unbounded component of the complement of the amoeba $\Am(P_E)$, gaseous if $B$ is in a bounded component of the complement of the amoeba and
liquid if $B$ is inside the amoeba.
\end{theorem}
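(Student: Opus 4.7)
The plan is to bootstrap off theorem 3.6 and reduce the phase classification to the analytic behavior of the inverse Kasteleyn operator, viewed as a contour integral over the torus $\{|z|=e^{B_1}, |w|=e^{B_2}\} \subset (\C^*)^2$. The starting observation is that the complement of the amoeba $\Am(P_E)$ has two kinds of components: the unbounded ones correspond bijectively to vertices (corners) of the Newton polygon, while the bounded ones correspond to certain interior lattice points; by theorem 3.6 the Ronkin gradient $\psi_E(B)$ equals this lattice point, which in turn is the slope of the EGM associated to the magnetic field $B$.

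First I would treat the frozen case. If $B$ lies in an unbounded component, the slope is a corner of $\MP(\qpol)$. By the earlier analysis, a Gibbs measure concentrated on matchings sitting at a corner is supported on matchings of the form $\cP_\theta$, which are completely determined by the sleeper direction; in particular there exist two independent homology classes on the torus, represented by paths $p$ opposite to the two zigzag directions adjacent to the corner, such that $\deg_{\cP}p = 0$ deterministically for every $\cP$ occurring with positive probability. Taking $(n,m)$ to be large multiples of these homology classes shows $n\vX+m\vY$ is deterministic, which is the frozen condition.

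Next, for the gaseous and liquid cases, the key is to study the correlations of the EGM as a determinantal process with kernel $K^{-1}_B$, where $K^{-1}_B(b,w) = \frac{1}{(2\pi i)^2}\oint\oint \frac{Q(z,w)}{P_E(z,w)}\,\frac{dz}{z}\frac{dw}{w}$ and the integration runs over the torus cut out by $B$. If $B$ lies in a bounded component of $\R^2\setminus\Am(P_E)$, the contour sits in a region where $P_E$ does not vanish; by deforming the contour one shows that $K^{-1}_B(b,w)$ decays exponentially in the lattice distance between $b$ and $w$, from which bounded variance of $n\vX+m\vY$ follows by summing the determinantal correlation kernel. If instead $B$ lies inside the amoeba, the contour necessarily crosses the spectral curve $\{P_E=0\}$, and a stationary-phase/residue analysis near a smooth point of the intersection produces a leading term of order $|b-w|^{-1}$, yielding logarithmically divergent variance along any direction $(n,m)$, hence the liquid condition.

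The main obstacle is the last step: turning the qualitative statement "the contour crosses the zero set" into the quantitative polynomial decay of the correlation kernel, because this requires that the intersection of the real torus with the spectral curve be, for generic $B$ inside $\Am(P_E)$, a pair of smooth points, and a uniform control of the leading asymptotics there. This is where the Harnack property of the spectral curve (proved by Kenyon-Okounkov) enters, and it is the technical heart of the argument in \cite{kenyon2006dimers}. Once that estimate is in hand, unboundedness of $\Var(n\vX+m\vY)$ for all large $(n,m)$ follows by summing the squared kernel over a box, completing the trichotomy.
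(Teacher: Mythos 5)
The paper does not actually prove this statement -- it is quoted from Kenyon--Okounkov--Sheffield and left as a citation -- so there is no internal proof to compare against. Your sketch follows the strategy of the cited source: use the slope/Ronkin-gradient identification from the preceding theorem, treat the EGM as a determinantal process whose kernel is the inverse Kasteleyn operator written as a double contour integral over the torus of radii $e^{B}$, get exponential decay of the kernel when $P_E$ has no zeros on that torus (complement of the amoeba) and polynomial decay of order $|b-w|^{-1}$ when $B$ is inside the amoeba, where the Harnack property guarantees that the torus meets the spectral curve in a single conjugate pair of simple zeros; that is indeed the technical heart of the argument, and you identify it correctly.

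One concrete inaccuracy: the unbounded components of $\R^2\setminus\Am(P_E)$ are not in bijection with the \emph{corners} of the Newton polygon but with its \emph{boundary lattice points}; tentacles separate unbounded components attached to edge lattice points as well (for the suspended pinchpoint, the side of elementary length $2$ produces such a component). Your frozen-phase argument, as written, only treats the corner case, where the EGM degenerates to the unique corner matching and two independent homology classes are deterministic. For a non-corner boundary point the EGM is genuinely random (it mixes the zigzag trains of one direction as in Gulotta's description of boundary matchings), and you only get \emph{one} primitive homology class $p$, opposite to that zigzag direction, with $\deg_{\cP}p$ deterministic; this still suffices for frozenness under the definition used here (arbitrarily large multiples of a single direction), but the case needs to be stated and argued separately rather than subsumed under the corner case. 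With that repair, and with the quantitative kernel asymptotics you already defer to the Harnack property, the sketch is a faithful outline of the original proof.
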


\begin{intermezzo}[Harnack curves]
Harnack curves are special real curves that can be characterized in many different ways \cite{mikhalkin2000real}.  
From our point of view these are curves that have nice amoebae. A curve defined by the real Laurent polynomial $f(X,Y) \in \R[X^{\pm 1},Y^{\pm1}]$ is
called \iemph{Harnack} if the map $\Log: \cC_f \to \Am(\cC_f)$ is at most $2\to 1$. This implies many nice features of the amoeba and the real curve.
First of all the number of components of the real curve is maximal, but some of them can be just an isolated point. These components are in $1\to 1$-correspondence 
with the lattice points of the Newton polygon. The real points are also the points of the curve where the $\Log$-map is $1\to 1$. 
It is also easy to determine the genus of a Harnack curve, this is just the number of holes in the amoeba.

The Harnack curves are those curves for which the area of the amoeba is maximal. If the Newton polygon of $f$ has area $A$ (this is half of the elementary area) 
Mikhalkin and Rullg{\aa}rd show that the area of $\Am(f)$ is at most $\pi A$ and the equality is reached if and only if $f$ defines a Harnack curve \cite{mikhalkin2001amoebas}.

Given a fixed lattice polygon $\LP$ we can look at the space of all real polynomials $f$ with a nondegenerate Newton polygon contained in
$\LP$. This space forms an open subset of $\R^m$ where $m$ is the number of lattice points in $\LP$. Because of the $2\to1$-property, the subset of all Harnack curves forms a closed subset of this space. 
\end{intermezzo}

\begin{theorem}[\cite{kenyon2006dimers} theorem 5.2]\label{alwaysHarnack}
Let $\rib$ be a dimer on a torus and $E:\rib \to \R$ be any energy function.
The partition function $\Part_E(X,Y)$ defines a Harnack curve.
\end{theorem}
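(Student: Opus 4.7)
The plan is to verify the defining property of Harnack curves via the Mikhalkin--Rullg{\aa}rd area characterization: a real curve with Newton polygon $\NP$ is Harnack if and only if its amoeba attains the maximal area $\pi\cdot \mathrm{Area}(\NP)$. By the preceding discussion the Newton polygon of $\Part_E(X,Y)$ is the matching polygon $\MP(\rib)$, so it suffices to prove that
\[
 \mathrm{Area}(\Am(\Part_E)) \;=\; \pi \cdot \mathrm{Area}(\MP(\rib)).
\]

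The first step is to exploit the correspondence between lattice points of $\MP(\rib)$, components of $\R^2\setminus \Am(\Part_E)$, and ergodic Gibbs measures. By the theorem of Kenyon--Okounkov--Sheffield stated above, the map $\psi_E:\R^2\to \MP(\rib)$ sending a magnetic field $\vec B$ to the slope of the associated EGM is the gradient of the Ronkin function $F_E$; this gradient is locally constant exactly on $\R^2\setminus \Am(\Part_E)$, and on each component it equals a lattice point of $\MP(\rib)$. I would first show that every boundary lattice point of $\MP(\rib)$ is realized: for $u$ on the boundary, one can choose $\vec B$ pointing far in the outward normal direction of the corresponding edge, which forces the perfect matching to concentrate on the corner/boundary matchings described by Gulotta's theorem, giving a frozen EGM, hence an unbounded component with gradient $u$.

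The second and harder step is to exhibit, for every \emph{interior} lattice point $u$ of $\MP(\rib)$, a bounded (gaseous) component of the complement of $\Am(\Part_E)$ on which $\nabla F_E = u$. Here I would combine two ingredients: (i) by zigzag consistency and Theorem \ref{mirdim}, the number of interior lattice points equals the genus of $\ful{\polq}$, which is also the number of independent commuting Hamiltonians $H_{a,\theta}$ coming from interior matchings; (ii) the Hamiltonian $H_{u,\theta}$ is a nonconstant Laurent polynomial in the face weights $W_F$ whose value controls the coefficient of $X^{i_u}Y^{j_u}$ in $\Part_E$. For generic $E$ this coefficient is nonzero, which by standard amoeba theory (Forsberg--Passare--Tsikh) guarantees at most one bounded component of the complement with gradient $u$, and a direct estimate of $|\Part_E|$ on a torus of radii prescribed by $\nabla F_E = u$ produces such a component. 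The Harnack property is closed in the space of real polynomials with fixed Newton polygon, so the non-generic case follows by a limiting argument.

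The main obstacle is precisely step two: ruling out the \emph{liquid} phase for interior lattice points, i.e., ensuring that each interior $u$ corresponds to an honest hole rather than a missing piece of the amoeba. The required estimate is the heart of the Kenyon--Okounkov argument and rests on the fact that the coefficients of $\Part_E$, when signed appropriately by the Kasteleyn rule, all lie in a single orthant once one fixes the overall gauge coming from a corner matching; this positivity, together with the enumeration of components via the Ronkin gradient, gives the maximal count of $\#\{u\in \MP\cap \Z^2\}$ components of $\R^2\setminus \Am$, hence the equality of areas and the Harnack property.
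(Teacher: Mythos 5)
Your route is genuinely different from the paper's, and it has two real gaps. The paper's proof (following Kenyon--Okounkov--Sheffield) does not attempt to verify maximal amoeba area directly: it first establishes the Harnack property for dimers on the hexagonal lattice, using transfer matrices and total positivity, and then obtains an arbitrary torus dimer by sending the weights $\exp(E_e)$ of selected edges to zero (i.e.\ deleting edges); since the Harnack condition is closed in the space of real polynomials with Newton polygon contained in a fixed $\LP$, the limit spectral curve is still Harnack. Your plan instead tries to prove the Mikhalkin--Rullg{\aa}rd area equality by exhibiting a complement component of $\Am(\Part_E)$ for every lattice point of $\MP(\rib)$, and this is where it breaks down. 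The existence of a bounded (gaseous) component for every interior lattice point is exactly the hard content of the theorem, and your proposal only asserts it: the claimed ``direct estimate of $|\Part_E|$ on the torus prescribed by $\nabla F_E=u$'' is precisely what one cannot do without already knowing something like the $2\!\to\!1$ property of $\Log$, and the ``single orthant'' positivity claim is not available (the Kasteleyn signs genuinely alternate across the matching polygon, and in any case nonvanishing of the coefficient $H_{u,\theta}$ does not produce a hole of the amoeba). Worse, the statement you want is simply false for some weights: for the critical isoradial weights of Kenyon--Okounkov the spectral curve has genus zero and \emph{all} gaseous components are contracted to points, yet the curve is still Harnack; so genericity would have to carry the whole argument, and nothing in the sketch establishes it.

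There is also a logical gap at the end even if both steps were granted: realizing every lattice point of $\MP(\rib)$ as the order of a complement component gives the maximal \emph{number} of components, but the Mikhalkin--Rullg{\aa}rd criterion you invoke is about the \emph{area} of the amoeba, and maximal component count does not imply maximal area (nor, by itself, the Harnack property, which constrains the real locus and the degree of $\Log$ on the curve). So the final inference ``maximal count $\Rightarrow$ area equality $\Rightarrow$ Harnack'' is a non sequitur as written. If you want to salvage an argument along these lines, you would need to control the Monge--Amp\`ere mass of the Ronkin function rather than just count components; the paper's reduction to the hexagonal case plus closedness of the Harnack locus avoids all of this.
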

\begin{proof}
The idea of the proof of the first part, is to show it holds for a special set of dimers coming from the tiling of the plane with hexagons.
This is done using the method of transfer matrices and certain total positivity results. 
For the other dimers we let the edge weights $\exp(E_e)$ of certain edges go to zero. This is the same as deleting this edge in the dimer and 
we can obtain each dimer in this way. Because the Harnack condition is closed the limit partition function $\Part_E(X,Y)$ is still Harnack. 
\end{proof}

\begin{example}\label{hexdim}
Consider the following dimer quiver with matching polygon a hexagon with perimeter $6$ and $2$ internal lattice points.
The weights of the edges are $1$ for the even arrows and $2$ for the odd. 
The homology classes are 
$$
\bar X=-a_{11}+a_{14}-a_{16}+a_{17}-a_{19}+a_{20}+a_{22} \text{ and } \bar Y = -a_{16}+a_{17}.
$$
The Kasteleyn matrix is
\begin{center}
\begin{tikzpicture}
\draw[thick] (-1,0) -- (0,1) -- (1,1) -- (2,0)-- (1,-1) -- (0,-1) -- (-1,0);
\draw (-1,0) node [circle,draw,fill=white,minimum size=15pt,inner sep=1pt] {{\tiny -32}};
\draw (0,1) node [circle,draw,fill=white,minimum size=15pt,inner sep=1pt] {{\tiny -128}};
\draw (0,-1) node [circle,draw,fill=white,minimum size=15pt,inner sep=1pt] {{\tiny -4}};
\draw (0,0) node [circle,draw,fill=white,minimum size=15pt,inner sep=1pt] {{\tiny -264}};
\draw (1,-1) node [circle,draw,fill=white,minimum size=15pt,inner sep=1pt] {{\tiny 1}};
\draw (1,1) node [circle,draw,fill=white,minimum size=15pt,inner sep=1pt] {{\tiny 32}};
\draw (1,0) node [circle,draw,fill=white,minimum size=15pt,inner sep=1pt] {{\tiny 84}};
\draw (2,0) node [circle,draw,fill=white,minimum size=15pt,inner sep=1pt] {{\tiny -2}};
\draw (-2,0) node[left] {$\left(\begin{smallmatrix}
2& 2/x& 1& 1& 0& 0& 0 \\ 0& -1& 1& 0& 2& 0& 0 \\ -x& 0& 2& 0& 0& 0& x \\ 2& 0& 0& -1& 0& 1/(xy)& 0 \\ 0& 0& -2& 2& 0& 0& x\\
0& 1& 0& 0& 2& 2& 0 \\ 0& 0& 0& 0& 1& -2/x& 2xy
\end{smallmatrix}\right)$};
\end{tikzpicture}
\end{center}
and the partition function is
\[
P(x,y)=-2X^2+3XY-84X-128Y+XY^{-1}-264-4Y^{-1}-32X^{-1}.
\]
\begin{center}
\begin{tikzpicture}
\begin{scope}[scale=.33]
\draw[dotted] (-10pt,20pt) rectangle (290pt,320pt);
\draw [-latex,shorten >=5pt] (20pt,320pt) to node [rectangle,draw,fill=white,sloped,inner sep=1pt] {{\tiny 1}} (-7pt,245pt); 
\draw [-latex,shorten >=5pt] (20pt,20pt) to node [rectangle,draw,fill=white,sloped,inner sep=1pt] {{\tiny 3}} (-7pt,95pt); 
\draw (20pt,170pt) -- (-10pt,170pt); 
\draw [-latex,shorten >=5pt] (20pt,20pt) to node [rectangle,draw,fill=white,sloped,inner sep=1pt] {{\tiny 2}} (102pt,20pt); 
\draw [-latex,shorten >=5pt] (20pt,320pt) to node [rectangle,draw,fill=white,sloped,inner sep=1pt] {{\tiny 2}} (102pt,320pt); 
\draw [-latex,shorten >=5pt] (-7pt,245pt) to node [rectangle,draw,fill=white,sloped,inner sep=1pt] {{\tiny 4}} (20pt,170pt); 
\draw [-latex,shorten >=5pt] (20pt,170pt) to node [rectangle,draw,fill=white,sloped,inner sep=1pt] {{\tiny 6}} (102pt,170pt); 
\draw [-latex,shorten >=5pt] (295pt,170pt) to node [rectangle,draw,fill=white,sloped,inner sep=1pt] {{\tiny 7}} (184pt,170pt); 
\draw [-latex,shorten >=5pt] (102pt,320pt) to node [rectangle,draw,fill=white,sloped,inner sep=1pt] {{\tiny 8}} (102pt,170pt); 
\draw [-latex,shorten >=5pt] (102pt,20pt) to node [rectangle,draw,fill=white,sloped,inner sep=1pt] {{\tiny 9}} (102pt,170pt); 
\draw [-latex,shorten >=5pt] (102pt,170pt) to node [rectangle,draw,fill=white,sloped,inner sep=1pt] {{\tiny 10}} (20pt,20pt); 
\draw [-latex,shorten >=5pt] (102pt,170pt) to node [rectangle,draw,fill=white,sloped,inner sep=1pt] {{\tiny 12}} (184pt,20pt); 
\draw [-latex,shorten >=5pt] (184pt,20pt) to node [rectangle,draw,fill=white,sloped,inner sep=1pt] {{\tiny 15}} (102pt,20pt); 
\draw [-latex,shorten >=5pt,yshift=300pt] (184pt,20pt) to node [rectangle,draw,fill=white,sloped,inner sep=1pt] {{\tiny 15}} (102pt,20pt); 
\draw [-latex,shorten >=5pt] (184pt,320pt) to node [rectangle,draw,fill=white,sloped,inner sep=1pt] {{\tiny 16}} (184pt,170pt); 
\draw [-latex,shorten >=5pt] (184pt,20pt) to node [rectangle,draw,fill=white,sloped,inner sep=1pt] {{\tiny 17}} (184pt,170pt); 
\draw [-latex,shorten >=5pt] (184pt,170pt) to node [rectangle,draw,fill=white,sloped,inner sep=1pt] {{\tiny 18}} (293pt,245pt); 
\draw [-latex,shorten >=5pt] (184pt,170pt) to node [rectangle,draw,fill=white,sloped,inner sep=1pt] {{\tiny 19}} (102pt,170pt); 
\draw [-latex,shorten >=5pt] (184pt,170pt) to node [rectangle,draw,fill=white,sloped,inner sep=1pt] {{\tiny 20}} (293pt,95pt); 
\draw [-latex,shorten >=5pt] (-7pt,95pt) to node [rectangle,draw,fill=white,sloped,inner sep=1pt] {{\tiny 21}} (20pt,170pt); 
\draw [-latex,shorten >=5pt] (293pt,95pt) to node [rectangle,draw,fill=white,sloped,inner sep=1pt] {{\tiny 22}} (184pt,20pt); 
\draw [-latex,shorten >=5pt] (102pt,170pt) to node [rectangle,draw,fill=white,sloped,inner sep=1pt] {{\tiny 11}} (20pt,320pt); 
\draw [-latex,shorten >=5pt] (102pt,170pt) to node [rectangle,draw,fill=white,sloped,inner sep=1pt] {{\tiny 13}} (184pt,320pt); 
\draw [-latex,shorten >=5pt] (293pt,245pt) to node [rectangle,draw,fill=white,sloped,inner sep=1pt] {{\tiny 5}} (184pt,320pt); 
\node at (20pt,20pt) [circle,draw,fill=white,minimum size=10pt,inner sep=1pt] {\mbox{\tiny $1$}}; 
\node at (20pt,320pt) [circle,draw,fill=white,minimum size=10pt,inner sep=1pt] {\mbox{\tiny $1$}}; 
\node at (-7pt,245pt) [circle,draw,fill=white,minimum size=10pt,inner sep=1pt] {\mbox{\tiny $2$}}; 
\node at (293pt,245pt) [circle,draw,fill=white,minimum size=10pt,inner sep=1pt] {\mbox{\tiny $2$}}; 
\node at (-7pt,245pt) [circle,draw,fill=white,minimum size=10pt,inner sep=1pt] {\mbox{\tiny $2$}}; 
\node at (293pt,245pt) [circle,draw,fill=white,minimum size=10pt,inner sep=1pt] {\mbox{\tiny $2$}}; 
\node at (-7pt,245pt) [circle,draw,fill=white,minimum size=10pt,inner sep=1pt] {\mbox{\tiny $2$}}; 
\node at (293pt,245pt) [circle,draw,fill=white,minimum size=10pt,inner sep=1pt] {\mbox{\tiny $2$}}; 
\node at (20pt,170pt) [circle,draw,fill=white,minimum size=10pt,inner sep=1pt] {\mbox{\tiny $3$}}; 
\node at (102pt,20pt) [circle,draw,fill=white,minimum size=10pt,inner sep=1pt] {\mbox{\tiny $4$}}; 
\node at (102pt,320pt) [circle,draw,fill=white,minimum size=10pt,inner sep=1pt] {\mbox{\tiny $4$}}; 
\node at (102pt,170pt) [circle,draw,fill=white,minimum size=10pt,inner sep=1pt] {\mbox{\tiny $5$}}; 
\node at (184pt,20pt) [circle,draw,fill=white,minimum size=10pt,inner sep=1pt] {\mbox{\tiny $6$}}; 
\node at (184pt,320pt) [circle,draw,fill=white,minimum size=10pt,inner sep=1pt] {\mbox{\tiny $6$}}; 
\node at (184pt,170pt) [circle,draw,fill=white,minimum size=10pt,inner sep=1pt] {\mbox{\tiny $7$}}; 
\node at (-7pt,95pt) [circle,draw,fill=white,minimum size=10pt,inner sep=1pt] {\mbox{\tiny $8$}}; 
\node at (293pt,95pt) [circle,draw,fill=white,minimum size=10pt,inner sep=1pt] {\mbox{\tiny $8$}}; 
\node at (-7pt,95pt) [circle,draw,fill=white,minimum size=10pt,inner sep=1pt] {\mbox{\tiny $8$}}; 
\node at (293pt,95pt) [circle,draw,fill=white,minimum size=10pt,inner sep=1pt] {\mbox{\tiny $8$}}; 
\node at (-7pt,95pt) [circle,draw,fill=white,minimum size=10pt,inner sep=1pt] {\mbox{\tiny $8$}}; 
\node at (293pt,95pt) [circle,draw,fill=white,minimum size=10pt,inner sep=1pt] {\mbox{\tiny $8$}}; 
\end{scope}
\begin{scope}[xshift=4cm,yshift=7.3cm]
\begin{scope}[y=0.80pt, x=0.80pt, yscale=-.50000, xscale=.50000, inner sep=0pt, outer sep=0pt]
  \path[fill=black] (142.3439,494.6836) .. controls (183.9194,452.4073) and
    (194.5342,440.0992) .. (202.7226,424.6733) .. controls (207.4896,415.6931) and
    (209.3337,409.9605) .. (211.1808,398.3803) .. controls (212.4387,390.4947) and
    (212.4391,388.2375) .. (211.1838,380.8803) .. controls (206.3558,352.5699) and
    (199.3583,342.7336) .. (144.8864,287.6859) .. controls (121.8181,264.3739) and
    (112.1064,253.9336) .. (113.4894,253.9336) .. controls (115.8286,253.9336) and
    (175.8792,314.1688) .. (187.9613,328.6344) .. controls (207.3078,351.7975) and
    (214.1567,367.8923) .. (214.0463,389.9336) .. controls (213.9090,417.3385) and
    (203.7320,435.3840) .. (167.1394,473.1072) .. controls (161.4449,478.9776) and
    (156.7857,484.0921) .. (156.7857,484.4726) .. controls (156.7857,484.8532) and
    (163.2587,478.8874) .. (171.1702,471.2154) .. controls (190.2184,452.7438) and
    (201.3761,443.8455) .. (212.2857,438.4258) .. controls (229.9697,429.6407) and
    (244.8382,430.6080) .. (254.3749,441.1640) .. controls (259.1715,446.4733) and
    (261.3625,451.6689) .. (263.8283,463.5816) .. controls (265.5570,471.9334) and
    (265.7749,475.6036) .. (265.2592,487.6836) .. controls (264.9006,496.0840) and
    (265.0267,501.9336) .. (265.5665,501.9336) .. controls (266.0701,501.9336) and
    (266.7426,495.2961) .. (267.0610,487.1836) .. controls (268.2162,457.7518) and
    (274.1927,442.4487) .. (285.3494,440.3557) .. controls (295.8566,438.3845) and
    (306.5523,442.0632) .. (320.2857,452.3716) .. controls (335.1683,463.5427) and
    (380.7687,507.9336) .. (377.3616,507.9336) .. controls (376.2499,507.9336) and
    (367.5890,500.1890) .. (356.3772,489.1694) .. controls (327.7736,461.0562) and
    (317.1362,452.1125) .. (304.9529,445.9331) .. controls (290.8288,438.7693) and
    (281.1095,441.2148) .. (275.3542,453.3804) .. controls (271.6679,461.1726) and
    (269.1912,478.4752) .. (268.8693,498.6836) .. controls (268.7873,503.8317) and
    (268.7275,503.9336) .. (265.7857,503.9336) -- (262.7857,503.9336) --
    (262.7857,486.9990) .. controls (262.7857,462.8176) and (260.1647,451.7864) ..
    (252.3691,443.1575) .. controls (241.1021,430.6864) and (222.7569,432.4460) ..
    (201.7948,448.0085) .. controls (192.7826,454.6993) and (179.4634,466.8221) ..
    (155.2857,490.3401) .. controls (138.1587,506.9998) and (137.0924,507.8510) ..
    (133.2996,507.8913) -- (129.3135,507.9336) -- (142.3439,494.6836) --
    cycle(360.1539,487.6201) .. controls (338.6082,465.5915) and
    (325.7998,451.2081) .. (318.4401,440.7771) .. controls (304.3930,420.8678) and
    (298.5798,395.1101) .. (303.2417,373.4336) .. controls (308.2225,350.2737) and
    (318.7885,334.2986) .. (351.1725,300.9653) .. controls (364.5429,287.2029) and
    (362.7621,288.1661) .. (347.2888,303.0659) .. controls (319.3644,329.9551) and
    (304.0196,339.9901) .. (290.9466,339.9116) .. controls (285.8061,339.8808) and
    (279.6706,336.8305) .. (276.1444,332.5526) .. controls (269.9976,325.0955) and
    (268.1413,314.1918) .. (266.9307,278.4336) -- (266.1182,254.4336) --
    (265.9520,280.5042) .. controls (265.7644,309.9209) and (264.3022,320.2882) ..
    (258.7389,331.6480) .. controls (248.6970,352.1524) and (224.3881,352.0643) ..
    (197.2857,331.4251) .. controls (181.3803,319.3127) and (114.6700,253.9336) ..
    (118.2164,253.9336) .. controls (119.3801,253.9336) and (131.3138,264.9990) ..
    (148.7070,282.2058) .. controls (189.8662,322.9236) and (200.9469,332.4688) ..
    (215.1447,339.4367) .. controls (234.0249,348.7027) and (248.7417,345.3098) ..
    (256.5578,329.8893) .. controls (262.3345,318.4924) and (263.2386,311.8718) ..
    (263.2097,281.1836) -- (263.1841,253.9336) -- (266.3824,253.9336) --
    (269.5807,253.9336) -- (270.0332,282.1836) .. controls (270.4334,307.1675) and
    (270.7367,311.3576) .. (272.6567,318.4256) .. controls (275.1842,327.7302) and
    (278.5235,332.4681) .. (284.8436,335.7171) .. controls (289.7855,338.2575) and
    (293.6063,338.0878) .. (301.9193,334.9585) .. controls (313.1137,330.7445) and
    (324.2630,321.4242) .. (362.1911,284.5742) .. controls (388.2837,259.2234) and
    (394.5325,254.2859) .. (393.6149,259.7442) .. controls (393.4338,260.8216) and
    (381.7563,273.5674) .. (367.6648,288.0683) .. controls (321.0616,336.0255) and
    (311.7098,348.7155) .. (306.1382,371.5573) .. controls (303.6685,381.6823) and
    (303.9126,399.1934) .. (306.6662,409.4336) .. controls (312.5540,431.3289) and
    (323.0820,445.7148) .. (363.1579,486.6262) .. controls (377.8951,501.6708) and
    (383.4410,507.9336) .. (382.0261,507.9336) .. controls (380.7386,507.9336) and
    (372.9185,500.6708) .. (360.1539,487.6201) -- cycle(228.7377,412.6802) ..
    controls (224.3281,409.7086) and (222.0223,405.5635) .. (220.3605,397.6211) ..
    controls (219.1899,392.0262) and (219.2041,389.9903) .. (220.4600,383.3387) ..
    controls (222.9330,370.2418) and (226.5636,365.3096) .. (234.5167,364.2429) ..
    controls (244.9845,362.8388) and (253.2539,369.0801) .. (256.9207,381.1522) ..
    controls (258.8952,387.6529) and (258.9850,388.9061) .. (257.8645,394.3309) ..
    controls (255.7826,404.4104) and (250.9855,411.4085) .. (244.4706,413.8700) ..
    controls (239.5907,415.7137) and (232.4362,415.1727) .. (228.7376,412.6802) --
    cycle(242.4706,411.8700) .. controls (248.5082,409.5888) and
    (253.0200,403.5152) .. (255.4134,394.4470) .. controls (258.1733,383.9899) and
    (250.7254,369.6035) .. (241.2306,367.0514) .. controls (236.9748,365.9075) and
    (231.9134,366.7528) .. (229.1157,369.0747) .. controls (226.5752,371.1831) and
    (223.9342,377.3972) .. (222.6757,384.2274) .. controls (221.6429,389.8324) and
    (221.7099,391.7908) .. (223.1312,397.5504) .. controls (225.0993,405.5257) and
    (226.7693,408.3431) .. (231.0900,410.9776) .. controls (234.6738,413.1628) and
    (238.2970,413.4469) .. (242.4706,411.8700) -- cycle(277.3927,409.8391) ..
    controls (273.8928,407.0861) and (271.2329,401.7189) .. (269.7069,394.3309) ..
    controls (268.5891,388.9192) and (268.6778,387.6477) .. (270.6182,381.2593) ..
    controls (272.3111,375.6856) and (273.6197,373.4129) .. (276.7796,370.5581) ..
    controls (280.9048,366.8313) and (283.6773,366.1815) .. (288.1683,367.8890) ..
    controls (291.7391,369.2466) and (293.4670,372.3783) .. (295.7859,381.6953) --
    (297.8167,389.8549) -- (295.7473,397.5769) .. controls (293.4294,406.2264) and
    (293.2924,406.5402) .. (290.4626,409.6836) .. controls (287.7500,412.6968) and
    (281.1258,412.7756) .. (277.3927,409.8391) -- cycle(288.5776,407.6551) ..
    controls (293.8425,402.0509) and (295.6887,387.9844) .. (292.4302,378.3012) ..
    controls (289.4148,369.3405) and (284.2681,367.1980) .. (278.6348,372.5586) ..
    controls (274.8447,376.1652) and (274.1393,377.4786) .. (272.3950,384.1753) ..
    controls (271.2152,388.7048) and (271.1996,390.3999) .. (272.2988,394.6374) ..
    controls (274.0363,401.3355) and (276.1703,405.3044) .. (279.4006,407.8453) ..
    controls (282.8458,410.5553) and (285.9127,410.4918) .. (288.5776,407.6551) --
    cycle;
\end{scope}
\end{scope}
\end{tikzpicture}
\end{center}
\end{example}

Kenyon and Okounkov also proved a statement in the reverse direction of theorem \ref{alwaysHarnack}.
\begin{theorem}[\cite{kenyon2006planar}]\label{Harnackfromdimer}
For any Harnack curve $\cC_f$ one can find a dimer and energy function such that the spectral curve $\cC :\Part_E(X,Y)=0$ is isomorphic to $\cC_f$.
\end{theorem}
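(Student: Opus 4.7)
The plan is to realise $\cC_f$ as a dimer spectral curve by first engineering a dimer with the right combinatorial data, then matching parameters between the edge-weight moduli and the moduli of Harnack curves.

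\textbf{Construction of the dimer.} First I would choose a consistent dimer $\rib$ on a torus whose matching polygon equals the Newton polygon $\NP$ of $\cC_f$. This is possible by applying the existence theorem for zigzag-consistent dimers on a torus to $\NP$ and invoking theorem \ref{ZPMP} to identify the zigzag and matching polygons in the consistent case. Theorem \ref{mirdim} and Pick's theorem then pin down the basic invariants of $\rib$, in particular $\dim \rL(\rib) = \#\rib_1-\#\rib_0+1 = 2I+B-1$, where $I$ and $B$ are the interior and boundary lattice-point counts of $\NP$.

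\textbf{The spectral map and its target.} Next I would consider the map $\Phi : \rL(\rib) \to \mathcal{H}_\NP,\ E \mapsto \Part_E(X,Y)$, where $\mathcal{H}_\NP$ is the moduli space of real Harnack polynomials with Newton polygon $\NP$, taken modulo the natural $(\R^*)^3$-action that rescales $X$, $Y$ and the overall polynomial. Theorem \ref{alwaysHarnack} guarantees that $\Phi$ lands in $\mathcal{H}_\NP$. I would then invoke the Mikhalkin-Rullg{\aa}rd parametrisation of $\mathcal{H}_\NP$ by $B-2$ tentacle-intercept coordinates (one asymptote per boundary lattice point of $\NP$, modulo translation of the amoeba) and $I$ hole-area coordinates (one nonnegative real per interior lattice point of $\NP$).

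\textbf{Matching the parameters.} These two families of coordinates have natural dimer interpretations. By theorem \ref{ZPMP}(2-3) the ratios of monomials of $\Part_E$ along a side of $\NP$ are monomials in the zigzag holonomies $W_Z$, i.e.\ the Casimirs of the Poisson structure on $\cL(\rib)$; these should match the tentacle intercepts. The interior coefficients, packaged by the $\deg_\TT$-grading into the Hamiltonians $H_{a,\theta}$ of the Goncharov-Kenyon integrable system, should match the hole areas via the relationship between the Ronkin function of $\Part_E$ and the lattice points of $\NP$. The theorem then reduces to showing that each prescribed combination of Casimir values and Hamiltonian values can actually be realised by some positive edge-weighting.

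\textbf{Main obstacle.} The hard step is upgrading this parameter matching to genuine surjectivity of $\Phi$. I would combine three ingredients: (i) properness of $\Phi$ modulo gauge and the $\rT$-action, so that escapes to infinity in $\rL(\rib)$ correspond to holes of the amoeba collapsing, i.e.\ to boundary strata of $\mathcal{H}_\NP$; (ii) a local-diffeomorphism property at one explicit base point (for instance a hexagonal-lattice dimer as in example \ref{hexdim} whose partition function is a well-understood Harnack polynomial), derived from strict concavity of the Ronkin function on the interior of the amoeba so as to invert the derivative of $\Phi$; and (iii) a degree/connectedness argument propagating local surjectivity to global. The analytic core is Ronkin-function concavity, together with the fact that Harnack-ness is a closed condition preserved under positive dimer deformations, so that $\Phi$ is an open map with closed Harnack image and the standard topological argument forces it to be surjective.
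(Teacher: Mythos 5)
The paper itself gives no proof of this statement; it is quoted from Kenyon--Okounkov \cite{kenyon2006planar}, so your sketch has to stand on its own. Its overall shape (properness plus a local statement plus connectedness, anchored at an explicit base point, with the remark that one may take $\MP(\qpol)=\NP(f)$) is indeed the spirit of the actual Kenyon--Okounkov argument, and your opening step is fine. But there is a serious circularity at the heart of the plan: the parametrisation of the moduli space $\mathcal{H}_{\NP}$ of Harnack curves by tentacle intercepts and hole areas, which you use both to describe the target of $\Phi$ and (implicitly, via connectedness of $\mathcal{H}_{\NP}$) in step (iii), is not a prior result of Mikhalkin--Rullg{\aa}rd. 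What they prove is the maximal-area characterisation of Harnack curves; the statement that intercepts and areas are global coordinates, and that $\mathcal{H}_{\NP}$ is a cell, is itself one of the main theorems of \cite{kenyon2006planar}, obtained there together with --- and by means of --- the very surjectivity you are trying to prove. A correct proof must either establish that parametrisation independently or replace it by what Kenyon--Okounkov actually do: a rank computation showing intercepts and areas can be varied independently by the weights, combined with properness and degenerations to smaller polygons.

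Two further steps would fail as stated. Step (ii): $\Phi$ is never a local diffeomorphism, for dimension reasons you have already half-computed. Indeed $\dim\rL(\rib)=2I+B-1$, while curves modulo the $(\R^*)^3$-action depend on only $I+B-3$ parameters; even after dividing the source by the two-dimensional magnetic-field action the fibres of $\Phi$ are $I$-dimensional --- this is exactly the extra divisor datum (one point on each compact oval) that the paper mentions is needed to make the correspondence bijective. So the correct local statement is a submersion/local-surjectivity claim, which needs an explicit Jacobian argument; convexity of the Ronkin function on the amoeba interior concerns the slope map and gives no inverse for the derivative of $\Phi$ in the weight variables. Step (i): the heuristic ``escape to infinity in $\rL(\rib)$ $\Leftrightarrow$ holes collapsing'' is false, since holes collapse at interior points of the weight space (the isoradial weights already give a genus-zero spectral curve); divergence of weights should instead be matched with divergence of intercepts or degeneration of the Newton polygon, and properness onto the space of curves with Newton polygon exactly $\NP$ has to be argued separately.
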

The identification between Harnack curves and dimers with energy functions is not one to one. Different dimers and energy functions can give rise
to the same Harnack curve. The correspondance can be turned into a bijection by adding extra data to the Harnack curve. This is further explained
in \cite{kenyon2006planar}. In fact if $\NP(f)=\MP(\qpol)$ then it is always possible to find an energy function that does the trick.

\begin{example}
Consider the suspended pinchpoint and number the arrows $a,\dots,g$ as in example  \ref{sppmut}.
For the Kasteleyn weight we assign $-1$ to $d$. The homology classes are $X=[a]$
and $Y=[b]-[e]+[g]$. This gives
\[
\Kast = \begin{pmatrix}
         aX &bY+c\\
f+gY&-d+eY^{-1}
         \end{pmatrix}
\]
and
\[
 \Part = -fc -(fb+gc)Y-gbY^2 -adX+aeXY^{-1}
\]
If we chose $a,c,e,b,g=1$ we get
\[
 Y^2+sY+t+X-XY^{-1}=0
\]
with $s = f+c$ and $p=fc$.

In general, up to a factor, every polynomial with that Newton polygon can be brought into this form by rescaling $X,Y$ with real numbers.

Depending on the roots of $Z^2-sZ+p$ there are $4$ main situations.
\begin{center}
\begin{tabular}{|c|c|c|c|}
\hline
$z_1,z_2>0$&$z_1>0>z_2$&$0>z_1,z_2$&$z_1=\bar z_2$\\ 
\hline
\resizebox{2cm}{!}{
\includegraphics{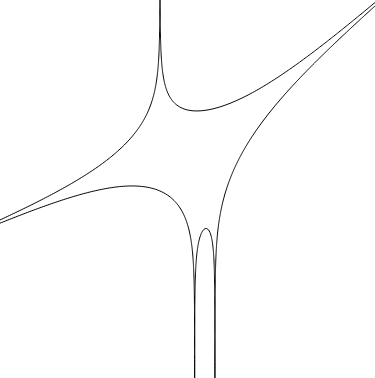}
}&
\resizebox{2cm}{!}{
\includegraphics{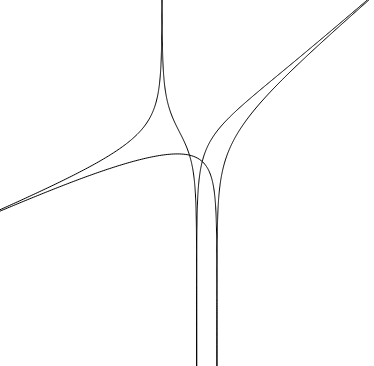}
}&
\resizebox{2cm}{!}{
\includegraphics{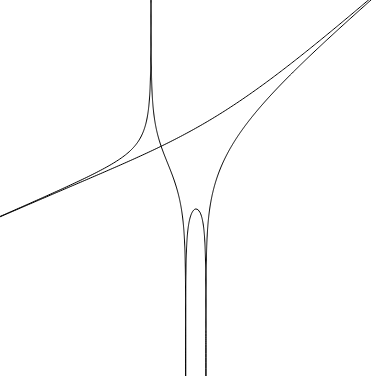}
}&
\resizebox{2cm}{!}{
\includegraphics{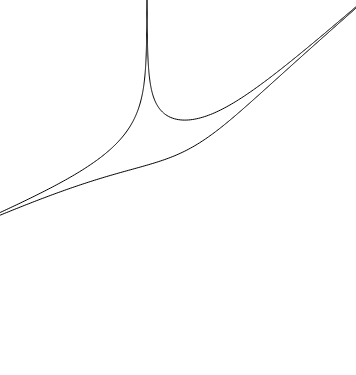}
}
\\
\hline
\end{tabular}
\end{center}
Only in the first case is the curve Harnack.
\end{example}

The theorem that all real spectral curves are Harnack curves has some interesting physical consequences.
\begin{itemize}
 \item The phase transitions occur on the image of the real locus of the spectral curve.
 \item If a gaseous phase is not present its real component has shrunk to an isolated point.
 \item The number of gaseous phases equals the genus of the curve.
\end{itemize}

The most generic situation happens when all real components are curves, not isolated points. The most singular
situation happens when all bounded real components have shrunk to points. In that case the genus of the spectral curve is zero.
In \cite{kenyon2006planar} Kenyon and Okounkov give a nice characterization of this situation.

\begin{theorem}
Suppose $(\rib,\qpol)$ is a dimer and $\qpol$ is isoradially embedded. If we assign to each edge a weight proportial to the length of its arrows then
the spectral curve has genus zero.
\end{theorem}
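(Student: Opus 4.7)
The plan is to exhibit an explicit rational parametrization $\psi\colon \PP^1 \to \cC$ of the spectral curve, which forces its geometric genus to be zero; combined with theorem \ref{alwaysHarnack} this means that each of the interior real ovals of the Harnack curve has shrunk to an isolated real double point, so that isoradial weights correspond to maximally degenerate Harnack curves in the moduli. The key input is that in an isoradial embedding every zigzag path $Z \in \bir_2$ has a sleeper direction $\theta_Z \in \SS^1$; I record it as a point $\alpha_Z = e^{i\theta_Z} \in \PP^1$, one for each puncture of $\punct{\bir}$.

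Fix a basis $X,Y$ of $\H_1(\TT,\Z)$ used to write the partition function $\Part(X,Y)$, and let $\vec v_Z = (a_Z,b_Z) \in \Z^2$ be the homology class of $Z$. Since the perpendiculars $\vec v_Z^{\perp}$ traverse the closed zigzag polygon, $\sum_Z \vec v_Z = 0$, so
\[
X(\xi) = c_X \prod_{Z \in \bir_2} (\xi - \alpha_Z)^{-b_Z}, \qquad Y(\xi) = c_Y \prod_{Z \in \bir_2}(\xi - \alpha_Z)^{a_Z}
\]
defines a regular map $\psi\colon \PP^1 \setminus \{\alpha_Z\} \to (\C^*)^2$, for constants $c_X,c_Y$ to be tuned. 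I would verify $\Part(X(\xi),Y(\xi)) \equiv 0$ by producing a nonzero kernel section of the Kasteleyn matrix $K(X(\xi),Y(\xi))$: to each white node $n$ assign a value of the form $\prod_Z (\xi - \alpha_Z)^{w_{Z,n}}$, the exponents $w_{Z,n}$ being read off from a fixed reference path from a basepoint to $n$ and its signed crossings with each zigzag walk. Because every edge weight is $|\alpha_{Z_1(a)} - \alpha_{Z_2(a)}|$, a chord between the two sleeper-direction points of the two zigzags through $a$, the Kasteleyn relation around each black node telescopes thanks to the proper ordering of zigzag paths at every positive cycle (Ishii--Ueda).

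The constants $c_X,c_Y$ are then pinned down by matching the leading behavior of $\Part(X(\xi),Y(\xi))$ as $\xi \to \alpha_Z$ to the corner matching $\cP_{\theta_Z \pm \eps}$ in $\MP(\qpol)$. The main obstacle is the combinatorial sign analysis: one must verify that the Kasteleyn signs $\kappa_e$ combine with the telescoping signs around each positive cycle to produce cancellation rather than an overall sign. This requires choosing the Kasteleyn line bundle compatibly with the canonical spin structure induced by the isoradial embedding -- equivalently the one appearing in Kenyon's discrete Dirac operator on critical graphs. Once the kernel is built, comparing orders of vanishing of $X(\xi),Y(\xi)$ at each $\alpha_Z$ against the side lengths of $\MP(\qpol)$ (via theorems \ref{ZPMP} and \ref{mirdim}) shows that $\psi$ has degree one, hence is birational, so $\cC$ has geometric genus zero.
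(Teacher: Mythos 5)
First, a point of comparison: the paper itself gives no proof of this theorem --- it is quoted from Kenyon--Okounkov \cite{kenyon2006planar} and stated without argument --- so there is nothing in-paper to match your proposal against. Your strategy is in fact the strategy of the original proof: for each $\xi\in\PP^1$ build an explicit section in the kernel of the critical (isoradial-weight) Kasteleyn operator out of the sleeper directions $\alpha_Z=e^{i\theta_Z}$ (Kenyon's discrete exponentials), read the Bloch multipliers $X(\xi),Y(\xi)$ off the crossings with the zigzag paths, and match the punctures with the sides of $\MP(\qpol)$; this parametrizes the spectral curve rationally.

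The genuine gap is that the one step where isoradiality actually enters --- that your proposed white-node values are annihilated by the Kasteleyn relation around every black node --- is asserted (``telescopes'') rather than proved, and as written the candidate functions are not even well defined: the assignment $\prod_Z(\xi-\alpha_Z)^{w_{Z,n}}$ by net signed crossings of a chosen reference path must be shown independent of the path, and that independence is essentially equivalent to the local identity you are trying to verify. The correct kernel functions carry, for each traversed rhombus, a linear factor in the corresponding sleeper direction together with unit-modulus phases tied to the embedding, and the telescoping rests on the splitting $|\alpha_j-\alpha_{j+1}|=\text{(phase)}\cdot\bigl((\xi-\alpha_{j+1})-(\xi-\alpha_j)\bigr)$ interacting compatibly with the Kasteleyn signs; you flag this sign/spin-structure compatibility as ``the main obstacle,'' but resolving it is the content of the theorem (for non-isoradial weights no such section exists), not a finishing detail. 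A second, smaller gap: genus zero does not follow from ``$\psi$ has degree one'' alone --- you must show the closure of the image of $\psi$ is the \emph{whole} spectral curve. The standard way is to check that the orders of vanishing of $X(\xi),Y(\xi)$ at the punctures $\alpha_Z$ force the Newton polygon of the image curve to be all of $\MP(\qpol)$, and then use that Newton polygons add under multiplication, so a factor of $\Part$ with full Newton polygon equals $\Part$ up to a monomial. With those two steps supplied, your outline becomes the Kenyon--Okounkov proof.
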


\begin{example}
We continue example \ref{hexdim} of the dimer with the hexagonal matching polygon.
There are $6$ zigzag paths and we assign them sleeper directions $0^\circ,60^{\circ},\dots,300^\circ$.
This gives us $3$ types of arrows: arrows that cover an arc length of $60^\circ, 120^\circ$ or $180^\circ$.
In the isoradially embedded dimer quiver they have length $1, \sqrt 3$ or $2$. The weight vector and the amoeba look as follows.
\begin{center}
\begin{tikzpicture}
[y=0.80pt, x=0.80pt, yscale=-.2500000, xscale=.2500000, inner sep=0pt, outer sep=0pt]
  \path[fill=black] (131.6648,660.1836) .. controls (183.4480,608.0549) and
    (204.2552,586.2283) .. (221.5127,565.9336) .. controls (253.0132,528.8892) and
    (264.1255,508.3692) .. (271.1879,474.2034) -- (273.4059,463.4731) --
    (270.5807,449.7119) .. controls (269.0268,442.1432) and (266.4805,432.4572) ..
    (264.9222,428.1874) .. controls (253.2679,396.2539) and (229.1973,365.1619) ..
    (167.9539,302.9336) .. controls (104.4488,238.4072) and (98.2665,231.9259) ..
    (99.8793,231.5664) .. controls (100.9641,231.3247) and (109.6844,239.5240) ..
    (125.4045,255.5664) .. controls (138.5372,268.9684) and (158.5835,289.3836) ..
    (169.9518,300.9336) .. controls (222.5765,354.3992) and (246.8383,383.9341) ..
    (260.3127,410.9336) .. controls (265.5626,421.4533) and (273.4876,443.9067) ..
    (272.6870,445.9931) .. controls (272.3830,446.7854) and (272.4988,447.4336) ..
    (272.9444,447.4336) .. controls (274.8593,447.4336) and (274.9541,476.3755) ..
    (273.0476,478.9336) .. controls (272.6377,479.4836) and (271.9911,481.5086) ..
    (271.6106,483.4336) .. controls (266.5065,509.2579) and (252.6040,533.6899) ..
    (223.5276,567.9336) .. controls (199.6465,596.0587) and (107.3512,690.4336) ..
    (103.7269,690.4336) .. controls (102.1938,690.4336) and (109.8474,682.1465) ..
    (131.6648,660.1836) -- cycle(141.5921,658.7381) .. controls
    (208.3347,592.4462) and (231.1771,571.6155) .. (253.8117,556.4013) .. controls
    (285.2955,535.2389) and (310.5560,532.7200) .. (326.8121,549.1218) .. controls
    (336.1541,558.5475) and (343.5765,577.6439) .. (345.5901,597.4336) .. controls
    (347.7110,618.2788) and (349.8365,690.4336) .. (348.3297,690.4336) .. controls
    (346.8596,690.4336) and (346.3167,682.5126) .. (346.1704,658.9336) .. controls
    (346.0109,633.2263) and (344.4747,606.1278) .. (342.4571,593.4336) .. controls
    (339.4484,574.5039) and (333.2464,559.6317) .. (324.8121,551.1218) .. controls
    (309.0649,535.2335) and (286.7817,537.5843) .. (255.8117,558.4013) .. controls
    (233.4320,573.4442) and (211.5587,593.3535) .. (144.8128,659.4336) .. controls
    (126.2449,677.8164) and (113.1248,690.0546) .. (111.7885,690.2381) .. controls
    (110.3236,690.4392) and (120.4345,679.7528) .. (141.5921,658.7381) --
    cycle(352.5438,666.6836) .. controls (354.1526,600.6527) and
    (356.2611,582.1743) .. (364.1991,564.5386) .. controls (371.8840,547.4651) and
    (384.8341,538.4078) .. (401.5029,538.4484) .. controls (423.2572,538.5014) and
    (448.0198,553.0400) .. (487.5714,588.9810) .. controls (518.2014,616.8148) and
    (592.4319,690.6548) .. (589.3613,690.2355) .. controls (588.0274,690.0533) and
    (573.9410,676.8502) .. (553.8314,656.9336) .. controls (500.7344,604.3462) and
    (482.0519,586.7934) .. (462.0714,570.7221) .. controls (437.4023,550.8795) and
    (416.6428,540.5034) .. (401.5029,540.4484) .. controls (390.9708,540.4101) and
    (379.6474,546.3614) .. (372.6219,555.6275) .. controls (359.7952,572.5448) and
    (355.3646,600.0756) .. (354.9503,665.4336) .. controls (354.8062,688.1681) and
    (354.6214,690.4336) .. (352.9113,690.4336) .. controls (352.3267,690.4336) and
    (352.1863,681.3569) .. (352.5438,666.6836) -- cycle(552.3104,646.6836) ..
    controls (504.9788,598.4485) and (497.6282,590.7246) .. (481.0669,571.8228) ..
    controls (452.1755,538.8482) and (436.6821,512.9462) .. (430.5893,487.4336) ..
    controls (429.6042,483.3086) and (428.4307,479.4629) .. (427.9816,478.8875) ..
    controls (427.5325,478.3122) and (427.1945,476.9622) .. (427.2304,475.8875) ..
    controls (427.2664,474.8129) and (426.9819,470.7836) .. (426.5981,466.9336) ..
    controls (425.3328,454.2389) and (431.8295,428.9440) .. (441.1328,410.3432) ..
    controls (449.7725,393.0691) and (463.7437,373.8362) .. (485.3772,349.4355) ..
    controls (509.9821,321.6834) and (599.3794,231.1521) .. (601.7272,231.6097) ..
    controls (602.8506,231.8286) and (589.7894,245.7157) .. (561.4502,274.4336) ..
    controls (489.3978,347.4488) and (472.5925,366.0500) .. (454.9244,392.3432) ..
    controls (441.8300,411.8299) and (435.2488,427.2699) .. (430.6029,449.4031) ..
    controls (428.2599,460.5651) and (428.0998,462.6096) .. (429.0754,468.9031) ..
    controls (436.1055,514.2527) and (455.3468,543.1689) .. (528.9082,618.9336) ..
    controls (538.5203,628.8336) and (558.3486,648.9711) .. (572.9711,663.6836) ..
    controls (592.7083,683.5423) and (599.0015,690.4336) .. (597.3992,690.4336) ..
    controls (595.8752,690.4336) and (582.6243,677.5762) .. (552.3104,646.6836) --
    cycle(284.5199,385.4918) .. controls (272.6068,381.9700) and
    (263.2277,376.8940) .. (247.5714,365.4952) .. controls (225.9158,349.7285) and
    (206.2378,331.5369) .. (144.5191,270.2272) .. controls (117.6016,243.4879) and
    (106.0955,231.4336) .. (107.4900,231.4336) .. controls (108.8144,231.4336) and
    (122.6469,244.4879) .. (147.5422,269.2324) .. controls (196.3973,317.7915) and
    (209.7529,330.5461) .. (229.0714,347.0927) .. controls (258.9862,372.7151) and
    (279.1501,383.9864) .. (297.1981,385.1743) .. controls (320.9008,386.7344) and
    (336.1808,369.6304) .. (342.0677,334.9485) .. controls (344.2694,321.9778) and
    (346.0714,295.3071) .. (346.0714,275.6922) .. controls (346.0714,254.3795) and
    (347.2569,231.4336) .. (348.3581,231.4336) .. controls (348.9838,231.4336) and
    (349.0911,242.0772) .. (348.6578,261.1836) .. controls (347.3897,317.1076) and
    (345.3528,338.7068) .. (339.8642,354.4336) .. controls (333.6515,372.2349) and
    (325.7344,381.3489) .. (312.6433,385.7695) .. controls (306.0889,387.9828) and
    (292.4920,387.8485) .. (284.5199,385.4918) -- cycle(388.4677,386.0154) ..
    controls (372.8204,381.1180) and (361.5908,363.5848) .. (357.0941,337.0303) ..
    controls (354.1727,319.7784) and (350.5983,231.4336) .. (352.8218,231.4336) ..
    controls (354.0209,231.4336) and (355.0610,249.2685) .. (355.0669,269.9290) ..
    controls (355.0729,292.4921) and (356.9011,321.9852) .. (359.1015,335.0303) ..
    controls (362.1232,352.9450) and (368.0235,366.5260) .. (376.4226,374.8995) ..
    controls (383.5922,382.0472) and (389.5533,384.7131) .. (399.5301,385.2335) ..
    controls (418.1046,386.2024) and (440.0047,374.5352) .. (473.4695,345.8427) ..
    controls (491.7970,330.1288) and (505.2391,317.3207) .. (552.7996,270.2543) ..
    controls (578.2711,245.0475) and (592.7327,231.4336) .. (594.0378,231.4336) ..
    controls (595.4111,231.4336) and (583.3015,244.0475) .. (555.8096,271.2538) ..
    controls (507.2095,319.3491) and (493.8648,332.0707) .. (475.4695,347.8427) ..
    controls (451.4831,368.4084) and (435.7129,378.7478) .. (420.1436,384.1156) ..
    controls (411.9816,386.9295) and (409.2455,387.4072) .. (401.5714,387.3578) ..
    controls (396.6214,387.3259) and (390.7248,386.7218) .. (388.4677,386.0154) --
    cycle;
\draw (310,470) node {$\cdot$};
\draw (390,470) node {$\cdot$};
\draw  (70,470) node[left] {$\left( \begin{smallmatrix}
a_1&a_2&a_3&a_4&a_5&a_6&a_7&a_8\\
 1& 1& 1& 1& \sqrt{3}& 1& 2& \sqrt{3}\\
a_9&a_{10}&a_{11}&a_{12}&a_{13}&a_{14}&a_{15}&a_{16}\\
 \sqrt{3}& 2& 2& 2& 2& 2& 1& \sqrt{3}\\ 
a_{17}&a_{18}&a_{19}&a_{20}&a_{21}&a_{22}&&\\
 \sqrt{3}& \sqrt{3}& 1& \sqrt{3}& 1& \sqrt{3}&&
 \end{smallmatrix}\right)
$};
\end{tikzpicture}
\end{center}
\end{example}

\subsection{Freezing in the tropics}
We will now have a look at what happens if we let the system cool down. Mathematically this means that we let the temperature go to zero and therefore
the $\frac{1}{k T}$ will go to infinity. As we absorbed the temperature constant into the energy, we can model this by looking at the one-parameter family
$Et$ with $t\to \infty$. If we look at the partition function we will get a one-parameter family of equations $P_{Et}(x,y)=0$. If we look at the amoeba, its central part
will become larger and larger, but we can rescale the amoeba by the factor $t$. In the limit the rescaled amoeba becomes
infinitely thin and  $\lim_{t\to \infty}\frac{1}{t} \Am(P_t)$  will be a union of line segments and half lines. 
Such a limit is also know as a tropical curve.

\begin{intermezzo}[Tropical curves]
The tropical semifield $\mathbb{F}_{\trop}$ is the set $\R\cup \{-\infty\}$ equipped with two operations $a \oplus b = \max (a,b)$ and
$a\odot b = a+b$. A tropical polynomial $f_\trop(x,y)= \oplus_{ij} a_{ij}\odot x^{\odot i}y^{\odot j}$ looks 
like an ordinary Laurent polynomial but for the evaluation we use the tropical operations instead of the ordinary ones.
\[
f_\trop(x,y) = \max_{i,j} (a_{ij}+ix+jy)
\]
which is a piecewise affine function $f_{\trop}:\R^2 \to \R$. Unlike ordinary polynomials, different tropical polynomials can give rise to the same function.
Another way of calculating  $f_\trop(x,y)$ is
\[
 f_\trop(x,y) = \lim_{t \to +\infty} \log_t f_t(t^x,t^y) \text{ with }f_t(X,Y) = \sum_{ij} t^{a_{ij}}X^iY^j
\]
We will often use $f_t(X,Y)$ instead of $f_\trop$ because for $f_t$ we can use the ordinary sum and product operations. 

The tropical curve defined by a tropical polynomial $f$ is the locus in $\R^2$ where $f_{\trop}$ is not smooth, or equivalently
where the maximum is reached by two terms in the expression of $f_\trop$. Because $f_\trop$ is piecewise affine, the tropical curve is a union of
line segments that cuts $\R^2$ into convex pieces, cells, and each of these convex pieces corresponds to a lattice point $(i,j)$ where the maximum
is reached by $a_{ij}+ix+jy$.

The tropical curve $f_\trop$ defines a subdivision of the Newton polygon $\NP(f)$. Draw all the lattice points corresponding to cells of the tropical curve
and connect two lattice points by and edge if the two cells share a line segment of the tropical curve. In other words this subdivision is the dual graph of the 
tropical curve. 

\begin{center}
\begin{tikzpicture} 
\draw (3,2.5) node[above] {$\scriptstyle{f_t(X,Y) = 1+X+Y+XY+t^{-1}X^2Y^2}$}; 
\draw (1,0) -- (0,0); 
\draw (2,2) -- (1,0); 
\draw (1,1) -- (1,0); 
\draw (0,0) -- (0,1); 
\draw (0,1) -- (2,2); 
\draw (1,1) -- (2,2); 
\draw (0,1) -- (1,1); 
\draw (0,0) node[circle,draw,fill=white,minimum size=10pt,inner sep=1pt] {{\tiny0}};
\draw (1,0) node[circle,draw,fill=white,minimum size=10pt,inner sep=1pt] {{\tiny0}};
\draw (0,1) node[circle,draw,fill=white,minimum size=10pt,inner sep=1pt] {{\tiny0}};
\draw (2,2) node[circle,draw,fill=white,minimum size=10pt,inner sep=1pt] {{\tiny-1}};
\draw (1,1) node[circle,draw,fill=white,minimum size=10pt,inner sep=1pt] {{\tiny0}};

\begin{scope}[xshift=4cm,yshift=.5cm]
\begin{scope}[scale=.5]
\draw (0,0) -- (0,-1); 
\draw (2,0) -- (4,-1); 
\draw (0,0) -- (2,0); 
\draw (0,0) -- (-1,0); 
\draw (0,2) -- (-1,4); 
\draw (2,0) -- (0,2); 
\draw (0,0) -- (0,2); 
\end{scope}
\end{scope}
\end{tikzpicture} 
\end{center}
For more information about tropical geometry we refer to \cite{mikhalkin2004amoebas, mikhalkin2006tropical}
\end{intermezzo}

If we return to the main story, we can interpret the $\lim_{t\to \infty}\frac{1}{t} \Am(P_{Et})$ as a tropical curve.
Look at a fixed point $(u,v) \in \R^2$. As $t$ goes to infinity each term of $P_{Et}(z,w)$ with $(|z|,|w|)=(e^{ut},e^{vt})$ 
will behave as $t^\alpha$ for some order $\alpha$. If $(z,w)$ is a zero there must be at least 2 terms with the highest order
so they can cancel each other.
Therefore one can deduce that the limit amoeba for 
\[
P_{Et}(x,y) = \sum_k \alpha_{k}e^{\beta_{k}t}X^{i_k}Y^{j_k}
\]
will be the tropical curve defined by the tropical polynomial
\[
 P_{E\trop}(x,y) = \max_{k}( i_k x+j_k y +\beta_k)
\]
From the physics point of view this means that the liquid phase shrinks to the line segments of the tropical curve and the cells are the solid and gaseous phases of the phase diagram. 

\begin{theorem}\label{alltropicaloccur}
Given a consistent dimer $\rib$ with matching polygon $\MP(\rib)$ and a tropical polynomial $f_\trop$ with Newton polygon $\NP(f_t)=\MP(\rib)$ then
there is an element $E \in \tL$ for which $f_\trop$ and $P_{E\trop}$ define the same tropical curve.
\end{theorem}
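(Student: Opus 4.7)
The plan is to identify the tropical coefficients of $P_{E,\trop}$ explicitly in terms of the edge energies and then invert this map. For $E\in\tL$, substitute weights $X_e = e^{tE_e}$ into the partition function
\[
P_E(X,Y)=\sum_{a\in\MP(\rib)\cap\Z^2}\pm\, H_{a,\theta}(E)\,W_{\cP_\theta}(E)\,X^{i_a}Y^{j_a},
\]
where $H_{a,\theta}(E)=\sum_{\PM\in\PMs_a}e^{E_\PM-E_{\cP_\theta}}$ and $E_\PM:=\sum_{e\in \PM}E_e$. After rescaling $E\mapsto tE$ and sending $t\to\infty$, the coefficient of $X^{i_a}Y^{j_a}$ has tropical exponent
\[
\beta_a(E)=\max_{\PM\in\PMs_a}E_\PM,
\]
the shift by $E_{\cP_\theta}$ cancelling in the product. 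Hence $P_{E,\trop}=\max_a(\beta_a(E)+i_ax+j_ay)$, and the theorem reduces to showing that the piecewise-linear map $\beta:\tL\to\R^{\MP(\rib)\cap\Z^2}$ realises every tropical curve with Newton polygon $\MP(\rib)$.

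Given a target tropical polynomial $f_\trop(x,y)=\max_a(\beta_a^\ast+i_ax+j_ay)$, first restrict attention to the \emph{active} lattice points, i.e.\ those for which $(a,\beta_a^\ast)$ lies on the upper convex hull in $\R^3$: only these affect the tropical curve. For each active $a$ select a single perfect matching $\PM_a\in\PMs_a$; on boundary points the structure theorem of Gulotta produces $\PM_a$ explicitly as modifications of corner matchings by zigzag walks, and on internal points existence of a matching in $\PMs_a$ follows from the consistency of $\qpol$ and the isoradial embedding. Then solve the affine-linear system
\[
\sum_{e\in \PM_a}E_e=\beta_a^\ast\qquad\text{for every active }a
\]
in the edge-energy variables $E_e$, modulo the gauge $\tG$ defining $\tL$.

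The main obstacle is surjectivity of the linear map $L:\tL\to\R^{\{a\text{ active}\}}$, $E\mapsto(E_{\PM_a})_a$, onto the target modulo the tropical symmetries. Three such symmetries are built in: adding a global constant to $E$, and the two rescalings $X\mapsto e^{tc}X$, $Y\mapsto e^{td}Y$ which shift $\beta_a$ by $ci_a+dj_a$; all leave the tropical curve unchanged. A dimension count using theorem \ref{mirdim} and Pick's theorem shows that $\dim\tL=\#\rib_2+1$ is compatible with $\#(\MP\cap\Z^2)-3$, with the $g$ extra dimensions on the target matching the $g$ internal lattice points, which are precisely the commuting Hamiltonians from the integrable system. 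For the actual surjectivity I would argue that any difference $\PM_a-\PM_b$ between two chosen matchings is a collection of closed walks in $\rib$ whose homology class in $\H_1(\TT)$ equals $a-b\in\Z^2$, and that together with the zigzag walks (which modify $\beta_a$ at a single lattice point without changing the homology class) these generate a subspace of $\Z\rib_1/\Z\rib_0$ of the required codimension.

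Finally, once $E$ satisfies the linear equations $E_{\PM_a}=\beta_a^\ast$, a generic perturbation of $E$ within this affine subspace makes $\PM_a$ the \emph{unique} maximiser of $E_\PM$ over $\PMs_a$ for every active $a$, while inactive lattice points have $\beta_a(E)$ strictly below the upper convex hull of the $(a,\beta_a^\ast)$. Then $P_{E,\trop}$ and $f_\trop$ induce the same subdivision of $\MP(\rib)$ with matching heights on active points, so they define the same tropical curve, completing the argument.
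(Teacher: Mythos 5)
There is a genuine gap, and it sits exactly where the hard content of the theorem lives. Your reduction to the piecewise-linear statement is fine: the tropical coefficient at a lattice point $a$ is indeed $\beta_a(E)=\max_{\PM\in\PMs_a}E_\PM$ (up to the irrelevant shift by $E_{\cP_\theta}$). But the realization step is asserted rather than proved, in two places. First, solvability of the system $E_{\PM_a}=\beta_a^\ast$ needs the functionals $E\mapsto E_{\PM_a}$ to be affinely independent on $\tL$ modulo the three tropical symmetries; a dimension count only shows the ambient dimensions are compatible, and your sketch (``differences $\PM_a-\PM_b$ together with zigzag walks generate a subspace of the required codimension'') is not an argument — e.g.\ for three collinear lattice points the corresponding differences of matchings could satisfy a linear relation in the graph homology $\H_1(\rib,\Z)$, in which case the prescribed heights at those points cannot be chosen independently. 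Second, and more seriously, even if you solve the equalities for your chosen matchings, nothing controls the \emph{other} matchings: you need $\max_{\PM\in\PMs_a}E_\PM=\beta_a^\ast$ (not merely $\ge$) at every active point, and $\max_{\PM\in\PMs_b}E_\PM$ to lie weakly below the hull at inactive points. If some competing matching $\PM'\in\PMs_a$ ends up with $E_{\PM'}>\beta_a^\ast$, the violation is strict and open, so ``a generic perturbation within this affine subspace'' cannot repair it; genericity breaks ties, it does not decide them in your favour. Controlling all these maxima simultaneously is precisely the nontrivial surjectivity statement the theorem encodes, and your proposal leaves it unproved.

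For contrast, the paper does not attack this combinatorial problem directly at all: it quotes theorem \ref{Harnackfromdimer} (Kenyon--Okounkov: every Harnack curve with Newton polygon $\MP(\rib)$ arises as a spectral curve $\Part_E$ for some energy function on the dimer) together with the Itenberg--Viro patchworking theorem that every tropical curve is a limit of Harnack curves; tropicalizing the resulting one-parameter family of energies produces the required $E\in\tL$. The positivity input hidden in the Harnack property is what substitutes for the max-control you are missing. If you want to salvage a direct argument, you would need either an explicit scheme (in the spirit of Gulotta's construction or Viro patchworking transplanted to edge weights) that makes the chosen $\PM_a$ provably dominant at each lattice point, or to route through the Ronkin function/surface tension formalism — at which point you have essentially reconstructed the paper's citation chain.
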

\begin{proof}
This follows from \ref{Harnackfromdimer} and 
and a theorem by Itenberg and Viro \cite{itenberg1996patchworking,viro2001dequantization} 
that every tropical curve can be realized as the limit of a Harnack curve.
\end{proof}

Just like in the nonzero temperature case we can put all these tropical curves together for one fixed dimer. In this way we get a tropical/frozen version
of our statical system for one dimer
\[
 \tL(\rib) \to \tX(\rib)  = \R^{\#\rib_1-\#\rib_0 +1} \to \R^{\rib_2-1}.
\]
The coordinates on $\tL$ and $\tX$ are tropical versions of the holonomies around the cycles, which we should view as 
\[
W_F^\trop(E) = \lim_{t\to \infty}\log_t W_F(t^E) = \sum_{e \in F} \pm E_e.
\]
Although these expressions are just logarithmic versions of the original holonomies, their behaviour is different. This becomes
apparent if we look at mutation. 

If $\rib$ and $\mu\rib$ are related by mutation at $F$ we can take the limit of the mutation formula for $t \to \infty$ and then we get
\[
\phi_\mu^\trop({W'_u}^\trop) = W_{\phi(u')}^\trop +\eps(\phi(u'),F)\max(0, W_F^\trop)
\]
This is a piecewise linear transformation which is everywhere defined and invertible. 
Just like in the statistical model $\tL \to \tX$ is just equal to $\tL(\rib) \to \tX(\rib)$ for all dimers in the same mutation class.

\section{The $B$-side: representation theory and crepant resolutions }\label{Bside}

In this section we will look at 3-dimensional toric Gorenstein singularities and we will use dimers to construct crepant resolutions. The main idea
is that these crepant resolutions can be seen as moduli spaces of representations of a noncommutative algebra associated with the dimer.
Different crepant resolutions arise by varying the stability condition that is used to construct the moduli space and to each dimer
we can associate a space of stability conditions.

This noncommutative algebra is called the Jacobi algebra and it is an example of a noncommutative crepant resolution (NCCR). 
Jacobi algebras of isotopic dimers will be NCCRs for the same singularity. Using quiver mutation it is possible to 
translate stability conditions of one NCCR to stability conditions of another NCCR. 

Finally we relate this to the tropical version of the space $\cX$ that we constructed in the $A$-model.

\subsection{Toric geometry}
\newcommand{\fan}{\mathsf{Fan}}
\newcommand{\grr}{\mathscr{T}}
\newcommand{\sK}{\mathscr{K}}

We start with reviewing some basic toric geometry that we will need in our story, for more details we refer to the classical introduction by Fulton \cite{fulton1993introduction}.

A \iemph{toric variety} $X$ is a normal variety that contains a torus $T=(\C^*)^n$ as an open dense subset and the standard action by $T$ on itself by multiplication extends to an action on 
$X$. To the torus we can associate two dual lattices: $N=\Hom(\C^*,T)$ contains the one parameter subgroups of $T$ and $M=\Hom(T,\C^*)$
its characters. The natural pairing can be written as $\<n,m\>=k\in \Z$ if $m\circ n(t)=t^k$. We will write $N_\R$ for $N\otimes \R$ and $M_\R$ for $M\otimes \R$. 

To describe the toric variety we can consider the set of all one parameter subgroups $\lambda$ for which the limit $\lim_{t\to 0} \lambda(t)$ exists in $X$. 
If it exists we denote by $O_\lambda$ the $T$-orbit of the limit point.
We say that $\lambda \cong \mu$ if they have the same limit and $\mu \preceq \lambda$ if $O_\lambda$ lies in the closure of $O_\mu$.  
To every $\lambda$ we assign a cone
\[
 \sigma_\lambda := \sum_{\mu \preceq \lambda} \R_+ \mu \subset N_\R.
\]
The fan of $X$ is the set containing all these cones:
\[
 \fan(X) = \{ \sigma_\lambda| \lambda \in N \text{ and }\lim_{t\to 0} \lambda(t) \in X\}.
\]

The interior of a cone $\sigma$ contains all $\lambda$ with the same orbit, so we also denote this orbit by $O_\sigma$.
If we take the union of all orbits that come from cones contained in $\sigma$ we get an open subset 
\[
 U_\sigma = \bigcup_{\tau \subset \sigma} O_\tau.
\]
This open subset also has a different description. The dual of a cone $\sigma\subset N\otimes \Rl$ is the
set $$\sigma^\vee =\{m \in M\otimes \Rl| \forall n \in \sigma: \<n,m\>\ge 0\}.$$ 
The intersection of the dual cone with the lattice $M$ gives a semigroup.
Because $\<n,m\>\ge 0$ implies that the limit $m(\lim_{t \to 0} n(t))=\lim_{t \to 0} t^{\<n,m\>}$ exists and $m$, we can see the lattice points in $M\cap \sigma^\vee$
as functions on $U_\sigma$ and therefore
\[
 U_\sigma = \Spec \C[\sigma^\vee \cap M].
\]

In general a \iemph{fan} is a collection of integral cones in $N\otimes \Rl$ that is closed under intersection and taking faces.
For each $\sigma \in \cF$ we define $U_\sigma=\Spec\C[\sigma^\vee \cap M]$ and
if we glue them together by the natural inclusion maps $U_\tau \subset U_\sigma$ whenever $\tau\subset \sigma$, we obtain a toric variety $X_\cF$. 

Toric varieties are completely determined by their fans and a lot of interesting properties can be read of
from the structure of the fan.
\begin{itemize}
\item A toric variety is affine if its fan has one maximal cone. 
\item A toric variety is complete if the fan covers the whole of $N\otimes \Rl$.
\item $U_\sigma$ is smooth if $\sigma\cap N$ is generated by a linearly independent set of $n_i \in N$.
\end{itemize}

If $X$ is a toric variety with fan $\cF$ then we can look at sheaves over $X$. A special type of sheaves are the \iemph{graded rank $1$ reflexive sheaves}.
Let $\{v_i|i \in \cI\}$ be the set of the generators of all $1$-dimensional cones and let $a=(a_i)_{i \in \cI}$ be a sequence of integers.
Out of this sequence we can construct a sheaf $\grr_a$ as follows
\[
 \grr_a(U_\sigma) = \C[ m \in M| \forall v_i : \<v_i,m\>+a_i \ge 0].
\]
From this definition we can easily deduce $\grr_a\otimes \grr_b=\grr_{a+b}$ and $\grr_a \cong \cO_X$ if there is an $m \in M$ such that $a_i=\<v_i,m\>$.
The set of all $\grr_a$ up to isomorphism together with the tensor product give a group, which we denote by $\Pic X$ because if
$X$ is smooth these graded rank one reflexives are precisely the line bundles over $X$. 

This group contains a special element $\sK=\grr_{\id}$ corresponding to the sequence consisting only of $1's$. This element is
the \iemph{canonical sheaf}. This sheaf is trivial if all the $v_i$ lie in a common hyperplane at height $1$.
If that is the case the toric variety is special.
If $X$ is smooth then we call such a variety \iemph{Calabi-Yau}, if it is singular we call it \iemph{Gorenstein}.

\subsection{Resolutions of singularities}
One of the themes in algebraic geometry is the resolution of singularities. Given a singular affine variety $X$ over $\C$ with coordinate ring $\C[X]$, we want to
find a smooth variety $\tilde X$ and a surjection $$\pi:\tilde X \to X$$ that is proper (the fibers are complete) and birational (almost everywhere one to one). 
The map $\pi$ is called a \iemph{resolution} of $X$ and the locus in $\tilde X$ that is not one-to-one is called the \iemph{exceptional locus}.

In general a resolution is far from unique and one would like to define a class of resolutions that remain close to $X$ geometrically. To capture this idea Reid introduced the 
the notion of a crepant resolution \cite{reid1983minimal}. 
\begin{definition}
A resolution $\pi:\tilde X \to X$ is \iemph{crepant} if the pullback of the canonical sheaf on $X$ is the canonical sheaf on $\tilde X$.
\end{definition}
If the canonical sheaf of $X$ is trivial and $\pi:\tilde X \to X$ is crepant then the canonical sheaf of $\tilde X$ will also be trivial, so
crepant resolutions of Gorenstein singularities result in Calabi-Yau varieties.

A fan $\tilde \cF$ is a \iemph{refinement} of a fan $\cF$ if every cone in $\cF'$ sits inside a cone of $\cF$ and every cone is $\cF$ is the union of some cones in $\cF'$. 
This gives a natural $T$-equivariant map $X_{\tilde\cF}\to X_\cF$ which maps the limit point of each cone in $\tilde \cF$ to the limit point of the cone in which it is contained.
This map is proper and surjective because every cone in $\cF$ is completely covered by cones in $\cF'$ and it is birational because it is one-to-one on the torus $T$,
so if $X_{\tilde \cF}$ is smooth then $X_{\tilde\cF}\to X_\cF$ is a resolution.

Let us now look more closely at the 3-dimensional case. To construct a 3-dimensional Gorenstein singularity we fix a convex lattice polygon $\LP \subset \R^2$ and 
put it at height $1$ in 3-dimensional space $\R^3$. The corners of this polygon are vectors of the form $v_i=(x_i,y_i,1) \in \Z^3$. The cone $\sigma = \sum_i \Rl_{\ge 0} v_i$ gives us a variety 
$X=U_\sigma$ with coordinate ring 
$$
R_{\LP}:=\C[\sigma \cap \Z^3]=\C[X^rY^sZ^t| x_ir+y_is+t\ge 0].
$$
This coordinate ring is Gorenstein because the corners of the polygon all sit in the same plane. Unless the polygon is an elementary triangle this variety is not smooth. 

To construct a resolution $\tilde X \to X$, we need to subdivide the cone into smooth subcones. If we can ensure that the generators of all these subcones 
lie in the same plane as the polygon, then the resolution is crepant and $\tilde X$ is Calabi-Yau. This can be done by subdividing the polygon in elementary triangles.
The simplest example is the crepant resolution of $\Spec\C[X,Y,Z,W]/(XY-ZW)$, which is illustrated below.
\begin{center}
\begin{tikzpicture}
\begin{scope}[scale=.5]
\filldraw  
(0,0) circle (2pt)  (1,0) circle (2pt)
(0,1) circle (2pt) (1,1) circle (2pt);
\draw (0,0) -- (1,0) -- (1,1) -- (0,1) -- (0,0);
\draw (0,0) -- (1,1);  
\end{scope}
\draw (1.75,.25) node {$\stackrel{\text{resolution}}{\longrightarrow}$};
\begin{scope}[xshift=3cm]
\begin{scope}[scale=.5]
\filldraw  
(0,0) circle (2pt)  (1,0) circle (2pt)
(0,1) circle (2pt) (1,1) circle (2pt);
\draw (0,0) -- (1,0) -- (1,1) -- (0,1) -- (0,0);  
\end{scope}
\end{scope}
\end{tikzpicture}
\end{center}

In most cases there are several ways to do this and each way gives a different crepant resolution. 
There is a construction that transforms one crepant resolution into another: the flop \cite{kollar1990flip}. Basically the flop blows down a $\PP_1$ and substitutes
it by a different $\PP_1$. In the toric picture one takes an internal edge of the subdivision of the polygon and looks at the two elementary triangles
that it bounds. 
If these $2$ form a convex quadrangle then we can remove the edge and put in the other diagonal of the quadrangle. In this way we obtain
a new subdivision of the polygon or a new crepant resolution of the singularity. The example below is called the Atiyah flop \cite{atiyah1958analytic}.
\begin{center}
\begin{tikzpicture}
\begin{scope}[scale=.5]
\filldraw  
(0,0) circle (2pt)  (1,0) circle (2pt)
(0,1) circle (2pt) (1,1) circle (2pt);
\draw (0,0) -- (1,0) -- (1,1) -- (0,1) -- (0,0);
\draw (0,0) -- (1,1);  
\end{scope}
\draw (1.75,.25) node {$\stackrel{\text{flop}}{\longrightarrow}$};
\begin{scope}[xshift=3cm]
\begin{scope}[scale=.5]
\filldraw  
(0,0) circle (2pt)  (1,0) circle (2pt)
(0,1) circle (2pt) (1,1) circle (2pt);
\draw (0,0) -- (1,0) -- (1,1) -- (0,1) -- (0,0);  
\draw (1,0) -- (0,1);  
\end{scope}
\end{scope}
\end{tikzpicture}
\end{center}

\begin{example}
If we take the lattice polygon $$\<(1,0),(0,1),(-1,0),(-1,-1),(0,-1)\>$$
we see that there are $5$ different toric crepant resolutions.
The arrows indicate the flops.
\begin{center}
\begin{tikzpicture}
\begin{scope}[scale=.5]
\begin{scope}
\draw (1,0) node{$\bullet$}--(0,1) node{$\bullet$}--(-1,0) node{$\bullet$}--(-1,-1) node{$\bullet$}--(0,-1) node{$\bullet$} -- (1,0);
\draw (0,0) node{$\bullet$};
\draw (0,0) -- (1,0);
\draw (0,0) -- (0,1);
\draw (0,0) -- (-1,0);
\draw (0,0) -- (-1,-1);
\draw (0,0) -- (0,-1);
\end{scope}
\begin{scope}[yshift=-4cm]
\draw (1,0) node{$\bullet$}--(0,1) node{$\bullet$}--(-1,0) node{$\bullet$}--(-1,-1) node{$\bullet$}--(0,-1) node{$\bullet$} -- (1,0);
\draw (0,0) node{$\bullet$};
\draw (0,0) -- (1,0);
\draw (0,0) -- (-1,0);
\draw (-1,-1) -- (1,0);
\draw (0,0) -- (-1,-1);
\draw (0,0) -- (0,1);
\end{scope}
\begin{scope}[yshift=4cm]
\draw (1,0) node{$\bullet$}--(0,1) node{$\bullet$}--(-1,0) node{$\bullet$}--(-1,-1) node{$\bullet$}--(0,-1) node{$\bullet$} -- (1,0);
\draw (0,0) node{$\bullet$};
\draw (0,0) -- (1,0);
\draw (0,0) -- (0,1);
\draw (0,0) -- (0,-1);
\draw (0,0) -- (-1,-1);
\draw (-1,-1) -- (0,1);
\end{scope}
\begin{scope}[xshift=4cm]
\draw (1,0) node{$\bullet$}--(0,1) node{$\bullet$}--(-1,0) node{$\bullet$}--(-1,-1) node{$\bullet$}--(0,-1) node{$\bullet$} -- (1,0);
\draw (0,0) node{$\bullet$};
\draw (0,0) -- (1,0);
\draw (0,0) -- (0,1);
\draw (-1,-1) -- (0,1);
\draw (0,0) -- (-1,-1);
\draw (-1,-1) -- (1,0);
\end{scope}
\begin{scope}[xshift=-4cm]
\draw (1,0) node{$\bullet$}--(0,1) node{$\bullet$}--(-1,0) node{$\bullet$}--(-1,-1) node{$\bullet$}--(0,-1) node{$\bullet$} -- (1,0);
\draw (0,0) node{$\bullet$};
\draw (0,0) -- (1,0);
\draw (0,0) -- (0,1);
\draw (0,0) -- (-1,0);
\draw (-1,0) -- (0,-1);
\draw (0,0) -- (0,-1);
\end{scope}
\draw (-2,0) node {$\ot$};
\draw (0,2) node {$\uparrow$};
\draw (0,-2) node {$\downarrow$};
\draw (1.5,-1.75) node {$\nearrow$};
\draw (1.5,1.75) node {$\searrow$};
\end{scope}
\end{tikzpicture}

\end{center}
The middle one resolves the singularity by adding a single del Pezzo surface, $dP_5$.
For the upper and lower resolution the exceptional fiber consists of a Hirzebruch
surface and an extra $\PP_1$. For the left resolution we have a $\PP_1\times \PP_1$ and a $\PP_1$ and for the right one a $\PP_2$ and 2 $\PP_1$'s.
The surface can be deduced from the neighborhood of the central lattice point 
and the number of extra $\PP_1$'s is the number of line segments not attached
to the central lattice point.
\end{example}

\subsection{Moduli spaces and geometric representation theory}

In this section we will review the basics on moduli spaces of representations of quivers, for more information we refer to
\cite{king1994moduli} and \cite{le2007noncommutative}.

Recall that the \iemph{path algebra} $\C Q$ of a quiver $Q$ is the vector space
spanned by all paths in $Q$. The product $pq$ of two paths $p,q$
is their concatenation if $t(p)=h(q)$ and zero otherwise.

For a dimension vector $\alpha:\qpol_0\to \N$ we define the space of $\alpha$-dimensional representations of $Q$ as  
$$
\Rep_\alpha Q :=  \bigoplus_{a \in \qpol_1}\Mat_{\alpha(h(a))\times \alpha(t(a))}(\C).
$$
We will denote the total dimension of these representations by $n=\sum_{v \in \qpol_0} \alpha_v$ and the group $\prod_{v \in \qpol_0} \GL_{\alpha(v)}(\C)$ by $\GL_\alpha$.

Every point $(m_a)$ in the space $\Rep_\alpha Q$ corresponds to a representation $\rho:\C Q \to \Mat_n(\C)$ that maps an arrow $a$ to a block matrix with $(m_a)$ in 
the appropriate position and zeros everywhere else. The group $\GL_\alpha$ acts on these representations by conjugation and the orbits
of this action can be seen as the isomorphism classes of the $\alpha$-dimensional modules/representations of the path algebra $\C Q$. 
\footnote{a $\C Q$-module $M$ is $\alpha$-dimensional if $\dim vM=\alpha_v$ for every $v \in \qpol_0$.} 

The most basic way to construct a quotient for the $\GL_\alpha$-action is to look at the ring of invariants
\[
\C[\CRep_\alpha A]^{GL_\alpha}  := \{f \in \C[\CRep_\alpha A]| \forall g \in \GL_\alpha: f\circ g = f\}.
\]
The spectrum of this ring parametrizes the closed orbits in $\Rep_\alpha Q$ and is called the \iemph{categorical quotient}, $\Rep_\alpha Q/\!\!/ \GL_\alpha$. The closed orbits are precisely the isomorphism classes of semisimple representations.

A more refined quotient can be taken if we specify a stability condition. This is a character
\[
\theta : \GL_\alpha \to \C^* : g \to \prod_{v \in \qpol_0} \det g_v^{\theta_v}. 
\] 
The \iemph{Mumford quotient} $\cM_\alpha^\theta(Q)$ is by definiton the proj of the graded ring of semi-invariants
\[
\C[\Rep_\alpha Q]_\theta := \bigoplus_{n=0}^\infty \{f \in \C[\Rep_\alpha Q]| f\circ g = \theta(g)^n f\}
\]
To interpret this quotient we call a representation \iemph{$\theta$-stable} if $\theta\cdot \alpha:=\sum_{v \in \qpol_0}\theta_v \alpha_v=0$ and it has no 
proper subrepresentation with dimension vector $\beta$ such that $\theta\cdot \beta\le 0$. If we relax the condition to $\theta\cdot \beta< 0$ 
then we call the module \iemph{$\theta$-semistable} and if it is a direct sum of stable representations we call it \iemph{$\theta$-polystable}.

The set of semistable representations forms an open subset $\Rep_\alpha^{\theta-ss} Q \subset \Rep_\alpha Q$ that is closed under the $\GL_\alpha$-action. The
moduli space $\cM_\alpha^\theta(Q)$ parametrizes the orbits that are closed in $\Rep_\alpha^{\theta-ss} Q$. It is well known that these are precisely the orbits of polystable representations. 

Because the moduli space $\cM_\alpha^\theta(Q)$ is the proj of the graded ring $\C[\Rep_\alpha Q]_\theta$ 
it comes with a natural projection to the spectrum of its degree zero part. The latter is just the ring of invariant functions, so this projection map 
is $\cM_\alpha^\theta(Q) \onto \Rep_\alpha Q/\!\!/ \GL_\alpha$.
The target can also be identified with the moduli space for the trivial character,$\cM_\alpha^0(Q)$, because $\C[\CRep_\alpha A]_0=\C[\CRep_\alpha A]^{GL_\alpha} \otimes \C[t]$.

A character is called \iemph{generic} if all vectors $\beta \in \N^{\qpol_0}$ with $0<\beta<\alpha$ satisfy $\theta \cdot\beta\ne 0$. For generic characters the concepts
stable, polystable and semistable $\alpha$-dimensional representations coincide. 

If $\theta$ is generic the moduli space is a \iemph{fine moduli space}. This means $\cM_\alpha^\theta(Q)$ comes with a bundle of which the fibres are precisely the
modules parametrized by the points in the moduli space. This bundle is called the \iemph{tautological bundle}.

To construct it, we assume for simplicity that there is a vertex $v$ with $\alpha_v=1$. In this case $\cU$ is the fibred product
\[
\Rep_\alpha^{\theta-ss} Q \times_{\GL_\alpha} \C^n 
\]
where $\GL_\alpha$ acts on the last part by $g\cdot x = g_v^{-1}g x$. 
On each $\{\rho\} \times \C^n$  the algebra $\C Q$ acts via the representation $\rho$ and the $\GL_\alpha$-action induces isomorphism between
the different modules. This implies that each fiber of the projection map $\cU \to \cM_\alpha^\theta(Q)$ has a natural $\C Q$-module structure and
$\cU$ can be seen as a sheaf of modules over $\cM_\alpha^\theta(Q)$.

The whole construction carries over to \iemph{path algebras with relations}. If
$A = \C Q/I$ we can construct the subscheme
$\Rep_\alpha A \subset \Rep_\alpha  Q$ containing the representations of $\C Q$ that factor through $A$.
Its ring of coordinates is
\[
 \C[\Rep_\alpha A] = \frac{\C[X_{ij}^a| a \in Q_1]}{\<r_{st}(X_{ij}^a)| r \in I\>}
\]
where the $r_{st}$ are the coeffients of the matrix we obtain when we substitute the arrows in the relation $r$ by their matrices $(X_{ij}^a)$.
In general this scheme is not reduced and it can consist of many components. 

The subscheme $\CRep_\alpha A$ is closed under the $\GL_\alpha$-action and if we fix a character $\theta$, we can define 
$\CM_\alpha^\theta(A)$ as $\Proj \C[\CRep_\alpha A]_\theta$. This moduli space is a closed subscheme of $\CM_\alpha^\theta(Q)$ that
classifies the polystable representations of $A$. If $\theta$ is generic this is a fine moduli space and the bundle is the restriction
of $\cU$ to the base $\CM_\alpha^\theta(A)$.

\subsection{Noncommutative geometry}

The basic idea of noncommmutative geometry is to extend the classical duality between
commutative rings and affine schemes to noncommutative algebras. There are many different ways to do 
this, all with there own advantages and disadvantages. We will focus on a categorical approach. 

To a variety $X$ we can associate its category of coherent sheaves, $\Coh X$. This is an abelian category and it contains the same information as the variety itself because
we can reconstruct $X$ from it. 
If $X$ is affine with coordinate ring
$R$, this category is equivalent to the category of finitely generated $R$-modules: $\Mod R$. If $X$ is not affine it is in general not possible
to find an algebra $A$ (not even a noncommutative one) such that $\Coh X \cong \Mod A$.

To address this problem we look at derived categories. A good reference for the the topics covered in this section is the review article by Keller \cite{keller2007derived}.
The \iemph{derived category of coherent sheaves} $\Db\Coh X$ is the homotopy category of complexes of coherent sheaves, with all quasi-isomorphisms inverted. This category contains $\Coh X$ as a full subcategory and it does not capture all information of $X$. Different Calabi-Yau varieties can have equivalent derived 
categories \cite{bondal2001reconstruction}. 

In a similar way we can define the \iemph{derived category of modules of an algebra} $\Db\Mod A$ as an enlargement of $\Mod A$. Unlike in the underived setting
it is possible to find non-affine varieties and algebras with equivalent derived categories. The easiest example is the projective line, which is derived equivalent to
the path algebra of the Kronecker quiver: 
$$\Db\Coh\PP_1 \cong \Db\Mod \C Q \text{ with }Q=\xymatrix{\vtx{}&\vtx{}\ar@{=>}[l]}.$$

Algebras and varieties can be derived equivalent in subtle ways and there are interesting constructions to go from one derived category to another.
The main idea is to use an object that has two structures as a bridge between the two.

Let us first look at \iemph{Morita theory}. If $M$ is a $A-B$ bimodule then there is a natural functor by tensoring
with $M$: $- \otimes_A M : \Mod A \to \Mod B$. This functor has an adjoint $\Hom_B(M,-): \Mod B \to \Mod A$ and
if these two functors are inverses of each other then $A$ and $B$ are called Morita equivalent. In that
case $\End_A(M,M)\cong B^{op}$ and $M$ considered as a $B$-module is a projective generator of $\Mod A$.
We can also go in the other direction: if $M$ is a projective generator in $\Mod A$, we can define $B = \End_A(M,M)^{op}$ and
view $M$ as an $A-B$-bimodule. The functor $- \otimes_A M: \Mod A \to \Mod B$ will then be an equivalence of categories.

If we move into the derived world we can do a similar thing. For any $A$-module $T$ there is a natural derived functor
$\RHom^\bullet_{A}(T,-): \Db\Mod A \to \Db\Mod B$ with $B=\Hom_{A}(T,T)^{op}$. We call $T$ a \iemph{tilting module} if it is a generator and $\Hom_{\Db\Mod A}(T,T[i])=0$ if $i>0$.
In that case the functor $\RHom_{A}^\bullet(T,-)$ is an equivalence.

This construction also works when we start with a sheaf $\cTT \in \Db\Coh X$ satifying the same conditions. In this case 
$\cTT$ is called a \iemph{tilting sheaf} and we get an equivalence between $\Db\Coh X$ and $\Db\Mod B$ with $B=\Hom_{\Db\Mod A}(\cTT,\cTT)^{op}$.

If we want to go in the other direction (start with an algebra and end with a variety) we can try a moduli construction.
If $A$ is an algebra and $\cU \to \cM$ is a fine moduli space of representations then $\cU$ can be seen as a sheaf of $A$-modules, so
there is a natural morphism $A \to \End_\cM(\cU)$. If this morphism is a bijection and $\cU$ is a tilting sheaf, the functor
$\Hom^\bullet_{\Db\Mod A }(\cU,-)$ is an equivalence with inverse $- \otimes^L \cU$.

Finally, we also want to go directly between two varieties $X$ and $Y$. Instead of a bimodule we need an object that is a sheaf on
both $X$ and $Y$, in other words a sheaf $\cPP$ on the product $X\times Y$. The functor that induces the equivalence is called the \iemph{Fourier-Mukai transform} and $\cPP$ is called its \iemph{kernel}.
It first pulls back a sheaf on $X$ to a sheaf on $X\times Y$
using the projection operator $p_X: X\times Y \to X$, then it tensors this sheaf with $\cPP$ and pushes it forward to a sheaf on $Y$ along the projection operator $p_Y: X\times Y \to X$:
$$\FM:\Db\Coh X \to \Db\Coh Y: \cG \mapsto p_{Y*} (\cPP \otimes p_X^*(\cG)).$$ 
Note that the functors above are used in their derived versions. For more on Fourier-Mukai transforms we refer to \cite{bridgeland1999equivalences} and \cite{huybrechts2006fourier}

\begin{example}
We illustrate these concepts with a few basic examples:
\begin{enumerate}
 \item The algebras $A=R$ and $B=\Mat_n(R)$ are Morita equivalent via
 the $A$-$B$-bimodule $R^n$.
 \item The path algebras of the quivers 
 $Q_1=\circ\!\ot\! \circ\! \to \!\circ$ and $Q_2=\circ\!\to\! \circ\!\ot\! \circ$ are derived equivalent by the tilting bundle
 $M_{110}\oplus M_{111} \oplus M_{011}$, where $M_{abc}$ is the unique indecomposable
 right $\C Q_1$-module with dimension vector $(a,b,c)$.
 \item The Kronecker quiver and $\PP_1$ are derived equivalent using the tilting bundle
 $\cO\oplus \cO(1)$. In the other direction we can do the moduli construction
 $\PP_1 \cong \cM_{(1,1)}^{(1,-1)}(Q)$.
 \item 
Let $\cE$ be an elliptic curve and identify $\cE=\Pic_0(\cE)$.
We can construct a bundle $\cPP$ on $\cE\times \cE$ such that $\cPP|_{\cE\times \ell}$
is the line bundle corresponding to $\ell \in \Pic_0(\cE)$. The Fourier-Mukai transform
with kernel $\cPP$ gives an nontrivial auto-equivalence of $\Db\Coh \cE$, see \cite{hille2007fourier}.
\end{enumerate}
\end{example}

An important application of Fourier-Mukai transform to the theory of 
crepant resolutions is the following result by Bridgeland.
\begin{theorem}(Bridgeland \cite{bridgeland2002flops})\label{flopsderived}
All crepant resolutions of a 3-dimensional variety are derived equivalent. 
\end{theorem}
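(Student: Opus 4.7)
The plan is to reduce to the case of a single flop and then construct an explicit Fourier-Mukai kernel realising the equivalence. First I would invoke the minimal model program result of Kawamata and Koll\'ar that any two crepant resolutions $\tilde X$ and $\tilde X'$ of a 3-dimensional variety $X$ are connected by a finite sequence of flops, each of which is again a birational map between crepant resolutions of some intermediate singular 3-fold. This reduces the statement to showing that a single flop $\tilde X \dashrightarrow \tilde X'$ induces an equivalence $\Db\Coh \tilde X \simeq \Db\Coh \tilde X'$.

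For the flop case I would realise $\tilde X'$ as a fine moduli space of certain objects on $\tilde X$ and use its universal family as the Fourier-Mukai kernel. Let $f:\tilde X \to X$ be the flopping contraction. Tilting the standard $t$-structure on $\Db\Coh \tilde X$ at the torsion pair determined by $Rf_*$ produces a non-standard heart of perverse coherent sheaves. Within this heart one singles out the perverse point sheaves: simple objects whose $K$-theory class agrees with that of a skyscraper sheaf $\cO_x$. The key claim is that the moduli space $\cM$ of perverse point sheaves is itself a crepant resolution of $X$ and coincides with $\tilde X'$; the universal sheaf $\cU$ on $\tilde X \times \tilde X'$ then serves as a kernel $\cPP$ defining a Fourier-Mukai transform $\FM:\Db\Coh \tilde X \to \Db\Coh \tilde X'$.

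To show that $\FM$ is an equivalence I would apply the Bondal-Orlov spanning class criterion. Crepancy implies $\omega_{\tilde X} \cong f^*\omega_X \cong \omega_{\tilde X'}$, which is trivial over the flopping locus, so $\FM$ is compatible with Serre duality and it suffices to verify that the images of a spanning class of skyscraper sheaves under $\FM$ are pairwise $\Ext$-orthogonal in the appropriate degrees. Away from the exceptional locus, $f$ and its counterpart $f'$ are isomorphisms and the verification is immediate; over the flopping locus it reduces to a local computation near the contracted $\PP^1$'s, where the perverse heart is controlled by a finite-dimensional contraction algebra and the relevant $\Ext$ groups can be computed by hand.

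The main obstacle is the moduli-theoretic step: constructing the fine moduli space of perverse point sheaves and identifying it with $\tilde X'$. This needs a proof that the perverse heart is Noetherian, a boundedness argument for point-like classes, and a GIT construction with an appropriately chosen polarization, followed by matching the resulting moduli space with the geometric flop via its universal property. Once the identification $\cM \cong \tilde X'$ is in hand, the flatness of the kernel and the Bondal-Orlov verification reduce to standard derived-category bookkeeping combined with the explicit local analysis in a neighbourhood of the flopping curves.
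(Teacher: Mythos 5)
Your proposal follows essentially the same route as the paper's (sketched) argument and as Bridgeland's cited proof: reduce to a single flop via the minimal model program, then realise the flopped variety as a fine moduli space of perverse point sheaves for the tilted heart and show the universal family is a Fourier--Mukai kernel inducing an equivalence via a spanning-class criterion on skyscrapers. The only cosmetic difference is your appeal to the contraction algebra for the local computation, which is a later gloss rather than part of the original argument, but it does not affect the correctness of the plan.
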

The idea behind the proof is that all crepant resolutions of a 3-dimensional variety are related by sequences of flops and a flop induces a Fourier-Mukai transform 
that is an equivalence if the dimension is $3$.

\subsection{Noncommutative crepant resolutions}

Given an algebra $A$, dimension vector $\alpha$ and stability condition $\theta$ we get a natural proper map
\[
\tilde X := \CM_\alpha^\theta(A) \onto \CM_\alpha^0(A) =: X.
\]
Under certain conditions this map can give us a crepant resolution of $\tilde X \to X$. If $A$ is derived equivalent to $\tilde X$ then it makes sense to view $A$ as a noncommutative crepant resolution of $X$.

The natural way for this to happen is that $A$ is the endomorphism ring of the tautological bundle $\cU$ on $\CM_\alpha^\theta(A)$. If we push this bundle forward along $\pi:\tilde X \to X$
we get an $R=\C[X]$-module $U$ with $A\cong\End_R(U)$. This module is reflexive ($\Hom_R(\Hom_R(U,R),R) \cong U$) because $\cU$ is a vector bundle.  
Furthermore, because $A$ and $\tilde X$ are derived equivalent and $\tilde X$ is smooth, 
the global dimension of $A$ is finite. 

Building on this ideas Van den Bergh developed the notion of a noncommutative crepant resolution without refering to a commutative crepant resolution \cite{van2004non}.
\begin{definition}
Let $R$ be a Gorenstein domain with Krull dimension $k$ and $X=\Spec R$. A noncommutative crepant resolution (NCCR) of $X$ is an algebra $A$ such that
\begin{enumerate}
 \item [N1] $A=\End(U)$, with $U$ a reflexive $R$-module, 
 \item [N2] All simple $A$-modules have projective dimension $k$.
\end{enumerate}
\end{definition}

If $A=\End_R(U)$ and $U$ decomposes as a direct sum $U=\bigoplus_i U_i$ of indecomposable reflexive modules we can write
$A$ as the path algebra of a quiver with relations. The vertices of this quiver correspond to the summands $U_i$ and there is a natural dimension
vector $\alpha$ that maps vertex $v_i$ to $\rank U_i := \dim_\KK U_i \otimes_R \KK$ where $\KK$ is the function field of $R$.

If we put $n = |\alpha|$ then $A\otimes_R \KK = \Mat_{n \times n}(\KK)$ and hence the generic representation in $\Rep_{\alpha} A$
is a simple. Therefore $\Rep_{\alpha} A$ has a special component that is the closure of the simple representations.
For this component we can construct the moduli space $\CM_\theta^\alpha(A)$. If $\theta$ is generic then $\CM_\theta^\alpha(A)$ is fine
with tautological bundle $\cU$. 

The main result is the following.
\begin{theorem}[Van den Bergh \cite{van2004non}]
Let $X$ be a 3-dimensional terminal singularity then $X$ admits a commutative crepant resolution if and only if it admits a noncommutative crepant resolution. Also all crepant resolutions and noncommutative crepant resolutions are derived equivalent. 
\end{theorem}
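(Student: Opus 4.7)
The plan is to establish both directions via tilting theory, bridging the commutative and noncommutative sides by constructing an appropriate tilting object on each. Once tilting bundles are in hand, all the derived equivalence assertions will follow by combining the resulting equivalences with Bridgeland's theorem \ref{flopsderived} on derived equivalence of flops.

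For the forward direction (commutative $\Rightarrow$ noncommutative), starting from a crepant resolution $\pi: \tilde X \to X$, I would construct a tilting bundle $\cTT = \bigoplus_i \cTT_i$ on $\tilde X$. The existence of such a bundle is the serious input and relies on $\tilde X$ being a smooth $3$-fold with $\pi$-trivial canonical. Setting $A := \End_{\tilde X}(\cTT)^{\mathrm{op}}$ and $U := \pi_* \cTT$, the projection formula combined with the vanishing $R^i\pi_* \mathcal{H}om(\cTT, \cTT) = 0$ for $i > 0$ (a consequence of $\cTT$ being tilting) gives $A \cong \End_R(U)$ with $U$ reflexive, establishing N1. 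Condition N2 follows because $\RHom_{\tilde X}(\cTT, -)$ induces an equivalence $\Db \Coh \tilde X \cong \Db \Mod A$, and smoothness of the $3$-fold $\tilde X$ forces the global dimension of $A$ to be $3$.

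For the reverse direction (noncommutative $\Rightarrow$ commutative), given $A = \End_R(\bigoplus_i U_i)$ with $U_i$ indecomposable reflexives, I would set $\alpha_i = \mathrm{rank}\, U_i$ and consider the irreducible component of $\Rep_\alpha A$ whose generic point is a simple representation. For a generic stability character $\theta$ with $\theta \cdot \alpha = 0$, form the fine moduli space $\tilde X := \CM_\alpha^\theta(A)$ with tautological bundle $\cU$, which comes with a proper birational morphism $\tilde X \to X$. The main steps are then: (i) $\End_{\tilde X}(\cU) \cong A$ by the universal property; (ii) $\cU$ is tilting, which requires the Ext-vanishing $\Ext^{>0}_{\tilde X}(\cU, \cU) = 0$; (iii) the resulting equivalence $\Db \Coh \tilde X \cong \Db \Mod A$ forces $\tilde X$ to be smooth (since $A$ has finite global dimension by N2) and the resolution to be crepant (since $A$ is $3$-Calabi--Yau as a consequence of N2 and reflexivity of $U$, which transports to triviality of the Serre functor on the geometric side).

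Finally, for the derived equivalence of all crepant and noncommutative crepant resolutions: any two commutative crepant resolutions are connected by a sequence of flops and thus derived equivalent by Bridgeland's theorem \ref{flopsderived}; each NCCR is derived equivalent to the commutative crepant resolution produced by the moduli construction via its tautological tilting bundle; composing these equivalences covers all mixed cases. The principal obstacle throughout is the Ext-vanishing that makes $\cTT$ (respectively $\cU$) into a tilting bundle, and this is where the terminal Gorenstein hypothesis enters essentially: terminality controls the codimension of the exceptional locus so that cohomology on $\tilde X$ can be compared with Ext groups over $R$ through reflexivity, while the Gorenstein condition provides the trivial canonical needed to match Serre functors on the two sides.
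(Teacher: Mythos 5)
Your proposal follows essentially the same route as the paper's own sketch: a tilting bundle on a commutative crepant resolution yields the NCCR, the moduli construction $\CM_\alpha^\theta(A)\to\CM_\alpha^0(A)$ with its tautological tilting bundle recovers a commutative crepant resolution from an NCCR, and the remaining derived equivalences come from composing these with Bridgeland's flop equivalences (theorem \ref{flopsderived}). The paper phrases the key technical input as the fibers of the resolution being at most $1$-dimensional for $3$-dimensional terminal singularities, which is exactly the role you assign to terminality in securing the Ext-vanishing for the tilting bundles.
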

The proof first constructs a noncommutative crepant resolution by finding a tilting bundle on a commutative crepant resolution. Then it shows that the moduli construction above gives a commutative crepant resolution $\CM_\alpha^\theta(A) \mapsto \CM_\alpha^0(A)$ and
induces a derived equivalence between $A$ and $\CM_\alpha^\theta(A)$. Finally, it uses the fact that all commutative crepant resolutions are related by flops and a result by Bridgeland that a flop induces a Fourier-Mukai transform that is an equivalence.

The technical details of this proof depend on the fact that the fibers of a crepant resolution of a 3-dimensional terminal singularity are at most
$1$-dimensional. These ideas have been extended to other settings (see f.i. Iyama and Wemyss \cite{iyama2014maximal}).
In the remainder of this section we will show how this works for 3-dimensional toric Gorenstein singularities. 

\subsection{The Jacobi algebra}
First we will look at a special noncommutative algebra associated to a dimer model.

\begin{definition}
Let $Q$ be a quiver and $\C Q$ be its path algebra. A \iemph{superpotential} $W$ is a 
sum of cyclic paths, which we view as an element in $\C Q/[\C Q,\C Q]$.
The \iemph{cyclic derivative} of a cyclic path $p=a_1\dots a_k$ with respect to an arrow $a$ is equal to
\[
\partial_a p = \sum_{i: a_i=a} a_{i+1}\dots a_k a_1\dots a_{i-1}.
\] 
The \iemph{Jacobi algebra of a superpotential} is
\[
\Jac(W) = \frac{\C Q}{\<\partial a W: a \in \qpol_1\>}.
\]
The \iemph{Jacobi algebra $\Jac(\qpol)$ of a dimer model} $\qpol$ is the Jacobi algebra of
\[
W = \sum_{c \in \qpol_2^+} c - \sum_{c \in \qpol_2^-} c. 
\]
It can also be defined directly as 
\[
\Jac(\qpol) =\frac{\C \qpol}{\<p_a^+-p_a^-| ap^+_a \in \qpol_2^+ \text{ and }ap^-_a \in \qpol_2^- \>},
\]
which means that for every arrow the path going back along its left cycle is the same in this algebra as the path going back around its right cycle. 
\end{definition}

\begin{example}[Galois covers]\label{mackay1}
Look at the following dimer quiver on the torus 
\begin{center}
\vspace{.2cm}
\begin{tikzpicture}
\draw [-latex,shorten >=3pt] (0,0) -- (0+1,0); 
\draw [-latex,shorten >=3pt] (0+1,0) -- (0+1,0+1); 
\draw [-latex,shorten >=3pt] (0+1,0+1) -- (0,0);
\draw [-latex,shorten >=3pt] (0,0) -- (0,0+1); 
\draw [-latex,shorten >=3pt] (0,0+1) -- (0+1,0+1); 
\draw (0,0) node [circle,draw,fill=white,minimum size=6pt,inner sep=1pt] {}; 
\draw (0,0+1) node [circle,draw,fill=white,minimum size=6pt,inner sep=1pt] {}; 
\draw (0+1,0) node [circle,draw,fill=white,minimum size=6pt,inner sep=1pt] {}; 
\draw (0+1,0+1) node [circle,draw,fill=white,minimum size=6pt,inner sep=1pt] {}; 
\end{tikzpicture}
\vspace{.2cm}
\end{center}
This is a quiver with one vertex and $3$ loops
$X,Y,Z$. The path algebra is the free algebra with $3$ variables and the superpotential is
$W=XYZ-XZY$. The Jacobi algebra is
\[
\Jac(\qpol)= \frac{ \C Q}{\<XY-YX,ZX-XZ,YZ-ZY\>},
\]
which is just the polynomial ring in $3$ variables.

If we choose a larger fundamental domain by quotienting out the universal cover of $\qpol$ by $\Z\vec v+\Z\vec w$ instead of $\Z\times \Z$, we get a quiver $\bar\qpol$ that is $|\vec v \times \vec w|$ as large.
This is a Galois cover of the original dimer with cover group $\Z\times \Z/\Z\vec v+\Z\vec w$.
\begin{center}
\vspace{.2cm}
\begin{tikzpicture}
\foreach \x in {2,...,5}{
\foreach \y in {1,...,3}{
\draw [-latex,shorten >=3pt] (\x,\y) -- (\x+1,\y); 
\draw [-latex,shorten >=3pt] (\x+1,\y) -- (\x+1,\y+1); 
\draw [-latex,shorten >=3pt] (\x+1,\y+1) -- (\x,\y);
\draw [-latex,shorten >=3pt] (\x,\y) -- (\x,\y+1); 
\draw [-latex,shorten >=3pt] (\x,\y+1) -- (\x+1,\y+1); 
\draw (\x,\y) node [circle,draw,fill=white,minimum size=6pt,inner sep=1pt] {}; 
\draw (\x,\y+1) node [circle,draw,fill=white,minimum size=6pt,inner sep=1pt] {}; 
\draw (\x+1,\y) node [circle,draw,fill=white,minimum size=6pt,inner sep=1pt] {}; 
\draw (\x+1,\y+1) node [circle,draw,fill=white,minimum size=6pt,inner sep=1pt] {}; 
}}
\draw[dotted] (2,2)--(3,4);
\draw[dotted] (2,2)--(5,1);
\draw[dotted] (3,4)--(6,3);
\draw[dotted] (6,3)--(5,1);
\draw (4,0.5) node{$G = \frac{\Z\times \Z}{\Z(1,2)+\Z(3,-1)}\cong \Z/7\Z$};
\end{tikzpicture}
\end{center}
\vspace{.2cm}
We can view the new Jacobi algebra as a smash product of the old one.
Remember that if $A$ is an algebra and $K$ a finite group that acts on $A$, the smash product is defined as
\[
 A\star G = A \otimes \C G \text{ with }(a_1\otimes g_1)\cdot (a_2\otimes g_2)=a_1(g_1\cdot a_2)\otimes g_1g_2.
\]
Take $G =\Hom\left(\frac{\Z\times \Z}{\Z\vec v+\Z\vec w},\C^*\right)$ and define
the action of $\rho \in G$ on $\C[X,Y,Z]$ by
\[
 \rho \cdot X=\rho(1,0)X,~\rho \cdot Y=\rho(0,1)Y \text{ and }\rho \cdot Z=\rho(-1,-1)Z.
\]
It is an interesting exercise to check that
\[
 \Jac(\bar \qpol) \cong  \C[X,Y,Z] \star G.
\]
The vertices of $\bar \qpol$ correspond to the idempotents 
$$v_{(i,j)} = \frac{1}{|G|}\sum_{\rho \in G} \rho(i,j)\rho \text{ where }(i,j) \in \frac{\Z\times \Z}{\Z\vec v+\Z\vec w}.$$
In other words the cover group and the group in the smash product are each others dual and the vertices
of the quiver correspond to the characters of the group in the smash product.

This generalizes to arbitrary dimers on the torus. In that case you have to choose a $\Z\times \Z$-grading $\deg$ on $\Jac(\qpol)$ for which
the arrows are homogeneous and the degree of a cyclic path is its homology class. The action of $\rho$ satisfies $\rho\cdot a = \rho(\deg a) a$.
\end{example}

Now let us have a look at the behaviour of the Jacobi algebra under dimer moves.
\begin{theorem}
Let $\qpol$ and $\qpol'$ be zigzag consistent dimer quivers.
\begin{enumerate}
 \item If $\qpol$ and $\qpol'$ are related by a split move then
 their Jacobi algebras are isomorphic.
 \item If $\qpol$ and $\qpol'$ are related by a quiver mutation then
 their Jacobi algebras are derived equivalent.
 \end{enumerate}
\end{theorem}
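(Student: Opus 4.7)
For part (1), the plan is to exhibit an explicit isomorphism by using the Jacobi relations at the two new arrows to eliminate them. Suppose $\qpol'$ is obtained from $\qpol$ by splitting a positive cycle $a_1\cdots a_k$ at position $i$ into the two new positive cycles $a_1\cdots a_i b_1$ and $b_2 a_{i+1}\cdots a_k$, with $b_1 b_2$ forming the new negative bigon. The dimer superpotential of $\qpol'$ differs from that of $\qpol$ only by the local replacement $a_1\cdots a_k \rightsquigarrow a_1\cdots a_i b_1 + b_2 a_{i+1}\cdots a_k - b_1 b_2$. Taking cyclic derivatives at the new arrows yields
\[
\partial_{b_1}W' = a_1\cdots a_i - b_2, \qquad \partial_{b_2}W' = a_{i+1}\cdots a_k - b_1,
\]
so in $\Jac(\qpol')$ we have $b_2 = a_1\cdots a_i$ and $b_1 = a_{i+1}\cdots a_k$. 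The map $\Jac(\qpol') \to \Jac(\qpol)$ sending each $b_j$ to the corresponding product of $a$'s and fixing every other arrow is well defined, since for $a_j$ with $1 \le j \le i$ the sum $\partial_{a_j}(a_1\cdots a_i b_1) + \partial_{a_j}(-b_1 b_2)$ becomes $\partial_{a_j}(a_1\cdots a_k)$ after substituting $b_1 = a_{i+1}\cdots a_k$, and similarly for $j > i$. The inclusion map in the other direction is a two-sided inverse, giving the isomorphism.

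For part (2), the plan is to reduce to two already-understood inputs: the spider move, and the theory of mutations of quivers with potential. By the lemma preceding the theorem, a quiver mutation at a $4$-valent vertex of a dimer factors as a sequence of split moves, a single spider move, and join moves. Part (1) handles the split/join moves as honest isomorphisms, so it suffices to establish a derived equivalence across a spider move. One checks directly that the spider move at a $4$-valent vertex $v$, together with the induced dimer superpotential, is exactly the Derksen--Weyman--Zelevinsky mutation $\mu_v(\qpol,W)$: the four arrows at $v$ are reversed, the four length-two paths through $v$ become the new arrows of the two new square faces, and the new superpotential has the expected form (old cycles at distance at least two from $v$, plus the two new squares minus the cubic $\Delta$-term pairing the new arrows with the reversed ones). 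Zigzag consistency is precisely what guarantees that no spurious $2$-cycles are created, so the mutated QP is already reduced.

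Now one invokes the Keller--Yang theorem: a QP-mutation induces a derived equivalence between the complete Ginzburg dg-algebras $\Gamma(\qpol,W)$ and $\Gamma(\mu_v\qpol,\mu_vW)$ via an explicit tilting complex built from the indecomposable summands $e_w\Gamma$ for $w\ne v$ together with a "mutated" summand at $v$. Because $\qpol$ is zigzag consistent, $\Jac(\qpol)$ is $3$-Calabi--Yau (Broomhead, Davison), so the Ginzburg algebra $\Gamma(\qpol,W)$ is quasi-isomorphic to $\Jac(\qpol)$ concentrated in degree zero, and the same is true after mutation. The Keller--Yang equivalence therefore descends to a derived equivalence of the bare Jacobi algebras.

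The hard part is the last step: promoting the dg-level equivalence of Keller--Yang to a derived equivalence of honest Jacobi algebras requires the full $3$-Calabi--Yau / concentration-in-degree-zero package for consistent dimers, which is a nontrivial input. An alternative approach, closer in spirit to Van den Bergh's treatment of NCCRs, is to bypass the dg formalism and construct an explicit tilting $\Jac(\qpol)$-module $T = \bigoplus_{w\ne v} P_w \oplus T_v$, where $T_v$ is the "mutated" analog of $P_v$ defined by the short exact sequence involving the arrows incident to $v$, and then identify $\End(T)^{op}\cong \Jac(\mu_v\qpol)$; the technical heart here is verifying $\Ext^{>0}(T,T)=0$, which again rests on consistency.
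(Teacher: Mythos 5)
Your part (1) is essentially the paper's own argument: take cyclic derivatives at the two new arrows of the bigon, use them to express those arrows as the complementary subpaths of the split cycle, and check that all other relations are carried along under this substitution. Nothing to add there.

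For part (2) you take a genuinely different route. The paper does not pass through the Ginzburg dg algebra at all: it invokes Vit\'oria's theorem (built on Derksen--Weyman--Zelevinsky QP mutation), which gives a derived equivalence of the Jacobi algebras themselves provided the superpotential satisfies a mild condition -- here, that every vertex has valency at least $2$, which consistency guarantees -- and it records the tilting object $\bigoplus_{w\ne v}P_w\oplus\Tilt_v$ with $\Tilt_v=\bigl(\bigoplus_{h(a)=v}P_{t(a)}\to P_v\bigr)$ realizing the equivalence. Your reduction of the mutation to split/join moves plus a spider move, and your identification of the spider move with the DWZ mutation of the dimer QP (with consistency ruling out new $2$-cycles), is fine and matches the paper's earlier lemma. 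The caveat is in your main line of attack: Keller--Yang is a statement about \emph{complete} Ginzburg dg algebras and complete Jacobi algebras, and even granting that consistency forces the Ginzburg algebra to have cohomology concentrated in degree zero (the CY-3 package of Mozgovoy--Reineke/Davison/Broomhead), what you obtain directly is a derived equivalence of the completed Jacobi algebras; descending to the honest (non-complete) algebras needs an extra argument, e.g.\ a grading/torus-weight argument or precisely the kind of result Vit\'oria proves. You flag that "promoting the dg-level equivalence" is the hard part, but the issue is not only concentration in degree zero -- it is the completion. Your alternative sketch, constructing the explicit tilting module $T=\bigoplus_{w\ne v}P_w\oplus T_v$ and computing $\End(T)^{\mathrm{op}}$, is in effect what the cited Vit\'oria/DWZ machinery does and is the cleaner way to close that gap; if you pursue it, the vanishing of $\Ext^{>0}(T,T)$ is indeed where consistency (via the CY-3 resolution) enters. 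So: correct in outline, with the complete-versus-non-complete step either to be argued explicitly or delegated to the reference the paper uses.
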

\begin{proof}
The first statement can be read off directly from the superpotential.
The split move will add one extra cycle $a_1a_2$ in the superpotential
and splits another cycle $b_1\dots b_k$ into $b_1\dots b_ia_1$ and $a_2b_{i+1}\dots b_k$.
If we look at the partial derivatives $\partial_{a_i}$ we get $2$ new relations
\[
 a_1 - b_{i+1}\dots b_k \text{ and } a_2 - b_1\dots b_i.
\]
All other partial derivatives remain the same except that the subpaths
$b_{i+1}\dots b_k$ and $b_1\dots b_i$ are substituted by $a_1$ and $a_2$. 
This clearly gives an isomorphism between $\Jac(\qpol)$ and $\Jac(\qpol')$.

The second part follows from a general theorem by Vit\'oria \cite{vitoria2009mutations} based on work by Derksen, Weyman and Zelevinsky \cite{derksen2008quivers,derksen2010quivers}.
This explains how superpotentials behave under the mutation operation and
shows that quiver mutations between Jacobi algebras with superpotentials
give rise to a derived equivalence if the superpotential satisfies some
additional condition. For dimer quivers this condition is satisfied if
every vertex has valency at least $2$. This is satisfied if $\qpol$ is consistent.

If $v \in \qpol_0$ is the vertex we mutate, the tilting bundle that realizes the derived equivalence is
\[
T_v = \bigoplus_{w \ne v} P_w \oplus \underbrace{\left(\bigoplus_{h(a)=v}P_{t(a)} \stackrel{a\cdot}{\to} P_v \right)}_{\Tilt_v}.
\]
Here $P_w$ is the projective $w\Jac(\qpol)$ and the last term is a complex for which the left term is in degree $0$. 
\end{proof}

The Jacobi algebra of a dimer model has a special central element $\ell$ which is the sum of cycles $c_v \in \qpol_2$, one starting in each vertex $v$  
\[
\ell = \sum_{v \in \qpol_0} c_v
\]
Different $c_v$ for the same vertex are equivalent in the Jacobi algebra, so $\ell$ does not depend on a particular choice of the $c_v$.
Furthermore the relations in $\Jac(\qpol)$ imply that for any path $p$ we have $c_{h(p)}p=pc_{t(p)}$ and therefore $\ell$ is central.  

By inverting $\ell$ we can form the \iemph{weak Jacobi algebra}
\[
\widehat{\Jac}(\qpol) = \Jac(\qpol) \otimes_{\C[\ell]} \C[\ell,\ell^{-1}].
\]
This algebra is the same as the universal localization of $\Jac(\qpol)$ by all arrows.

\begin{definition}
A dimer is called \iemph{cancellative} if the canonical map $\Jac(\qpol)\to \widehat{\Jac}(W) $ is an embedding.
\end{definition}
This condition is introduced in \cite{mozgovoy2010noncommutative} as the \emph{first 
consistency condition}
\begin{theorem}
If $\qpol$ is a zigzag consistent dimer then it is cancellative. If $\chi(\qpol)\le 0$ then the reverse also holds: all cancellative dimers are zigzag consistent.
\end{theorem}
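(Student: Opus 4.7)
For the forward direction, my approach is to construct a normal form in $\Jac(\qpol)$ using perfect matchings. For every perfect matching $\cP$, define the weight $w_\cP(p) = \deg_\cP p$ on paths. Since $ap_a^+$ and $ap_a^-$ are both full cycles in $\qpol_2$, each of which meets $\cP$ in exactly one arrow, we get $\deg_\cP(p_a^+) = \deg_\cP(p_a^-) = 1 - \deg_\cP(a)$, so $w_\cP$ descends to a well-defined weight on $\Jac(\qpol)$, and clearly $w_\cP(\ell \cdot p) = w_\cP(p) + 1$. Zigzag consistency supplies, via the existence of minimal paths (Lemma \ref{minimalpaths}), for every pair of vertices $v, w$ a path $p_{vw}$ and a corner matching $\cP_{vw}$ with $w_{\cP_{vw}}(p_{vw}) = 0$.

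Then I would prove that $v\Jac(\qpol) w$ is a free $\C[\ell]$-module of rank one with basis $p_{vw}$. For spanning, I use the Jacobi relation $p_a^+ = p_a^-$ to reduce any path which has positive $\cP$-weight for every corner matching $\cP$ to $\ell$ times a shorter path; iterating produces a minimal path which must equal $p_{vw}$ up to the Jacobi relations. For freeness, I apply all weight functions to a putative relation $\sum c_k \ell^k p_{vw} = 0$: the summands have distinct $\cP$-weights, and varying $\cP$ over the corner matchings separates them, forcing each $c_k = 0$. Granted this normal form, cancellativity is immediate: if $\ell x = \ell y$ in $\Jac(\qpol)$, then writing both sides in normal form and matching powers of $\ell$ yields $x = y$, whence $\Jac(\qpol) \hookrightarrow \widehat{\Jac}(\qpol)$.

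For the reverse direction I would argue by contrapositive. Suppose $\qpol$ is not zigzag consistent, so its alternating strand diagram contains a monogon or bad digon. Consider the bad digon case (the monogon case is analogous): two zigzag paths $Z_1, Z_2$ share two arrows with common orientation, and this geometry exhibits two paths $p_1, p_2$ between the same pair of vertices together with Jacobi relations at every arrow along the digon that force $\ell \cdot p_1 = \ell \cdot p_2$ in $\Jac(\qpol)$. The crux is showing $p_1 \ne p_2$ in $\Jac(\qpol)$. I would lift $p_1, p_2$ to the universal cover $\tilde\qpol$ and use the fact that, when $\chi(\qpol) \le 0$, this cover is the plane or the hyperbolic disk, hence contractible and combinatorially large enough that $p_1$ and $p_2$ lift to genuinely distinct paths whose endpoints do not coincide. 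Any Jacobi-move only modifies a path within a single face of the cover, so distinctness is preserved, yielding $p_1 \ne p_2$. The hypothesis $\chi(\qpol) \le 0$ is essential here: on the sphere the universal cover is compact and this separation collapses, explaining why the converse requires the Euler characteristic bound.

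\textbf{Main obstacle.} The technical heart is the freeness of $v\Jac(\qpol) w$ as a $\C[\ell]$-module in the forward direction. The existence of one minimal representative comes from Lemma \ref{minimalpaths}, but ruling out relations among the $\ell^k p_{vw}$ requires a rich enough family of corner matchings to separate every pair of distinct weights, which is where zigzag consistency feeds in through the full description of the matching polygon rather than merely for a single path. Getting spanning and freeness simultaneously, and matching them up cleanly with the Jacobi relations in the universal cover, is the part that needs genuine care.
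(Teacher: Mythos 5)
Your forward direction takes a genuinely different route from the paper (the paper argues both implications contrapositively via zigzag paths in the universal cover: from two homotopic paths that are not related up to a power of $\ell$ it extracts a doubly intersecting pair of zigzag paths), and "establish a normal form, i.e.\ algebraic consistency, hence cancellativity" is in principle a viable strategy --- but only on the torus. Corner matchings, the matching polygon and Lemma \ref{minimalpaths} are all torus-specific (that lemma is stated and proved only for dimers on a torus, via isoradial embeddings), whereas the forward implication is asserted for dimers on any surface, in particular genus $\ge 2$, where zigzag consistent dimers exist and none of your machinery applies. Even on the torus there are two real gaps: first, $v\Jac(\qpol)w$ is certainly not free of rank one over $\C[\ell]$ --- it decomposes over homotopy classes of paths from $w$ to $v$, and the correct claim is rank one \emph{per homotopy class} (Lemma \ref{minimalpaths} is a statement per homotopy class); second, your spanning step --- that a path with strictly positive degree for every corner matching can be rewritten by the relations as $\ell$ times a shorter path, and that the minimal representative of a homotopy class is unique up to relations --- is exactly the hard combinatorial content (algebraic consistency in the sense of Broomhead and Ishii--Ueda), not an easy iteration; asserting it leaves the main difficulty untouched, whereas freeness, which you present as the main obstacle, is nearly automatic since the relations are differences of paths and are homogeneous for every $\deg_\cP$.

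The reverse direction has a fatal internal inconsistency at its crux. For $\ell p_1=\ell p_2$ to hold in $\Jac(\qpol)$ (or even in $\widehat{\Jac}(\qpol)$) the paths $p_1,p_2$ must be homotopic relative to their endpoints, so their lifts to the universal cover begin \emph{and end} at the same vertices; your claim that the lifts "do not coincide at their endpoints" contradicts your own setup, and "each Jacobi move only modifies the path within a single face, so distinctness is preserved" proves nothing --- $\ell p_1$ and $\ell p_2$ are, by hypothesis, connected by a chain of precisely such local moves, so locality alone can never separate $p_1$ from $p_2$. What makes the argument work in the paper is the specific choice of witnesses: one takes the two zigzag paths bounding the bad digon and their \emph{opposite} paths outside the region; these are homotopic, hence equal up to a power of $\ell$ in $\widehat{\Jac}(\qpol)$, but they are built from subpaths $a_1\cdots a_{k-2}$ of face cycles, so no relation $p_a^+=p_a^-$ can be applied to them and the region they cut out cannot be shrunk, whence they are not equal in $\Jac(\qpol)$. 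The hypothesis $\chi(\qpol)\le 0$ enters because on the sphere one can pass around the other side, so the region can be undone from the back; appealing to compactness of the universal cover, as you do, does not by itself supply the rigidity you need.
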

\begin{proof}
We give a brief sketch of the argument. Details can be found in \cite{bocklandt2012consistency} or in a paper by Ishii and Ueda \cite{ishii2009dimer}.
If $\pZ_1$ and $\pZ_2$ are two zigzag paths that intersect twice, we can find a simply connected piece in the universal cover bounded by these zigzag paths.
Look at the opposite paths of the zigzag paths outside this piece.
These opposite paths are equivalent up to a power of $\ell$ in the weak Jacobi algebra, because they are homotopic.

In the ordinary Jacobi algebra they are not equivalent. 
The opposite paths consist of subpaths of the form $a_1\dots a_{k-2}$ where $a_1\dots a_{k} \in \qpol_2$.
Hence they cannot contain any $p_a^+$ or $p_a^-$ and we cannot make the piece they cut out smaller using the relations $\partial_aW$. 
Note that this argument does not work on the sphere because we could go the other way round over the sphere.

If a dimer is not cancellative we can find two homotopic paths that cannot be transformed
into each other up to a power of $\ell$. It is possible to show that there are
doubly intersecting zigzag paths inside the connected piece that these homotopic paths cut out.
\end{proof}
\begin{remark}
Dimers on a sphere are never zigzag consistent, but they can be cancellative. An example of this is the octahedron quiver, the fourth quiver in example \ref{lotsofmirrors}. 
\end{remark}

The cancellative property is important because it allows us to transfer nice properties of the weak Jacobi algebra to the ordinary Jacobi algebra.

\begin{definition}[Ginzburg \cite{ginzburg2006calabi}]
An algebra $A$ is Calabi-Yau-$n$ if 
\[
\Ext_{A \otimes A^{op}}^i(A,A\otimes A) = \begin{cases}
0 & i \ne n\\
A & i = n
\end{cases}
\]
This is the same \cite{van2015calabi} as the property that $A$ has a selfdual projective resolution of length $n$ as a bimodule over itself:
\[
A \equiv P^\bullet \text{ and }\Hom_{A\otimes A^{op}}(P^\bullet, A\otimes A^{op})\equiv A[n] 
\]
\end{definition}
\begin{remark}
A Calabi-Yau algebra is a noncommutative version of a Calabi-Yau variety. 
Recall that a smooth compact $n$-dimensional variety is called Calabi-Yau if its canonical sheaf is trivial. Using Poincar\'e duality
this implies that in the derived category of coherent sheaves we have a natural nondegenerate pairing
\[
\Hom(\sF,\sG)\times \Hom(\sG,\sF[n]) \to \C.
\] 
When an algebra is Calabi-Yau-$n$ then the bounded derived category of finite dimensional modules has a similar pairing.

From this pairing we can deduce that all simples have projective dimension $n$.
The global dimension of $A$ is $n$ because of the selfdual resolution and if $S$
is a simple module we have that $\Ext^n_A(S,S)=\Hom_A(S,S)^\vee =\C$ so its projective dimension must be $n$.
\end{remark}

There is a close relation between Jacobi algebras and Calabi-Yau-3 algebras. A Jacobi algebra $A$ has a standard selfdual complex
\[
C_W:\hspace{.5cm}\xymatrix{
\underset{i \in \qpol_0}{\bigoplus} F_{ii}\ar[r]^{(\cdot \tau d a \cdot)}&
\underset{a \in \qpol_1}{\bigoplus} F_{t(a)h(a)}\ar[r]^{(\cdot\partial^2_{ba}W \cdot)}&
\underset{b \in \qpol_1}{\bigoplus} F_{h(b)t(b)}\ar[r]^{(\cdot db \cdot)}&
\underset{i \in \qpol_0}{\bigoplus} F_{ii}\ar[r]^m&A}
\]
Here $F_{ij}=Ai\otimes jA$ and $da$ represents $a \otimes t(a) - h(a) \otimes A$, $\tau$ swaps the tensor product
and $\partial^2_{ba}p$ is the sum of all $p_1\otimes p_2$ such that $bp_1ap_2$ is equal to a cyclic shift of $p$.
If this complex is a resolution then the Jacobi Algebra is Calabi-Yau-$3$. In some settings such as the graded \cite{bocklandt2008graded} or complete case \cite{van2015calabi} one can also show that every Calabi-Yau-3 algebra is a Jacobi Algebra.
If we restrict to the toric setting there is also a result that ties Calabi-Yau algebras
to dimers: if $A \subset Mat_n(\C[X^{\pm 1},Y^{\pm 1},Z^{\pm 1}]$ is a $\Z^3$-graded subalgebra and $A$ is CY-3 then $A$ is the Jacobi algebra of a dimer \cite{bocklandt2013calabi}.

\begin{theorem}[Homological properties of Jacobi Algebras]
The weak Jacobi algebra of a dimer $\qpol$ is a Calabi-Yau-3 algebra if and only if $\chi(\qpol)\le 0$. 
If $\qpol$ is cancellative then the ordinary Jacobi algebra is also a Calabi-Yau-3 algebra if $\chi(\qpol)\le 0$.
\end{theorem}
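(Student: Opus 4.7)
The plan is to establish the Calabi-Yau-3 property via the characterization given in the excerpt, namely by exhibiting a self-dual projective bimodule resolution of length $3$. The natural candidate is the standard complex $C_W$ associated with the superpotential that is written out explicitly just before the statement. Self-duality of $C_W$ is built into its definition, so the whole task reduces to proving that $C_W$ is exact (i.e., a resolution of $A$) precisely when $\chi(\qpol) \le 0$, first for $A = \widehat{\Jac}(\qpol)$ and then, under the cancellative hypothesis, for $A = \Jac(\qpol)$.

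First I would check that $C_W$ is a complex, which is an elementary but bookkeeping-heavy computation using the identity $\partial_a W = p_a^+ - p_a^-$ and the product rule for $\partial^2_{ba}W$; the key cancellations are exactly the Jacobi relations. Next, exactness at the outer positions (the augmentation $m$ and its dual) is straightforward: $m$ is surjective onto $A$ with kernel generated by elements of the form $da$, and this is encoded in the definition of the differential $(\cdot da \cdot)$. The substantive content lies at the two middle positions, which by self-duality reduce to a single exactness check. The strategy I would follow is to tensor $C_W$ from the right with simple modules at each vertex $v \in \qpol_0$ and recognize the resulting complex as the cellular chain complex associated with the star of $v$ in the universal cover $\tilde{\ful{\qpol}}$ of the surface carrying the dimer. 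Exactness of this local complex is equivalent to the acyclicity of the relevant pieces of the universal cover, and this is where the topology of the underlying surface enters: when $\chi(\qpol) \le 0$ the universal cover is either the Euclidean or the hyperbolic plane (hence contractible), which forces the requisite vanishing, whereas for $\chi(\qpol) > 0$ (the sphere) one picks up a genuine $H^2$ contribution that obstructs exactness at the second term from the left.

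For the weak Jacobi algebra the invertibility of the central cycle $\ell$ glues these local computations into a global statement, because every idempotent $v$ has a central unit $\ell v$ acting on the corresponding projective, so one can patch the local resolutions into a global bimodule resolution of $\widehat{\Jac}(\qpol)$. This yields the ``only if'' direction as well, since in the spherical case one can directly exhibit a nontrivial class in $\Ext^2_{\widehat{\Jac}\otimes \widehat{\Jac}^{op}}(\widehat{\Jac},\widehat{\Jac}\otimes\widehat{\Jac})$ coming from the fundamental class of $S^2$, obstructing CY-$3$. The passage from $\widehat{\Jac}(\qpol)$ to $\Jac(\qpol)$ in the cancellative case is then the final step: the embedding $\Jac(\qpol) \hookrightarrow \widehat{\Jac}(\qpol)$ makes $C_W$ over $\Jac(\qpol)$ into a subcomplex of its counterpart over $\widehat{\Jac}(\qpol)$, and cancellativity prevents spurious $\ell$-torsion from appearing in the kernels of the differentials.

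I expect the main obstacle to be precisely this last step. Exactness of $C_W$ over $\widehat{\Jac}(\qpol)$ does not automatically descend to exactness over $\Jac(\qpol)$, and one needs cancellativity in an essential way: one must show that any cycle of $C_W$ lying in the uncompleted subalgebra is already a boundary there, not merely after inverting $\ell$. The cancellative hypothesis provides exactly the needed injectivity of multiplication by $\ell$ on each bimodule term in $C_W$, so that ``becoming a boundary after inverting $\ell$'' is equivalent to ``being a boundary''. Verifying this carefully, and checking that the topological reduction at the middle of the complex survives the passage from the weak to the ordinary Jacobi algebra without introducing new obstructions, is where the bulk of the technical work will lie.
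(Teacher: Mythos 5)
Your overall architecture places the cancellative hypothesis in the wrong spot, and the specific mechanism you give for it does not work. You claim that exactness of $C_W$ in the middle reduces, after tensoring with vertex simples, to contractibility of the universal cover, so that $\chi(\qpol)\le 0$ alone forces exactness, with cancellativity only needed at the very end to rule out ``spurious $\ell$-torsion''. But for the ordinary Jacobi algebra this cannot be right: there are non-cancellative dimers on the torus (so the universal cover is the contractible plane) whose Jacobi algebras are not Calabi-Yau-3, i.e.\ $C_W$ fails to be a resolution. The identification of the tensored complex with a cellular complex of the universal cover is itself only valid when homotopic paths of equal matching degree are equal in $\Jac(\qpol)$ --- which is exactly the cancellation property --- so consistency must enter in the heart of the middle-exactness argument, not as an afterthought. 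Moreover your proposed descent step is logically flawed: injectivity of multiplication by $\ell$ on the \emph{terms} of $C_W$ only lets you conclude that if a cycle $z$ bounds after inverting $\ell$ then some $\ell^N z$ bounds; to remove the $\ell^N$ you need $\ell$ to act injectively on the \emph{homology} of the complex, which is a genuinely different statement and is precisely the hard content of the Mozgovoy--Reineke/Davison-style proofs the paper invokes. Two further technical problems: checking exactness after tensoring with the vertex simples does not suffice for the non-complete, non-positively-graded algebra $\Jac(\qpol)$ (a Nakayama-type argument needs a completed or graded setting, which is why the paper's proof works with a grading by the fundamental groupoid); and for the weak Jacobi algebra the vertex simples are not modules at all, since $\ell$ acts invertibly, so your local-to-global strategy does not even get started there.

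For contrast, the paper handles the two halves differently. For $\widehat{\Jac}(\qpol)$ it does not touch $C_W$: it uses that the weak Jacobi algebra is Morita equivalent to the group algebra of the fundamental group of a circle bundle over $\ful{\qpol}$, which is an aspherical $3$-manifold exactly when $\chi(\qpol)\le 0$, so the group is a $3$-dimensional Poincar\'e duality group and the algebra is Calabi-Yau-3 (and fails to be when $\chi(\qpol)>0$); this is cleaner than your proposed $\Ext^2$ class from the fundamental class of $S^2$, which you would still have to construct. For $\Jac(\qpol)$ in the cancellative case the paper does follow your general plan of proving $C_W$ is a resolution, but the proof runs through a fundamental-groupoid grading and uses cancellativity throughout the exactness argument (originally with an extra consistency hypothesis, later shown by Davison to follow from cancellativity). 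If you want to salvage your write-up, you should either adopt the Morita/Poincar\'e-duality argument for the weak algebra, or explain how to make sense of your local complexes there, and for the ordinary algebra you must build cancellativity into the identification of the middle homology with topological data rather than deferring it to an $\ell$-torsion argument that, as stated, does not close.
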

\begin{proof}
The first part follows from the fact that the weak Jacobi algebra is Morita equivalent to the fundamental group algebra of circle bundle over the surface. This
is an aspherical manifold (its universal cover is contractible) and therefore the fundamental group algebra is a 3d-Poincar\'e duality group, which implies that the group algebra is Calabi-Yau-3.

The second part is proved by checking that the complex we defined above is indeed a resolution. This can be done using a grading by the fundamental groupoid of the surface.
The original proof by Mozgovoy and Reineke \cite{mozgovoy2010noncommutative} used
an extra consistency condition on $\qpol$. Later Davison \cite{davison2011consistency} showed that this second condition followed from the cancellative property.
\end{proof}

Apart from nice homological properties, we can also look at finiteness conditions.
\begin{theorem}[Finiteness properties of Jacobi Algebras]
The weak Jacobi algebra of a dimer $\qpol$ is a finite over its center if and only if $\chi(\qpol)\ge 0$.
If $\qpol$ is cancellative then the ordinary Jacobi algebra is also finite over its center if and only if $\chi(\qpol)\ge 0$.
\end{theorem}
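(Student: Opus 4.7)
The plan is to exploit the Morita equivalence $\widehat\Jac(\qpol) \sim \C[\pi_1(M_\qpol)]$ between the weak Jacobi algebra and the fundamental group algebra of the circle bundle $M_\qpol$ over $\ful\qpol$ that was used in the proof of the preceding theorem. Finiteness over the centre is a Morita invariant, so the problem reduces to studying the central extension $1\to\Z\to\pi_1(M_\qpol)\to \pi_1(\ful\qpol)\to 1$.

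For the reverse implication $\chi(\qpol)<0$, I would run a Gelfand--Kirillov growth argument. The surface $\ful\qpol$ is then hyperbolic, so $\pi_1(\ful\qpol)$ has exponential word growth, and the central $\Z$-extension inherits this growth. Consequently $\C[\pi_1(M_\qpol)]$ has infinite GK-dimension, whereas any module which is finitely generated over a finitely generated commutative $\C$-subalgebra has finite GK-dimension. Hence $\widehat\Jac(\qpol)$ cannot be finite over its centre, and for cancellative dimers the same conclusion transfers to $\Jac(\qpol)$ via the embedding $\Jac(\qpol)\hookrightarrow \widehat\Jac(\qpol)$.

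For $\chi(\qpol)\ge 0$ the surface is a sphere or a torus. In the sphere case $\pi_1(M_\qpol)\cong\Z$, so the group algebra equals its centre and finiteness is immediate. The torus case is the delicate one: for non-trivial circle bundles $\pi_1(M_\qpol)$ is an integer Heisenberg group, whose group algebra is \emph{not} finite over its centre, so the naive group-algebra route fails. I would instead work directly with $\Jac(\qpol)$ using the perfect-matching dictionary. Each perfect matching defines a one-dimensional representation of $\Jac(\qpol)$, and combining theorems \ref{mirdim} and \ref{ZPMP} one identifies a central subring isomorphic to the coordinate ring $R_{\MP(\qpol)}$ of the toric Gorenstein threefold with matching polygon $\MP(\qpol)$. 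The minimal-path property for zigzag-consistent dimers (which on the torus is equivalent to cancellativity) then shows that any path between two fixed vertices differs from a path of bounded length by a central monomial, yielding the required finite module structure.

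The transfer from $\widehat\Jac(\qpol)$ to $\Jac(\qpol)$ under cancellativity is routine, because $\widehat\Jac(\qpol)=\Jac(\qpol)[\ell^{-1}]$ is a central localisation: finite generation passes to localisations in one direction, and in the other direction generators of the localisation can be cleared of $\ell^{-1}$ by multiplying by a sufficiently high power of $\ell$ to produce, together with $\ell$ itself, module generators of $\Jac(\qpol)$ over its centre. The main obstacle is the torus case: rigorously identifying the centre with $R_{\MP(\qpol)}$ and bounding the module rank via perfect matchings is essentially the content of the noncommutative crepant resolution description developed in the subsequent sections, so the honest proof will rely on organising those tools rather than on a short self-contained argument at this point in the paper.
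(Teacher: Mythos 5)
Your plan coincides with the paper's proof in most respects: the $\chi(\qpol)<0$ direction is handled there by the same reduction to the group algebra of the circle bundle (your GK-dimension/growth argument is a more detailed substitute for the paper's one-line remark that this group is not finite over its centre; note that for it you also need the centre to be an affine algebra, which follows from the Artin--Tate lemma since $\widehat{\Jac}(\qpol)$ is finitely generated), the transfer between $\Jac(\qpol)$ and $\widehat{\Jac}(\qpol)$ is the same central localisation at $\ell$, the sphere case is the same observation that there are essentially no homotopy classes to control, and for the cancellative torus case the paper does exactly what you propose: it defers to the endomorphism-ring description of theorem \ref{JacNCCR}, where each piece $v\Jac(\qpol)w$ is a rank one reflexive module over $R_{\MP(\qpol)}=Z(\Jac(\qpol))$.

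The genuine gap is the first claim for the torus: the statement is about the weak Jacobi algebra of an \emph{arbitrary} dimer, and your proposal never proves it. You observe that for a nontrivial circle bundle one gets a Heisenberg group algebra, which is not finite over its centre, and then you retreat to $\Jac(\qpol)$ under zigzag consistency --- but that only yields the second claim. The paper's argument for the first claim is different: since $\pi_1$ of the torus is abelian, every weak cycle can be promoted to a central element of $\widehat{\Jac}(\qpol)$, so the finitely many weak paths $p_{ij}$ generate $\widehat{\Jac}(\qpol)$ over its centre. That promotion works precisely when the central $\Z$-extension cut out by the face relations is trivial; its Euler class is $\#\qpol_2^+-\#\qpol_2^-$, i.e.\ the difference between the numbers of black and white nodes, so it vanishes for instance whenever the dimer admits a perfect matching, and in particular in the cancellative case (compare theorem \ref{weakjac}). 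Your Heisenberg worry is in fact well founded rather than a technicality to be routed around: for a torus dimer with unequal numbers of black and white nodes (say one $4$-valent black node joined twice to each of two $2$-valent white nodes) a direct computation with the relations gives $\widehat{\Jac}(\qpol)\cong\C[H]$ with $H$ the integer Heisenberg group, so the ``if'' direction of the first statement needs such a balancedness hypothesis, which the paper's proof uses silently. To close your argument along the paper's lines you should first show that the extension is trivial under the hypothesis you adopt, and then run the central-element argument; the NCCR machinery is neither available nor needed for the weak algebra at this level of generality.
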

\begin{proof}
The first part follows from the fact that if $\chi(\qpol)\ge 0$, the fundamental group of the surface is abelian. If we fix a weak path $p_{ij}$ between
each pair of vertices $i,j$ then these generate $\widehat\Jac(\qpol)$ as a module over its center. If $\chi(\qpol)< 0$ then the fundamental group is not finite over its center and neither is the weak Jacobi algebra.

If $\qpol$ is cancellative then $Z(\Jac(\qpol)) = Z\widehat{\Jac}(\qpol) \cap \Jac(\qpol)$.
Two weak paths that are homotopic differ by a power of $\ell$ so for
each homotopy class there is a path $p \in \Jac(\qpol)$ such that
all homotopic paths are in $\C[\ell]p \subset Z(\Jac(\qpol)p$.
On a sphere there are only a finite number of homotopy classes so
$\Jac(\qpol)$ is finitely generated over $\C[\ell]$.
For the torus the second part follows from the explicit description of $\Jac(\qpol)$ as an endomorphism ring of a sheaf theorem \ref{JacNCCR}.
\end{proof}

The behaviour of the Jacobi algebra depends a lot on the Euler characteristic of the surface and we have a trichotomy as illustrated in the table below.
\begin{center}
\begin{tabular}{|c|c|c|}
\hline
$\chi(\qpol)>0$&$\chi(\qpol)=0$&$\chi(\qpol)<0$ \\
\hline
finite over center&finite over center&never finite over center\\
$\Uparrow$&$\Uparrow$&\\
cancellative&cancellative&cancellative\\
&$\Updownarrow$&$\Updownarrow$\\
never zigzag consistent&zigzag consistent&zigzag consistent\\
&$\Downarrow$&$\Downarrow$\\
never CY-3&CY-3&CY-3\\
\hline
\end{tabular}
\end{center}

The key property that makes the Jacobi algebra nice is the cancellative property. For nonnegative Euler characteristic it gives the Jacobi algebra nice finiteness properties, 
while for the nonpositive Euler characteristic it induces nice homological properties. Clearly, the best of both worlds happens if the Euler characteristic is zero and
then the notion also coincides with zigzag consistency. We will study this case in more detail.

\subsection{Jacobi algebras and NCCRs}

From now on $\qpol$ will be a zigzag consistent dimer on a torus.
In this case, we know there is a close connection between the zigzag paths and perfect matchings.

A perfect matching $\PM$ defines a grading $\deg_\PM$ on $\Jac(\qpol)$ by giving the arrows in the perfect matching degree $1$ and the others
degree zero. For this grading $\ell$ has degree $1$. This grading can be used to characterize the weak Jacobi algebra.

\begin{theorem}\label{weakjac}
If $\qpol$ is a cancellative dimer on a torus with $|\qpol_0| =n$ then 
\[
\widehat{\Jac}(\qpol) \cong \Mat_n(\C[\Z^3]) 
\]
\end{theorem}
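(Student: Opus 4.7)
The plan is to lift everything to the universal cover $\tilde\qpol \to \qpol$, which is an infinite dimer on $\R^2$, analyze $\widehat\Jac(\tilde\qpol)$ using cancellativity, and then assemble the structure on $\qpol$ out of the $\Z^2$-action of deck transformations.

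First I would identify the center $Z(\widehat\Jac(\qpol))$. The element $\ell = \sum_v c_v$ is central of infinite order and becomes invertible. For each homology class $h \in \H_1(\TT) = \Z^2$ and each vertex $v \in \qpol_0$, I can pick a cyclic path $p_{v,h}$ at $v$ of homology class $h$ (existence follows from lemma \ref{minimalpaths} on minimal paths). Different choices at the same vertex differ by the cancellative-plus-face-swap argument only by powers of $\ell$, so after suitable normalization the $p_{v,h}$ assemble into a single central element $z_h$ of degree $h$. The elements $\ell^{\pm 1}$ and $z_h$ ($h \in \Z^2$) generate a subalgebra isomorphic to $\C[\Z^3]$; I would check by a $\Z^3$-grading argument (via a perfect matching together with the homology degree) that this is the whole center.

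Next I would prove local freeness: each $e_j\,\widehat\Jac(\qpol)\,e_i$ is free of rank one over the center. Working in $\tilde\qpol$, since $\R^2$ is simply connected any two paths from a chosen lift $\tilde i$ to a chosen lift $\tilde j$ are homotopic in the $2$-complex $\ful{\tilde\qpol}$. A cellular homotopy decomposes into face-swaps (corresponding to the relations $\partial_a W = 0$) and insertions/removals of contractible face-loops (each contributing a factor $\ell^{\pm 1}$); so any two such paths become equal in $\widehat\Jac(\tilde\qpol)$ up to a power of $\ell$. Hence $\tilde j\,\widehat\Jac(\tilde\qpol)\,\tilde i$ is free of rank one over $\C[\ell^{\pm 1}]$. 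Summing over the $\Z^2$-family of lifts of $j$ above a fixed lift of $i$ gives
\[
e_j\,\widehat\Jac(\qpol)\,e_i \;\cong\; \bigoplus_{h \in \Z^2} \C[\ell^{\pm 1}] \;\cong\; \C[\Z^3],
\]
free of rank one over $Z(\widehat\Jac(\qpol))$.

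Finally I would assemble this into a matrix algebra isomorphism. Pick, for each ordered pair $(i,j)$, a generator $p_{ji} \in e_j\,\widehat\Jac(\qpol)\,e_i$ with $p_{ii} = e_i$. The products satisfy $p_{kj}\,p_{ji} = z_{kji}\,p_{ki}$ for central units $z_{kji} \in \C[\Z^3]^\times$, and these form a $2$-cocycle on the pair groupoid $\qpol_0 \times \qpol_0$ with values in $\C[\Z^3]^\times$. Since $\mathrm{Spec}\,\C[\Z^3] = (\C^\ast)^3$ has trivial Brauer group, the cocycle is a coboundary; equivalently, one may renormalize $p_{ji} \mapsto u_{ji} p_{ji}$ with central units $u_{ji}$ to kill $z_{kji}$. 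The renormalized $p_{ji}$ then satisfy $p_{kj}p_{ji} = p_{ki}$ and $p_{ji}p_{ij} = e_j$, so they are matrix units defining an algebra map $\Mat_n(\C[\Z^3]) \to \widehat\Jac(\qpol)$, which is a bijection by the rank-one-freeness result.

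The main obstacle is the claim that in $\widehat\Jac(\tilde\qpol)$ two paths with common endpoints are equal up to a power of $\ell$. The relations $\partial_a W = 0$ are genuinely local face-swaps across a shared arrow, and translating a general planar homotopy into a finite sequence of such swaps requires an induction on the area enclosed by the two paths. Cancellativity is used precisely here: it rules out spurious identifications that would create more relations than homotopy provides, and it is what theorem \ref{weakjac} really rests on. Carrying out the induction cleanly—and verifying that the local model near each face-swap produces exactly the expected $\ell$-discrepancy rather than some uncontrolled correction—is the technical heart of the proof.
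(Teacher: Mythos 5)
Your proposal is correct in substance and rests on exactly the same two facts as the paper's argument: that in $\widehat{\Jac}(\qpol)$ homotopic weak paths agree up to a power of $\ell$ (equivalently, a weak path between fixed vertices is determined by its homology class together with its $\deg_{\cP}$-degree for a perfect matching $\cP$), and that this gives each block $e_j\,\widehat{\Jac}(\qpol)\,e_i$ a $\Z^3$-worth of monomial generators. Where you differ is in the packaging: the paper simply fixes base paths $p_{1i}$ to a chosen vertex, normalizes them by powers of $\ell$ so that $\deg_{\cP}p_{1i}=0$, and sends a path $q$ to the monomial of $(\text{homology},\deg_{\cP})$ of $p_{1h(q)}\,q\,p_{1t(q)}^{-1}$ times the elementary matrix $E_{h(q)t(q)}$ --- which is precisely your matrix-unit construction with the cocycle trivialized from the outset, since taking $p_{ji}=p_{1j}^{-1}p_{1i}$ makes $z_{kji}\equiv 1$. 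Your detour through the center and through rank-one freeness over it is fine (and makes explicit something the paper leaves implicit), but it is not needed to write the isomorphism down. You are also right that the technical heart --- converting a planar homotopy in the universal cover into a finite sequence of face-swaps, with the only discrepancy a controlled power of $\ell$ --- is what the whole theorem rests on; the paper does not reprove it here but relies on the earlier discussion of cancellativity (where the same ``homotopic paths are equivalent up to a power of $\ell$ in the weak Jacobi algebra'' is used), so leaving it as a cited/standard fact is consistent with the paper, though if you carry out your induction on enclosed area you should note that cancellativity also guarantees the idempotents survive in $\widehat{\Jac}(\qpol)$ and that a perfect matching exists, both of which you use.

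One genuine slip to repair: the Brauer group of $(\C^*)^3$ is \emph{not} trivial (it contains classes of quantum tori), so that justification for killing the cocycle is wrong as stated. Fortunately you do not need it: your $z_{kji}$ form a $2$-cocycle on the pair groupoid of the finite set $\qpol_0$ with values in the central units, and the pair groupoid is equivalent to the trivial group, so every such cocycle is a coboundary (set $u_{ji}=z_{j i 1}^{-1}$, say); or avoid the issue entirely by defining $p_{ji}=p_{j1}p_{1i}$ as above. With that replacement your argument goes through and agrees with the paper's.
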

\begin{proof}
Number the vertices $1,\dots,n$ and fix paths $p_{1i}$ from each vertex to one.
By multiplying these paths with the appropriate powers of $\ell$ we can assume that $\deg_\PM p_{1i}=0$.

To each path $q$ we can associate a triple $(x,y,z) \in\Z^3$ that represents the homology class and the $\PM$-degree of $p_{1h(q)}qp_{1t(q)}^{-1}$ and
let $m_q \in \C[\Z^3]$ the monomial that represents that triple.
The map $q \mapsto m_q E_{h(q)t(q)} \in \Mat_n(\C[\Z^3])$, where $E_{ij}$ is the elementary matrix with $1$ on entry $i,j$,  gives the required isomorphism.
\end{proof}

So, paths are uniquely determined by their homotopy and  $\PM$-degree. Because the construction did not depend on which perfect matching we used,
two homotopic paths that have the same degree for one perfect matching will also agree for any other perfect matching.

To describe Jacobi algebra we will need two important lemmas.
\begin{lemma}\label{minimalpaths}
If $\qpol$ is a zigzag consistent dimer on a torus then for each pair of vertices $v,w$ and each homotopy class there is a real path $p:v\ot w$ in $\qpol$
with this homotopy class and a perfect corner matching $\PM$ such that $\deg_{\PM} p=0$.
\end{lemma}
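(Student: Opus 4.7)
The plan is to fix an isoradial embedding of $\qpol$, which exists by zigzag consistency, and choose both the corner matching and the path by geometric means in the universal cover.

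Given $\gamma \in \H_1(\TT) = \Z^2$ and vertices $v, w$, first choose the corner matching. For any fixed path $p$ representing $\gamma$, the map $\cP \mapsto \deg_\cP p$ depends affinely on the position of $\cP$ in the matching polygon, so its minimum over perfect matchings is attained at a corner of $\MP(\qpol)$ determined by $\gamma$: namely, the corner minimizing the linear functional $m \mapsto \langle \gamma, m \rangle$. By Theorem \ref{ZPMP}, $\MP(\qpol) = \ZP(\qpol)$, so this corner corresponds to an arc between two consecutive sleeper directions; pick $\theta \in \SS^1$ in the interior of this arc, so that $\cP_\theta$ is the chosen corner matching.

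Second, construct a path $p\colon v \ot w$ representing $\gamma$ with $\deg_{\cP_\theta} p = 0$. Lift to the universal cover $\tilde\qpol \subset \R^2$ and pick lifts $\tilde v, \tilde w$ with $\tilde v - \tilde w$ representing $\gamma$. The arrows not in $\tilde{\cP}_\theta$ are precisely those whose arc on the circumcircle of their positive cycle does not contain $\theta$. Build a path from $\tilde w$ to $\tilde v$ using only such arrows, for instance by a greedy walk: at each intermediate vertex $u$, follow an outgoing arrow $a \notin \tilde{\cP}_\theta$ whose direction has maximal inner product with $\tilde v - u$. Projecting this path down to $\qpol$ yields the required $p$.

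The main obstacle is verifying that the greedy walk terminates at $\tilde v$. This hinges on the geometric claim that at every vertex of $\tilde\qpol$, the outgoing arrows not in $\tilde{\cP}_\theta$ span an angular range containing the direction from the current vertex to $\tilde v$. The claim follows from the consistent $R$-charge condition together with the cyclic arrangement of zigzag paths at each vertex: the arrows deleted by $\tilde{\cP}_\theta$ form a contiguous angular interval around $\theta$, which by our choice sits opposite the direction of $\gamma$, leaving the remaining arrows to cover a half-plane that includes the direction of progress. Broomhead proved the analogous statement in the geometrically consistent case (all $R$-charges less than one) in \cite{broomhead2012dimer}, and the extension to general zigzag consistent dimers appears in \cite{bocklandt2012consistency}.
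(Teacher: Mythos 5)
Your route is genuinely different from the paper's (which bounds a rhombus in the universal cover by two consecutive zigzag paths through $v$, observes that the two boundary paths have $\cP$-degree zero for the corner matching associated to the arc between those zigzag directions, and then sweeps one boundary path into the other by F-term relations — which preserve $\deg_\cP$ — so that some intermediate path passes through $w$), but as written it has a genuine gap exactly where you flag it. The claim that at every vertex the outgoing arrows not in $\cP_\theta$ ``cover a half-plane'' containing the direction of progress is not established and is in fact false in general: already for the hexagonal dimer with one vertex and arrows $X,Y,Z$ (the $\C^3$ example), the corner matching $\cP_\theta$ consists of all arrows of one type, and the unmatched outgoing arrows at each vertex span only a $120^\circ$ cone. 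What one really needs is that the \emph{remaining displacement to $\tilde v$} stays inside the cone of unmatched arrows at every intermediate vertex — an inductive invariant that is essentially the content of Broomhead's argument with zigzag flows, not a consequence of the $R$-charge axioms alone. Moreover, even granting a positive-progress arrow at each step, ``positive inner product with $\tilde v-u$'' does not force the walk to terminate at the exact vertex $\tilde v$ (steps have length comparable to $1$, so the walk can overshoot or orbit the target); termination at the prescribed endpoint needs a separate argument. Citing Broomhead and \cite{bocklandt2012consistency} does not close this, since those references prove the lemma itself by the rhombus/F-term argument rather than your greedy-walk claim, so the appeal is circular.

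A smaller inaccuracy in your first step: for an \emph{open} path $p$, the map $\cP\mapsto\deg_\cP p$ is not a function of the lattice point of $\cP$ in $\MP(\qpol)$ — two matchings on the same lattice point differ by a coboundary $dg$ with $g\in\Maps(\qpol_0,\Z)$, and $\langle\bar p,dg\rangle=g(h(p))-g(t(p))$ need not vanish — so ``depends affinely on the position in the matching polygon'' is not quite right, and the minimum-at-a-corner assertion needs justification. This is not the crux (one can simply specify the corner matching by the two consecutive zigzag directions straddling the displacement, as the paper does), but the proof of the lemma really lives in the construction of the path, and that is the part your proposal leaves unproved.
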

\begin{proof}
The proof is based on an idea by Broomhead \cite{broomhead2012dimer} in the geometrically consistent setting and uses the corner matchings an extension of the argument to all consistent dimers is found in \cite{bocklandt2012consistency}. 

Let $v,w$ be the head and tail vertices of the path in the universal cover. Draw zigzag paths emanating from $v$ and look at the pair of consecutive zigzag paths in the clockwise direction
that encompass $v$. Draw shifts of these zigzag paths such that you get a rhombus that contains $w$. 
\[
\xymatrix@=.3cm{
&&&&&&&&\\
&&\vtx{v}\ar@{.>}[rrrr]|{\pZ_{i+1}}&&&\vtx{v}\ar@/^3pc/[llllldd]\ar@/_3pc/[llllldd]\ar[llld]&&&\\
&&\vtx{w}\ar[lld]&&&&&&\\
\vtx{v}\ar@{.>}[uuurrr]|{\pZ_{i}}\ar@{.>}[rrrr]|{\pZ_{i+1}}&&&\vtx{v}\ar@{.>}[uuurrr]|{\pZ_{i}}&&&&&
}
\]
\vspace{.5cm}

Outside the rhombus
there are two paths going in the opposite direction along the boundary. Both paths are zero on the corner matching between
the two zigzag paths, so these paths are equivalent. We can apply relations to turn the two paths into each other. These relations will move the path from the upper left to the lower right. One of the intermediate paths will meet $w$ and this will give us an equivalent path that meets $v$.
\end{proof}

\begin{lemma}\label{realpath}
If $\qpol$ is a cancellative dimer on a torus and $p \in \widehat{\Jac}(\qpol)$ is a weak path then 
$p \in \Jac(\qpol)$ if and only if it has nonnegative $\PM$-degree for all perfect corner matchings.
\end{lemma}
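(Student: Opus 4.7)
The forward implication is immediate: any element of $\Jac(\qpol)$ is a $\C$-linear combination of actual paths in the quiver, and each such path has nonnegative $\PM$-degree for every perfect matching $\PM$ (not merely the corner ones), since each arrow contributes $0$ or $1$ depending on whether it lies in $\PM$.

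For the reverse implication, the plan is to combine Theorem \ref{weakjac} with Lemma \ref{minimalpaths}. Fix a weak path $p$ with head $h$, tail $t$, and homotopy class $\gamma$. Theorem \ref{weakjac} identifies $\widehat{\Jac}(\qpol)$ with $\Mat_n(\C[\Z^3])$, where the $\Z^3$-grading records homology on the torus together with degree for one fixed reference matching. In particular $e_h \widehat{\Jac}(\qpol) e_t$ is, within each homotopy class, a rank-one free module over $\C[\ell^{\pm 1}]$: a weak path is determined up to a nonzero scalar by its head, tail, homotopy class, and degree with respect to any single perfect matching. Now apply Lemma \ref{minimalpaths} to produce a real path $q \in \Jac(\qpol)$ from $t$ to $h$ in the homotopy class $\gamma$ together with a corner matching $\PM_0$ for which $\deg_{\PM_0} q = 0$. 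Then $p = c\, \ell^{m}\, q$ for some $c \in \C^{*}$ and $m \in \Z$, and taking $\PM_0$-degrees gives $m = \deg_{\PM_0} p$. The hypothesis, specialized to this particular corner matching, yields $m \geq 0$, so $p = c\,\ell^{m}\, q \in \Jac(\qpol)$.

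The only subtle point is the assertion that a weak path is determined up to a scalar and a power of $\ell$ by head, tail, and homotopy class, which is exactly the content of the isomorphism in Theorem \ref{weakjac}. Note that the argument in fact uses nonnegativity only for the one corner matching produced by Lemma \ref{minimalpaths}; the lemma's hypothesis quantifies over all corner matchings because the relevant $\PM_0$ depends on the homotopy class of $p$ and is not under our control in advance.
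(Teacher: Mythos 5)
Your proof is correct and follows essentially the same route as the paper: invoke Lemma \ref{minimalpaths} to produce a real path $q$ in the given homotopy class with $\deg_{\PM_0} q = 0$ for some corner matching $\PM_0$, use the rigidity from Theorem \ref{weakjac} (a weak path is determined by head, tail, homotopy class and $\PM$-degree) to write $p = \ell^{\deg_{\PM_0} p}\, q$, and conclude from $\deg_{\PM_0} p \ge 0$. Your observation that only the one corner matching supplied by Lemma \ref{minimalpaths} is actually needed matches the paper's argument as well (and the scalar $c$ is in fact $1$, which is harmless).
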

\begin{proof}
Let $p$ be such a weak path then we can construct a real path $q$ with the same homology class and a corner matching $\PM$ such that
$\deg_\PM q =0$ therefore $p=q\ell^{\deg_\PM p} \in \Jac(\qpol)$. 
\end{proof}

Using these lemmas we can prove the main theorem
\begin{theorem}[$\Jac(\qpol)$ is an NCCR]\label{JacNCCR}
If $\qpol$ is a cancellative dimer on a torus then its center is the coordinate ring of the 3d-Gorenstein singularity defined by its matching polygon
\[
 Z(\Jac(\qpol)) = R_{\MP(\qpol)}.
\]
Moreover $\Jac(\qpol)$ is a noncommutative crepant resolution of $\Spec R_{\MP(\qpol)}$.
\end{theorem}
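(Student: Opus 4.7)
The plan is to prove both claims using Theorem \ref{weakjac}, Lemma \ref{realpath} and the CY-3 property of $\Jac(\qpol)$ (which holds because $\chi(\qpol)=0$ and $\qpol$ is cancellative).

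First, I will compute the center. Cancellativity gives an embedding $\Jac(\qpol)\hookrightarrow\widehat\Jac(\qpol)\cong\Mat_n(\C[\Z^3])$, under which the center maps into the scalar matrices $\C[\Z^3]$. A central element corresponds to a collection of cycles $z=\sum_v z_v$, one starting at each vertex, that are pairwise homotopic and have the same $\PM$-degree for some (hence every, by the proof of Theorem \ref{weakjac}) corner matching. Under the isomorphism such an element is described by a triple $(a,b,c)\in\Z^3$ where $(a,b)$ is its homology class on the torus and $c$ is its degree for a fixed reference corner matching $\PM_0$. By Lemma \ref{realpath}, this element lies in $\Jac(\qpol)$ iff $\deg_{\PM_i}z\ge 0$ for every corner matching $\PM_i$; and by the definition of the matching polygon $\deg_{\PM_i}z=c+ax_i+by_i$ where $(x_i,y_i)=\PM_i-\PM_0$ are the corners of $\MP(\qpol)$. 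These are exactly the semigroup conditions defining $R_{\MP(\qpol)}$, giving $Z(\Jac(\qpol))=R_{\MP(\qpol)}$.

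Next, for the NCCR structure, fix a base vertex $0$ and set
\[
U:=\bigoplus_{v\in\qpol_0}e_v\Jac(\qpol)e_0,
\]
viewed as an $R$-module via the central action. Under the embedding into $\widehat\Jac$ each summand $e_v\Jac(\qpol)e_0$ becomes an $R$-submodule of $\C[\Z^3]$ cut out by a finite family of corner-matching inequalities (one per corner of $\MP$), i.e.\ a monomial lattice of full rank. Such lattices are reflexive $R$-modules, so $U$ is reflexive. The canonical multiplication map $\Jac(\qpol)\to\End_R(U)$ is injective. For surjectivity I use that any $R$-linear map $e_v\Jac e_0\to e_w\Jac e_0$ extends uniquely to a $\C[\Z^3]$-linear map between their localizations, hence is multiplication by an element $m\in\C[\Z^3]$; the requirement that $m$ send one lattice into the other translates, via Lemma \ref{realpath}, into precisely the corner-matching inequalities defining $e_w\Jac e_v$. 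This yields $\Jac(\qpol)\cong\End_R(U)$, giving condition N1.

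Finally, condition N2 follows from the CY-3 property. The self-dual bimodule resolution $C_W$ of $\Jac(\qpol)$ of length $3$ specializes, for any simple module $S$, to a projective resolution of length at most $3$; and CY-3 duality gives $\Ext^3_{\Jac}(S,S)\cong\Hom_{\Jac}(S,S)^\vee\ne 0$, so every simple has projective dimension exactly $3=\dim R_{\MP(\qpol)}$. The main obstacle I expect is step two: identifying $\End_R(U)$ with $\Jac(\qpol)$ requires showing that the hom-conditions between two monomial lattices in $\C[\Z^3]$ are generated precisely by the corner-matching inequalities, and this is where the content of zigzag consistency (via Lemma \ref{minimalpaths} and Lemma \ref{realpath}) is really used; everything else is essentially formal once the structural input from Theorem \ref{weakjac} is in hand.
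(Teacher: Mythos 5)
Your proposal is correct and follows essentially the same route as the paper: both use Theorem \ref{weakjac} and Lemma \ref{realpath} to identify each piece $v\Jac(\qpol)w$ with a monomial lattice (the rank one reflexive $\grr_{a^{vw}}$) cut out by the corner-matching inequalities, deduce $Z(\Jac(\qpol))=v\Jac(\qpol)v=R_{\MP(\qpol)}$, identify $\Jac(\qpol)$ with $\End_R\bigl(\bigoplus_v e_v\Jac(\qpol)e_0\bigr)$, and invoke the CY-3 property for condition N2. Your localization argument for surjectivity of $\Jac(\qpol)\to\End_R(U)$ is just an unpacking of the paper's identity $\grr_{a^{\vo w}}^{-1}\otimes\grr_{a^{\vo v}}=\grr_{a^{\vo v}-a^{\vo w}}$, so no essential difference.
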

\newcommand{\vo}{1}
\begin{proof}
Fix a vertex $\vo$ and let $X,Y$ be weak cycles in $\vo$ that form a basis for the homology of 
the torus and let $Z$ be a positive cycle. We already know from $\ref{weakjac}$ that $\widehat{\Jac}(\qpol)=\Mat_n(\C[X^{\pm1} ,Y^{\pm1},Z^{\pm1}])$. 

We use this identification and the previous lemma to determine the pieces $v\Jac(\qpol)w$:
\se{
 v\Jac(\qpol)w &= \C\< p \in v\widehat \Jac(\qpol)w \,|\,\forall \PM: \deg_{\PM}p\ge 0 \>\\
 &\cong \C\< p_{\vo v}pp_{\vo w}^{-1} \,|\, \forall \PM:\deg_{\PM}p\ge 0\>\\
 &\cong \C\<X^iY^jZ^k\,|\,\forall \PM: i\PM(X)+j\PM(Y)+k +\deg_{PM} p_{\vo v}p_{\vo w}^{-1} \ge 0\>.
}
Here $\forall \PM$ stands for all corner matchings.

The ring $v\Jac(\qpol)v$ is equal to $$\C[X^iY^jZ^k\,|\,\forall \PM: (\PM(X),\PM(Y),1) \cdot (i,j,k) \ge 0],$$  which is precisely the coordinate ring of the 3d-Gorenstein singularity defined by the matching polygon.

A central element of  $\Jac(\qpol)$ is also central in $\widehat{\Jac}(\qpol)$ and hence it is the sum of cyclic paths, one starting in each vertex, which all have the same homology class and $\PM$-degrees, 
so $R=Z(\Jac(\qpol))= v\Jac(\qpol)v$.

On the other hand $v\Jac(\qpol)w$ is the rank 1 reflexive module $\grr_{a^{vw}}$ where $a_{\PM}^{vw} = \deg_{\PM} p_{1v}- \deg_{\PM} p_{1w}$.
From this it easily follows that 
\se{
\End_R\bigoplus_{w \in \qpol_0} \grr_{a^{\vo w}}
&= \bigoplus_{v,w \in \qpol_0} \grr_{a^{\vo w}}^{-1}\otimes \grr_{a^{\vo v}}\\
&= \bigoplus_{v,w \in \qpol_0} \grr_{a^{\vo v} - a^{\vo w}}\\
&= \bigoplus_{v,w \in \qpol_0} v\Jac(\qpol)w\\
&= \Jac(\qpol).
}

Further we already know that $\Jac(\qpol)$ is CY-3 so all simples have projective
dimension $3$ and $\Jac(\qpol)$ is an NCCR. 
\end{proof}
Various versions of this theorem can be found in the literature \cite{broomhead2012dimer,ishii2009dimer,mozgovoy2009crepant,bocklandt2012consistency}.
The main ingredient is always the proof that the map
\[
 \Jac(\qpol) \to  \End_R\bigoplus_{w \in \qpol_0} \grr_{a^{\vo w}}
\]
is an isomorphism. This condition is also called \emph{algebraic consistency}
(see \cite{broomhead2012dimer}). 

\subsection{Moduli spaces for the Jacobi algebra}

Let $\qpol$ be a zigzag consistent dimer quiver on a torus. The Jacobi algebra is an NCCR and the reflexive module $U=\bigoplus_{w \in \qpol_0} 1\Jac(\qpol)w$ is a direct sum of rank $1$ reflexive modules.
If we want to use geometric representation theory to construct crepant resolutions then we have to use the dimension vector $\alpha=\id$, which maps every vertex to $1$.
We will denote the Jacobi algebra of $\qpol$ by $A=\Jac(\qpol)$, its center by $R$, and the weak versions by $\widehat{A} $ and $\widehat{R}$. 

\newcommand{\Ct}{\mathsf{Cons}}
Let us first have a look at the representation theory of $\widehat A$.
The space $\Rep_\alpha \widehat A$ can be seen as 
\se{
\Rep_\alpha \widehat A &= \{\rho \in \Maps(\qpol_1,\C^*)| \rho(r^+_a)=\rho(r^-_a)\}\\
&=\{\rho \in \Maps(\qpol_1,\C^*)| \rho(a)\rho(r^+_a)=\rho(a)\rho(r^-_a)\}\\
&=\{\rho \in \Maps(\qpol_1,\C^*)| \forall c_1,c_2 \in \qpol_2:\rho(c_1)=\rho(c_2)\}\\
&= d^{-1}\Ct
}
where $\Ct$ is the space of constant maps in $\Maps(\qpol_2,\C^*)$ and
$d$ is the differential in the complex $\Maps(\qpol_\bullet,\C^*)$.
On the other hand $\GL_\alpha = \Maps(\qpol_0,\C^*)$
and it acts on $\Rep_\alpha \widehat A$ by multiplication with $d\GL_\alpha$.
For $\widehat A$ all $\alpha$-dimensional representations are simple so
\[
 \cM^\theta_\alpha(\widehat A) = \frac{\Rep_\alpha \widehat A}{\GL_\alpha} = \frac{d^{-1}\Ct}{d\Maps(\qpol_0,\C^*)}.
\]
This is a quotient of two complex tori and hence also a complex torus.
We will denote these $3$ tori by $T_G,T_A$ and $T_R$.
\[
\xymatrix@R=.1cm{
T_G & T_A & T_R\\
\GL_\alpha \ar@{^(->}[r]& \Rep_\alpha \widehat A\ar@{->>}[r] & \cM^0_\alpha(\widehat A)
}
\]
The dimensions of these tori are
\se{
\dim T_G &= \#\qpol_0,\\
\dim T_A &= \#\qpol_1-\#\qpol_2 +1 + \dim \Ct= \#\qpol_0+2,\\
\dim T_R &= \dim T_A - \dim T_G +1 = 3.
}
(The extra $1$'s come form the fact that the map $d:\Maps(\qpol_1,\C^*)\to \Maps(\qpol_2,\C^*)$ is not surjective and the map $d:\Maps(\qpol_0,\C^*)\to \Maps(\qpol_1,\C^*)$
is not injective.)
To each of these tori we can associate a pair of dual lattices $M_X =\Hom(T_X,\C^*)$
and $N_X = \Hom(\C^*,T_X)$.

On the $M$-side we get that
\[
\xymatrix@R=.1cm{
M_G & M_A& M_R\\
\Z \qpol_0 & \frac{\Z \qpol_1}{\<\bar c_1-\bar c_2 | c_1,c_2 \in \qpol_2\>}\ar@{->>}[l]& \{m \in M_A| dm=0\}\ar@{_(->}[l]
}
\]
where $d$ is the differential in $\Z\qpol_\bullet$. 

Each weak path $p=a_1\dots a_k$  gives an element
$\bar p = a_1 +\dots + a_k \in M_A$. Two paths $p,q$ that are equivalent in $\widehat A$
give the same element in $M_A$. The reverse also holds provided $h(p)=h(q)$ and $t(p)=t(q)$. 

A path represents an element in $M_R$ if it is cyclic.
We will fix $3$ real minimal cyclic paths $X,Y,Z$ in $\vo\widehat A\vo$. $X,Y$ will represent the $2$ homology classes on the torus and for $Z$ we take a cycle in $\qpol_2$. This will identify $M_R$ with $\Z^3$:
\[
 (i,j,k) \mapsto i\bar X+j\bar Y+k\bar Z \in M_R.
\]

On the $N$-side we get
\[
\xymatrix@R=.1cm{
N_G & N_A& N_R\\
\Maps(\qpol_0,\Z)\ar@{^(->}[r]&\{n \in \Maps(\qpol_1,\Z)|n(\bar  c_1)=n(\bar c_2)\} 
\ar@{->>}[r] & \{dn| n \in N_A\}.
}
\]
We can also see $N_A$ as the set of all $\Z$-gradings on 
$A$ such that the arrows are homogeneous. The map $N_A\to N_R$ restricts
the grading to the cyclic paths. Again we identify $N_R$ with $\Z^3$.
\[
n \mapsto (n(\bar X),n(\bar Y),n(\bar Z)).
\]
In particular if $\cP$ is a perfect matching we can associate to it
the degree function $\deg_\cP$ that gives the arrows in $\cP$ degree $1$ and
the other arrows degree $0$. We denote the corresponding point in $N_A$ by
$n_\cP$ and its image in $N_R$ by $\bar n_\cP$.

Now we will look at the representations of the Jacobi algebra $A$. 
The space 
\[
\Rep_\alpha A = \{\rho \in \Maps(\qpol_1,\C)| \rho(r^+_a)=\rho(r^-_a)\}.
\]
is in general not an irreducible variety but it contains $\Rep_\alpha \widehat A$ 
as an open subset. We will denote its closure by $\trep_\alpha A$. 
\[
\C[\trep_\alpha A] = \C[ M_A^+] \text{ with }M_A^+=\sum_{a \in \qpol_1} \N a \subset M_A.
\]
\newcommand{\nrep}{\overline{\trep}}
In general $\trep_\alpha A$ is not normal and we will denote its normalization by $\nrep_\alpha A$.

The variety $\nrep_\alpha A$ is an affine toric variety and its cone will consist
of all $n \in N_A\oplus \R$ that give the arrows nonnegative degrees
\[
 \sigma_A = \{ n \in N_A\otimes \R|\forall a \in \qpol_1: n_a\ge 0\}. 
\]
\begin{lemma}
$\sigma_A$ is generated by the perfect matchings. 
\[
 \sigma_A = \sum_{\cP\in \PMs(\qpol)} \R_+ n_{\cP} \subset N_A\otimes \R.
\]
\end{lemma}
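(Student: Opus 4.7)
The inclusion $\sum_\cP \R_+ n_\cP \subseteq \sigma_A$ is immediate: each $n_\cP$ assigns $0$ or $1$ to every arrow, so in particular nonnegative values. The content of the lemma is the reverse inclusion, and the plan is to reduce it to lemma \ref{flowpm}.

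The first step is to unpack what it means for $n \in N_A \otimes \R$ to lie in $\sigma_A$. Such an $n$ is a real weighting of the arrows so that $n_a \ge 0$ for every $a \in \qpol_1$ and, by definition of $N_A$, so that $n(\bar c)$ takes the same value $\lambda$ on every cycle $c \in \qpol_2$. Passing to the dimer graph via the identification $\qpol_1 \leftrightarrow \rib_1$ and $\qpol_2 \leftrightarrow \rib_0$, this is exactly the condition that $n$ is a nonnegative weighting of the edges of $\rib$ such that the sum of the weights around every node equals the constant $\lambda$.

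Next I split into cases on $\lambda$. If $\lambda = 0$, then since all edge weights are nonnegative and sum to $0$ around each node, every weight is $0$, so $n = 0$ and there is nothing to prove. If $\lambda > 0$, then $\lambda^{-1} n$ is precisely a unit flow in the sense of the earlier definition. By lemma \ref{flowpm} we can write
\[
\lambda^{-1} n \;=\; \sum_{i} \mu_i\, \phi_{\cP_i}
\]
with $\mu_i \ge 0$ and $\cP_i \in \PMs(\rib)$. Under the identification of edges of $\rib$ with arrows of $\qpol$, the unit flow $\phi_{\cP_i}$ is exactly $n_{\cP_i}$, so multiplying through by $\lambda$ gives $n = \sum_i (\lambda \mu_i)\, n_{\cP_i}$, exhibiting $n$ as a nonnegative combination of the $n_\cP$.

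The only real ingredient is lemma \ref{flowpm}, so I do not expect a genuine obstacle; the main care is purely bookkeeping, namely verifying that the condition $n(\bar c_1) = n(\bar c_2)$ defining $N_A$ dualizes under $\qpol \leftrightarrow \rib$ to the node-sum condition defining a (scaled) unit flow. Once that translation is made the decomposition is automatic, and in particular the generators of $\sigma_A$ can be taken among the $n_\cP$ (and the extreme rays among the indecomposable matchings, though that refinement is not needed here).
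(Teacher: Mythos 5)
Your proof is correct and follows essentially the same route as the paper: the paper's argument is precisely to observe that for $n\in\sigma_A$ the rescaling $n/n(Z)$ is a unit flow and then invoke lemma \ref{flowpm}. Your version just makes explicit the easy inclusion, the quiver--graph translation, and the degenerate case $\lambda=0$ (which the paper's division by $n(Z)$ silently assumes away), so there is nothing to change.
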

\begin{proof}
If $n \in \sigma_A$ then $n/n(Z)$ is a unit flow and
the statement follows from lemma \ref{flowpm}.
\end{proof}
Likewise the cone of the ring 
\se{
  \C[\nrep_\alpha A]^{\GL_\alpha} &= \C[m \in M_R\,|\,\forall \cP \in \PMs(\qpol): \bar n_\cP(m)\ge 0]\\
  &=\C[(i,j,k)\,|\, \forall \cP \in \PMs(\qpol): i\deg_\cP X+j\deg_{\cP}Y+k\ge 0]\\
}
Because of lemma \ref{minimalpaths}, each $X^iY^jZ^k \in \C[\nrep_\alpha A]^{\GL_\alpha}$ can be represented by a path and therefore this ring is also equal to
$\C[\nrep_\alpha A]^{\GL_\alpha}= \C[M_A^+\cap M_R]$.

By the same lemma we can restrict to corner matchings, so 
$\C[\trep_\alpha A]^{\GL_\alpha}\cong R_{\MP(\qpol)}$ and $\trep_\alpha A/\!\!/\GL_\alpha$ is the Gorentstein singularity associated to $\MP(\qpol)$. 
We will denote its cone by
\[
 \sigma_R = \sum_{p\in \PMs(\qpol)} \R_+ \bar n_{\cP} \subset N_R\otimes \R.
\]

Now we will look at the moduli spaces 
\[
 \CM_\alpha^\theta(A)= \Proj \C[\trep_\alpha A]_\theta \text{ and }
 \overline{\CM}_\alpha^\theta A = \Proj \C[\nrep_\alpha A]_\theta.
 \]
\begin{theorem}[The crepant resolution (Ishii-Ueda \cite{ishii2007moduli,ishii2009dimer}]
Let $\qpol$ be a consistent dimer on a torus, $A=\Jac(\qpol)$ and $R=Z(A)$.
If $\theta:\qpol_0 \to \Z$ is a generic stability condition then $\CM_\alpha^\theta \to \CM_\alpha^0=\Spec R$ is a crepant resolution.  
\end{theorem}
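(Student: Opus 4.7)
The plan is to exploit the toric nature of every object in sight. Because $\alpha = \id$, the group $\GL_\alpha$ is a torus and $\trep_\alpha A$ is a toric variety with cone $\sigma_A$. Hence the GIT quotient $\CM_\alpha^\theta(A) = \trep_\alpha^{\theta-ss} A /\!\!/ \GL_\alpha$ is again a toric variety, and its structure torus is precisely $T_R = T_A/T_G$. The base $\CM_\alpha^0(A) = \Spec R$ is by Theorem \ref{JacNCCR} the Gorenstein singularity defined by $\MP(\qpol)$, whose cone $\sigma_R \subset N_R \otimes \R$ is generated by the images $\bar n_{\cP}$ of the (corner) perfect matchings. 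The natural map $\CM_\alpha^\theta \to \CM_\alpha^0$ is therefore a toric morphism, so it is determined by a fan $\cF_\theta$ in $N_R \otimes \R$ refining $\sigma_R$. My strategy is to describe $\cF_\theta$, show that it is smooth and complete over $\sigma_R$, and then conclude crepancy from the fact that all its rays lie in a common affine hyperplane.

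First I would verify birationality and properness. The open subset $\Rep_\alpha \widehat A \subset \trep_\alpha A$ consists entirely of simple representations (by Theorem \ref{weakjac}, $\widehat A \cong \Mat_n(\C[\Z^3])$ is Azumaya over its centre $\widehat R$), so for any nontrivial $\theta$ every point of $\Rep_\alpha \widehat A$ is automatically $\theta$-stable. Consequently $\CM_\alpha^\theta$ contains $T_R$ as an open dense subvariety mapping isomorphically onto $\Spec \widehat R \subset \Spec R$, giving birationality. Properness of $\CM_\alpha^\theta \to \Spec R$ is built into the GIT construction as a projective morphism, so the map is a proper birational morphism of toric varieties.

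Next I would identify $\cF_\theta$ more explicitly. The general GIT-fan formalism translates the stability condition $\theta \in \Z^{\qpol_0}$ into a linear functional on $N_A \otimes \R$ (its image under $M_G \to M_A$ is zero, so it descends to a function on the fibres of $N_A \to N_R$); the cones of $\cF_\theta$ are the maximal domains on which the $\theta$-minimizing preimage of a point of $\sigma_R$ is a fixed face of $\sigma_A$. Using the description $\sigma_A = \sum_{\cP} \R_{\ge 0} n_{\cP}$ together with Lemma \ref{minimalpaths} (every lattice point of $\sigma_R$ is realized by an actual cyclic path, not merely a weak one), the rays of $\cF_\theta$ are generated by a subset of the $\bar n_{\cP}$. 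Since every $\cP$ satisfies $\bar n_{\cP}(Z) = 1$, every ray generator lies in the affine hyperplane $\{n(Z)=1\}$, which is exactly the condition that the toric variety $X_{\cF_\theta}$ has trivial canonical sheaf over $\Spec R$, i.e. that the resolution is crepant provided it is a resolution at all.

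The main obstacle is smoothness of $\CM_\alpha^\theta$ for generic $\theta$. Here is where consistency of $\qpol$ is used in an essential way: I would show that for generic $\theta$ the GIT quotient is a geometric quotient (no strictly semistable points), so the tautological bundle gives a fine moduli space and every closed point parametrizes a simple module. Smoothness at such a point follows because $A$ is Calabi-Yau-$3$ and hence of finite global dimension (Theorem on homological properties), so $\Ext^2_A(S,S) = 0$ for any simple $S$ of dimension vector $\alpha$, whence the moduli space is unobstructed and hence smooth of the expected dimension $3$. Translated into toric language, each maximal cone of $\cF_\theta$ is generated by three lattice points $\bar n_{\cP_1}, \bar n_{\cP_2}, \bar n_{\cP_3}$ forming a $\Z$-basis of $N_R$; equivalently, $\cF_\theta$ is a unimodular triangulation of $\MP(\qpol)$ (viewed at height $1$), each triangle of which has elementary area $1$. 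Combined with the previous paragraph this gives a smooth, proper, birational toric morphism $\CM_\alpha^\theta \to \Spec R$ with trivial relative canonical class, i.e.\ a crepant resolution.
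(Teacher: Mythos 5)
Your reductions to birationality and properness are fine (simples over $\widehat A$ are stable for every $\theta$ with $\theta\cdot\alpha=0$, and the GIT quotient is projective over $\Spec R$), and your observation that crepancy follows once one knows the fan of $\CM_\alpha^\theta$ is a unimodular refinement of $\sigma_R$ with all rays at height $1$ is correct. But the step where you establish smoothness is where the real content lies, and your argument for it fails. You claim that because $A$ is Calabi--Yau-$3$ one has $\Ext^2_A(S,S)=0$ for a stable (hence simple) module $S$ of dimension vector $\alpha$, so the moduli problem is unobstructed. This is not true: the CY-3 pairing gives $\Ext^2_A(S,S)\cong \Ext^1_A(S,S)^*$, so the obstruction space has the \emph{same} dimension as the tangent space, which is $3$ at a generic point of a $3$-dimensional moduli space, not $0$. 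Finite global dimension does not give the vanishing you want. Indeed, for a CY-3 algebra the moduli space of simples is locally the critical locus of a superpotential function on $\Ext^1_A(S,S)$ and can perfectly well be singular; smoothness of $\CM_\alpha^\theta$ for a \emph{consistent} dimer is precisely the nontrivial point of the theorem, and it must use the combinatorics of the dimer, not just the CY-3 property.

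The paper's proof (following Ishii--Ueda) supplies exactly this missing ingredient: one looks at the $T_R$-fixed points of $\CM_\alpha^\theta$, each represented by a representation sending every arrow to $0$ or $1$, and constructs for each such point an explicit embedding $\C^3\to\trep_\alpha A$ hitting it and projecting onto an open neighbourhood in $\CM_\alpha^\theta$. These charts cover the moduli space, which gives smoothness, and their transition functions preserve the standard volume form on $\C^3$, which gives triviality of the canonical sheaf and hence crepancy in one stroke (equivalently, in your toric language, it is this local analysis that shows each maximal cone of the fan is unimodular with generators at height $1$). So your outline needs to replace the $\Ext^2$-vanishing argument by such a local-chart (or equivalent combinatorial) argument; as written, the central step of the proof is unsupported.
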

\begin{proof}
The idea of the proof is to look at $T_R$-invariant points of  
$\CM_\alpha^\theta A$. Such a point $\varrho$ can be represented by a representation
$\varrho$ that maps every arrow to either zero or one. In \cite{ishii2007moduli}
Ishii and Ueda construct an embedding $\C^3 \to \trep_\alpha A$ that
maps $0$ to $\varrho$ and projects onto an open subset of $\CM_\alpha^\theta A$.

If we look at all $T_R$-invariant points we get a cover of $\CM_\alpha^\theta A$.
The transition functions preserve the standard volume form on $\C^3$, so
$\CM_\alpha^\theta A$ is Calabi-Yau and the resolution is crepant.
\end{proof}

Because $\CM_\alpha^\theta(A)$ is smooth, it is also normal and equal to $\overline{\CM}_\alpha^\theta(A)$.
To construct the fan of the moduli space $\CM_\alpha^\theta(A)=\Proj \C[\Rep_\alpha A]_\theta$ we need to classify the torus orbits of $\theta$-semistable representations.

Again we can use perfect matchings to describe these.
If $\{\PM_1,\dots,\PM_k\}$ is a collection of perfect matchings we can look at the representation 
$$
\rho_{\PM_1,\dots,\PM_k}: a \mapsto \begin{cases}
                                     0 &a \in \PM_1\cup \dots\cup \PM_k\\
                                     1 &a \notin \PM_1\cup \dots\cup \PM_k
                                    \end{cases}
$$
We call the collection \iemph{$\theta$-stable} if $\rho_{\PM_1,\dots,\PM_k}$ is a stable representation.

\begin{theorem}[The Fan of $\CM_\alpha^\theta(A)$ (Mozgovoy \cite{mozgovoy2009crepant})]
Let $\qpol$ be a consistent dimer on a torus, $A=\Jac(\qpol)$ and $R=Z(A)$.
If $\theta:\qpol_0 \to \Z$ is a generic stability condition then the fan of $\CM_\alpha^\theta$ consists of all cones $\R_+ \bar n_{\PM_1} + \dots+\R_+ \bar n_{\PM_k}$ where
$\{\PM_1,\dots,\PM_k\}$ is a stable collection. 
There are 3 types of nontrivial closed stable collections:
\begin{enumerate}
\item Singletons $\{\PM\}$. On each lattice point $\bar n$ of the matching polygon there is a unique stable perfect matching $\PM$ with $\bar n_\PM=\bar n$
The torus orbit of $\rho_{\PM}$ in $\CM_\alpha^\theta$ is $2$-dimensional.
\item
Pairs $\{\PM_1,\PM_2\}$. The points $\bar n_{\PM_1}$,$\bar n_{\PM_2}$ form an elementary line segment and the orbit of $\rho_{\PM_1,\PM_2}$ in $\CM_\alpha^\theta$ is $1$-dimensional.
\item
Triples $\{\PM_1,\PM_2,\PM_3\}$. The points $\bar n_{\PM_1}$,$\bar n_{\PM_2},\bar n_{\PM_3}$ form an elementary triangle and the orbit of $\rho_{\PM_1,\PM_2,\PM_3}$ in 
$\CM_\alpha^\theta$ is a point.
\end{enumerate}
The empty collection corresponds to the 3-dimensional torus containing the $\widehat A$-representations.
\end{theorem}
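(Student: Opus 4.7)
The plan is to exploit the toric structure of $\CM_\alpha^\theta(A)$ to read off its fan combinatorially. By the previous theorem the moduli space is a smooth crepant resolution of $\Spec R_{\MP(\qpol)}$. Crepancy forces the generators of the rays of its fan to lie at height $1$ in $N_R$, that is, in the matching polygon embedded in $N_R \otimes \R$; smoothness forces every maximal cone to be a unimodular (elementary) simplex. So the task reduces to matching up the torus orbits of $\CM_\alpha^\theta(A)$ with the stable collections of perfect matchings, and checking that their $\bar n_\PM$ are precisely the ray generators of a unimodular triangulation of $\sigma_R$.

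The first step I would take is to classify the $T_R$-fixed points. Since $T_R = T_A/T_G$ acts on $\CM_\alpha^\theta(A)$, a fixed closed orbit is represented (after $T_G$-rescaling) by a representation $\rho : \qpol_1 \to \{0,1\}$. The core combinatorial lemma to establish is that the zero set $Z = \rho^{-1}(0)$ is a disjoint union of perfect matchings. One direction is immediate: for any collection $\{\PM_1, \dots, \PM_k\}$, because every cycle in $\qpol_2$ meets each $\PM_i$ in exactly one arrow, the relation $\rho(r_a^+) = \rho(r_a^-)$ is automatic for $\rho_{\PM_1, \dots, \PM_k}$. For the converse, given that the relation forces, for every cycle $c$ containing an arrow $a$, that $c \setminus \{a\}$ meets $Z$ iff the other cycle through $a$ does, one can build perfect matchings iteratively inside $Z$ by peeling off one matching at a time: choose an arrow $a \in Z$ of smallest $\deg_\PM$-valuation for some fixed reference $\PM$, follow the alternating structure along the two cycles through $a$, and argue using cancellativity that the resulting subset of $Z$ is a perfect matching; then induct on $Z$ minus this matching.

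Next I would match orbits with cones. The one-parameter subgroup $n_\PM \in N_A$ acts on the open $T_R$-orbit in $\CM_\alpha^\theta(A)$ and the limit as $t \to 0$ sends exactly the arrows in $\PM$ to $0$, producing $\rho_\PM$; iterating with several matchings gives $\rho_{\PM_1, \dots, \PM_k}$ as the limit under the subtorus generated by $n_{\PM_1}, \dots, n_{\PM_k}$. This identifies the closed torus orbit of $\rho_{\PM_1, \dots, \PM_k}$ with the cone $\R_+ \bar n_{\PM_1} + \dots + \R_+ \bar n_{\PM_k}$ in the fan of $\CM_\alpha^\theta(A)$, of dimension $3-k$ when the $\bar n_{\PM_i}$ are linearly independent. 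The existence of such a closed orbit in $\CM_\alpha^\theta(A)$ is equivalent to $\rho_{\PM_1, \dots, \PM_k}$ being $\theta$-semistable, i.e.\ to $\{\PM_1, \dots, \PM_k\}$ being $\theta$-stable.

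Finally I would read off the three types. Since the fan is a unimodular triangulation of $\sigma_R$ with rays at height $1$, the maximal (3-dimensional) cones are precisely elementary triangles on $\MP(\qpol)$, and these must come from $\theta$-stable triples; the 2-dimensional cones are their common edges, corresponding to $\theta$-stable pairs that are elementary line segments; and the rays are the lattice points of $\MP(\qpol)$, each carrying one $\theta$-stable singleton. Uniqueness of the singleton on a given lattice point follows because two different perfect matchings on the same lattice point would yield the same ray, and genericity of $\theta$ rules out more than one of them being semistable. For boundary points the classification of boundary matchings from the $A$-side section already guarantees uniqueness independent of $\theta$. The main obstacle I anticipate is the combinatorial lemma in the second paragraph — showing that every $\{0,1\}$-valued representation satisfying the Jacobi relations has its zero set equal to a union of perfect matchings — since this is where the cancellative (zigzag consistent) hypothesis really enters, and without it spurious fixed points could appear that do not correspond to matchings.
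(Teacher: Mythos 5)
Your overall architecture is the same as the paper's: use the preceding crepant-resolution theorem to force the fan to be a unimodular triangulation of the matching polygon at height $1$, identify torus-fixed (closed) orbits with $\{0,1\}$-valued representations built from collections of perfect matchings, and get uniqueness of the stable matching on each lattice point from genericity of $\theta$. The paper itself only sketches this, deferring the orbit classification to Mozgovoy, so the real question is whether your version of that classification holds up.

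It does not, as written. Your ``core combinatorial lemma'' asserts that the zero set $Z=\rho^{-1}(0)$ of a torus-fixed representation is a \emph{disjoint} union of perfect matchings, and your proof peels off one matching at a time and inducts on the remainder. But the matchings in a stable collection generically overlap: by Gulotta's theorem quoted earlier in the paper, two matchings on adjacent boundary lattice points contain all the common edges outside the relevant zigzag paths, and the same happens for the triples spanning elementary triangles. So $Z$ is a union, not a disjoint union, and after removing one matching the leftover set $Z\setminus \cP$ is typically not (and need not contain) a perfect matching, so the induction collapses. Moreover, as stated in your closing paragraph the lemma drops the stability hypothesis, while the precise statement needed is finer than ``union of matchings'': one must show $Z$ is the union of at most three $\theta$-stable matchings whose lattice points form a point, an elementary segment, or an elementary triangle matching the cone structure --- and this is exactly the content the paper outsources to Mozgovoy/Ishii--Ueda, for which you give no working argument. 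A smaller slip: your claim that uniqueness at boundary lattice points is ``independent of $\theta$'' contradicts the $\binom{k}{d}$ count of boundary matchings on non-corner boundary points; uniqueness there is a statement about the \emph{stable} matching and genuinely depends on $\theta$ (only corner lattice points carry a unique matching outright).
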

\begin{proof}
We already know that $\CM_\alpha^\theta$ is a crepant resolution.
So the fan splits the matching polygon in elementary triangles.

On each of the lattice points there can only be one stable perfect matching
because otherwise $\rho_{\PM_1}$ and $\rho_{\PM_2}$ would correspond to
the same point in $\CM_\alpha^\theta$. If $\theta$ is generic there is a unique stable
orbit for each point so these two representations are isomorphic.
\end{proof}

\begin{theorem}[The tilting bundle]
Let $\qpol$ be a zigzag consistent quiver, $A=\Jac(\qpol)$ and $R=Z(A)$.
If $\theta:\qpol_0 \to \Z$ is a generic stability condition then $\CM_\alpha^\theta$ is a fine moduli space with tautological bundle $\cU = \bigoplus_{v \in \qpol_0} \tilde \grr_v$.
Here $\tilde\grr_v$ is the rank one reflexive sheaf
$$
\tilde \grr_v = \tilde\grr_{(\deg_{\PM_i} p_{\vo v})}.
$$
and $p_{1v}$ is any path from vertex $v$ to vertex $\vo$. Note that now 
$\PM_i$ represents all stable matchings, not all corner matchings like in \ref{JacNCCR}.

The tautological bundle $\cU$ is a tilting bundle on $\CM_\alpha^\theta$ and hence we have an equivalence
\[
 \Db\Mod A \cong \Db\Coh \CM_\alpha^\theta(A).
\]
\end{theorem}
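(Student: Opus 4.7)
The plan is to first establish the fine moduli structure, then identify the summands of $\cU$ with the claimed reflexive sheaves by reading off toric data, and finally bootstrap the NCCR property of $A$ (Theorem \ref{JacNCCR}) together with Bridgeland's derived-equivalence-of-crepant-resolutions result (Theorem \ref{flopsderived}) to deduce the tilting property.

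First I would note that fineness of $\CM_\alpha^\theta(A)$ is automatic in this setting: since $\theta$ is generic, every $\alpha$-dimensional semistable representation is stable, and since $\alpha_{\vo}=1$ the standard construction sketched in the moduli section yields a tautological bundle $\cU$. Because $\alpha_v=1$ at every vertex, each summand $\cU_v$ of the decomposition $\cU=\bigoplus_{v\in\qpol_0}\cU_v$ is a line bundle; by construction its fiber at the point $[\rho]$ recovers the $v$-component $v\cdot \C^n$ of the corresponding $A$-module, and the $A$-action on $\cU$ induces a ring map $A\to \End_{\CM_\alpha^\theta}(\cU)$.

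Next I would identify $\cU_v$ with $\tilde\grr_v$. By Mozgovoy's theorem the fan of $\CM_\alpha^\theta$ has its rays exactly at the vectors $\bar n_{\PM_i}$ for stable matchings $\PM_i$, and the toric divisor $D_i$ corresponding to $\PM_i$ parametrises semistable representations with $\rho(a)=0$ precisely for $a\in\PM_i$. Under the induced $T_R$-action, the fiber of $\cU_v$ along the generic $T_R$-orbit in $D_i$ transforms by the character $\deg_{\PM_i}p_{\vo v}$: indeed, after fixing the base point $v=\vo$, the trivialisation $\cU_{\vo}\cong\cO$ forces the isomorphism $\cU_v\cong\cU_{\vo}\otimes\grr^?$ to carry, at the divisor $D_i$, the weight obtained by multiplying the arrows of $p_{\vo v}$ that lie in $\PM_i$. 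This is independent of the path chosen because any two representatives differ by a relation $\partial_aW$ whose two sides meet every perfect matching in the same number of arrows. Hence the toric data of $\cU_v$ is exactly $(\deg_{\PM_i}p_{\vo v})_i$, matching $\tilde\grr_v$.

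It remains to prove that $\cU$ is tilting and that the induced functor realises the equivalence. Taking global sections and using the rank-one-reflexive-module description of $\tilde\grr_v$, together with Theorem \ref{JacNCCR}, gives
\[
\End_{\CM_\alpha^\theta}(\cU)\ \cong\ \End_R\Bigl(\bigoplus_{v}\grr_{a^{\vo v}}\Bigr)\ \cong\ A,
\]
so the natural map $A\to\End(\cU)$ is an isomorphism. The main obstacle is the vanishing of higher self-Ext. I would handle it as follows: by Van den Bergh's theorem (cf. Theorem \ref{JacNCCR} and the surrounding discussion) the NCCR $A$ is derived equivalent to \emph{some} crepant resolution of $\Spec R$, and by Bridgeland's flop theorem (Theorem \ref{flopsderived}) any two crepant resolutions of a threefold are derived equivalent; composing gives a derived equivalence $\Db\Mod A\simeq \Db\Coh\CM_\alpha^\theta(A)$ induced by a tilting object $T$ with $\End(T)\cong A$. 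A generator-of-minimal-Ext-class argument (using that $\cU$ generates because its summands separate $T_R$-fixed points, together with $\End(\cU)\cong A=\End(T)$ and the fact that the fibres of $\pi:\CM_\alpha^\theta\to\Spec R$ have dimension at most one so the Leray spectral sequence for $\RHom(\cU,\cU)$ collapses onto $\End_R(\pi_*\cU)$) then forces $\cU\cong T$ up to shift, yielding the desired equivalence $\Db\Mod A\simeq\Db\Coh\CM_\alpha^\theta(A)$ with kernel $\cU$.
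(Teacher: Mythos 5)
Your setup (fineness for generic $\theta$, the decomposition of $\cU$ into line bundles, and the identification of the toric data of $\cU_v$ with $(\deg_{\PM_i}p_{\vo v})_i$, including the path-independence via the relations $\partial_a W$) is fine and matches the paper. The gap is in the last step, which is exactly where the real content of the theorem lies. Composing Van den Bergh's equivalence for \emph{some} crepant resolution with Bridgeland's flop equivalences does give an equivalence $\Db\Mod A\simeq\Db\Coh\CM_\alpha^\theta(A)$, and the image $T$ of $A$ is a tilting \emph{complex} with $\End(T)\cong A$; but nothing in your argument identifies $T$ with $\cU$. Flop functors are Fourier--Mukai transforms that do not carry tilting bundles to bundles, and the facts you invoke --- $\End(\cU)\cong A\cong\End(T)$, generation, fibres of dimension at most one --- do not force $\cU\cong T$ up to shift: two objects with the same endomorphism algebra need not be isomorphic, and in particular knowing that some tilting object with endomorphism ring $A$ exists says nothing about the higher self-Ext of the specific bundle $\cU$.

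Concretely, the two statements the paper's proof isolates are (i) $H^{>0}\bigl(\tilde\grr_v\otimes\tilde\grr_w^\vee\bigr)=0$ and (ii) these sheaves generate $\Db\Coh\CM_\alpha^\theta$, and your proposal assumes rather than proves both. The Leray argument with one-dimensional fibres only kills $R^{\ge 2}\pi_*$; the whole difficulty is the vanishing of $R^1\pi_*(\cU^\vee\otimes\cU)$, which is precisely statement (i) and is what Ishii--Ueda establish by a detailed combinatorial analysis of the dimer (or what Van den Bergh's method establishes via ampleness-type arguments adapted to fibres of dimension $\le 1$). Likewise, ``the summands separate $T_R$-fixed points'' is far weaker than generation of the derived category; generation requires an argument in the style of Van den Bergh (using that $\cO$ and a relatively ample line bundle built from the summands lie in the thick subcategory generated by $\cU$) or the explicit dimer combinatorics. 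So the proposal, as written, bypasses the substance of the theorem without supplying a replacement for it; to repair it you would have to prove (i) and (ii) directly, which is the route the paper (following Van den Bergh and Ishii--Ueda) takes.
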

\begin{proof}
We need to show the following two things: the sheaves 
\[
 \tilde\grr_v\otimes \tilde\grr_{w}^\vee = \tilde\grr_{(\deg_{\PM_i} p_{1w}^{-1}p_{1v})}
\]
have no higher cohomology and they generate the derived category $\Db\Coh \CM_\alpha^\theta$.
This can be done using techniques from Van den Berg \cite{van2004non} or
by an in depth analysis of the combinatorics of the dimer \cite{ishii2009dimer}.
\end{proof}

\begin{intermezzo}[The McKay Correspondence]\label{mckay2}
Consider $\C^3$ with the action of a finite group $G\subset \mathsf{GL}_3(\C)$.
The quotient $\C^3/G= \Spec \C[X,Y,Z]^G$ is a Gorenstein singularity if $G\subset \mathsf{SL}_3(\C)$.
It has a crepant resolution called $\GHilb$ which parametrizes the ideals in $\C[X,Y,Z]$
with a quotient that is isomorphic to the regular representation of $G$.
\[
\GHilb =\{ \ideal m \lhd \C[X,Y,Z] \,|\, \C[X,Y,Z]/\ideal m \cong \C G \}
\]
The space $\GHilb$ is a fine moduli space: we have a bundle $\cU\to \GHilb$ such that the fiber
over $\ideal m$ is $\C[X,Y,Z]/\ideal m$.

A famous result by Bridgeland, King and Reid \cite{bridgeland2001mckay} states that 
the derived category of $\GHilb$ is equivalent to the derived category of $G$-equivariant sheaves on $\C^3$.
\[
\Db \Coh \GHilb \cong \Db \Coh_G \C^3  
\]
As $\C^3=\Spec \C[X,Y,Z]$ a $G$-equivariant sheaf is just a $\C[X,Y,Z]$-module $M$ with a $G$-action on it such that 
$$
\forall f \in \C[X,Y,Z]:\forall m \in M: g\cdot fm = (g\cdot f)(g\cdot m).
$$
This is the same as a module of $A=\C[X,Y,Z]\star G$, so
\[
\Db \Coh \GHilb \cong \Db \Mod A  
\]
and $A$, which is also isomorphic to $\End(\cU)$, is an NCCR of $\C^3/G$.

If $G\subset \mathsf{SL}_3(\C)$ is abelian we can see $A=\C[X,Y,Z]\star G$ as the Jacobi algebra of a dimer.
Without loss of generality we can suppose that $G$ acts diagonally.
Let $N=\{(i,j)| X^iY^j \in \C[X,Y,Z]^G\}$ then 
$$
G \mapsto \Hom(\Z^2/N,\C^*) : g \mapsto (i,j) \mapsto \frac{g(X^iY^j)}{X^iY^j}  
$$
is an isomorphism and $A\cong \Jac(\bar\qpol)$ where $\bar \qpol$ is the Galois cover of $\qpol$ with cover group $\Z^2/N$.

We can also see $\GHilb$ as a moduli space $\CM_\alpha^\theta(A)$ for a special $\theta$.
Let $v$ be the vertex corresponding to $\frac{1}{|G|} \sum_{g \in G} g$ and choose $\theta_v=-|G|+1$ and $\theta_w=1$ for $w\ne v$. 
An $A$-module $S$ is $\theta$-stable if and only if $Sv$ generates $S$.
If $S=\C[X,Y,Z]/\ideal m$ is the regular $G$-representation then viewed as an $A$-module $S$ is a module
with dimension vector $(1,\dots,1)$ and it is $\theta$-stable because $S$ is cyclic as a $\C[X,Y,Z]$-module.
\end{intermezzo}

We end this section with a classification of all resolutions that can occur in this way. First of all such a resolution $\tilde X \to X$ must be toric.
Which means that both $\tilde X$ and $X$ are toric varieties and the map $\pi:\tilde X \to X$ is equivariant for the torus-action.
A second condition is that $\tilde X$ must be projective, i.e. $\tilde X=\proj \tilde R$ with $\tilde R$ a positively graded ring such
that $\Spec\tilde R_0 =X$. Not every subdivision of the matching polygon in elementary triangles is projective. In order to be projective the variety must allow an ample line bundle. A counterexample is given by the following polygon \cite{oda1988convex}.
\begin{center}
\begin{tikzpicture}[scale=.5]
\draw (0,0) node{$\bullet$}--(0,2) node{$\bullet$}--(1,0) node{$\bullet$}--(0,1) node{$\bullet$} --(0,2) node{$\bullet$}--(2,0) node{$\bullet$}--(1,1) node{$\bullet$}--(1,0);
\draw (0,2) node{$\bullet$}--(-2,-2) node{$\bullet$};
\draw (0,1) node{$\bullet$}--(-1,0) node{$\bullet$};
\draw (0,1)--(-1,-1) node{$\bullet$};
\draw (0,1)--(-2,-2) node{$\bullet$};
\draw (2,0) node{$\bullet$}--(-2,-2) node{$\bullet$};
\draw (2,0) node{$\bullet$}--(-2,-2) node{$\bullet$};
\draw (-1,-1) node{$\bullet$}--(0,-1) node{$\bullet$};
\draw (-1,-1) node{$\bullet$}--(2,0) node{$\bullet$};
\draw (-1,-1) node{$\bullet$}--(1,0) node{$\bullet$};
\draw (0,0) -- (2,0);
\draw (0,0) -- (-2,-2);
\end{tikzpicture}
\end{center}

\begin{theorem}[All projective toric resolutions occur (Craw-Ishii \cite{craw2004flops})]
Let $\qpol$ be a zigzag consistent quiver, $A=\Jac(\qpol)$ and $R=Z(A)$.
If $\cM \to \Spec R$ is any projective toric resolution then there is a character $\theta$ such that
\[
 \cM \to \Spec R  \cong \CM_\alpha^\theta \to \Spec R
\]
\end{theorem}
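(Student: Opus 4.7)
The plan is to combine variation of geometric invariant theory for the space of stability conditions with the classical fact that any two projective toric crepant resolutions of a $3$-dimensional Gorenstein toric singularity are connected by a chain of flops. The statement then reduces to showing that every such flop is realized by crossing a wall in the space of characters $\theta$.

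First, I would set up the chamber structure. The space of real stability conditions $\{\theta\in\Hom(\GL_\alpha,\C^*)\otimes\R\mid\theta\cdot\alpha=0\}$ is divided by finitely many rational hyperplane walls, where a wall is the locus on which some dimension vector $0<\beta<\alpha$ admitting a $\theta$-semistable representation satisfies $\theta\cdot\beta=0$. Inside each open chamber $C$ every semistable representation is stable, the chamber is generic in the sense of the preceding theorem, and the moduli space $\CM_\alpha^{\theta}(A)$ is constant on $C$. By that theorem each chamber produces a toric projective crepant resolution $\pi_C:\CM_\alpha^{\theta_C}(A)\to\Spec R$, and by construction $\pi_C$ is projective over $\Spec R$ because $\CM_\alpha^{\theta}(A)$ is built as a Proj of a graded ring over $R$.

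Second, I would analyze what happens when two adjacent chambers $C_1,C_2$ are separated by a single wall. By the general variation-of-GIT theory of Thaddeus and Dolgachev--Hu, the two moduli spaces are related by a birational map that is an isomorphism in codimension one. Since both $\CM_\alpha^{\theta_{C_i}}(A)$ are smooth Calabi--Yau threefolds (they are crepant resolutions of the same Gorenstein singularity), this codimension-one birational map is a flop in the sense of Kollár. Thus the chambers, linked by wall crossings, organize into a graph whose edges realize flops among the resolutions $\pi_C$.

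Third, I would appeal to the classical theorem that any two projective toric crepant resolutions of a $3$-dimensional Gorenstein toric singularity are connected by a finite sequence of flops (Kawamata, Reid). Combined with the previous step, starting from any single chamber (for instance the one giving $G$-$\mathrm{Hilb}$ in the McKay setting of Intermezzo \ref{mckay2}, or any chamber guaranteed by the theorem above) we can reach a chamber whose associated resolution is $\cM$, provided every flop between projective toric crepant resolutions of $\Spec R$ arises from some wall in $\theta$-space.

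The main obstacle is this last surjectivity: showing that the map from chambers to projective toric crepant resolutions hits every such resolution, equivalently that every flop in the flop graph lifts to a wall crossing for representations of $A$. The way I would address this is by using tilting theory. Given an arbitrary projective toric crepant resolution $p:\cM\to\Spec R$, one constructs on $\cM$ a tilting bundle $\cU_\cM=\bigoplus_v\tilde{\grr}_v$ whose summands are the tautological rank-one reflexive sheaves indexed by $\qpol_0$ (these exist because the combinatorial data $(a^{\vo w})$ used in the proof of Theorem \ref{JacNCCR} extends to any toric crepant resolution, and every divisor on $\cM$ is torus-equivariant). One then shows $\End_\cM(\cU_\cM)\cong A$ by computing Hom groups locally on a torus-invariant affine cover and matching them with the combinatorial description of $v A w$. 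King's characterization of moduli of representations then realizes $\cM$ as $\CM_\alpha^\theta(A)$ for the $\theta$ given by the first Chern classes of the summands of $\cU_\cM$ with respect to an ample class on $\cM$; genericity of $\theta$ follows from smoothness of $\cM$.
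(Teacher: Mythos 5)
Your first three steps reproduce the Craw--Ishii wall-crossing picture that the paper itself invokes: the chamber decomposition of $\Theta$, constancy of $\CM_\alpha^\theta(A)$ on chambers, flops at walls, and flop-connectedness of projective toric crepant resolutions. As you acknowledge, none of this yet proves the theorem; the entire content is the surjectivity statement, and there your argument has a genuine gap. The bundle $\bigoplus_v \tilde\grr_v$ you want to place on an arbitrary projective toric crepant resolution $\cM$ is not actually determined by the data you cite. The exponents $a^{\vo w}$ from Theorem \ref{JacNCCR} are the degrees $\deg_{\PM} p_{\vo w}$ taken over the \emph{corner} matchings only, i.e.\ they prescribe the sheaf along the rays through the corners of $\MP(\qpol)$. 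The fan of a crepant resolution has a ray through \emph{every} lattice point of $\MP(\qpol)$, so to specify a line bundle on $\cM$ you must choose an integer --- equivalently a perfect matching --- at every lattice point. At boundary and interior points there are in general many matchings (the paper counts $\binom{k}{d}$ of them on boundary points), different choices give non-isomorphic bundles, and for most choices $\End$ is not $A$ and the higher $\Ext$'s do not vanish. Exhibiting a choice that works for an arbitrary $\cM$ is precisely the hard combinatorial content of Ishii--Ueda/Craw--Ishii; in the moduli construction that choice is the set of $\theta$-stable matchings, which presupposes the $\theta$ you are trying to produce. So the sentence ``one constructs a tilting bundle \dots and shows $\End_\cM(\cU_\cM)\cong A$'' assumes the crux rather than proving it. Moreover, even granting such a bundle, concluding $\cM\cong\CM_\alpha^\theta(A)$ requires a $\theta$ for which every fiber of the bundle is $\theta$-stable and for which the induced classifying map is an isomorphism; ``$\theta$ from first Chern classes against an ample class'' is the right kind of recipe but needs this verification, and genericity of $\theta$ is a condition relative to the walls in $\Theta$ --- it does not follow from smoothness of $\cM$.

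For comparison, the paper closes exactly this gap in one of two ways: either by Craw--Ishii's explicit analysis of which representations are destabilized and which are glued in at each wall, showing that the chambers exhaust all projective toric crepant resolutions, or by the tropical route, which avoids constructing bundles on $\cM$ altogether: a projective toric resolution corresponds to a regular subdivision of $\MP(\qpol)$ into elementary triangles, Theorem \ref{alltropicaloccur} realizes that subdivision as the dual of a tropical curve coming from some weighting $m$ of the arrows, and Theorem \ref{tropmod} identifies the resulting fan with that of $\overline{\CM}_\alpha^\theta$ for $\theta_v=\sum_{h(a)=v}m_a-\sum_{t(a)=v}m_a$. Either of these supplies the surjectivity that your tilting argument leaves open.
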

\begin{proof}
The proof by Craw and Ishii in (Craw-Ishii \cite{craw2004flops}) is given for dimers coming from the McKay-correspondence, i.e. those for which the matching polygon is a triangle. It works by analysing what happens if we change the
stability condition. The space of stability conditions 
\[
 \Theta = \{\theta \in \Z \qpol_0 \,|\, \theta\cdot \alpha=0\}
\]
can be divided into chambers, which are separated by walls of the form
$\beta\cdot \theta=0$ where $\beta$ is a dimension vector with $\forall v \in \qpol_0:\beta_v\le \alpha_v$.
On the positive side of the wall representations with a subrepresentation of
dimension vector $\beta$ are stable while on the negative side they are unstable.
On the negative side on the wall representations with a subrepresentation of
dimension vector $1-\beta$ are stable and on the positive side they are unstable.

If such representations exist wall crossing will cut out certain representations
and glue in others, which can result in a flop. 
By studying the combinatorics of this problem Ishii and Craw were able to 
construct all projective toric crepant resolutions. 
This technique also works for general dimers. 

Another way to prove this result is to use theorem \ref{alltropicaloccur} and the connection between tropical curves and resolutions given by theorem \ref{tropmod}.
\end{proof}

\begin{example}
We illustrate this with the suspended pinchpoint.
The quiver has 3 vertices, so $\Theta=\Z^2$. We parametrize it by $(\theta_2,\theta_3)$.
There are three walls corresponding to the equations $\theta_i=0$. 

There are $2$ perfect matchings located on $(0,1)$: 
\begin{tikzpicture}
\begin{scope}[xshift=-1cm,yshift=-.5cm]
\begin{scope}[scale=.125]
\draw [dotted] (0,0) -- (3,0) -- (3,3) -- (0,3);
\draw [dotted] (0,0) -- (3,1) -- (3,2) -- (0,1)--(0,2)--(3,3);
\draw [thick] (0,0) -- (3,1);
\draw [thick] (0,2) -- (0,3);
\draw [thick] (3,2) -- (3,3);
\end{scope}
\end{scope}
\end{tikzpicture}
and
\begin{tikzpicture}
\begin{scope}[xshift=-1cm,yshift=-.5cm]
\begin{scope}[scale=.125]
\draw [dotted] (0,0) -- (3,0) -- (3,3) -- (0,3);
\draw [dotted] (0,0) -- (3,1) -- (3,2) -- (0,1)--(0,2)--(3,3);
\draw [thick] (0,2) -- (3,3);
\draw [thick] (0,0) -- (0,1);
\draw [thick] (3,0) -- (3,1);
\end{scope}
\end{scope}
\end{tikzpicture} and for every $\theta$ only one of them is stable.
If we cross the wall $\theta_1=0$ they change between stable and unstable.
In that case a $2$-dimensional subvariety of $\CM_\alpha^\theta$ gets replaced
by another. The moduli space stays the same variety but it
classifies different representations. Also the tautological remains
the same vector bundle but has a different $\Jac(\qpol)$-action. 

Flops occur at the walls $\theta_2=0$ and $\theta_3=0$ because there a $1$-dimensional
orbit of representations becomes unstable and gets substituted.
\begin{center}
\begin{tikzpicture}
\begin{scope}[scale=1]
\begin{scope}[yshift=-1.5cm]
\draw [-latex,shorten >=5pt] (0,0) -- (3,0);
\draw [-latex,shorten >=5pt] (3,0) -- (3,1);
\draw [-latex,shorten >=5pt] (3,1) -- (0,0);
\draw [-latex,shorten >=5pt] (0,0) -- (0,1);
\draw [-latex,shorten >=5pt] (0,1) -- (3,2);
\draw [-latex,shorten >=5pt] (3,2) -- (3,3);
\draw [-latex,shorten >=5pt] (3,2) -- (3,1);
\draw [-latex,shorten >=5pt] (0,2) -- (0,1);
\draw [-latex,shorten >=5pt] (0,3) -- (3,3);
\draw [-latex,shorten >=5pt] (3,3) -- (0,2);
\draw [-latex,shorten >=5pt] (0,2) -- (0,3);
\draw (0,0) node[circle,draw,fill=white,minimum size=10pt,inner sep=1pt] {{\tiny1}};
\draw (0,1) node[circle,draw,fill=white,minimum size=10pt,inner sep=1pt] {{\tiny2}};
\draw (0,2) node[circle,draw,fill=white,minimum size=10pt,inner sep=1pt] {{\tiny3}};
\draw (0,3) node[circle,draw,fill=white,minimum size=10pt,inner sep=1pt] {{\tiny1}};
\draw (3,0) node[circle,draw,fill=white,minimum size=10pt,inner sep=1pt] {{\tiny1}};
\draw (3,1) node[circle,draw,fill=white,minimum size=10pt,inner sep=1pt] {{\tiny2}};
\draw (3,2) node[circle,draw,fill=white,minimum size=10pt,inner sep=1pt] {{\tiny3}};
\draw (3,3) node[circle,draw,fill=white,minimum size=10pt,inner sep=1pt] {{\tiny1}};
\end{scope}
\end{scope}
\begin{scope}[xshift=7cm]
\begin{scope}
\draw[-latex] (-2,0)--(2,0) node[right] {$\theta_3=0$};
\draw[-latex] (0,-2)--(0,2) node[above] {$\theta_2=0$};
\draw[-latex] (-1.5,1.5)--(1.5,-1.5) node[right] {$\theta_1=0$};

\begin{scope}[xshift=1cm,yshift=1cm]
\begin{scope}[scale=.25]
\begin{scope}[xshift=-.5cm,yshift=-.5cm]
\draw[thick](0,0) -- (0,2) -- (1,0) -- (1,-1) -- (0,0); 
\draw (0,0) node{$\scriptstyle{\bullet}$};
\draw (0,2) node{$\scriptstyle{\bullet}$};
\draw (0,1) node{$\scriptstyle{\bullet}$};
\draw (1,0) node{$\scriptstyle{\bullet}$};
\draw (1,-1) node{$\scriptstyle{\bullet}$};

\draw (1,0)--(0,1);
\draw (1,-1)--(0,1);
\end{scope}
\end{scope}
\end{scope}

\begin{scope}[xshift=-1cm,yshift=-1cm]
\begin{scope}[scale=.25]
\begin{scope}[xshift=-.5cm,yshift=-.5cm]
\draw[thick](0,0) -- (0,2) -- (1,0) -- (1,-1) -- (0,0); 
\draw (0,0) node{$\scriptstyle{\bullet}$};
\draw (0,2) node{$\scriptstyle{\bullet}$};
\draw (0,1) node{$\scriptstyle{\bullet}$};
\draw (1,0) node{$\scriptstyle{\bullet}$};
\draw (1,-1) node{$\scriptstyle{\bullet}$};

\draw (1,0)--(0,1);
\draw (1,-1)--(0,1);
\end{scope}
\end{scope}
\end{scope}

\begin{scope}[xshift=-1.5cm,yshift=.5cm]
\begin{scope}[scale=.25]
\begin{scope}[xshift=-.5cm,yshift=-.5cm]
\draw[thick](0,0) -- (0,2) -- (1,0) -- (1,-1) -- (0,0); 
\draw (0,0) node{$\scriptstyle{\bullet}$};
\draw (0,2) node{$\scriptstyle{\bullet}$};
\draw (0,1) node{$\scriptstyle{\bullet}$};
\draw (1,0) node{$\scriptstyle{\bullet}$};
\draw (1,-1) node{$\scriptstyle{\bullet}$};

\draw (1,-1)--(0,2);
\draw (1,-1)--(0,1);
\end{scope}
\end{scope}
\end{scope}

\begin{scope}[xshift=-.5cm,yshift=1.5cm]
\begin{scope}[scale=.25]
\begin{scope}[xshift=-.5cm,yshift=-.5cm]
\draw[thick](0,0) -- (0,2) -- (1,0) -- (1,-1) -- (0,0); 
\draw (0,0) node{$\scriptstyle{\bullet}$};
\draw (0,2) node{$\scriptstyle{\bullet}$};
\draw (0,1) node{$\scriptstyle{\bullet}$};
\draw (1,0) node{$\scriptstyle{\bullet}$};
\draw (1,-1) node{$\scriptstyle{\bullet}$};

\draw (1,-1)--(0,2);
\draw (1,-1)--(0,1);
\end{scope}
\end{scope}
\end{scope}

\begin{scope}[xshift=.5cm,yshift=-1.5cm]
\begin{scope}[scale=.25]
\begin{scope}[xshift=-.5cm,yshift=-.5cm]
\draw[thick](0,0) -- (0,2) -- (1,0) -- (1,-1) -- (0,0); 
\draw (0,0) node{$\scriptstyle{\bullet}$};
\draw (0,2) node{$\scriptstyle{\bullet}$};
\draw (0,1) node{$\scriptstyle{\bullet}$};
\draw (1,0) node{$\scriptstyle{\bullet}$};
\draw (1,-1) node{$\scriptstyle{\bullet}$};

\draw (1,0)--(0,1);
\draw (1,0)--(0,0);
\end{scope}
\end{scope}
\end{scope}

\begin{scope}[xshift=1.5cm,yshift=-.5cm]
\begin{scope}[scale=.25]
\begin{scope}[xshift=-.5cm,yshift=-.5cm]
\draw[thick](0,0) -- (0,2) -- (1,0) -- (1,-1) -- (0,0); 
\draw (0,0) node{$\scriptstyle{\bullet}$};
\draw (0,2) node{$\scriptstyle{\bullet}$};
\draw (0,1) node{$\scriptstyle{\bullet}$};
\draw (1,0) node{$\scriptstyle{\bullet}$};
\draw (1,-1) node{$\scriptstyle{\bullet}$};

\draw (1,0)--(0,1);
\draw (1,0)--(0,0);
\end{scope}
\end{scope}
\end{scope}

\end{scope}
\end{scope}
\end{tikzpicture}
\end{center}

\end{example}

\newcommand{\rCU}{\mathcal V_\R}
\subsection{The space of semi-invariants}\label{spaceofsemi}

In this section we will extend the results in the previous section to nongeneric $\theta$. Instead of working with the complex toric variety, we will work with its quotient by the group action of $U_1^3\subset \C^{*3}$. This is a topological space
\[
(\CM_\alpha^\theta)_\R = \overline{\CM}_\alpha^\theta/U_1^3 
\]
with 3 real dimensions and it has the structure of a manifold with corners. 
If we can write a toric variety as the proj of a ring $\C[\sigma^\vee \cap \Z^n]$ then this manifold is isomorphic to the
slice of $\sigma^\vee$ that corresponds to the degree $1$ elements. In particular in our case,
\[
 \overline{\CM}_\alpha^\theta = \Proj \C[\nrep_\alpha A]_\theta,  
\]
this is the slice of the $\theta$-semi-invariant monomials on $\nrep_\alpha A$.
This isomorphism is not canonical but depends on a choice of moment map of the $U_1^3$ action. More details can be found in \cite[Chapter 4]{fulton1993introduction}.

To construct this slice we identify a monomial function $f= \prod X_a^{m_a}$ with $(m_a) \in \Maps(\qpol_1,\R)$. 
If we look at the standard complex
\[
 \Maps(\qpol_2,\R) \stackrel{d}{\to} \Maps(\qpol_1,\R) \stackrel{d}{\to} \Maps(\qpol_0,\R),
\]
the functions on $\rep_\alpha \widehat A$ are integral points of the quotient of $\Maps(\qpol_1,\R)$ by the image of 
\[
\rG_0 = \{ (c_f)_{f \in \qpol_2}| \sum_{f \in \qpol_2^+} c_f - \sum_{f \in \qpol_2^-} c_f=0\} \subset \rG := \Maps(\qpol_2,\R)
\]
under $d$. The subspace of $\theta$-semi-invariants is the fiber $d^{-1}(\theta)/\rG_0$.

In this subspace the functions over $\nrep_\alpha A$ are those that have positive $\PM$-degree for all perfect matchings. These form a convex subset $\rCU \subset d^{-1}(\theta)/\rG_0$.

To describe this subset we can fix one semi-invariant $m=(m_a) \in d^{-1}(\theta)/\rG_0$. All other semi-invariants are of the form
$(m_a) + (x_a)$ with $(x_a)$ an invariant. The space of invariants is spanned by 3 invariants $\bar X,\bar Y,\bar Z$. We can identify $d^{-1}(\theta)/\rG_0$ with $\R^3$
by  $(\xi,\eta,\zeta)\mapsto (m_a)+ \xi X+ \eta Y-\zeta Z$.

Because the degree of $Z$ is $1$ for all perfect matchings we have
\[
\rCU = \{(\xi,\eta,\zeta)\,|\, \zeta \le \xi \deg_{\PM}(X) +\eta \deg_{\PM}(Y)+\deg_{\PM} m\} 
\]
So $\rCU$ is the space above the graph of the function
$F_m =\max_{\PM} (\deg_{\PM}(X)+ \eta \deg_{\PM}(Y)+\deg_{\PM} m)$. This expression is the same as the tropical function that we used 
to define the tropical curve associated to the dimer model in the part on statistical physics. 
To interprete the tropical curve in this setting, we first consider the quotient $d^{-1}(\theta)/\rG$ instead of $d^{-1}(\theta)/\rG_0$. This space is two-dimensional
and it corresponds to the projection $\rC$ along the $\Z$-direction. The boundary of $\rCU$ maps bijectively onto $d^{-1}(\theta)/\rG$ and
the tropical curve splits $d^{-1}(\theta)/\rG$ into pieces that corresponds to the facets of $\rCU$.

\begin{center}
\begin{tikzpicture}[line join=round]
\begin{scope}[scale=.5]
\draw[-latex](-6.769,-5.059)--(-9.477,-5.938);
\draw[-latex](-6.769,-5.059)--(-6.769,4);
\draw[-latex](-6.769,-5.059)--(1.354,-6.232);
\draw(-5.957,-3.27)--(-3.52,-.762);
\draw(-5.957,2.449) --(-3.52,-.762);
\draw(-1.895,-.997)--(-3.52,-.762);
\draw(.542,1.51) node[above] {$\tC$}--(-1.895,-.997);
\draw(.542,-4.208)--(-1.895,-.997);
\fill [fill=lightgray,fill opacity=0.6](-4.061,-.938)--(-8.123,-3.974)--(-1.625,-4.912)--(-2.437,-1.173)--cycle;
\fill[fill=lightgray,fill opacity=0.6](-4.061,-.938)--(-8.123,1.745)--(-8.123,-3.974)--cycle;
\fill[fill=lightgray,fill opacity=0.6](-2.437,-1.173)--(-1.625,-4.912)--(-1.625,-4.912+1.745+3.974)--cycle;
\fill[fill=lightgray,fill opacity=0.6](-8.123,1.745)--(-4.061,-.938)--(-2.437,-1.173)--(-1.625,-4.912+1.745+3.974)--cycle;
\draw (-1.625,-4.912+1.745+3.974)--(-2.437,-1.173)--(-4.061,-.938)--(-8.123,1.745);
\draw (-1.625,-4.912)--(-2.437,-1.173)--(-4.061,-.938)--(-8.123,-3.974);

\draw (-8.123,1.745) node[above] {$\rCU$};
\end{scope}
\end{tikzpicture}
\end{center}

\begin{theorem}[the tropical curve and the moduli space]\label{tropmod}
Let $m=(m_a)$ be a weighting on the arrows and construct the tropical curve with function
$F_m =\max_{\PM} (\deg_{\PM}(X)+ \eta \deg_{\PM}(Y)+\deg_{\PM} m)$.
The subdivision of the Newton polygon that corresponds to the tropical curve is the same as the subdivision of the matching polygon that corresponds to 
the fan of $\overline{\cM}_\alpha^\theta$ with $\theta_v = \sum_{h(a)=v} m_a - \sum_{t(a)=v} m_a$.
\end{theorem}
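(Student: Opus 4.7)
The plan is to realize both subdivisions of $\MP(\qpol)$ as the projection onto $\R^2$ of the upper faces of a single convex hull in $\R^3$, namely the hull of the ``lifted'' matching points
\[
L_m := \{(\deg_\PM X,\;\deg_\PM Y,\;\deg_\PM m)\mid \PM \in \PMs(\qpol)\} \subset \MP(\qpol)\times \R.
\]
Once both subdivisions are presented as the same projected shadow of this same hull, equality will follow from standard polyhedral duality.

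First I would handle the tropical side. By construction $F_m$ is the support-type function $\max_\PM(\xi\deg_\PM X+\eta\deg_\PM Y+\deg_\PM m)$, so the standard dictionary between tropical curves and Newton-polygon subdivisions (recalled in the intermezzo above) identifies the subdivision dual to the tropical locus $\{F_m\text{ not smooth}\}$ with the projection onto the $(\xi,\eta)$-plane of the upper envelope of $L_m$: a top-dimensional cell is the convex hull of those $\{\PM_1,\dots,\PM_k\}$ whose lifts lie on a common upper facet of $\mathrm{Conv}(L_m)$, equivalently those matchings that simultaneously attain the maximum in $F_m$ at a single vertex of the tropical curve. Because the projections $\bar n_\PM=(\deg_\PM X,\deg_\PM Y)$ exhaust the lattice points of $\MP(\qpol)$, this really is a subdivision of $\MP(\qpol)$.

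Next I would handle the fan side. The discussion of \S\ref{spaceofsemi} just before the statement identifies the real moment image $(\overline{\cM}_\alpha^\theta)_\R$ with the polytope $\rCU$, whose bounding surface is the graph of $F_m$ and whose supporting half-spaces are exactly the inequalities $\zeta\le\xi\deg_\PM X+\eta\deg_\PM Y+\deg_\PM m$, one per perfect matching contributing a facet. Since $\overline{\cM}_\alpha^\theta=\Proj\C[\nrep_\alpha A]_\theta$ is toric with $T_R$-action and $\rCU$ is its moment polytope, its fan is by construction the inner normal fan of $\rCU$. A top-dimensional cone of this fan is therefore spanned by the outward normals to the facets of $\rCU$ meeting at a common vertex, and, after unpacking the identifications between $M_A$, $M_R$ and $N_R$ set up in the previous subsections, those outward normals are exactly the lifted lattice points $(\bar n_\PM,1)\in N_R\otimes\R$. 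Hence a collection $\{\PM_1,\dots,\PM_k\}$ spans a cone of the fan iff the corresponding supporting hyperplanes share a vertex of $\rCU$, iff the lifts $(\bar n_{\PM_i},\deg_{\PM_i}m)$ lie on a common upper facet of $\mathrm{Conv}(L_m)$, iff (by the previous paragraph) the $\PM_i$ form a cell of the tropical subdivision.

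The main obstacle will be to push the toric dictionary cleanly through the non-generic case. When $\theta$ is not generic, $\rCU$ typically has non-simplicial vertices (several matchings occupy the same lattice point of $\MP(\qpol)$ and several supporting hyperplanes coincide), so the resulting fan cones are not elementary and the enumeration of stable collections from Mozgovoy's generic-$\theta$ theorem has to be extended by a polystability argument to cover the coarser cones that then appear. A second, largely clerical, difficulty is bookkeeping the lattices $M_A,M_R,N_A,N_R$ and the three ambient tori $T_G,T_A,T_R$ carefully enough that the outward normals to the facets of $\rCU$ are seen to coincide on the nose with the vectors $(\bar n_\PM,1)$ rather than merely up to an unspecified affine transformation, so that the two subdivisions of $\MP(\qpol)$ match as subdivisions of the matching polygon and not just as combinatorial types.
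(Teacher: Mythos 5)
Your proof is correct and takes essentially the same route as the paper: the paper also reads both subdivisions off the face structure of $\rCU$ (equivalently the graph of $F_m$) established in \S\ref{spaceofsemi}, matching its $2$-dimensional faces and their adjacencies with lattice points and edges of the subdivision on the fan side and on the tropical side alike, which is exactly your upper-hull/normal-fan duality in slightly less formal language. The extra care you flag about non-generic $\theta$ and extending Mozgovoy's enumeration of stable collections is not actually needed on your route, since the normal fan of the moment polyhedron $\rCU$ determines the fan of $\overline{\cM}_\alpha^\theta$ directly, without reference to stable collections.
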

\begin{proof}
Each $2$-dimensional face of $\rCU$ corresponds to a $2$-dimensional torus orbit or
a lattice point in the matching polygon. Two faces share an edge if the corresponding
lattice points are connected by an edge.

The picture for the tropical curve is the same, the connected components of the complement
correspond to lattice points in the Newton polygon and these lattice points
are connected if the components share an edge of the tropical curve.
\end{proof}

We are now able to interprete the tropicalization of the A-model $\tL(\qpol) \to \tX(\qpol)$ in terms of the $B$-model.
$\tL(\qpol)$ can be seen as the space of all monomial functions with real exponents on $\nrep_\alpha \qpol$ modulo the powers of $\ell$. The space $\tX(\qpol)$ can be seen as the character space $\Theta\otimes \R$ and the projection map $\tL(\qpol) \to \tX(\qpol)$ associates to each monomial the character for which it is a semi-invariant.

The fibers of the projection map are two-dimensional and they each contain a tropical curve that cuts the fiber in $2,1,0$-dimensional cells
which parametrize the $2,1,0$-dimensional torus ortbits in $\overline{\cM}_\alpha^\theta$.

The final step in the construction is to relate these bundles for different dimers  
using quiver mutation.

\begin{theorem}
Let $\qpol$ and $\mu\qpol$ be two consistent dimers on a torus, related by mutation. 
The map $\phi^\trop_\mu$ restricts to a bijection 
\[
 \phi^\trop_\mu: \Theta(\qpol) \to \Theta(\mu\qpol),
\]
such that there is an isomorphism of toric varieties 
\[
\overline{\CM}_\alpha^\theta(\qpol) \cong \overline{\CM}_\alpha^{\phi^\trop_\mu(\theta)}(\mu \qpol).
\]
\end{theorem}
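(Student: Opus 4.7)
The plan is to reduce the statement to Theorem \ref{tropmod}, which identifies the fan of $\overline{\CM}_\alpha^\theta$ with the subdivision of the matching polygon $\MP$ cut out by the tropical curve of $F_m$.

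First I would identify the space of stability conditions $\Theta(\qpol)$ with the base $\tX(\qpol)$ of the Poisson bundle built on the $A$-side. Via the canonical bijection $\rib_2\leftrightarrow\qpol_0$ between faces of the bipartite graph and vertices of the quiver, both spaces are naturally the hyperplane in $\R^{\qpol_0}$ on which the coordinates sum to zero: for $\tX(\qpol)$ this relation comes from $\prod_F W_F=1$, and for $\Theta(\qpol)$ from $\theta\cdot\alpha=0$ with $\alpha=\id$. A direct check with the tropical formulae $x'_F=-x_F$, $x'_{F_i}=x_{F_i}+\max(0,x_F)$ for $i=1,3$, and $x'_{F_i}=x_{F_i}-\max(0,-x_F)$ for $i=2,4$ shows that the coordinate sum is preserved by $\phi^\trop_\mu$, so the cluster transformation restricts to a piecewise linear map $\Theta(\qpol)\to\Theta(\mu\qpol)$, which is a bijection because $\mu$ is an involution on consistent dimers.

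Second I would show that the tropical curves corresponding to $\theta$ and to $\phi^\trop_\mu(\theta)$ subdivide the common matching polygon in the same way. Given a real arrow-weighting $m$ realising $\theta$ in the sense of Theorem \ref{tropmod}, let $m'$ be its image on $\mu\qpol$ under the tropicalised cluster transformation on $\tL$. The Goncharov–Kenyon identity $\phi_\mu(H_{a,\theta}')=H_{a,\theta}$, valid lattice point by lattice point on $\MP$, passes to the tropical limit and, in combination with the bijection of corner matchings and preservation of $\MP$ from Lemma \ref{mutpres}, yields an equality of tropical polynomials $F_{m'} = F_m\circ\phi^\trop_\mu$ on $\R^2=H^1(\TT,\R)$ up to a global monomial shift absorbed by the choice of reference corner matching. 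Hence the two tropical curves induce identical subdivisions of $\MP$.

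Finally I would invoke Theorem \ref{tropmod} on both sides: the subdivision of $\MP$ is literally the fan of $\overline{\CM}_\alpha^\theta(\qpol)$ on one side and of $\overline{\CM}_\alpha^{\phi^\trop_\mu(\theta)}(\mu\qpol)$ on the other. Since both fans live in the same three-dimensional lattice $N_R=\Z^3$ and coincide, the corresponding projective toric varieties are isomorphic.

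The main obstacle is the second step: one must verify that the invariance of the Hamiltonians $H_{a,\theta}$ under $\phi_\mu$ passes to a genuine equality of tropical polynomials (not merely of their zero sets), while carefully tracking how the reference corner matching transforms and absorbing the resulting monomial shift into the canonical identification of ambient lattices in $H^1(\TT,\R)$. Once this bookkeeping is organised, the remainder is a formal consequence of Theorem \ref{tropmod} together with Lemma \ref{mutpres}.
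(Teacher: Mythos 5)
Your proposal is correct and follows essentially the same route as the paper: the paper's (much terser) proof likewise notes that $\phi^\trop_\mu$ preserves $\Theta$ because the tropical formulas send integral points to integral points and are inverted by the inverse mutation, and then deduces the isomorphism from the fact that the associated tropical curves agree up to translation, which via Theorem \ref{tropmod} means the two fans coincide. The detail you supply in your second step---tropicalizing the Goncharov--Kenyon identity $\phi_\mu(H_{a,\theta})=H'_{a,\theta}$ and using Lemma \ref{mutpres} to match reference corner matchings and matching polygons---is exactly the justification the paper leaves implicit, so the two arguments are the same in substance.
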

\begin{proof}
The map $\phi_\mu^\trop$ maps $\Theta(\qpol)$ to $\Theta(\mu\qpol)$ because
its expression only uses the tropical operations which map integers to integers. It is a bijection because its inverse is
the same map for the inverse mutation.

There is an isomorphism 
\[
\overline{\CM}_\alpha^\theta(\qpol) \cong \overline{\CM}_\alpha^{\phi^\trop_\mu(\theta)}(\mu \qpol).
\]
because the tropical curves are the same up to translation.
\end{proof}

\begin{example}
If we go back to example \ref{P1P1} we see that the tropical 
transition function becomes
\se{
 \theta_1'&=\theta_1\\ \theta_2'&=\theta_2+2\max(0,\theta_4)\\ \theta_3'&=\theta_3+2\theta_4-2\max(0,\theta_4)\\ \theta_4'&=-\theta_4.
}
For the first quiver we will look at the plane of stability conditions
\[
 \Pi = \{ \alpha(1,0,-1,0)+\beta(0,-1,0,1)| \alpha,\beta \in \R\}.
\]
and we draw an overview of the moduli spaces coming from stability conditions in this plane
\begin{center}
 
\end{center}
}
\end{example}

\section{Connections: cluster and categories}

\renewcommand{\Stab}{\mathsf{Stab}\,}
\newcommand{\Cat}{\mathtt{C}}
We have seen two stories that come from completely different parts of mathematics
but which look very much alike.
On the one hand we studied the dynamics of a cluster variety and interpreted 
its positive part as the parameter space of a model in statistical physics.
On the other hand we studied resolutions of 3-dimensional singularities and
how one could realize these as moduli spaces of representations of a noncommutative algebra. This also gave rise to a parameter space which turned out
to be the same as the tropical limit of the parameter space of the statistical model.
The reason why these two stories line up so neatly is homological mirror symmetry.

\subsection{Categories and mirror symmetry}
Mirror symmetry is a phenomenon that first occured in superstring theory.
This is quantum theory that studies the dynamics of strings in a 4+6-dimensional space.
The first $4$ dimensions represent ordinary space-time and the other $6$
are internal degrees of freedom that have the mathematical shape of a Calabi-Yau manifold. This Calabi-Yau manifold has both a symplectic and a complex structure. 

Studying the full physics of superstrings is beyond our mathematical technology, but one can make approximations that turn the theory into something more manageable. 
There are $2$ main approaches to this called the A-model and the B-model.
For an introduction on how physicists arrive at these models we refer to \cite{aspinwall2003d}.

The A-model focusses on the symplectic side of the Calabi-Yau manifold: the strings in the theory satisfy boundary conditions that require their end points to be fixed
on Lagrangian submanifolds. These are submanifolds of a symplectic manifold
that look locally like space embedded in phase space. These Lagrangian submanifolds
are dynamical objects in the theory and therefore it
is interesting to study their intersection theory.

The B-model focusses on the complex side of the Calabi-Yau manifold: the strings in the
theory satisfy boundary conditions that require their end points to be fixed
on complex submanifolds equiped with vector bundles. Mathematically it means
that we are interested in coherent sheaves of the Calabi-Yau manifold and 
their cohomology.

Homological mirror symmetry is a strange duality between the two models. 
It states that the A-model on one Calabi-Yau $X$ is physically indistinguishable
from the B-model on another Calabi-Yau $X^\vee$. These two Calabi-Yaus are called
mirror pairs. 

To turn this observation into a mathematical statement, Kontsevich \cite{kontsevich1994homological} formulated the
homological mirror symmetry conjecture. This conjecture associates to both
models a category. For the $A$-model this is the Fukaya category of $X$, which has
as objects the Lagrangian submanifolds and as morphisms their intersection points.
For the $B$-model this category is the category of coherent sheaves.
Two manifolds $X,X^\vee$ form a mirror pair if these categories are derived equivalent: 
\[
 \Db\Fuk X \cong \Db \Coh X^\vee.
\]

\subsection{Choosing a mirror pair}
The two stories we presented above are related to one particular sort of conjectured
mirror pairs. For the $A$-model the Calabi-Yau manifold is a smooth hypersurface of
the form
\[
X_f = \{(x,y,u,v) \in \C^{*2} \times \C^2 \,|\, uv+f(x,y)=0 \}
\]
where $f$ is a generic polynomial with Newton polygon $\LP$.
This is a compact submanifold but we can give it a symplectic structure coming
from the standard symplectic structure on $\PP_1^2\times \PP_2$. The symplectic structure does not change under small deformations of the coefficients
of $f$. The intersection theory of certain Lagrangians on $X_f$ can be studied by looking at the intersection theory of curves in the Riemann surface $\cC_f:f(X,Y)=0$.

For the $B$-model the manifold $X^\vee_f$ is a crepant resolution of the Gorenstein
singularity defined by the same polygon $\LP$.
The homological mirror symmetry conjecture in this particular case states that
for a generic polynomial $f \in \C[X^{\pm 1},Y^{\pm 1}]$ we have that
\[
 \Db\Fuk X_f \cong \Db\Coh X_f^\vee.
\]
From this statement it is clear that the mirror of a manifold is not uniquely determined,
there are parameters we can adjust without changing the underlying category.
On the $A$-side these are the coeffients of the polynomial and on the $B$-side
this is a stability condition to construct the moduli space. 

If we fix a dimer we have parameter space $\cX(\rib)$ 
and for each point $\xi \in \cX(\rib)$ we can construct a symplectic manifold $X_{P_\xi}$
and a complex manifold $\CM_\alpha^\theta(A)$ where $\theta= \log |\xi|$. 
Hence $\cX(\rib)$ can be seen as a moduli space of mirror pairs. 

A second conjecture related to mirror symmetry is the Strominger-Yau-Zaslow (SYZ) conjecture \cite{strominger1996mirror}. It states that if $(X,X^\vee)$ is a mirror pair then $X$ and $X^\vee$ should be dual torus fibrations over a common base: 
we have a diagram 
\[
 \xymatrix{X\ar@{->>}[dr]_{\pi_X}&&X^\vee\ar@{->>}[dl]_{\pi_X^\vee}\\
 &B&}
\]
such that the generic fibers $\pi_X^{-1}(b)$ and $\pi_{X^\vee}^{-1}(b)$
are both real tori which are naturally dual to each other: 
$\pi_X^{-1}(b)=M_\R/M$ and $\pi_{X^\vee}^{-1}(b)=N_\R/N$ where $M$ and $N$
are dual lattices.

The common base $B$ is usually singular.  
There are two ways to construct it, one can look at an action of $U_1^3$
and take the quotient or one can construct a tropical degeneration. For more information
on this we refer to \cite{gross2011tropical}.
This is also reflected in our stories. On both sides we ended up with a union of
line segments, which could be interpreted as the tropical limit
of the spectral curve $\cC_f$ on the $A$-side and as the $U_1^3$-quotient
of the singular locus of $\ell^{-1}(0) \subset \CM_\alpha^\theta(A)$.
This one-dimensional version of the SYZ-fibration represents the locus
where the geometry of the whole SYZ-fibration is the most interesting.

There is also a second choice underneath, the dimer, which determines the parametrization we use to describe the parameter space $\cX$. It also picks out some 
special objects in the category. On the $A$-side the vertices of $\qpol$ correspond
to zigzag paths in $\polq$. These can be drawn as strands on the spectral curve.
Each of the strands can be lifted to a Lagrangian submanifold, which gives
us an object in $\Db\Fuk X_f$. On the $B$-side the vertices of $\qpol$ correspond
to the indecomposable summands of the tilting bundle $\cU$, which are objects
in $\Db\Coh X_f^\vee$.

\subsection{Clusters and stability}

This type of description with a discrete and a continuous part is a phenomenon that pops up in a lot of situations and its combinatorics has been formalized in the theory of cluster algebras and cluster Poisson varieties \cite{fomin2002cluster,fomin2003cluster,fock2003cluster,fock2009cluster}.
The discrete part is a cluster, which is encoded by a quiver and a set of cluster variables. The continuous part is (the spectrum of) the cluster algebra or
the cluster Poisson variety. Mutation changes the cluster and 
tells you how the corresponding change in coordinates works on the continuous part.

Starting from a dimer model we have interpreted this structure in two different ways
associated to two interpretations of the same category $\Db\Fuk X_f \cong \Db\Coh X_f^\vee$. This suggests that there is a way to define this structure on the level of the category. We will end with a tentative description on how this might work in a
more general setting.

Given a triangulated category $\Cat$ one can define its space of Bridgeland 
stability conditions $\Stab \Cat$ \cite{bridgeland2007stability}. This consists of pairs $(Z,\heartsuit)$ where $Z:\mathtt{K}_0 \to \C$ is an additive map from the Grothendieck group of $\Cat$ to $\C$ and $\heartsuit\subset \Cat$ is the heart of a $t$-structure.
These two objects satisfy some axioms. One of these is that the image $\Z(\heartsuit)$ must be in the upper half plane $\mathbb{H}^+=\{z| \arg(z)\in (0,\pi] \}$.

The main result of \cite{bridgeland2007stability} is that $\Stab \Cat$ can be given the structure of a manifold such that 
\[
 \psi: \Stab \Cat \to \Hom(\mathtt{K}_0,\C): (Z,\heartsuit)\mapsto Z
\]
is a local homeomorphism. If $(Z,\heartsuit)$ is a stability condition and we change $Z$ a little bit to $Z'$, two things can happen: the image of $\heartsuit$ stays in
$\mathbb{H}^+$ or some of the objects in $\heartsuit$ get negative imaginary part. If the latter is the case we can construct a new heart $\heartsuit'$ that substitutes
these bad objects for new ones such that $(Z',\heartsuit')$ is again a stability condition. This process is analogous to mutation.

The space of stability conditions has a cell decomposition according to the hearts. So again we have a continuous part and a discrete part but now
the cells are glued together without overlap. This is different from the Poisson cluster variety, where different coordinate patches are identified
on an open dense subset. The situation is similar to what happens with toric varieties. On the one hand you have the fan, where the top-dimensional
cones are glued together at their boundaries and on the other hand you have the affine pieces $U_\sigma$ which are glued together on affine open subsets.
This suggests that it is possible to construct a `fan/toric variety' type duality between the theory of Bridgeland stability conditions and cluster Poisson varieties and let
$\Stab^\vee \Cat$ be the dual to $\Stab \Cat$.

Given a derived category that is CY-3 there is an antisymmetric form on the Grothendieck
group $\<A,B\>=\sum_i (-1)^i\dim \Hom(A,B[i])$, which can be used to define the Poisson structure on $\Stab^\vee \cC$. The symplectic $A$-manifold should be extracted from  $\Stab^\vee \cC$ by looking at level sets of Hamiltonians for its dynamics. The complex $B$-manifold should be constructed by a moduli problem. We then have two processes that can be used to degenerate these manifolds, tropicalization and quotienting out a torus action. If we apply the first method to the $A$-manifold and the second to the $B$-manifold we should get the same result.

\bibliography{overview}{}
\bibliographystyle{plain}

\printindex
\end{document}